\def \cA{\mathcal{A}}
\def \cB{\mathcal{B}}
\def \cC{\mathcal{C}}
\def \cF{\mathcal{F}}
\def \cH{\mathcal{H}}
\def \cI{\mathcal{I}}
\def \cJ{\mathcal{J}}
\def \cK{\mathcal{K}}
\def \cL{\mathcal{L}}
\def \cO{\mathcal{O}}
\def \cT{\mathcal{T}}
\def \P{\mathsf P}
\def \Q{\mathsf Q}
\def \E{\mathsf E}
\def \N{\mathbb{N}}
\def \R{\mathbb{R}}
\def \F{\mathbb{F}}
\def \ud{\mathrm{d}}
\def \e{\mathrm{e}}
\newcommand{\eps}{\varepsilon}
\newtheorem{theorem}{Theorem}[section]
\newtheorem{lemma}[theorem]{Lemma}
\newtheorem{corollary}[theorem]{Corollary}
\newtheorem{proposition}[theorem]{Proposition}
\newtheorem{remark}[theorem]{Remark}
\newtheorem{assumption}[theorem]{Assumption}
\theoremstyle{definition}
\newtheorem{problem}{Problem}
\DeclareMathOperator*{\sign}{sign}
\title[Stopper vs.\ singular-controller games on the half-line]{Finite-time horizon,\\ stopper vs.\ singular-controller games on the half-line}
\author[Bovo]{Andrea Bovo}
\author[De Angelis]{Tiziano De Angelis}
\subjclass[2020]{91A05, 91A15, 60G40, 93E20, 49J40}
\keywords{zero-sum stochastic games, singular control, optimal stopping, variational inequalities, Dirichlet conditions.}
\address{A.\ Bovo: School of Management and Economics, Dept.\ ESOMAS, University of Torino, Corso Unione Sovietica, 218 Bis, 10134, Torino, Italy.}
\email{\href{mailto:andrea.bovo@unito.it}{andrea.bovo@unito.it}}
\address{T.\ De Angelis: School of Management and Economics, Dept.\ ESOMAS, University of Torino, Corso Unione Sovietica, 218 Bis, 10134, Torino, Italy; Collegio Carlo Alberto, Piazza Arbarello 8, 10122, Torino, Italy.}
\email{\href{mailto:tiziano.deangelis@unito.it}{tiziano.deangelis@unito.it}}
\date{\today}
\numberwithin{equation}{section}
\begin{document}

\begin{abstract}
We prove existence of a value for two-player zero-sum stopper vs.\ singular-controller games on finite-time horizon, when the underlying dynamics is one-dimensional, diffusive and bound to evolve in $[0,\infty)$. We show that the value is the maximal solution of a variational inequality with both obstacle and gradient constraint and satisfying a Dirichlet boundary condition at $[0,T)\times\{0\}$. Moreover, we obtain an optimal strategy for the stopper.
In order to achieve our goals, we rely on new probabilistic methods, yielding gradient bounds and equi-continuity for the solutions of penalised partial differential equations that approximate the variational inequality.
\end{abstract}

\maketitle

\section{Introduction}

Two-player zero-sum stochastic games of singular-controller vs.\ stopper type have been studied in various formulations when the underlying diffusive dynamics evolves in $\R^d$ (cf.\ \cite{bovo2022variational,bovo2023,bovo2023b}). Motivated by applications that we are going to discuss in more detail below, in this paper we consider games on finite-time horizon in which the underlying dynamics is one-dimensional and it is bound to stay positive (in the sense that the process is absorbed if it falls below zero and the game ends). The state space of our game is $[0,T]\times[0,\infty)$. We prove that the game admits a value and we produce an optimal stopping rule for the stopper, which is optimal against {\em any} choice of the control. The value is the maximal solution in a Sobolev class of a variational inequality with obstacle and gradient constraint, {\em and} with an additional Dirichlet boundary condition at $[0,T]\times\{0\}$. These results pave the way to the analysis of a saddle point in the game which is obtained in the companion paper \cite{bovo2023saddlearxiv} via free boundary methods.

Notice that singular controls may induce jumps in the controlled dynamics. However, we specify the class of admissible controls so as to guarantee that the controlled dynamics does not jump strictly below zero. This is consistent with economic/financial applications discussed later on. We also emphasise that the choice of domain $[0,T]\times[0,\infty)$ is arbitrary and we may equivalently consider domains of the form $[0,T]\times[a,\infty)$ for some $a\in\R$, depending on the specific application of the model. All of our results and methods continue to apply as shown in Remark \ref{rem:a-abs}.

Although the statement of the main result is similar to the one in \cite{bovo2022variational}, the path to its proof is based on entirely new estimates which illustrate a methodology of independent interest. There are two key technical steps in our analysis: 

(i) We construct sufficiently regular solutions to the so-called penalised problems, that approximate the general variational problem. Such construction requires a priori bounds on the gradient of the solution, which are particularly challenging near the boundary $[0,T)\times\{0\}$ of the domain. As we will explain below, existing techniques do not apply to our game setup and therefore we develop a new, fully probabilistic approach that shows interesting connections to Fleming logarithmic transform\footnote{In the PDE literature this is better known as Cole-Hopf transform, after the papers \cite{cole1951quasi,hopf1950partial} (cf.\ also historical remarks in \cite[Ch.\ VI]{fleming2006controlled}).} for linear-quadratic stochastic control problems (cf.\ \cite[Chapters\ III and VI]{fleming2006controlled}).

(ii) We prove compactness of the family of solutions for the penalised problems, in order to pass to the limit along a sequence thereof and to obtain a solution of the general variational problem. Compactness in a suitable Sobolev class boils down to a uniform bound for the norm of the solution of the penalised problem. This can be done partly relying on ideas borrowed from \cite{bovo2022variational}. However, those ideas do not provide equi-continuity of the family near the boundary $[0,T]\times\{0\}$ and they are therefore insufficient to close our argument. Our second technical contribution is indeed an equi-continuity estimate uniformly with respect to the penalisation parameters. This estimate is entirely probabilistic and it leverages on a change of probability measure and a time-change.
\medskip

It is customary in the literature to approximate variational problems with hard constraints via families of semi-linear partial differential equations (PDEs) that in some sense ``soften'' the constraints. This is known as {\em penalisation method}. The resulting family of PDEs is parametrised by one or more indexes that will eventually tend to zero. The difficulties with this approach are to obtain existence/uniqueness of solutions for the penalised problem and bounds thereof in suitable norms, uniformly with respect to the penalisation parameters. When considered separately, optimal stopping problems and singular control problems have been extensively studied with variational inequalities following the penalisation method. Optimal stopping is linked to so-called obstacle problems, whereas singular control is linked to variational inequalities with gradient constraints.
On the one hand, obstacle problems on domains have been studied in depth, with seminal contributions collected in the monograph \cite{bensoussan2011applications}. On the other hand, variational problems with gradient constraint on a domain have also received significant attention (see, e.g., \cite{evans1979second}, \cite{hynd2013analysis}, \cite{ishii1983regularity}, \cite{kelbert2019hjb}, \cite{wiegner1981character}). In some instances the methods from those papers cannot be easily transferred to concrete stochastic control problems with absorbing boundary. In particular, a classical problem in mathematical finance, involving a variational inequality with gradient constraint on a domain, is the celebrated {\em dividend problem}. The finite-time horizon version of the problem, which shares the same domain as in our paper, was solved with great technical effort in \cite{grandits2013optimal}, \cite{grandits2014existence}, \cite{grandits2015optimal} with PDE methods. Later on, \cite{de2017dividend} found a shorter probabilistic approach, which however does not apply to our game setting.

Stochastic games on domains and their associated variational problems have been studied to a lesser extent. For stopping games (so-called Dynkin games), we mention the pioneering work \cite{friedman1973stochastic}, where PDE methods combined with some probabilistic considerations are used. Instead, a controller-stopper game (with classical controls) on a domain was considered more recently in \cite{bayraktar2011regularity}, with methods relying upon dynamic programming and viscosity theory. 

New technical difficulties arise when one tries to combine optimal stopping with singular control, because certain estimates from the PDE literature for the penalised problems fail (e.g., additional terms appear that are not easily controlled or, more simply, some inequalities are reversed and key arguments of proof break down). These hurdles are {\em structural} in zero-sum games of singular control and stopping, because the penalising terms in the approximating PDEs have opposite signs (cf.\ Section \ref{sec:pen-p}). In the interior of the domain $[0,T)\times(0,\infty)$ we can adopt the new estimates developed in \cite{bovo2022variational}, but those are insufficient to control the regularity of the solutions to the penalised problems near the boundary $[0,T]\times\{0\}$ of the domain. Moreover, all bounds need to be uniform with respect to the penalisation parameters, in order to be able to pass to the limit as those parameters vanish. In this paper we devise ways to deal with all of the new technical difficulties.

In order to clarify our technical contribution we summarise the main ideas of (i) and (ii) listed above. Concerning (i): in order to obtain a lower bound on the gradient of the solution to the penalised problem near $[0,T)\times\{0\}$, we first use a lower bound on the Hamiltonian of the associated control problem (cf.\ \eqref{eq:lowbnHepsm}); that introduces an auxiliary linear-quadratic problem (cf.\ \eqref{eq:defw}) whose value is a proxy for our value function; then we use Fleming transform to reduce the Hamilton-Jacobi-Bellman equation for the linear-quadratic problem to a linear PDE; finally, we provide uniform bounds on the solution of such PDE. It is important to notice that we cannot obtain the latter bounds directly for the original domain $[0,T]\times[0,\infty)$, because of a lack of compatibility conditions for the parabolic operator (see proof of Lemma \ref{lem:lbw} for details). Instead we also need to approximate the boundary $[0,T)\times\{0\}$ with a suitable time-dependent (smooth) boundary. We must also ensure that we control the dependence of all bounds with respect to this additional layer of approximation. Concerning (ii): in order to prove equi-continuity of the family of solutions to the penalised problem, we must develop several estimates which we summarise in the statements of Theorem \ref{thm:equic_m} and Proposition \ref{prop:equiuepsmuN}. Those are needed in order to proceed with the limits in Section \ref{sec:final} that yield the value of the game. The equi-continuity estimates are delicate because we must keep track of how the constants depend on the penalisation parameters. In the stochastic control literature we could not find any similar estimates.

In summary, we believe that the combination of techniques used in the paper is novel and it adds to our understanding of the interplay between probabilistic and PDE methods for the study of stochastic games. Indeed, all the key estimates are performed with probabilistic tools rather than analytical ones.

\subsection{Possible applications} 
The paper is motivated by some applications of stochastic control and stochastic games in finance and economics that assign to the underlying dynamics the meaning of asset prices, portfolio values, interest rates, etc. In all those cases, it is natural to assume that the underlying dynamics should evolve in a subspace of $\R^d$. It turns out that stopper vs.\ singular-controller stochastic games on a domain are much more challenging than their analogue on $\R^d$ and they have not been studied yet.
We consider the simplest possible case of a real-valued controlled process which is absorbed upon hitting a fixed threshold. With no loss of generality we set the threshold equal to zero (cf.\ Remark \ref{rem:a-abs}).

We review some applications that are suggested by the existing literature. In particular, we briefly illustrate models for the control of an exchange rate, for optimal investment decisions with random time horizon and for American options with cancellation feature. 

({\em Optimal control of an exchange rate}.) In the Introduction of \cite{hernandez2015zero}, zero-sum stopper vs.\ singular-controller games are proposed for a model in which a central bank controls the exchange rate until the time of a possible political veto. While in \cite{hernandez2015zero} the central bank is able to directly control the exchange rate, in other papers it controls the logarithm of the exchange rate (cf.\ \cite{ferrari2020singular}) or the logarithm of the so-called velocity-adjusted money stock (cf.\ \cite{miller1996optimal}). In \cite{ferrari2020singular} it is assumed that the controlled dynamics evolves on a possibly bounded domain and the optimisation terminates when such dynamics leaves the domain. We share with the setup in \cite{ferrari2020singular} the presence of an absorption time but we also add a finite-time horizon and extend the model to include the game feature. In \cite{miller1996optimal} the log of the velocity-adjusted money stock follows a controlled Brownian motion but there is no absorption of the controlled dynamics. Then, a game with the dynamics from \cite{miller1996optimal} would be more closely related to our earlier works on unbounded domains (cf.\ \cite{bovo2022variational,bovo2023,bovo2023b}).

({\em Optimal investment decision with random time horizon}.) It is natural to assume that there may be uncertainty on the investment horizon of an agent, due to several factors as, e.g., mortality risk or health contingencies that may result in sudden need for liquidity. Investment decision problems of this kind were studied, for example in \cite{blanchet2008investment} and \cite{young2004investment}, where the random time $\tau$ at which the optimisation ends is not a stopping time for the filtration of the underlying asset but it is exogenously specified. Instead, when the agent is able to choose {\em optimally} the time at which the optimisation ends, we enter the realm of so-called optimal investment with discretionary stopping (e.g.,\ \cite{davis1994discretionary}). In that case the time at which the optimisation ends is a stopping time for the agent's filtration. Our framework complements those two strands of the optimal investment literature by considering a worst-case scenario model in which the agent is forced to terminate the optimisation at the {\em least favourable} stopping time. This point of view is considered for example in the context of mortgage valuations in \cite{kitapbayev2024mortgage} but without our game feature (i.e., there is no investment decision). The game that we analyse can be interpreted as a game between an investor and an adversarial nature. Such an interpretation is common in the broader context of Knightian uncertainty and Stackelberg games (see, e.g., \cite{nutz2015nonlinear,riedel2009stopping}). To some extent, our specific problem may be considered as a special case of Knightian uncertainty, where the uncertainty is over the law of the stopped process.

({\em American options with cancellation feature}.) In mathematical finance, we have in mind an extension of the classical framework for cancellable American options (so-called game options; see, e.g., \cite{kifer2000game,kifer2013dynkin,kyprianou2004some,yam2014game}), which also encompasses so-called callable convertible bonds (introduced by \cite{si2004perpetual,sirbu2006two} and widely studied in the literature).
Game options give the issuer of an American option the right to cancel the option at any time, upon the payment of a cancellation fee (similarly, in callable convertible bonds, the firm recalls the bond). Their rational price coincides with the value of a zero-sum Dynkin (stopping) game, as shown in \cite{kifer2000game}. Building on this understanding, our class of games is well-suited to describe contracts which allow gradual cancellation of the option (or gradual recall of the bond) so that, in principle, such contracts would be cheaper for the buyer and less risky for the issuer. 
\medskip

{\bf An example.} We briefly present a model for callable convertible bonds that fits our framework. Following \cite{hobson2024convertible}, let us consider a firm with value described by a geometric Brownian motion 
\[
X^0_t\coloneqq x\exp\Big(\sigma W_t+(\mu-\sigma^2/2)t\Big),
\]
for some parameters $\mu\in\R$, $\sigma>0$ and with initial condition $x>0$, where $(W_t)_{t\ge 0}$ is a Brownian motion. A convertible bond pays coupons to the holder at a constant rate $c>0$ until the time $\tau$ at which the holder decides to convert. In the model by \cite{hobson2024convertible} (cf.\ also \cite{sirbu2006two}), at the conversion time, the holder receives from the issuer a proportion $\gamma\in(0,1]$ of the firm's value. If the convertible bond is ``callable'', the firm may decide to recall the bond at any time $\rho\in[0,T]$ earlier than $\tau$ by paying a fixed penalty $K>0$ to the holder. If the issuer calls the bond exactly at time $\tau\in[0,T]$, various forms of the payoff may be specified, but for simplicity we assume that the holder takes precedence (as in \cite{sirbu2006two}). Then, the rational price of the callable convertible bond is described by the value of a Dynkin game with payoff
\[
\E\Big[\int_0^{\tau\wedge\rho}\e^{-r t}c\,\ud t+ \mathds{1}_{\{\tau\le \rho,\tau<T\}}\e^{-r\tau}\gamma X^0_\tau+\mathds{1}_{\{\rho<\tau\}}\e^{-r\rho} K+\mathds{1}_{\{\rho=\tau=T\}}\e^{-rT} g\big(X^0_T\big)\Big],
\]
where the payoff at the terminal time $T$ may be borrowed for example from \cite{sirbu2006two}. A variant of the classical model, which would fit our framework, can be introduced by assuming that the issuer or the convertible bond is allowed to reduce the proportion of the firm's value that can be converted by the holder. That is, although the firm's value $X^0$ is {\em not} affected by the issuer, the contract stipulates that the holder can only convert an amount $X_t=X^0_t(1-R_t)$ where $R_t\in[0,1]$ is the fraction of firm's value that was gradually recalled by the issuer up to time $t$. Assuming that $(R_t)_{t\ge 0}$ is c\`adl\`ag and non-decreasing, the dynamics of the remaining {\em convertible} amount reads
\[
\ud X_t=\mu X_t\ud t+\sigma X_t\ud W_t-X^0_t\ud R_t,\quad t\in[0,T].
\]
Redefining $\ud \nu_t\coloneqq X^0_t\ud R_t$ we obtain another c\`adl\`ag, non-decreasing control $(\nu_t)_{t\ge 0}$ and the resulting process $X_t=X^\nu_t$ solves an SDE that fits our framework (cf.\ \eqref{eq:prcXcntrll}). Now, on the one hand, if the holder converts at time $\tau$ they receive $\gamma X^\nu_\tau$; on the other hand, if the issuer wants to recall a fraction $R_t=\int_{[0,t]}\frac{\ud \nu_s}{X^0_s}$ of the firm's value they have to pay a proportional cost. Notice that the issuer may recall the bond in full by letting the process $(R_t)_{t\in[0,T]}$ jump to $1$. Thus, we recover the stopping time $\rho$ from the classical formulation as $\rho=\inf\{t\ge 0\,|\,R_t=1\}$. In this example, $\rho$ also coincides with the absorption time for the controlled dynamics, i.e., $\rho=\tau_0\coloneqq\inf\{t\ge 0\,|\,X^\nu_t\le 0\}$. Then, we may formulate the rational price of this contract as the value of a game with payoff
\[
\E\Big[\int_0^{\tau\wedge\tau_0}\e^{-r t}c\,\ud t+\int_{[0,\tau\wedge\tau_0]}\e^{-rt} k\ud \nu_t + \mathds{1}_{\{\tau\le \tau_0,\tau<T\}}\e^{-r\tau}\gamma X^\nu_\tau + \mathds{1}_{\{\tau=T\le \tau_0\}}\e^{-rT} g\big(X^\nu_T\big)\Big],
\]
where $k>0$ is the proportional cost paid by the issuer to recall a fraction $X^0_t\ud R_t$ of the bond. This game fits our framework.

\subsection{Summary of the paper}
The paper is organised as follows: in Section \ref{sec:setting} we set up the problem, formulate our assumptions and state the main result (Theorem \ref{thm:usolvar}) concerning existence of a value, solvability of the variational problem and structure of an optimal stopping strategy. Section \ref{sec:summary} provides an overview of our methods for the proof of Theorem \ref{thm:usolvar}. In Section \ref{sec:penalised-a} we formulate penalised problems on bounded domains, with bounded approximations of the original game's payoff and diffusion parameters. Then we prove the existence of a unique smooth solution to the penalised problem using our new gradient estimates in Lemmas \ref{lem:fntbndr} and \ref{lem:lbw}. In Section \ref{sec:penalised-b} we obtain the solution of the penalised problem on unbounded domain. In that section we obtain one of our main technical estimates for the equi-continuity of the solutions of the penalised problems on bounded domains. Finally, in Section \ref{sec:final} we pass to the limit in the penalisation parameters and smoothing parameters to obtain the value function of our original game and an optimal strategy for the stopper. A short technical appendix concludes the paper.

\section{Setting and main results}\label{sec:setting}
Let $(\Omega,\cF,\P)$ be a probability space equipped with a right-continuous filtration $\F=(\mathcal{F}_s)_{s\in[0,\infty)}$ and with an $\F$-adapted, one-dimensional Brownian motion $(W_s)_{s\in[0,\infty)}$. For $T\in(0,\infty)$, we set $\cO=[0,T)\times(0,\infty)$ and $\overline\cO=[0,T]\times[0,\infty)$. We take continuous functions $h:\overline\cO\to [0,\infty)$ and $g:\overline\cO\to[0,\infty)$ and let $r\ge 0$ and $\bar{\alpha}>0$ be fixed constants. 

We consider a zero-sum controller/stopper game in which the stopper picks a random time $\tau$ and the controller picks a control process $\nu=(\nu_t)_{t\ge 0}$ with paths of bounded variation. First we introduce the game's payoff and then we give precise statements concerning the class of admissible pairs $(\tau,\nu)$. For $(t,x)\in\overline\cO$, the game's {\em expected} payoff is defined as
\begin{align*}
\begin{aligned}
\cJ_{t,x}(\nu,\tau)&\coloneqq \E_{x}\Big[\e^{-r(\tau\wedge\tau_0)}g\big(t\!+\!\tau\wedge\tau_0,X^{\nu}_{\tau\wedge\tau_0}\big)+\!\int_0^{\tau\wedge\tau_0}\!\! \e^{-rs}h(t\!+\!s,X_s^{\nu})\,\ud s+\!\int_{[0,\tau\wedge\tau_0]}\!\! \e^{-rs}\bar{\alpha}\,\ud |\nu|_s \Big],
\end{aligned}
\end{align*}
where $X^\nu=(X^\nu_s)_{s\ge 0}$ denotes a stochastic dynamics controlled with $\nu$, $|\nu|_s$ is the total variation of the process $\nu$ on the interval $[0,s]$ and 
\begin{align}\label{eq:tau0}
\tau_0\coloneqq\tau_0(t,x;\nu)=\inf\big\{s\ge 0\,\big|\, X^\nu_s\le 0\big\}\wedge (T-t). 
\end{align}
The class of admissible times $\tau$ reads 
$\cT_t:=\big\{\tau\,|\,\text{$\tau$ is $\F$-stopping time with $\tau\in[0,T-t]$, $\P$-a.s.}\big\}$.

Given a c\`adl\`ag process of bounded variation $\nu=(\nu_s)_{s\ge 0}$, its Jordan decomposition reads $\nu=\nu^+-\nu^-$ for c\`adl\`ag, non-decreasing processes $\nu^\pm=(\nu^\pm_s)_{s\ge 0}$. Jumps of the process $\nu$ are denoted $\Delta \nu_t=\nu_t-\nu_{t-}$ for $t\ge 0$.
The class of admissible controls is a subset of the class of bounded variation processes that reads 
\begin{align}\label{eq:cA}
\qquad\cA_{t,x}\coloneqq \left\{\nu \left|
\begin{array}{l}
\text{$\nu=\nu^+-\nu^-$ where $(\nu_s^+)_{s\ge 0}$ and $(\nu_s^-)_{s\ge 0}$ are $\F$-adapted, real-valued,}\\ [+3pt]
\text{non-decreasing, c\`adl\`ag, with $\P(\nu_{0-}^\pm=0)=1$, $\E[|\nu_{T-t}^\pm|^2]<\infty$,}\\ [+3pt]
\text{such that $X_{\tau_0}^\nu=0$, $\P$-a.s.\ on $\{\tau_0<T-t\}$ and $\nu_s^\pm-\nu_{\tau_0}^\pm=0$ for $s\ge \tau_0$}
\end{array}
\right. \right\}.
\end{align}

For $\nu \in\cA_{t,x}$ the controlled dynamics is given by
\begin{align}\label{eq:prcXcntrll}
X_s^{\nu}=x+\int_0^s \mu(X_u^{\nu})\ud u + \int_0^s \sigma(X_u^{\nu})\ud W_u +\nu_s, \quad 0\leq s\leq T-t,
\end{align}
where $\mu:\R\to\R$ and $\sigma:\R\to [0,\infty)$ are continuous functions. We will make precise assumptions on $\mu$ and $\sigma$ in Assumption \ref{ass:gen1}. Those assumptions guarantee existence of a unique $\F$-adapted solution of \eqref{eq:prcXcntrll} by, e.g., \cite[Thm.\ 2.5.7]{krylov1980controlled}. When necessary, the dynamics $X^\nu$ will be extended from $[0,T-t]$ to $[0,\infty)$ simply by taking $\nu_s\equiv \nu_{T-t}$ for $s\ge T-t$. Finally, the notation $\E_x$ in the definition of $\cJ_{t,x}(\tau,\nu)$ refers to the expectation under the measure $\P_x(\,\cdot\,)=\P(\,\cdot\,|X^{\nu}_{0-}=x)$, where $X^{\nu}_{0-}$ is the state of the dynamics before a possible jump of the control at time zero. 

The {\em lower} and {\em upper} value of the game are defined respectively as 
\begin{align}\label{eq:lowuppvfnc}
\underline{v}(t,x)\coloneqq\adjustlimits\sup_{\tau\in \mathcal{T}_t}\inf_{\nu\in \cA_{t,x}} \cJ_{t,x}(\nu,\tau)\quad\text{and}\quad
\overline{v}(t,x)\coloneqq\adjustlimits\inf_{\nu \in \cA_{t,x}}\sup_{\tau\in \mathcal{T}_t} \cJ_{t,x}(\nu,\tau),
\end{align}
so that $\underline{v}(t,x) \leq \overline{v}(t,x)$. If equality holds then we say that the game admits a value 
\begin{align}\label{eq:valfunc}
v(t,x)\coloneqq \underline{v}(t,x) = \overline{v}(t,x).
\end{align}

\begin{remark}\label{rem:a-abs}
We can replace the domain $[0,T)\times(0,\infty)$ by $[0,T)\times(a,\infty)$ for some $a\in\R$ without altering our analysis. Indeed, in the latter case we replace $\tau_0$ by $\tau_a=\inf\{s\ge 0\,|\, X^\nu_s\le a\}$ and functions $g$ and $h$ must be defined as mappings $[0,T]\times[a,\infty)\to [0,\infty)$. Then, a simple translation of the dynamics yields the exact same setup as described above: taking $Z^\nu_t\coloneqq X^\nu_t-a$ it is immediate to verify that $\tau_a=\inf\{s\ge 0\,|\, X^\nu_s\le a\}=\inf\{s\ge 0\,|\, Z^\nu_s\le 0\}\eqqcolon\bar\tau_0$ and the shifted dynamics reads
$\ud Z_t=\bar\mu(Z_t)\ud t+\bar\sigma(Z_t)\ud W_t+\ud \nu_t$,
with $\bar \mu(z)\coloneqq\mu(z+a)$ and $\bar \sigma(z)\coloneqq\sigma(z+a)$. Thus, redefining also the cost functions $\bar g(t,z)\coloneqq g(t,z+a)$ and $\bar h(t,z)\coloneqq h(t,z+a)$ we immediately fall back into the setting described above with absorption at $[0,T)\times\{0\}$ for the process $(t,Z^\nu_t)_{t\in[0,T]}$.
\end{remark}

The domain of the game's state-dynamics is $\cO$. As in \cite{bovo2022variational} such domain is unbounded but, differently from \cite{bovo2022variational}, the controlled dynamics may leave the domain by crossing the boundary $[0,T]\times\{0\}$. From the point of view of variational inequalities we must now include boundary conditions at $[0,T]\times\{0\}$ which were not required in the approach of \cite{bovo2022variational}. This leads to a much more complicated analysis of the problem. Next we formulate a variational inequality for the current set-up.

The infinitesimal generator of the uncontrolled time-space process $(t,X^{0})$ is defined on sufficiently smooth functions as
\begin{align}\label{eq:infinitesimal}
\partial_t \varphi(t,x)+(\mathcal{L}\varphi)(t,x)\coloneqq\partial_t \varphi(t,x)+\tfrac{\sigma^2(x)}{2}\partial^2_{xx}\varphi(t,x)+ \mu(x)\partial_x\varphi(t,x), 
\end{align}
where $\partial_x\varphi$ and $\partial^2_{xx}\varphi$ denote the first and second order spatial derivatives of the function $\varphi$, respectively, and $\partial_t \varphi$ its time derivative. In particular we will apply the operator to functions from the Sobolev space $W^{1,2;p}_{\ell oc}(\cO)$, i.e., functions with one time derivative and two space derivatives (in the weak sense) in $L^p(\cK)$ for any compact $\cK\subset\cO$. The variational problem that we associate with the value of the game (if it exists) is the following:
\begin{problem}\label{prb:varineq}
Fix $p>3$. Find a function $u\in W^{1,2;p}_{\ell oc}(\cO)\cap C(\overline\cO)$ such that, letting
\begin{align*}
\mathcal{I}^u\coloneqq \big\{(t,x)\in\cO\big|\:|\partial_x u(t,x)|<\bar{\alpha}\big\}\quad\text{and} \quad\mathcal{C}^u\coloneqq \big\{(t,x)\in\cO\big|\:u(t,x)>g(t,x)\big\},
\end{align*}
$u$ satisfies: 
\begin{align}\label{eq:inipde}
\begin{cases}
(\partial_tu +\mathcal{L}u-ru)(t,x)=-h(t,x), &\qquad \text{{for all}}\ (t,x)\in\mathcal{C}^u\cap\mathcal{I}^u; \\
(\partial_tu +\mathcal{L}u-ru)(t,x)\geq -h(t,x), &\qquad \text{for a.e.}\ (t,x)\in\mathcal{C}^u; \\
(\partial_tu +\mathcal{L}u-ru)(t,x)\leq -h(t,x), &\qquad \text{for a.e.}\ (t,x)\in \mathcal{I}^u; \\
u(t,x)\geq g(t,x),&\qquad \text{for all}\ (t,x)\in \overline\cO;\\
|\partial_x u(t,x)|\leq \bar{\alpha}, &\qquad \text{for all}\ (t,x)\in\cO; \\
u(t,0)=g(t,0), &\qquad \text{for all}\ t\in[0,T];\\
u(T,x)=g(T,x), &\qquad \text{for all}\ x\in(0,\infty),
\end{cases}
\end{align}
with $|u(t,x)|\leq c(1+|x|^2)$ for all $(t,x)\in\overline\cO$ and a suitable $c>0$.\hfill$\blacksquare$
\end{problem}
Differently from \cite[Problem 1]{bovo2022variational}, where the domain of the variational problem is $[0,T]\times\R^d$, the sixth line of \eqref{eq:inipde} provides a boundary condition at $[0,T]\times\{0\}$. The condition $|\partial_x u|\leq \bar{\alpha}$ in the fifth line holds for all $(t,x)\in\cO$ because of the embedding $W^{1,2;p}(\cK)\hookrightarrow C^{0,1;\gamma}(\cK)$ for any compact $\cK\subset \R^2$. The embedding holds for $p>3$ and $\gamma\in (0,1-3/p)$ (see \cite[App.\ E]{fleming2012deterministic}), with $C^{0,1;\gamma}$ the space of $\gamma$-H\"older continuous functions with $\gamma$-H\"older continuous spatial derivative (relative to the parabolic distance). Notice that the two sets $\cI^u$ and $\cC^u$ are open in $\cO$.

Next we give assumptions under which we obtain our main result (Theorem \ref{thm:usolvar}). For any open set $\cI\subset\R$ we use the notation $C^{\gamma}_{\ell oc}(\cI)$ for functions $f:\cI\to \R$ which are $\gamma$-H\"older continuous on any compact contained in $\cI$. We use $C^{1;\gamma}_{\ell oc}(\cI)$ for functions from $C^\gamma_{\ell oc}(\cI)$ with $\gamma$-H\"older continuous first derivative on any compact contained in $\cI$. Similarly, we use $C^{j,k;\gamma}_{\ell oc}(\overline\cO)$ for functions $f:\overline\cO\to \R$ which are $\gamma$-H\"older continuous on any compact $\cK\subset\overline\cO$ with $\gamma$-H\"older continuous $j$-th order time derivative and $k$-th order spatial derivative on $\cK$, for $j,k=\N\cup\{0\}$. We use $C^{j,k;\gamma}(\overline\cO)$ when those properties hold on the whole space $\overline\cO$, i.e., we formally set $\cK=\overline\cO$. Finally, we use $C^{j,k;\gamma}_b(\overline\cO)$ for functions in $C^{j,k;\gamma}(\overline\cO)$ which are bounded along with all their derivatives. For an arbitrary set $B\subset\R$, when a family of functions $(f_\beta)_{\beta\in B}\in W^{1,2;p}_{\ell oc}(\cO)$ is such that for any compact $\cK\subset\cO$ there is a constant $c=c(\cK)>0$ for which $\sup_{\beta\in B}\|f_\beta\|_{W^{1,2;p}(\cK)}\le c$, we simply say that $(f_\beta)_{\beta\in B}$ is bounded in $W^{1,2;p}_{\ell oc}(\cO)$ (and analogously for $C^{j,k:\gamma}_{\ell oc}(\cO)$, $C^{1;\gamma}_{\ell oc}(\cI)$, etc.).

\begin{assumption}[Controlled SDE]\label{ass:gen1}
The coefficients $\mu:\R\to\R$, $\sigma:\R\to[0,\infty)$ are continuous on $\R$, with $\sigma(x)=\sigma(0)$ and $\mu(x)=\mu(0)$ for $x<0$. Moreover
\begin{itemize}
\item[(i)] $\mu \in C^1((0,\infty))$ and Lipschitz on $[0,\infty)$ with constant $D_1$; 
\item[(ii)] $\sigma\in C^1((0,\infty))$ and $|\sigma(x)-\sigma(y)|\le D_\gamma|x-y|^\gamma$ for $x,y\in[0,\infty)$ with $\gamma> 1/2$;
\item[(iii)] $\sigma(x)>0$ for $x\in(0,\infty)$ and
\begin{align}\label{eq:lgc}
|\mu(x)|+\sigma(x)\leq D_1(1+|x|), \text{ for all }x\in[0,\infty).
\end{align}
\end{itemize}
\end{assumption}
\begin{assumption}[Functions $g$ and $h$]\label{ass:gen2}
For the functions $g,h:\overline\cO\to[0,\infty)$, the following hold:
\begin{enumerate}
\item[(i)] $g \in C^{1,2;\gamma}_{\ell oc}(\overline\cO)$ and $h\in C^{0,1;\gamma}_{\ell oc}(\overline\cO)$ for some $\gamma\in(0,1)$, and there is $K_0>0$ such that
\[
g(t,x)-g(s,x)\le K_0(t-s)\, \text{ and }\, h(t,x)-h(s,x)\le K_0 (t-s),\,\text{ for }0\le s\le t\le T,\, x\in[0,\infty);
\]
\item[(ii)] $g$ is such that
$|\partial_x g(t,x)|\leq \bar{\alpha}$, for all $(t,x)\in\overline\cO$;
\item[(iii)] For $(t,x)\in\overline\cO$,
$0\le h(t,x) \leq K_1(1\!+\!|x|^2)$ and $0\le g(t,x) \leq K_1(1\!+\!|x|)$,
for some $K_1\!\in\!(0,\infty)$.
\end{enumerate}
\medskip

Letting 
$\Theta(t,x)\coloneqq h(t,x)+\partial_t g(t,x)+(\cL g)(t,x)-rg(t,x)$, for $(t,x)\in\overline\cO$,
we further assume:
\begin{enumerate}
\item[(iv)] The function $\Theta$ is locally Lipschitz on $\cO$ and there is $K_2\in(0,\infty)$ such that
\begin{align}\label{defn:infTheta}
\Theta(t,x)\ge -K_2,\quad\text{for all $(t,x)\in\overline\cO$};
\end{align}
\item[(v)] We have $\liminf_{x\to\infty}\Theta(T,x)>0$ with $\Theta(T,0)<0$.
\end{enumerate}
\end{assumption}
For future reference we also introduce 
\begin{align}\label{eq:OT}
\overline \Theta\coloneqq\inf\big\{x\in(0,\infty)\,\big|\,\Theta(T,x)=0\big\} 
\end{align}
and we notice that $\overline\Theta\in(0,\infty)$ thanks to (v) in the assumption above.

Next we state our main result.
\begin{theorem}\label{thm:usolvar}
Let Assumptions \ref{ass:gen1} and \ref{ass:gen2} hold. The game described above admits a value $v$ (i.e., \eqref{eq:valfunc} holds) and $v$ is the maximal solution to Problem \ref{prb:varineq}, if either {\bf A1} or {\bf A2} below hold:
\begin{itemize}
\item[{\bf A.1}]

\begin{itemize} 
\item[ (i)] There is $\kappa>0$ such that $|\mu(x)|\le \kappa^{-1}$ and $\kappa^{-1}\ge \sigma(x)\ge \kappa$; 
\item[(ii)] $h\in C^{0,1;\gamma}_b(\overline \cO)$ and Lipschitz, and $g\in C^{1,2;\gamma}_b(\overline \cO)$.
\end{itemize}
\item[{\bf A.2}] The maps $x\mapsto g(t,x)$ and $x\mapsto h(t,x)$ are non-decreasing for all $t\in[0,T]$.
\end{itemize}
Moreover, if either {\bf A.1} holds or $x\mapsto \sigma(x)$ is Lipschitz continuous and {\bf A.2} holds, we have
\begin{align*}
v(t,x)=\inf_{\nu \in\cA_{t,x}}\cJ_{t,x}\big(\nu,(\tau_*\wedge\sigma_*)(\nu)\big), \quad(t,x)\in\overline\cO,
\end{align*}
where
\begin{align}\label{eq:taustar}
\begin{aligned}
\sigma_*(\nu)&\coloneqq \inf\big\{s\geq 0\,\big|\, v(t+s,X_{s-}^{\nu})=g(t+s,X_{s-}^{\nu})\big\}\wedge(T-t),\\
\tau_*(\nu)&\coloneqq \inf\big\{s\geq 0\,\big|\, v(t+s,X_s^{\nu})=g(t+s,X_s^{\nu})\big\}\wedge(T-t).
\end{aligned}
\end{align}
\end{theorem}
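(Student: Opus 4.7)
The plan is a penalisation scheme reinforced with new probabilistic estimates designed for the absorbing boundary $\{x=0\}$. First, on a bounded truncated domain $\cO^N=[0,T)\times(0,N)$ and with smooth approximations $g^m,h^m,\mu^m,\sigma^m$ of the data, I would construct a classical solution $u^{\eps,m,N}$ of a semi-linear parabolic PDE of the schematic form
\[
\partial_t u+\cL^m u-r u+h^m+\tfrac{1}{\eps}(g^m-u)^+-\Phi_\eps(\partial_x u)=0\quad\text{on }\cO^N,
\]
with Dirichlet data $g^m$ on $\{t=T\}\cup\{x=0\}$ and a natural condition at $x=N$. Here $\tfrac{1}{\eps}(g^m-u)^+$ enforces the obstacle $u\ge g$, whereas $\Phi_\eps(\partial_x u)$ enforces the gradient constraint $|\partial_x u|\le\bar\alpha$. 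The two penalty terms carry opposite signs, which is the structural obstruction preventing direct use of classical PDE techniques and was highlighted in the introduction.

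The central technical hurdle is to produce a lower bound on $\partial_x u^{\eps,m,N}$ that is uniform in $(\eps,m,N)$ and valid up to the lateral boundary $\{x=0\}$. Interior estimates can be adapted from \cite{bovo2022variational}, but they degenerate near this boundary. I would instead bound the Hamiltonian of the associated differential game from below by that of an explicit linear-quadratic control problem, so that a lower bound on its value $w$ transfers to $\partial_x u^{\eps,m,N}$. The Fleming/Cole--Hopf logarithmic transform $\phi=\e^{-\lambda w}$ converts the HJB equation for $w$ into a linear parabolic PDE for $\phi$, amenable to maximum-principle arguments; since compatibility at the corner $\{T\}\times\{0\}$ fails, one must first replace $\{x=0\}$ by an approximating smooth time-dependent curve, prove bounds on $\phi$ that are insensitive to the approximation, and then pass to the limit. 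This two-layer procedure is exactly Lemmas~\ref{lem:fntbndr} and~\ref{lem:lbw}, and I expect it to be the hardest part of the entire proof.

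With the gradient bounds in hand, interior $W^{1,2;p}_{\ell oc}$ estimates on $u^{\eps,m,N}$ follow along the lines of~\cite{bovo2022variational}; however, extracting a limit up to $\{x=0\}$ further requires equi-continuity of the family near this boundary, uniformly in $(\eps,m,N)$. I would obtain it through a change of probability measure absorbing the gradient penalty into the drift, followed by a time-change normalising the diffusion, thereby reducing the modulus-of-continuity estimate to a standard Brownian first-exit-time bound (Theorem~\ref{thm:equic_m} and Proposition~\ref{prop:equiuepsmuN}). Arzelà--Ascoli combined with weak compactness in $W^{1,2;p}_{\ell oc}$ then yield a subsequential limit $u\in W^{1,2;p}_{\ell oc}(\cO)\cap C(\overline\cO)$ as $\eps\downarrow 0$, $m\to\infty$ and $N\to\infty$. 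A monotone/weak-convergence inspection of the two penalty terms shows that $u$ satisfies every line of~\eqref{eq:inipde}, with the Dirichlet condition at $\{x=0\}$ inherited through the equi-continuity estimate.

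It remains to identify $u$ with the value of the game and to prove optimality of $\sigma_*\wedge\tau_*$. For an arbitrary $\nu\in\cA_{t,x}$, I would apply an It\^o--Tanaka--Meyer expansion to $s\mapsto\e^{-rs}u(t+s,X^\nu_s)$ stopped at $\rho\coloneqq\sigma_*(\nu)\wedge\tau_*(\nu)\wedge\tau_0$: on $\cC^u\cap\cI^u$ the first line of~\eqref{eq:inipde} gives equality, the jump and continuous parts of the $\ud\nu$ integral are controlled by $|\partial_x u|\le\bar\alpha$ combined with the third line of~\eqref{eq:inipde}, and stopping on $\{u=g\}$ converts the terminal term into the exact payoff. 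This delivers $u(t,x)\le \cJ_{t,x}(\nu,\sigma_*\wedge\tau_*)$ for every $\nu$, hence $u\le\underline v$. The reverse inequality $u\ge\overline v$ is obtained from an $\eps$-optimal feedback control extracted from the gradient-penalty term of $u^{\eps,m,N}$, and then passing to the limit using the equi-continuity estimates. Combined, these yield $u=v=\underline v=\overline v$ together with the announced representation. Maximality follows by running the same verification argument with an arbitrary competing $W^{1,2;p}_{\ell oc}$ solution $\tilde u$ in place of $u$ and taking the infimum over $\nu$.
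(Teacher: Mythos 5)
Your high-level architecture coincides with the paper's: a penalised semi-linear PDE on a truncated domain, a Fleming (Cole--Hopf) logarithmic transform preceded by a deformation of the flat boundary $\{x=0\}$ into a smooth time-dependent curve to restore parabolic compatibility (Lemmas~\ref{lem:fntbndr} and~\ref{lem:lbw}), measure change plus time change for boundary equi-continuity, compactness to pass to the limit, and It\^o/verification to identify the game value and prove maximality. Two genuine gaps, however, would block the proposal as written.

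First, you merge the obstacle penalty and the gradient penalty into a single parameter $\eps$ and announce a single limit $\eps\downarrow 0$. The paper keeps them separate ($\delta$ for the obstacle, $\eps$ for the gradient constraint) because the equi-continuity estimate obtained from the measure change (Theorem~\ref{thm:equic_m}) has a constant $c_0=c_0(p,N,\kappa,\eps)$ that blows up as $\eps\to0$: the stopping level $M_6\sim\eps^{-2}$ feeds into $C_2$ and hence into $\e^{C_2 p}$. Consequently one may send $\delta\to0$ and $m\to\infty$ with $\eps$ fixed, obtaining the value $u^{N,\eps}_\kappa$ of a classical-controller game, but one cannot send $\eps\to0$ with that modulus. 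A fresh equi-continuity estimate uniform in $\eps$ (Proposition~\ref{prop:equiuepsmuN}) is then required; its proof relies on the specific structure of the $u^{N,\eps}_\kappa$-game, where using the same control $\nu$ for both trajectories makes the Hamiltonian terms cancel and a Lipschitz bound becomes available. A single-parameter scheme that tries to send everything to zero at once will not close.

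Second, the statement splits into cases {\bf A.1} and {\bf A.2}, and your proposal does not address how to remove boundedness of data ($N\to\infty$) and non-degeneracy of $\sigma$ ($\kappa\to0$) under {\bf A.2}. That step is not a mere compactness argument: it rests on Lemma~\ref{lem:nu^-} and Corollary~\ref{cor:nu^-} (monotone costs make it sub-optimal to use increasing controls), which is what makes the $L^1$ stability estimate of Lemma~\ref{lem:convkappa} and the uniform growth bound in the $N\to\infty$ limit go through. Without the restriction to non-increasing controls (and the corresponding comparison $0\le X^\nu\le X^0$), the moment estimates you would need to pass to the limit with degenerate $\sigma$ and unbounded $g,h$ do not close. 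Under {\bf A.1} these approximations are superfluous, which is why the theorem distinguishes the two cases.

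Finally, a smaller point: the verification step producing the optimal stopping rule when the control has jumps needs It\^o's formula with explicit treatment of $X^\nu_{s-}$ vs.\ $X^\nu_s$ and both stopping rules $\sigma_*$ and $\tau_*$ (cf.\ \eqref{eq:sigma*opt}--\eqref{eq:tau*opt}); your single phrase about ``It\^o--Tanaka--Meyer'' and ``jump and continuous parts'' gestures at this but does not show the two-sided estimate that turns $u(t+\rho, X^\nu_{\rho-})$ into $g(t+\rho,X^\nu_\rho)+\bar\alpha\Delta\nu_\rho$, which is where the stopping on $\{u=g\}$ evaluated at $X^\nu_{s-}$ matters.
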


Without loss of generality it is possible to restrict the class of admissible controls to those bounded in expectation. The proof is the same as in \cite[Lem.\ 3.1]{bovo2023} and it is omitted.
\begin{lemma}[{\cite[Lem.\ 3.1]{bovo2023}}]\label{lem:Aopt}
Letting $\cA_{t,x}^{opt}\coloneqq \big\{\nu\in\cA_{t,x}|\E_x\big[|\nu|_{T-t}\big]\le K_5(1+x^2)\big\}$, for suitable $K_5>0$, we have
$\underline{v}(t,x)=\sup_{\tau\in\cT_t}\inf_{\nu\cA_{t,x}^{opt}}\cJ_{t,x}(\nu,\tau)$
for any $(t,x)\in\overline\cO$.
\end{lemma}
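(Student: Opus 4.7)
My plan is to prove the stronger claim that, for every fixed $\tau\in\cT_t$, one has $\inf_{\nu\in\cA_{t,x}}\cJ_{t,x}(\nu,\tau)=\inf_{\nu\in\cA^{opt}_{t,x}}\cJ_{t,x}(\nu,\tau)$; taking $\sup_{\tau\in\cT_t}$ on both sides then yields the lemma. One inequality is trivial from $\cA^{opt}_{t,x}\subset\cA_{t,x}$. For the other, I would set up two a priori estimates that do not depend on $\tau$. First, standard second-moment bounds for the uncontrolled SDE (which follow from the linear growth in Assumption \ref{ass:gen1}(iii)) combined with the growth conditions in Assumption \ref{ass:gen2}(iii) give
\begin{align*}
\cJ_{t,x}(0,\tau)\le M(1+x^2)\qquad\text{for all }\tau\in\cT_t,
\end{align*}
for some $M>0$ depending on $T,r,K_1,D_1$ but not on $\tau$. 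Second, since $g,h\ge 0$,
\begin{align*}
\cJ_{t,x}(\nu,\tau)\ge \bar\alpha\,\e^{-rT}\,\E_x\big[|\nu|_{\tau\wedge\tau_0}\big]\qquad\text{for every }\nu\in\cA_{t,x}.
\end{align*}
Choosing $K_5>M/(\bar\alpha\,\e^{-rT})$, the two inequalities show that if $\E_x[|\nu|_{\tau\wedge\tau_0}]>K_5(1+x^2)$ then $\cJ_{t,x}(\nu,\tau)>M(1+x^2)\ge\cJ_{t,x}(0,\tau)$, so such a $\nu$ cannot improve on the trivial control $0\in\cA^{opt}_{t,x}$.

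It remains to convert a bound on $|\nu|_{\tau\wedge\tau_0}$ into a bound on $|\nu|_{T-t}$, which is what enters the definition of $\cA^{opt}_{t,x}$. I would do this by truncation. Given $\nu\in\cA_{t,x}$, set $\varsigma_\nu\coloneqq\tau\wedge\tau_0(\nu)$, which is an $\F$-stopping time, and define $\tilde\nu_s\coloneqq\nu_{s\wedge\varsigma_\nu}$. Then $\tilde\nu$ is $\F$-adapted, c\`adl\`ag, of bounded variation, with $|\tilde\nu|_{T-t}=|\nu|_{\varsigma_\nu}\le|\nu|_{T-t}$, hence square-integrable. By pathwise uniqueness in \eqref{eq:prcXcntrll}, $X^{\tilde\nu}=X^{\nu}$ on $[0,\varsigma_\nu]$, so the equality $\cJ_{t,x}(\tilde\nu,\tau)=\cJ_{t,x}(\nu,\tau)$ will follow as soon as we show $\tau\wedge\tau_0(\tilde\nu)=\tau\wedge\tau_0(\nu)$. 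This comes from a short case analysis: on $\{\tau_0(\nu)\le\tau\}$ both absorption times coincide because $\nu$ is already constant after $\tau_0(\nu)$ by \eqref{eq:cA}; on $\{\tau<\tau_0(\nu)\}$ the process $X^{\tilde\nu}$ is frozen under the uncontrolled dynamics starting from $X^\nu_\tau>0$, hence $\tau_0(\tilde\nu)\ge\tau$ and $\tau\wedge\tau_0(\tilde\nu)=\tau=\tau\wedge\tau_0(\nu)$.

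The main technical point to check carefully is admissibility of $\tilde\nu$, specifically the boundary condition $X^{\tilde\nu}_{\tau_0(\tilde\nu)}=0$ on $\{\tau_0(\tilde\nu)<T-t\}$ from \eqref{eq:cA}. On $\{\tau_0(\nu)\le\tau\}$ this is inherited from $\nu$; on $\{\tau<\tau_0(\nu)\}$, the truncated $\tilde\nu$ is frozen after $\tau$, so $X^{\tilde\nu}$ evolves continuously thereafter and necessarily hits $0$ continuously (when it does), giving $X^{\tilde\nu}_{\tau_0(\tilde\nu)}=0$. Putting everything together: for each fixed $\tau$ and $\nu\in\cA_{t,x}$, either $\E_x[|\nu|_{\tau\wedge\tau_0}]\le K_5(1+x^2)$, in which case $\tilde\nu\in\cA^{opt}_{t,x}$ achieves the same payoff as $\nu$; or $\E_x[|\nu|_{\tau\wedge\tau_0}]>K_5(1+x^2)$ and $\nu$ is strictly worse than $0\in\cA^{opt}_{t,x}$. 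In either scenario $\inf_{\cA^{opt}_{t,x}}\cJ_{t,x}(\cdot,\tau)\le\inf_{\cA_{t,x}}\cJ_{t,x}(\cdot,\tau)$, and taking $\sup_{\tau\in\cT_t}$ concludes.
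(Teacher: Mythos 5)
Your argument is correct. The paper omits the proof, deferring to \cite[Lem.\ 3.1]{bovo2023}, but the strategy you use — bound $\cJ_{t,x}(0,\tau)$ quadratically in $x$, use $g,h\ge 0$ to get the lower bound $\cJ_{t,x}(\nu,\tau)\ge\bar\alpha\e^{-rT}\E_x[|\nu|_{\tau\wedge\tau_0}]$, pick $K_5$ so that expensive controls are dominated by the zero control — is exactly the standard mechanism behind such localisation lemmas. The one genuinely new ingredient in your write-up is the truncation $\tilde\nu_s=\nu_{s\wedge\varsigma_\nu}$ with $\varsigma_\nu=\tau\wedge\tau_0(\nu)$, which is needed here precisely because the payoff only ``charges'' the control up to $\tau\wedge\tau_0$ whereas $\cA^{opt}_{t,x}$ constrains $\E_x[|\nu|_{T-t}]$; in the unbounded-domain version of the lemma (no $\tau_0$) the argument is slightly simpler. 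Your verification that $\tilde\nu$ stays admissible — in particular that $X^{\tilde\nu}_{\tau_0(\tilde\nu)}=0$ on $\{\tau<\tau_0(\nu)\}$ because the post-$\tau$ dynamics is the continuous uncontrolled diffusion, and that $\tau\wedge\tau_0(\tilde\nu)=\tau\wedge\tau_0(\nu)$ so the payoff is unchanged — is the right thing to check and you handle it cleanly.
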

Notice that the bound is quadratic in $x$ because of quadratic growth of $h$.

\subsection{A summary of the proof of Theorem \ref{thm:usolvar}}\label{sec:summary}

In order to convey the main ideas in the proof of Theorem \ref{thm:usolvar} we provide an outline of the main steps and arguments. Several layers of approximation for the game are necessary and we review their need and how they link to one another.

In order to use tools developed in \cite{bovo2022variational} we must work with a uniformly non-degenerate controlled diffusion but Assumption \ref{ass:gen1} allows $\sigma(0)=0$. Then, the first layer of approximation introduces new diffusion coefficients $\sigma_\kappa$ that are bounded and separated from zero. At no additional cost, we also replace $\mu$ with a bounded drift $\mu_\kappa$ which will then simplify some equi-continuity estimates for the solution of the penalised problem. In particular, the ratio between $\mu_\kappa$ and $\sigma_\kappa$ is a key ingredient in the estimates using a measure-change and time-change in the proof of Theorem \ref{thm:equic_m}. The same equi-continuity estimates also necessitate of bounded cost functions $g^N$ and $h^N$, which approximate the original functions $g$ and $h$. However, when we approximate $g$ with $g^N$ we need that the latter continues to satisfy condition (ii) in Assumption \ref{ass:gen2}. Therefore, we need to adjust also the cost $\bar \alpha$ and replace it with a suitable $\bar \alpha^N$.

The approach proposed in \cite{bovo2022variational} relies initially on the solution of penalised problems on a sequence of cylinders $[0,T]\times B_m(0)$, where $B_m(0)$ is the ball in $\R^d$ of radius $m$ and centred in zero. Since in the limit as $m\to\infty$ the domains diverge to $[0,T]\times\R^d$, the conditions on the boundary of $B_m(0)$ can be chosen arbitrarily in the penalised problems. In particular, choosing {\em zero} boundary conditions is crucial to obtain a gradient bound for the solution of the penalised problem (cf.\ \cite[Lem.\ 3]{bovo2022variational}). In our current setting, again we want to solve penalised problems on bounded domains $[0,T]\times[0,m]$, but we cannot choose arbitrarily the boundary condition at $[0,T)\times\{0\}$. Indeed, the latter must be set to $g(t,x)$. Although $g$ is smooth, it is not equal to zero in general. This introduces two difficulties: first, the compatibility conditions for parabolic Cauchy-Dirichlet problems fail (cf.\ \eqref{eq:comp} and \cite[Thm.\ 3.7]{friedman2008partial}) and, second, the gradient bounds from \cite{bovo2022variational} fail. To overcome the first issue we deform the boundary $[0,T)\times\{0\}$ by introducing a smooth, time-dependent, lower boundary $\zeta^\kappa_m(t)$, which depends on the approximation of the coefficients $(\mu_\kappa,\sigma_\kappa)$ and on the size of the domain $m$.

Once we have performed the above approximations the key steps are summarised below. We denote $u^{N,\eps,\delta}_{\kappa,m}$ the solution of the penalised problem, where $\delta$ and $\eps$ are the penalisation parameters.
\begin{itemize}
\item[(a)] In Lemma \ref{lem:fntbndr} we obtain the new gradient bound for $u^{N,\eps,\delta}_{\kappa,m}$. This requires a separate technical lemma (Lemma \ref{lem:lbw}). In those two lemmas we combine probabilistic estimates with the Fleming logarithmic transform, as detailed in the Introduction.
\item[(b)] For any compact $\cK\subset[0,T)\times(0,\infty)$ we obtain a bound on the $W^{1,2;p}(\cK)$-norm of $u^{N,\eps,\delta}_{\kappa,m}$, uniformly with respect to $N,\kappa, \eps, \delta,m$ but depending on $\cK$. For that we borrow results from \cite{bovo2022variational}. This bound is not enough to pass to the limit in the penalised problems because we do not control the regularity of $u^{N,\eps,\delta}_{\kappa,m}$ near $[0,T)\times\{0\}$.
\item[(c)] Since $\sigma(0)=0$ is allowed by our assumptions, ultimately we cannot expect more than continuity of the value of the game at $[0,T]\times\{0\}$. Theorem \ref{thm:equic_m} yields equi-continuity of $u^{N,\eps,\delta}_{\kappa,m}$, uniformly in $\delta$ and $m$. The proof is lengthy, because there are several terms that need to be estimated and we need to keep track of the dependence of our estimates on the parameters $N,\kappa, \eps, \delta,m$. The paper contains full details as it may otherwise prove challenging to reproduce the correct bounds.
\item[(d)] Thanks to (b) and (c), we let $m\to\infty$ and $\delta\to 0$ and obtain that the resulting function $u^{N,\eps}_\kappa$ is the value of a classical-controller vs.\ stopper game (Lemma \ref{lem:uepsmuN}). For $u^{N,\eps}_\kappa$ we obtain new equi-continuity estimates, uniformly in $\eps$ (Proposition \ref{prop:equiuepsmuN}). The new equi-continuity and the bound from (b) allow us to let also $\eps\to 0$. The resulting function $u^{N}_\kappa$ is the value of our original game but with bounded payoffs $g^N$, $h^N$ and regularised coefficients $(\mu_\kappa,\sigma_\kappa)$ (Lemma \ref{lem:uNkappagame}). 
\item[(e)] In order to let also $\kappa\to 0$ we need slightly non-standard stability estimates in $L^1$ for the controlled dynamics (Lemma \ref{lem:convkappa}). Then we obtain the value function $u^N$ of our original game but with bounded payoffs (Proposition \ref{prop:uNkappato0}). Finally, we let $N\to \infty$ and conclude the proof of our Theorem \ref{thm:usolvar}. It is worth noticing that in this last group of estimates, we use that in our game it is sub-optimal to select controls such that $\nu^+_{\tau_0} > 0$ with positive probability (cf.\ Lemma \ref{lem:nu^-} and Corollary \ref{cor:nu^-}). 
\end{itemize}

\section{A penalised problem on bounded domain}\label{sec:penalised-a}

In this section we introduce a family of stochastic games on bounded domains and with classes of admissible controls different from the original ones. Such games are an approximation of the game with payoff $\cJ_{t,x}$ from the previous section and we study them via penalised PDEs. 

\subsection{An approximation of the game's payoff}\label{sec:approx}
We start with an approximation of the payoff functions ruled by three parameters: $N$, $\kappa$ and $m$. The first one is used to make the payoff functions and their derivatives bounded; the second one is used to obtain a SDE with diffusion coefficient strictly separated from zero and bounded coefficients; 
the last one refers to a localisation of the problem on bounded domains. Given a function $f:\overline\cO\to [0,\infty)$ and $k\in\N$ we introduce the set $A^f_{k}\coloneqq\{f\le k\}\cap\{0\le x\le k\}$, where $\{f\le c\}\coloneqq\{(t,x)\in\overline\cO:f(t,x)\le c\}$ for any $c\in [0,\infty)$. 

Let $(g^N)_{N\in\N}\!\subset\! C^{1,2;\gamma}_b(\overline\cO)$ and $(h^N)_{N\in\N}\!\subset\! C^{0,1;\gamma}_b(\overline\cO)$ be such that, for $N\!\in\!\N$, $0\!\le\! g^N\!\le\! g$, $0\!\le\! h^N\!\le\! h$,
\begin{align}\label{eq:thetaN}
\begin{aligned}
&0\le g^N(t,x)+h^N(t,x)\leq N, &(t,x)\in \overline\cO;\\
&g^N(t,x)\mathds{1}_{A^g_{N-1}}(t,x)=g(t,x)\mathds{1}_{A^g_{N-1}}(t,x),& (t,x)\in \overline\cO;\\ 
&h^N(t,x)\mathds{1}_{A^h_{N-1}}(t,x)=h(t,x)\mathds{1}_{A^h_{N-1}}(t,x),& (t,x)\in \overline\cO;\\
&|h^N(t,x)-h^N(s,y)|\le L_N(|x-y|+|t-s|),\qquad & t,s\in [0,T]\text{ and } x,y\in [0,\infty);\\
&|g^N(t,x)-g^N(s,y)|\le \bar{\alpha}^{N}(|x-y|+|t-s|),& t,s\in [0,T]\text{ and } x,y\in [0,\infty),
\end{aligned}
\end{align}
where $L_N$ is a suitable constant and $(\bar{\alpha}^{N})_{N\in\N}\subset \R_+$ is such that $\bar{\alpha}^{N}\downarrow \bar{\alpha}$ as $N\to\infty$.

For $\kappa\in(0,1)$, we introduce $\sigma_\kappa,\mu_\kappa\in C([0,\infty))\cap C^1_{\ell oc}((0,\infty))$ defined so that $|\mu_\kappa(x)|\le \kappa^{-1}$ and $\kappa^{-1}\ge \sigma_\kappa(x)\ge \kappa$ for $x\in[0,\infty)$. We also assume $\sigma_\kappa\to \sigma$ and $\mu_\kappa\to \mu$ uniformly on compact subsets of $[0,\infty)$ as $\kappa\to 0$. Then, with no loss of generality $\mu_\kappa$ and $\sigma_\kappa$ continue to enjoy linear growth with a constant that can be chosen equal to $D_1$ (as in \eqref{eq:lgc}) independently of $\kappa$.

Under {\bf A.1} in Theorem \ref{thm:usolvar} these approximations are superfluous. Under {\bf A.2} instead we take non-decreasing $x\mapsto g(t,x)$ and $x\mapsto h(t,x)$ and so we assume $x\mapsto g^N(t,x)$ and $x\mapsto h^N(t,x)$ non-decreasing as well (cf.\ Section \ref{sec:valueunb}).

Let $\cL_\kappa$ be the analogue of $\cL$ in \eqref{eq:infinitesimal} but with $(\mu,\sigma)$ therein replaced by $(\mu_\kappa,\sigma_\kappa)$. We define 
\begin{align*}
\Theta^{N}_{\kappa}(t,x)\coloneqq h^N(t,x)+\partial_t g^N(t,x)+(\cL_\kappa g^N)(t,x)-rg^N(t,x).
\end{align*}
Since $g^N\in C^\infty_b(\overline\cO)$ and $\mu_\kappa,\sigma_\kappa,h^N$ are bounded, there is a constant $C_\Theta=C_\Theta(N,\kappa)>0$ such that 
\begin{align}\label{eq:ThetaNkmbnd}
|\Theta^{N}_{\kappa}(t,x)|\le C_\Theta(N,\kappa),\quad\text{for all $(t,x)\in\overline\cO$.}
\end{align}
Recalling $\overline\Theta$ from \eqref{eq:OT} we set 
\begin{align}\label{eq:overtheta}
\overline{\Theta}^{N}_{\kappa}\coloneqq \inf\big\{x\in[0,\infty)\,\big|\, \Theta^{N}_{\kappa}(T,x)=0\big\}.
\end{align}
For large $N$ we can assume with no loss of generality that $\{T\}\times[0,\overline\Theta+1] \in A^g_{N-1}\cap A^h_{N-1}$. Therefore $|\Theta^{N}_{\kappa}(T,x)-\Theta(T,x)|\le |\sigma_\kappa^2(x)-\sigma^2(x)||\partial_{xx}g(T,x)|+|\mu_\kappa(x)-\mu(x)||\partial_{x}g(T,x)|$ for $x\in[0,\overline\Theta+1]$. Since $\Theta(T,0)<0$, then there must be $\kappa_0>0$ such that $\overline\Theta^{N}_{\kappa}\in(0,\overline \Theta+1)$ for all $\kappa\in(0,\kappa_0)$.

Next we localise the problem. We introduce functions $(\zeta^\kappa_m)_{m\in\N}\subset C^\infty([0,T])$ and $(\xi_{m})_{m\in\N}\subset C^\infty([0,\infty))$ such that for each $m\in\N$:
\begin{itemize}
\item[(i)] $\zeta^\kappa_m$ is non-decreasing with $\zeta^\kappa_m(s)=0$ for $s\in[0,T-\tfrac{1}{m}]$ and $\zeta^\kappa_m(T)=\overline{\Theta}^{N}_{\kappa}$;
\item[(ii)] $0\leq \xi_{m}\leq 1$ on $[0,\infty)$, with $\xi_m=1$ on $[0,m]$ and $\xi_m=0$ on $[m+1,\infty)$;
\item[(iii)] There is $C_0>0$ independent of $m\in\N$ such that
$|\dot{\xi}_m|^2\leq C_0\xi_m$ and $|\ddot \xi_m|\le C_0$ on $\R$.
\end{itemize}
Notice that $\xi_m$ can be constructed as in \cite[App.\ A.1]{bovo2022variational}.
For $m,N\in\N$ and $(t,x)\in\overline\cO$ we consider payoffs
$g_m^N(t,x)\coloneqq \xi_{m-1}(x)g^N(t,x)$ and $h_m^N(t,x)\coloneqq \xi_{m-1}(x)h^N(t,x)$,
and we introduce 
\begin{align}\label{eq:ThetaNkm}
\Theta^{N}_{\kappa,m}(t,x)= h^N_m(t,x)+\partial_t g^N_m(t,x)+(\cL_\kappa g^N_m)(t,x)-rg^N_m(t,x).
\end{align}
An easy calculation and properties of $\xi_m$ allow to show $|\Theta^{N}_{\kappa,m}(t,x)|\le C_\Theta(N,\kappa)$, where the constant may be taken as in \eqref{eq:ThetaNkmbnd} with no loss of generality.

We replace the constant cost of control $\bar{\alpha}^N$ with a state-dependent one, defined as
\begin{align*}
\bar{\alpha}^N_m(t,x)=\Big((\bar{\alpha}^{N})^2+N^2|\dot{\xi}_m(x)|^2+2g^N(t,x)\xi_{m}(x)\dot{\xi}_m(x)\partial_x g^N(t,x)\Big)^{1/2}.
\end{align*}
By regularity of $g^N$ and $\xi_m$, we have that $(\bar{\alpha}^N_m(t,x))^2$ is Lipschitz in $x$ and $\gamma/2$-H\"older in $t$ (uniformly in $m$) and we can assume with no loss of generality that the Lipschitz/H\"older constant is $L_N$ as in \eqref{eq:thetaN}. Clearly $\bar\alpha^N_m(t,x)=\bar \alpha^N$ for $(t,x)\in[0,T]\times([0,m]\cup[m+1,\infty))$. The expression under square root is positive, as shown in \cite[Eq.\ below (17)]{bovo2022variational}, and 
\begin{align}\label{eq:alphaNMbnd}
|\partial_x g_m^N(t,x)|\le \bar{\alpha}^N_m(t,x)\le \Big((\bar{\alpha}^{N})^2+N^2C_0+2N\sqrt{C_0}\alpha^N_0\Big)^{1/2}\eqqcolon \Lambda_N,
\end{align} 
(cf.\ \cite[Eq.\ below (17)]{bovo2022variational}), where the second inequality uses $|\dot \xi|^2\le C_0\xi$.

We will consider domains 
$\cO^\kappa_m\coloneqq\{(t,x)\in\cO:\zeta^\kappa_m(t)<x<m\}$
with parabolic boundary 
\begin{align*}
\partial_P\cO^\kappa_m= \big\{(t,\zeta^\kappa_m(t)):t\in[0,T)\big\}\cup\big([0,T)\times\{m\}\big)\cup\big( \{T\}\times [\overline{\Theta}^{N}_{\kappa},m]\big).
\end{align*}
With no loss of generality we assume $N$, $1/\kappa$ and $m$ sufficiently large so that $\overline\Theta^{N}_{\kappa}=\overline\Theta^{N}_{\kappa,m}<m$, where $\overline\Theta^{N}_{\kappa,m}$ is the analogue of \eqref{eq:overtheta} but with $\Theta^{N}_{\kappa}(T,x)$ replaced by $\Theta^{N}_{\kappa,m}(T,x)$.

We are now ready to introduce a game which approximates in a suitable sense the original one. Let us fix $(\eps,\delta)\in(0,1)^2$. For $(t,x,y)\in\overline\cO\times \R$ we define the Hamiltonian 
\begin{align}\label{eq:hmltn}
H_{m}^{N,\eps}(t,x,y)\coloneqq \sup_{p\in\mathbb{R}}\big\{y p-\psi_\eps\big(|p|^2- (\bar{\alpha}^N_m(t,x))^2\big)\big\},
\end{align} 
where $\psi_\eps\in C^2(\R)$ is a non-negative, convex function such that $\psi_\eps(y)=0$ for $y\leq0$, $\psi_\eps(y)>0$ for $y>0$, $\psi_\eps'\geq0$ and $\psi_\eps(y)=\frac{y-\eps}{\eps}$ for $y\geq 2\eps$.

We consider control classes $\cA^\circ_{t,x}\coloneqq\{\nu\in\cA_{t,x}\,|\,t\mapsto \nu_t\text{ absolutely continuous}\}$ and
\[
\mathcal{T}^\delta_t\coloneqq \{w\,|\,\text{$(w_s)_{s\in[0,\infty)}$ progressively measurable with $0\leq w_s\leq\tfrac{1}{\delta}$, $\forall s\in[0,T-t]$, $\P$-a.s.}\}.
\]
For any $\nu\in\cA^\circ_{t,x}$, let
\begin{align}\label{eq:SDEcntrll}
 X_s^{\nu,\kappa}=x+\int_0^s \big[\mu_\kappa(X_u^{\nu,\kappa})+ \dot{\nu}_u\big]\ud u + \int_0^s \sigma_\kappa(X_u^{\nu,\kappa})\ud W_u, \qquad \text{for $0\leq s\leq T-t$}.
\end{align}
Sometimes we use $X^{\nu,\kappa;x}$ to keep track of the starting point of $X^{\nu,\kappa}$. Moreover, we introduce 
\begin{align}\label{eq:rhom}
\rho_{m}=\rho_{m} (t,x;\nu,\kappa) \coloneqq \inf\big\{s\geq0\,\big|\,X_{s}^{\nu,\kappa;x}\notin (\zeta_m(t+s),m)\big\}\wedge (T-t).
\end{align}

Given $(t,x)\in\overline{\cO^\kappa_m}$ and $(\nu,w)\in\cA^\circ_{t,x}\times\cT^\delta_t$ the expected payoff of the approximating game reads:
\begin{align}\label{eq:Jpen}
\begin{aligned}
\cJ^{N,\kappa,\eps,\delta,m}_{t,x}(\nu,w)= \E_{x}\Big[&R^w_{\rho_m}g_m^N(t\!+\!\rho_m,X_{\rho_m}^{\nu,\kappa})\\
&+\int_0^{\rho_{m}}\!\!R^w_s\big[(h_m^N+w_sg_m^N\!+\! H_{m}^{N,\eps}(\cdot,\dot{\nu}_s))(t\!+\!s,X_s^{\nu,\kappa})\big]\,\ud s\Big],
\end{aligned}
\end{align}
where $R^w_s\coloneqq \exp\!\big( \!-\!\int_0^s \left[r+w_\lambda\right] \ud\lambda\big)$.
The associated upper and lower value read, respectively, 
\begin{align*}
\overline{v}^{N,\eps,\delta}_{\kappa,m} (t,x)=\adjustlimits\inf_{\nu\in \cA^{\circ}_{t,x}}\sup_{w\in \mathcal{T}^\delta_t} \cJ^{N,\kappa,\eps,\delta,m}_{t,x}(\nu,w)\quad \text{and}\quad
\underline{v}^{N,\eps,\delta}_{\kappa,m} (t,x)=\adjustlimits \sup_{w\in \mathcal{T}^\delta_t}\inf_{\nu\in \cA^{\circ}_{t,x}} \cJ^{N,\kappa,\eps,\delta,m}_{t,x}(\nu,w), 
\end{align*}
so that $\underline{v}^{N,\eps,\delta}_{\kappa,m} \leq \overline{v}^{N,\eps,\delta}_{\kappa,m} $. We refer to this game as the penalised game on bounded domain.
\begin{remark}
Since we will let $m\to \infty$ while keeping $N$ and $\kappa$ fixed, it is convenient to simplify our notation by setting $\zeta_m=\zeta^\kappa_m$ and $\cO_m=\cO_m^\kappa$. It is clear that $\cO_m^\kappa\uparrow\cO$ as $m\to\infty$, for any $\kappa\in(0,1)$.
\end{remark}

\subsection{A semi-linear PDE}\label{sec:pen-p}
The value of the penalised problem is obtained as solution to a semi-linear PDE formulated as follows: letting $(y)^+\coloneqq \max\{0,y\}$ for $y\in\R$, the PDE of interest reads
\begin{problem}\label{prb:penprob}
Find $u=u^{N,\eps,\delta}_{\kappa,m}$ in $C^{1,2;\gamma}(\overline{\cO}_m)$, for $\gamma\in(0,1)$ as in Assumption \ref{ass:gen2}, that solves:
\begin{align}\label{eq:penprob}
\begin{cases}\partial_tu+\cL_\kappa u-ru=-h_m^N-\frac{1}{\delta}\big(g_m^N-u\big)^++\psi_\eps\big(|\partial_x u|^2- (\bar{\alpha}^N_m)^2\big), &\text{on } \cO_m, \\
u(t,x)=g_m^N(t,x), & (t,x)\in\partial_P \cO_m.
\end{cases}
\end{align}
\hfill$\blacksquare$
\end{problem}

A standard verification theorem shows that any solution of Problem \ref{prb:penprob} coincides with the value function of the penalised game. We omit the proof and refer to the analogous results from \cite{bovo2022variational}:
\begin{proposition}[{\cite[Prop.\ 1 and Rem.\ 3]{bovo2022variational}}]\label{prp:probrap1}
Let $u^{N,\eps,\delta}_{\kappa,m} $ be a solution of Problem \ref{prb:penprob}. Then 
\begin{align}\label{eq:probrap}
u^{N,\eps,\delta}_{\kappa,m} (t,x)=\overline{v}^{N,\eps,\delta}_{\kappa,m} (t,x)=\underline{v}^{N,\eps,\delta}_{\kappa,m} (t,x),\quad\text{for all $(t,x)\in\overline{\cO}_m$}.
\end{align}

For any $\nu\in\cA^\circ_{t,x}$ the process 
\begin{align*}
w^*_s=w^*_s(\nu)\coloneqq \tfrac1\delta \mathds{1}_{\big\{u^{N,\eps,\delta}_{\kappa,m}(t+s,X_s^{\nu,\kappa})\leq g^N_m(t+s,X_s^{\nu,\kappa})\big\}}
\end{align*}
is optimal for the maximiser in the sense that $u^{N,\eps,\delta}_{\kappa,m}(t,x)=\inf_{\nu\in\cA^\circ_{t,x}}\cJ^{N,\kappa,\eps,\delta,m}_{t,x}(\nu,w^*(\nu))$.
The controlled SDE \eqref{eq:SDEcntrll} admits a unique $\F$-adapted solution $(X^*_s)_{s\in[0,T-t]}$ when $\nu=\nu^*$ with
\begin{align}\label{eq:optcntr}
\dot{\nu}_s^*\coloneqq -2\psi_{\eps}'\Big(\big|\partial_x u^{N,\eps,\delta}_{\kappa,m}(t+s,X_s^*)\big|^2- \big(\bar{\alpha}^N_m(t+s,X_s^*)\big)^2\Big)\partial_x u^{N,\eps,\delta}_{\kappa,m}(t+s,X_s^*),
\end{align}
and $\nu^*\in\cA^\circ_{t,x}$ is optimal for the minimiser in the sense that
$u^{N,\eps,\delta}_{\kappa,m}(t,x)=\sup_{w\in\cT^\delta_t}\cJ^{N,\kappa,\eps,\delta,m}_{t,x}(\nu^*,w)$.
\end{proposition}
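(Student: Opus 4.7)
The plan is to run a classical verification argument. Fix a solution $u=u^{N,\eps,\delta}_{\kappa,m}\in C^{1,2;\gamma}(\overline{\cO}_m)$ of Problem \ref{prb:penprob} and, for notational ease, write $\cJ(\nu,w)$ in place of $\cJ^{N,\kappa,\eps,\delta,m}_{t,x}(\nu,w)$. First I would apply It\^o's formula to $s\mapsto R^w_s\,u(t+s,X^{\nu,\kappa}_s)$ on $[0,\rho_m]$ for an arbitrary admissible pair $(\nu,w)\in\cA^\circ_{t,x}\times\cT^\delta_t$, substitute $\partial_tu+\cL_\kappa u-ru$ using \eqref{eq:penprob}, and take expectation. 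The absolute continuity of $\nu\in\cA^\circ_{t,x}$ rules out jumps in the It\^o expansion, the stochastic integral is a true martingale because $\sigma_\kappa\,\partial_xu$ is bounded on the compact $\overline{\cO}_m$, and the boundary condition $u=g^N_m$ on $\partial_P\cO_m$ combined with $X^{\nu,\kappa}_{\rho_m}\in\partial_P\cO_m$ identifies the terminal value with $R^w_{\rho_m}g^N_m(t+\rho_m,X^{\nu,\kappa}_{\rho_m})$.

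Rearranging yields the key identity
\begin{equation*}
\cJ(\nu,w)-u(t,x)=\E_x\!\int_0^{\rho_m}\!\!R^w_s\bigl\{A(s)+B(s)\bigr\}\,\ud s,
\end{equation*}
where, at the point $(t+s,X^{\nu,\kappa}_s)$,
\begin{equation*}
A(s)=w_s(g^N_m-u)-\tfrac{1}{\delta}(g^N_m-u)^+,\qquad B(s)=H^{N,\eps}_m(\cdot,\dot\nu_s)+\dot\nu_s\,\partial_xu+\psi_\eps\bigl(|\partial_xu|^2-(\bar\alpha^N_m)^2\bigr).
\end{equation*}
A case analysis on the sign of $g^N_m-u$ shows $A(s)\le 0$ for every $w_s\in[0,1/\delta]$, with equality whenever $w_s$ agrees with the state feedback $w^*(t,x):=\tfrac{1}{\delta}\mathds{1}_{\{u(t,x)\le g^N_m(t,x)\}}$. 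For $B$, choosing $p=-\partial_xu$ in the supremum \eqref{eq:hmltn} gives $H^{N,\eps}_m(\cdot,\dot\nu_s)\ge-\dot\nu_s\partial_xu-\psi_\eps(|\partial_xu|^2-(\bar\alpha^N_m)^2)$, hence $B(s)\ge 0$; the first-order condition shows that the supremum in \eqref{eq:hmltn} is attained at $p^*=-\partial_xu$ precisely when $\dot\nu_s=\dot\nu^*_s$ as in \eqref{eq:optcntr}, and in that case $B(s)=0$.

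These pointwise bounds drive the saddle-point argument. Taking $\nu=\nu^*$ kills $B$ and leaves $\cJ(\nu^*,w)-u=\E_x\int R^w_sA(s)\,\ud s\le 0$ for every $w\in\cT^\delta_t$, so $\overline{v}^{N,\eps,\delta}_{\kappa,m}(t,x)\le\sup_w\cJ(\nu^*,w)\le u(t,x)$. Conversely, taking $w=w^*$ as a state feedback kills $A$ and leaves $\cJ(\nu,w^*)-u=\E_x\int R^{w^*}_sB(s)\,\ud s\ge 0$ for every $\nu\in\cA^\circ_{t,x}$, so $\underline{v}^{N,\eps,\delta}_{\kappa,m}(t,x)\ge\inf_\nu\cJ(\nu,w^*)\ge u(t,x)$. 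Together with $\underline v\le\overline v$ this proves \eqref{eq:probrap}. Applying the identity once more at $(\nu^*,w^*(\nu^*))$ makes both brackets vanish, so $\cJ(\nu^*,w^*(\nu^*))=u(t,x)$; this upgrades the two one-sided bounds into the strategy-wise optimality statements $u=\inf_\nu\cJ(\nu,w^*(\nu))$ and $u=\sup_w\cJ(\nu^*,w)$.

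The main obstacle I anticipate is the well-posedness of \eqref{eq:SDEcntrll} under the closed-loop feedback $\dot\nu^*$: the resulting drift $x\mapsto\mu_\kappa(x)-2\psi'_\eps(|\partial_xu(t,x)|^2-(\bar\alpha^N_m(t,x))^2)\partial_xu(t,x)$ is only $\gamma$-H\"older in space (since $u\in C^{1,2;\gamma}(\overline{\cO}_m)$ and $\psi'_\eps$ is smooth), so standard Lipschitz theory does not apply directly. However, $\sigma_\kappa\ge\kappa>0$ on $\overline{\cO}_m$ and the drift is bounded there, so Veretennikov/Zvonkin-type results yield strong existence and pathwise uniqueness of \eqref{eq:SDEcntrll} on $[0,\rho_m]$. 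After extending $\nu^*$ past $\rho_m$ by freezing, absolute continuity of $\nu^*$ is immediate and the required second moment follows from boundedness of $\dot\nu^*$ on $\overline{\cO}_m$, so $\nu^*\in\cA^\circ_{t,x}$ and the verification closes.
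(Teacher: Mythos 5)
Your verification identity
\begin{equation*}
\cJ(\nu,w)-u(t,x)=\E_x\!\int_0^{\rho_m}\!\!R^w_s\bigl\{A(s)+B(s)\bigr\}\,\ud s,
\end{equation*}
together with the sign analysis $A\le 0$, $B\ge 0$ and the identification of the equality cases, is correct and is exactly the ``standard verification theorem'' the paper refers to via \cite{bovo2022variational}. From it you correctly obtain $\sup_{w}\cJ(\nu^*,w)\le u$ and $\inf_{\nu}\cJ(\nu,w^*(\nu))\ge u$, hence the two strategy--wise optimality statements in the proposition, plus $\overline v\le u$, $\underline v\le u$, and (via the valid chain $\overline v=\inf_\nu\sup_w\cJ\ge\inf_\nu\cJ(\nu,w^*(\nu))\ge u$) also $\overline v\ge u$.

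However, the step you use to prove $\underline v\ge u$ does not close. You write ``$\underline v\ge\inf_\nu\cJ(\nu,w^*)\ge u$,'' treating the state feedback $w^*$ as if it were a single element of $\cT^\delta_t$. It is not: the realised process $w^*_s(\nu)=\tfrac1\delta\mathds{1}_{\{u(t+s,X^{\nu,\kappa}_s)\le g^N_m(t+s,X^{\nu,\kappa}_s)\}}$ depends on $\nu$ through the path of $X^{\nu,\kappa}$, so it is a \emph{strategy}, one progressively measurable process per $\nu$, not a fixed $w\in\cT^\delta_t$. In the lower value $\underline v=\sup_{w\in\cT^\delta_t}\inf_{\nu}\cJ(\nu,w)$ the stopper must commit first to a single $w$; the inequality $\sup_w\inf_\nu\cJ(\nu,w)\ge\inf_\nu\cJ(\nu,w^*(\nu))$ simply does not follow from the definitions. (The analogous step \emph{does} hold for $\overline v$, because there the $w$-player moves second and can indeed adapt to $\nu$.) Your corrected argument therefore yields $\overline v=u$, the two optimality identities, and $\underline v\le u$, but leaves $\underline v\ge u$, i.e.\ existence of the game value, unproven. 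Closing it requires an additional ingredient beyond verification — a dynamic programming principle, the decoupled structure of the Hamiltonian, or the measurable-selection argument actually carried out in \cite[Prop.~1 and Rem.~3]{bovo2022variational} — which your sketch omits.

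A minor remark on well-posedness of the closed-loop SDE: your worry that the drift is only $\gamma$-H\"older, necessitating Veretennikov/Zvonkin, is unfounded. Since $u\in C^{1,2;\gamma}(\overline\cO_m)$, the derivative $\partial_{xx}u$ exists and is bounded on the compact $\overline\cO_m$, so $x\mapsto\partial_xu(t,x)$ is Lipschitz uniformly in $t$; combined with $\psi'_\eps\in C^1$ and the Lipschitz bound on $(\bar\alpha^N_m)^2$, the feedback drift $\mu_\kappa+\dot\nu^*$ is Lipschitz in $x$ uniformly in $t$, and classical existence/uniqueness applies directly.
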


Uniqueness of the solution of Problem \ref{prb:penprob} follows from the probabilistic representation in \eqref{eq:probrap}.
\begin{corollary}\label{cor:unique-pen}
There is at most one solution to Problem \ref{prb:penprob}.
\end{corollary}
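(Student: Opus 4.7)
The plan is to observe that Corollary \ref{cor:unique-pen} follows immediately from the probabilistic representation recorded in Proposition \ref{prp:probrap1}, and therefore essentially no new work is needed beyond reading that statement carefully. The verification argument there applies to \emph{any} classical solution $u\in C^{1,2;\gamma}(\overline{\cO}_m)$ of \eqref{eq:penprob}: for every such $u$ one obtains, by an It\^o/Dynkin computation combined with the pointwise identification of $H_m^{N,\eps}$ as the Legendre transform in \eqref{eq:hmltn} and of $\tfrac{1}{\delta}(g_m^N-u)^+$ as the supremum over $w\in[0,1/\delta]$ of $w(g_m^N-u)$, the double equality
\begin{equation*}
u(t,x)=\overline{v}^{N,\eps,\delta}_{\kappa,m}(t,x)=\underline{v}^{N,\eps,\delta}_{\kappa,m}(t,x),\qquad (t,x)\in\overline{\cO}_m.
\end{equation*}

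Now suppose that $u_1$ and $u_2$ are two solutions of Problem \ref{prb:penprob}. Applying the above to each of them yields
\begin{equation*}
u_1(t,x)=\overline{v}^{N,\eps,\delta}_{\kappa,m}(t,x)=u_2(t,x)\qquad\text{for all }(t,x)\in\overline{\cO}_m,
\end{equation*}
because the right-hand side is an intrinsic quantity defined in terms of the game's payoff \eqref{eq:Jpen} and the admissible classes $\cA^\circ_{t,x}$, $\cT^\delta_t$, without any reference to the PDE. This gives $u_1\equiv u_2$ on $\overline{\cO}_m$, which is exactly the claim.

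Consequently, the only step to carry out formally would be to check that the saddle-point/verification argument implicit in Proposition \ref{prp:probrap1} is applicable to an arbitrary, rather than some specifically constructed, solution $u$; but this is precisely the content of the cited statement and of \cite[Prop.\ 1 and Rem.\ 3]{bovo2022variational}. There is no genuine obstacle: the only thing to keep in mind is that the verification theorem is a uniqueness statement in disguise, since it represents every classical solution of the semi-linear PDE as the same stochastic game value.
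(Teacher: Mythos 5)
Your argument is exactly the paper's: the paper deduces Corollary \ref{cor:unique-pen} in a single sentence from the probabilistic representation \eqref{eq:probrap} in Proposition \ref{prp:probrap1}, which identifies any classical solution of \eqref{eq:penprob} with the game value $\overline{v}^{N,\eps,\delta}_{\kappa,m}=\underline{v}^{N,\eps,\delta}_{\kappa,m}$, an intrinsic quantity. Your elaboration of why the verification argument applies to an arbitrary solution is accurate and matches the intended reading of the cited proposition.
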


The next lemma can be proven as in \cite{bovo2022variational} thanks to \eqref{eq:lgc} and Assumption \ref{ass:gen2}(iii). We omit details. 
\begin{lemma}[{\cite[Lem.\ 1]{bovo2022variational}}]\label{lem:polygrow}
Let $u^{N,\eps,\delta}_{\kappa,m}$ be a solution of Problem \ref{prb:penprob}. Then, there is a constant $K_3>0$ independent of $N,\kappa,\delta,\eps,m$ such that
$0\leq u^{N,\eps,\delta}_{\kappa,m}(t,x)\leq K_3(1+|x|^2)$, for all $(t,x)\in\overline{\cO}_m$.
In particular, for any $m\geq m_0\in \N$ and $(\kappa,\eps,\delta)\in(0,1)^3$ and $N\in\N$
\begin{align*}
\big\|u^{N,\eps,\delta}_{\kappa,m}\big\|_{L^\infty(\overline\cO_{m_0})}\leq K_3(1+|m_0|^2) \eqqcolon M_1(m_0).
\end{align*}
\end{lemma}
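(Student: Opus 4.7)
The plan is to exploit the probabilistic representation $u^{N,\eps,\delta}_{\kappa,m}=\overline{v}^{N,\eps,\delta}_{\kappa,m}=\underline{v}^{N,\eps,\delta}_{\kappa,m}$ from Proposition \ref{prp:probrap1} and evaluate the two variational representations at trivial choices of control and weight. For the lower bound, I would observe that $g^N_m,h^N_m\ge 0$ by construction, $R^w_s\in(0,1]$, and the Hamiltonian satisfies $H^{N,\eps}_m(t,x,y)\ge 0$ for every $y\in\R$: indeed, taking $p=0$ in the supremum \eqref{eq:hmltn} gives $-\psi_\eps\bigl(-(\bar\alpha^N_m(t,x))^2\bigr)=0$ because $\psi_\eps$ vanishes on $(-\infty,0]$. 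Therefore every integrand in \eqref{eq:Jpen} is non-negative, so $\cJ^{N,\kappa,\eps,\delta,m}_{t,x}(\nu,w)\ge 0$ for all admissible $(\nu,w)$, and $u=\underline{v}^{N,\eps,\delta}_{\kappa,m}\ge 0$.

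For the upper bound I would use $u=\overline{v}^{N,\eps,\delta}_{\kappa,m}\le \sup_{w\in\cT^\delta_t}\cJ^{N,\kappa,\eps,\delta,m}_{t,x}(0,w)$. With $\nu\equiv 0$ we have $\dot\nu_s=0$ and hence $H^{N,\eps}_m(\cdot,0)=0$ as above, so only the running costs $h^N_m$ and $w_s g^N_m$ and the terminal reward $g^N_m$ enter the payoff. I would then bound each piece using exclusively the unapproximated envelopes $g^N_m\le g\le K_1(1+|x|)$ and $h^N_m\le h\le K_1(1+|x|^2)$ from Assumption \ref{ass:gen2}(iii), together with the pathwise identity
\begin{align*}
\int_0^{\rho_m}\!\! w_s R^w_s\,\ud s \;=\; (1-R^w_{\rho_m})-r\!\int_0^{\rho_m}\!\! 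R^w_s\,\ud s \;\le\; 1,
\end{align*}
which is crucial because it eliminates any dependence on the cap $1/\delta$. Writing $\int_0^{\rho_m} w_s R^w_s g^N_m(t+s,X^{0,\kappa}_s)\,\ud s \le \sup_{s\le\rho_m} g^N_m(t+s,X^{0,\kappa}_s)$ and estimating the terminal and the $h$-term via $R^w\le 1$, this yields
\begin{align*}
\cJ^{N,\kappa,\eps,\delta,m}_{t,x}(0,w) \;\le\; 2K_1\bigl(1+\E_x[\sup\nolimits_{s\le T}|X^{0,\kappa}_s|]\bigr) + TK_1\bigl(1+\E_x[\sup\nolimits_{s\le T}|X^{0,\kappa}_s|^2]\bigr),
\end{align*}
uniformly in $w\in\cT^\delta_t$. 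A standard Burkholder-Davis-Gundy plus Gronwall estimate applied to the uncontrolled SDE \eqref{eq:SDEcntrll} with $\nu=0$, using the linear-growth bound $|\mu_\kappa|+\sigma_\kappa\le D_1(1+|x|)$ (which the paper chooses independently of $\kappa$), then gives $\E_x[\sup_{s\le T}|X^{0,\kappa}_s|^2]\le C(1+x^2)$ with $C=C(T,D_1)$. Combining the estimates yields $u^{N,\eps,\delta}_{\kappa,m}(t,x)\le K_3(1+x^2)$ with $K_3=K_3(T,D_1,K_1)$, and the final claim about $\|u\|_{L^\infty(\overline\cO_{m_0})}$ is immediate since $|x|\le m_0$ on $\overline\cO_{m_0}$.

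The delicate point—and the step to watch carefully—is the insistence on uniformity in all five parameters $N,\kappa,\eps,\delta,m$. A tempting but wrong route is to apply It\^o's formula to $R^w_s g^N_m(t+s,X_s)$ and rewrite $\cJ(0,w)=g^N_m(t,x)+\E_x[\int_0^{\rho_m}R^w_s\Theta^N_{\kappa,m}(t+s,X_s)\,\ud s]$; this produces the bound $g^N_m+TC_\Theta(N,\kappa)$, whose constant $C_\Theta(N,\kappa)$ from \eqref{eq:ThetaNkmbnd} depends on $N$ and $\kappa$. The argument sketched above bypasses $\Theta^N_{\kappa,m}$ altogether by controlling $g^N_m$ and $h^N_m$ through the \emph{unapproximated} polynomial envelopes, by using $H^{N,\eps}_m(\cdot,0)=0$ to erase any dependence on $\eps$, and by the identity $\int w_s R^w_s\,\ud s\le 1$ to erase any dependence on $\delta$; dependence on $m$ drops out because $\rho_m\le T-t$ and $g^N_m\le g^N$, $h^N_m\le h^N$ pointwise.
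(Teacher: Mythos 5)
Your proof is correct and follows essentially the same route as the paper (which defers to \cite[Lem.~1]{bovo2022variational}): the lower bound from non-negativity of every integrand in the probabilistic representation, and the upper bound by taking $\nu\equiv 0$, using $H^{N,\eps}_m(\cdot,0)=0$, the pathwise identity $\int_0^{\rho_m}w_sR^w_s\,\ud s\le 1$, the unapproximated polynomial envelopes $g^N_m\le g$, $h^N_m\le h$ from Assumption \ref{ass:gen2}(iii), and the $\kappa$-uniform linear-growth estimate \eqref{eq:lgc}. Your parenthetical warning that passing through $\Theta^N_{\kappa,m}$ would spoil uniformity in $N,\kappa$ is exactly the right observation about why those two ingredients, and not Dynkin's formula on $g^N_m$, are the ones the paper flags.
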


Our next goal is an a priori bound for the norm of $\partial_x u^{N,\eps,\delta}_{\kappa,m}$, required to obtain existence of the solution of Problem \ref{prb:penprob}. Differently from \cite{bovo2022variational} our boundary condition $u=g^N_m$ on $\partial_P\cO_m$ in Problem \ref{prb:penprob} is not identically zero. Therefore, we need to replace estimates from \cite[Lemma 3]{bovo2022variational} with new ones. These new estimates leverage on ideas from linear-quadratic control and Fleming logarithmic transform (cf.\ \cite[Ch.\ III.7, Ch.\ VI.5--9]{fleming2006controlled}). This approach has no analogue in \cite{bovo2022variational}. 

For the next results we use properties of the function $H_{m}^{N,\eps}$: for arbitrary $N$, $\eps$, $m$ and $(t,x)\in\overline\cO$, 
\begin{itemize}
	\item[ (i)] $H_{m}^{N,\eps}(t,x,0)=0$ and $H_{m}^{N,\eps}(t,x,y)\ge 0$ for any $y\in\R$ (pick $p=0$ in \eqref{eq:hmltn}); 
	\item[(ii)] Choosing $p=\eps y/2$ in \eqref{eq:hmltn} yields
\begin{align}\label{eq:lowbnHepsm}
H_{m}^{N,\eps}(t,x,y)\geq \tfrac{\eps}{2}|y|^2-\psi_\eps\Big({\tfrac{\eps^2}{4}|y|^2-(\bar{\alpha}^N_m(t,x))^2}\Big)\geq \tfrac{\eps}{2}|y|^2-\psi_\eps\Big({\tfrac{\eps^2}{4}|y|^2}\Big)\geq \tfrac{\eps}{4}|y|^2.
\end{align}
\end{itemize}

\begin{lemma}\label{lem:fntbndr}
Let $u^{N,\eps,\delta}_{\kappa,m}$ be a solution of Problem \ref{prb:penprob}. There is $M_2=M_2(N,\kappa,\eps,m)>0$ such that 
\begin{align}\label{eq:grdb}
\sup_{(t,x)\in \partial_P\cO_m}|\partial_x u^{N,\eps,\delta}_{\kappa,m} (t,x)|\leq M_2.
\end{align}
\end{lemma}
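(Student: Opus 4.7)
The plan is to split the parabolic boundary as $\partial_P\cO_m=\Gamma_T\cup\Gamma_R\cup\Gamma_L$, with the top face $\Gamma_T\coloneqq\{T\}\times[\overline\Theta^{N}_{\kappa,m},m]$, the right face $\Gamma_R\coloneqq[0,T)\times\{m\}$, and the lower curve $\Gamma_L\coloneqq\{(t,\zeta_m(t)):t\in[0,T)\}$, and to bound $|\partial_x u^{N,\eps,\delta}_{\kappa,m}|$ on each piece separately. On $\Gamma_T$ the bound is immediate from the Dirichlet data: since $u^{N,\eps,\delta}_{\kappa,m}(T,\cdot)=g^N_m(T,\cdot)$ on $[\overline\Theta^{N}_{\kappa,m},m]$, differentiation in $x$ gives $|\partial_x u^{N,\eps,\delta}_{\kappa,m}(T,x)|=|\partial_x g^N_m(T,x)|\le \Lambda_N$ by \eqref{eq:alphaNMbnd}. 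On $\Gamma_R$, both $g^N_m(\cdot,m)$ and $h^N_m(\cdot,m)$ vanish because $\xi_{m-1}(m)=0$; combined with the $L^\infty$ bound $u^{N,\eps,\delta}_{\kappa,m}\le M_1(m)$ of Lemma \ref{lem:polygrow} and the sign of $\psi_\eps$ on the right-hand side of \eqref{eq:penprob}, a classical linear barrier $\Phi(x)=B(m-x)$ on the strip $[0,T]\times[m-1,m]$, with $B=B(N,\kappa,\eps,m)$ chosen so that $\Phi$ dominates $u^{N,\eps,\delta}_{\kappa,m}$ on the parabolic boundary of the strip and the term $\psi_\eps(B^2-(\bar\alpha^N_m)^2)$ can be absorbed, yields by comparison $|\partial_x u^{N,\eps,\delta}_{\kappa,m}(t,m)|\le B$.

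The main work concerns the lower curve $\Gamma_L$, and here I would follow the Fleming-logarithmic-transform strategy advertised in Section \ref{sec:summary}. The starting point is \eqref{eq:lowbnHepsm}, $H^{N,\eps}_m(t,x,y)\ge\tfrac{\eps}{4}|y|^2$, together with the matching upper bound $H^{N,\eps}_m(t,x,y)\le \tfrac{\eps}{4}|y|^2+C(N,\eps)$, which follows from the explicit form of $\psi_\eps$ and the bound $\bar\alpha^N_m\le\Lambda_N$. I would then introduce the auxiliary linear-quadratic controller-stopper problem on $\cO_m$ obtained from \eqref{eq:Jpen} by replacing $H^{N,\eps}_m(\cdot,\dot\nu_s)$ with $\tfrac{\eps}{4}|\dot\nu_s|^2$; its value $w$ coincides with $g^N_m$ on $\partial_P\cO_m$ and solves a quasilinear HJB equation with the purely quadratic gradient term $\tfrac{1}{\eps}|\partial_x w|^2$ (the Legendre dual of the quadratic control cost). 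A direct comparison argument for this HJB, using the two-sided bound on $H^{N,\eps}_m$, gives $u^{N,\eps,\delta}_{\kappa,m}\le w$ and $u^{N,\eps,\delta}_{\kappa,m}\ge w-c$ in $\cO_m$ for $c=c(N,\eps,T)$. Since $u^{N,\eps,\delta}_{\kappa,m}=w=g^N_m$ on $\Gamma_L$, the one-sided inequalities yield $\partial_x u^{N,\eps,\delta}_{\kappa,m}\le \partial_x w$ on $\Gamma_L$ and a symmetric lower bound, so it suffices to bound $\partial_x w$ on $\Gamma_L$. This I would achieve via the Cole-Hopf transform $\phi\coloneqq\exp(-\beta w)$, with $\beta=\beta(\eps,\sigma_\kappa)$ calibrated so that the quadratic gradient term in the HJB for $w$ cancels against the diffusive term, thereby reducing the problem to a linear parabolic equation for $\phi$ with smooth bounded coefficients and Dirichlet data $\phi=\exp(-\beta g^N_m)$ on $\partial_P\cO_m$; classical boundary Schauder estimates then produce a bound $|\partial_x\phi|\le C(N,\kappa,\eps,m)$ on $\Gamma_L$, and since $\phi$ is bounded away from $0$ on $\overline\cO_m$, inverting the transform gives the sought control of $|\partial_x w|$ on $\Gamma_L$.

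The main obstacle I anticipate is not the formal Cole-Hopf computation itself but rather ensuring that the resulting linear problem for $\phi$ is well-posed in $C^{1,2;\gamma}(\overline\cO_m)$ and that the boundary Schauder estimate applies. By parabolic theory this requires first-order compatibility conditions at the corners of $\partial_P\cO_m$, and in particular at the critical corner $(T,\zeta_m(T))=(T,\overline\Theta^{N}_{\kappa,m})$. This is precisely why the straight boundary $[0,T)\times\{0\}$ has been deformed into the smooth curve $\Gamma_L$: the very definition \eqref{eq:overtheta} of $\overline\Theta^{N}_{\kappa,m}$ as a zero of $\Theta^{N}_{\kappa,m}(T,\cdot)$, combined with $\zeta_m\in C^\infty([0,T])$ and $\zeta_m(T)=\overline\Theta^{N}_{\kappa,m}$, is tailored precisely so that the compatibility identity at this corner is satisfied. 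The technical verification of this condition and the resulting uniform bound for $\phi$, from which $M_2=M_2(N,\kappa,\eps,m)$ is extracted, is exactly what the subsequent Lemma \ref{lem:lbw} is designed to provide; once it is in hand, assembling the estimates on $\Gamma_T$, $\Gamma_R$ and $\Gamma_L$ delivers \eqref{eq:grdb}.
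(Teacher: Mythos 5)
Your high-level reading is right --- the estimate on $\Gamma_T$ is immediate, the estimate on $\Gamma_R$ is the $\cite{bovo2022variational}$ estimate (the paper simply cites \cite[Lem.\ 3]{bovo2022variational} there), and the work is on the deformed lower curve where the Fleming transform is used and the corner compatibility condition at $(T,\overline\Theta^N_\kappa)$ is the crux. That last observation, in particular, correctly identifies why the time-dependent boundary $\zeta^\kappa_m$ was introduced, and it anticipates the content of Lemma \ref{lem:lbw} and Appendix \ref{app:gtilde}. However, the execution on $\Gamma_L$ has three genuine gaps that, taken together, invalidate the proposed route.

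First, the two-sided sandwich $w\le u^{N,\eps,\delta}_{\kappa,m}\le w+c$ does not give a two-sided gradient bound on $\Gamma_L$. The inequality $u^{N,\eps,\delta}_{\kappa,m}\ge w$ together with equality on $\Gamma_L$ does give $\partial_x u^{N,\eps,\delta}_{\kappa,m}\ge \partial_x w$ there (note your sign is reversed: the interior lies to the right of $\zeta_m$). But the other half fails: $w+c$ equals $g^N_m+c$, not $g^N_m$, on $\Gamma_L$, so the inequality $u^{N,\eps,\delta}_{\kappa,m}\le w+c$ carries no information about $\partial_x u^{N,\eps,\delta}_{\kappa,m}$ at the boundary. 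The paper instead derives the upper and lower bounds for $u^{N,\eps,\delta}_{\kappa,m}-g^N_m$ from \emph{two different} auxiliary quantities, both of which vanish on $\partial_P\cO_m$: the upper bound uses $\nu\equiv 0$ and the linear quantity $\pi(t,x)=\E_x[\int_0^{\rho_m}(\Theta^N_{\kappa,m})^+\,\ud s]$, while the lower bound uses $w_s\equiv 0$ and the linear-quadratic control value $w$ of \eqref{eq:defw}. A single sandwiching function cannot do both jobs.

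Second, your auxiliary $w$ retains the stopper, so its HJB equation still carries the penalty term $\tfrac1\delta(g^N_m-w)^+$. This is a Lipschitz but nonsmooth, value-dependent nonlinearity that the Cole--Hopf/Fleming substitution does \emph{not} linearize; $\phi=\exp(-\beta w)$ eliminates only the quadratic-gradient term. The paper sidesteps this by setting $w_s\equiv 0$ before any PDE manipulation, so that the resulting auxiliary problem is a pure control problem whose HJB \eqref{eq:quadlinprob} has no penalty term. Third, the calibration of a \emph{constant} $\beta$ fails because $\sigma_\kappa$ is $x$-dependent: matching $\tfrac{\sigma_\kappa^2(x)}{2}\beta(\partial_x w)^2$ against $\tfrac1\eps(\partial_x w)^2$ forces $\beta=\tfrac{2}{\eps\sigma_\kappa^2(x)}$, which is incompatible with the constant-$\beta$ ansatz $\phi=\exp(-\beta w)$. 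The paper's preliminary change of control $\dot\upsilon_s=\dot\nu_s/\sigma_\kappa(X^\nu_s)$, followed by a completion of squares, is precisely what arranges the Hamiltonian into the form $-\tfrac14(\sigma_\kappa\partial_x f)^2$ so that the constant-coefficient substitution $\Psi=\exp(-f/2)$ linearizes the equation. Without that reduction, the Cole--Hopf step in your plan does not go through.
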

\begin{proof}
For simplicity we denote $u=u^{N,\eps,\delta}_{\kappa,m}$. Let $(s,y)\in\partial_P\cO_m$. If $s=T$, then $u(T,y)=g^N_m(T,y)$ for $y\ge \overline\Theta^{N}_{\kappa}$ and the result is trivial. If $y=m$, then $u(t,m)=g^N_m(t,m)=0$ for $t\in[0,T)$ and a direct application of \cite[Lemma\ 3]{bovo2022variational} leads to the result.
It remains to prove the bound along the boundary $(t,\zeta_m(t))$ for $t\in[0,T)$.
Using that $u(t,\zeta_m(t))=g_m^N(t,\zeta_m(t))$ for $t\in[0,T)$, and setting $y=\zeta_m(t)$, we have
\begin{align}\label{eq:limxtoz}
|u(t,x)-u(t,y)|
\le |u(t,x)-g_m^N(t,x)|+\|\partial_x g_m^N\|_{L^\infty(\overline{\cO}_m)}|x-y|.
\end{align}
Therefore, it is enough to show that $|u(t,x)-g_m^N(t,x)|=O(|x-y|)$ for $x\to y$.

Let $t\in[0,T)$ and fix $x\in (\zeta_m(t),m)$. An upper bound for $u(t,x)-g_m^N(t,x)$ can be obtained as in the proof of \cite[Lemma\ 3]{bovo2022variational}. Thanks to Dynkin's formula, for arbitrary $(\nu,w)\in\cA_{t,x}^\circ\times\cT^\delta_t$ we have
\begin{align}\label{eq:gmNdynkin}
\begin{aligned}
g_m^N(t,x)=\E_x\Big[&R^w_{\rho_m}g_m^N(t\!+\!\rho_m,X_{\rho_m}^{\nu,\kappa})\\
&-\!\int_0^{\rho_{m}}\!\!R^w_s\big((\partial_t\!+\!\cL_\kappa\!-\!r)g_m^N\!-\!w_sg_m^N\!+\!\partial_xg_m^N\dot{\nu}_s\big)(t\!+\!s,X_s^{\nu,\kappa})\,\ud s\Big].
\end{aligned}
\end{align}
Taking $\nu\equiv 0$, recalling $H_{m}^{N,\eps}(t,x,0)=0$ and using \eqref{eq:Jpen} and \eqref{eq:gmNdynkin}, yields 
\begin{align*}
u(t,x)\!-\!g_m^N(t,x)\!\le\! \sup_{w\in\cT^\delta_t}\E_x\Big[\!\int_{0}^{\rho_{m}}\!\!\!R^w_s\Theta^{N}_{\kappa,m}(t\!+\!s,X_s^{0,\kappa})\,\ud s\Big]\!\le\! \E_x\Big[\!\int_{0}^{\rho_{m}}\!\!\!(\Theta^{N}_{\kappa,m})^+(t\!+\!s,X_s^{0,\kappa})\,\ud s\Big]\eqqcolon\pi(t,x).
\end{align*}
Since the boundary of the domain $\cO_m$ is smooth, \cite[Thm.\ 3.7]{friedman2008partial} and a classical verification argument guarantee that $\pi\in C^{1,2;\alpha}(\overline\cO_m)$, for some $\alpha\in(0,1)$, and it solves 
\begin{align*}
\begin{cases}
\big(\partial_t \pi+\cL_\kappa \pi)(t,x)=-(\Theta^{N}_{\kappa,m})^+(t,x), &\text{for $(t,x)\in \cO_m$,}\\
\pi(t,m)=\pi(t,\zeta_m(t))=0,&\text{for $t\in[0,T)$},\\
\pi(T,x)=0,&\text{for $x\in[\overline{\Theta}^{N}_{\kappa},m]$}.
\end{cases}
\end{align*}
Moreover, $\pi(t,x)\ge 0$ for $(t,x)\in\cO_m$. Then, $0\!\le\! \pi(t,x)\!-\!\pi(t,y)\!\le\! L_{N,\kappa,m}|x\!-\!y|$ for $(t,x)\in\cO_m$ and $y\in\{\zeta_m(t),m\}$, for some $L_{N,\kappa,m}>0$. It follows 
\begin{align}\label{eq:limxtozUPP}
u(t,x)-g_m^N(t,x)\le L_{N,\kappa,m}|x-y|.
\end{align}

Next we prove the lower bound. 
Take $w_s\equiv 0$ in \eqref{eq:Jpen}. Using \eqref{eq:gmNdynkin} and \eqref{eq:lowbnHepsm} we obtain
\begin{align*}
&u(t,x)-g_m^N(t,x)\\
&\ge \inf_{\nu\in\cA_{t,x}^\circ}\E_{x}\Big[\int_0^{\rho_{m}}\!\!\e^{-rs}\big(\Theta^{N}_{\kappa,m}(t\!+\!s,X_s^{\nu,\kappa})\!+\!\partial_xg_m^N(t\!+\!s,X_s^{\nu,\kappa})\dot{\nu}_s\!+\! H_{m}^{N,\eps}(t\!+\!s,X_s^{\nu,\kappa},\dot{\nu}_s)\big)\ud s\Big]\\
&\ge \tfrac{\eps }{4}\inf_{\nu\in\cA^\circ_{t,x}}\E_x\Big[\int_{0}^{\rho_{m}}\!\!\e^{-rs}\Big(\tfrac{4}{\eps}\Theta^{N}_{\kappa,m}(t\!+\!s,X_s^{\nu,\kappa})\!+\!\tfrac{4}{\eps}\partial_xg_m^N(t\!+\!s,X_s^{\nu,\kappa})\dot{\nu}_s\!+\!|\dot{\nu}_s|^2\Big)\ud s\Big].
\end{align*}
It is convenient to introduce $\dot{\upsilon}_s\coloneqq\dot{\nu}_s/\sigma_\kappa(X^{\nu,\kappa}_s)$, which is well-defined because $\sigma_\kappa(x)\ge \kappa$. Then $\upsilon_t=\int_0^t\dot \upsilon_s\ud s$ defines an admissible control $\upsilon\in\cA^\circ_{t,x}$. With a slight abuse of notation we relabel $X^{\nu,\kappa}=Y^{\upsilon}$. Optimising over $\nu$ is equivalent to optimising over $\upsilon$, then
\begin{align}\label{eq:preHJB}
\begin{aligned}
&u(t,x)\!-\!g_m^N(t,x)\\
&\ge \tfrac{\eps}{4}\inf_{\upsilon\in\cA^\circ_{t,x}}\E_x\Big[\int_{0}^{\rho_{m}}\!\!\e^{-rs}\Big(\tfrac{4}{\eps}\Theta^{N}_{\kappa,m}(\cdot,\cdot)\!+\!\tfrac{4}{\eps}\partial_xg_m^N(\cdot,\cdot)\sigma_\kappa(\cdot)\dot{\upsilon}_s\!+\!|\sigma_\kappa(\cdot)\dot{\upsilon}_s|^2\Big)(t\!+\!s,Y^{\upsilon}_s)\,\ud s\Big]\\
&\ge \tfrac{\eps\kappa^2}{4}\inf_{\upsilon\in\cA^\circ_{t,x}}\E_x\Big[\int_{0}^{\rho_{m}}\!\!\e^{-rs}\Big(\tfrac{4}{\eps\kappa^2}\Theta^{N}_{\kappa,m}(\cdot,\cdot)+\tfrac{4}{\eps\kappa^2}\partial_xg_m^N(\cdot,\cdot)\sigma_\kappa(\cdot)\dot{\upsilon}_s\!+\!|\dot{\upsilon}_s|^2\Big)(t\!+\!s,Y^{\upsilon}_s)\,\ud s\Big]
\end{aligned}
\end{align}
where the second inequality holds by $|\sigma_\kappa(x)|\ge \kappa$ for all $x\in[0,\infty)$. Applying the inequality $b^2+ab\ge\tfrac{1}{2}b^2-\tfrac{1}{2}a^2$ with $a=\tfrac{4}{\eps\kappa^2}\partial_xg_m^N(t\!+\!s,Y^{\upsilon}_s)\sigma_\kappa(Y^{\upsilon}_s)$ and $b=\dot{\upsilon}_s$ in \eqref{eq:preHJB}, we obtain
\begin{align*}
\begin{aligned}
&u(t,x)\!-\!g_m^N(t,x)\\
&\ge \tfrac{\eps\kappa^2}{8}\inf_{\upsilon\in\cA^\circ_{t,x}}\E_x\Big[\int_{0}^{\rho_{m}}\!\!\e^{-rs}\Big(\tfrac{8}{\eps\kappa^2}\Theta^{N}_{\kappa,m}(t\!+\!s,Y^{\upsilon}_s)\!-\!\big(\tfrac{4}{\eps\kappa^2}\partial_xg_m^N(t\!+\!s,Y^{\upsilon}_s)\sigma_\kappa(Y^{\upsilon}_s)\big)^2\!+\!|\dot{\upsilon}_s|^2\Big)\ud s\Big].
\end{aligned}
\end{align*}
In order to simplify notation, we relabel 
$\e^{-rT}\vartheta(t,x)\coloneqq \frac{8}{\eps\kappa^2}[\Theta^{N}_{\kappa,m}(t,x)-\frac{2}{\eps\kappa^2}(\partial_x g^N_m(t,x)\sigma_\kappa(x))^2]$,
and denote $\vartheta^-=\max\{0,-\vartheta\}$. Then
\begin{align*}
\begin{aligned}
u(t,x)-g_m^N(t,x)
&\ge \tfrac{\eps\kappa^2}{8}\inf_{\upsilon\in\cA^\circ_{t,x}}\E_x\Big[\int_{0}^{\rho_{m}}\!\!\e^{-rs}\Big(|\dot{\upsilon}_s|^2-\e^{-rT}\vartheta^-(t+s,Y^{\upsilon}_s)\Big)\,\ud s\Big]\\
&\ge \tfrac{\eps\kappa^2}{8}\e^{-rT}\inf_{\upsilon\in\cA^\circ_{t,x}}\E_x\Big[\int_{0}^{\rho_{m}}\!\!\Big(|\dot{\upsilon}_s|^2-\vartheta^-(t+s,Y^{\upsilon}_s)\Big)\,\ud s\Big].
\end{aligned}
\end{align*}
Setting
\begin{align}\label{eq:defw}
w(t,x)\coloneqq \inf_{\upsilon\in\cA^\circ_{t,x}}\E_x\Big[\int_{0}^{\rho_{m}}\!\!\Big(|\dot{\upsilon}_s|^2-\vartheta^-(t+s,Y^{\upsilon}_s)\Big)\,\ud s\Big],
\end{align}
we reach $(u-g^N_m)(t,x)\ge (\eps\kappa^2/8)\e^{-rT} w(t,x)$. It is clear that $w(t,y)=0$ for $y\in\{\zeta_m(t),m\}$. We claim that $w(t,x)\ge -c |x-y|$ for $(t,x)\in\cO_m$ and $y\in \{\zeta_m(t),m\}$, and some $c=c(N,\kappa,\eps,m)>0$. Then, combining the latter with \eqref{eq:limxtoz} and \eqref{eq:limxtozUPP} we obtain \eqref{eq:grdb}.

A proof of the bound $w(t,x)\ge -c |x-y|$ is given in Lemma \ref{lem:lbw} for the ease of exposition.
\end{proof}
\begin{lemma}\label{lem:lbw}
Let $w(t,x)$ be defined as in \eqref{eq:defw}. Then, there is $c=c(N,\kappa,\eps,,m)>0$ such that for every $(t,x)\in\cO_m$ and $y\in\{\zeta(t),m\}$ it holds $w(t,x)\ge -c|x-y|$.
\end{lemma}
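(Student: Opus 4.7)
The plan is to recognise $w$ as the value of a linear-quadratic stochastic control problem on the smoothly bounded cylinder $\cO_m$, linearise the associated HJB equation via the Fleming logarithmic (Cole-Hopf) transform, invoke classical parabolic regularity for the linearised equation, and then translate the gradient bound back to $w$. Throughout we work with fixed $(N,\kappa,\eps,m)$, and the constant $c$ in the statement is allowed to depend on these parameters.

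First I would show that $w$ is a bounded classical solution in $C^{1,2}(\cO_m)\cap C(\overline\cO_m)$ of the HJB equation
\[
\partial_t w+\cL_\kappa w-\tfrac{\sigma_\kappa^2(x)}{4}|\partial_x w|^2=\vartheta^-(t,x)\ \text{in}\ \cO_m,\qquad w=0\ \text{on}\ \partial_P\cO_m,
\]
the quadratic term arising as $\inf_{p\in\R}\{\sigma_\kappa(x)\,p\,\partial_x w+p^2\}$, attained at $p^*=-\tfrac{1}{2}\sigma_\kappa(x)\partial_x w$. Standard dynamic-programming/verification arguments for linear-quadratic problems with bounded, locally Hölder data apply here, using that $\vartheta^-$ is bounded on $\overline\cO_m$ and $\sigma_\kappa\ge\kappa>0$. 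Setting $W\coloneqq\exp(-w/2)$, a direct calculation turns the HJB into the \emph{linear}, homogeneous parabolic equation
\[
\partial_t W+\cL_\kappa W+\tfrac{\vartheta^-(t,x)}{2}\,W=0\ \text{in}\ \cO_m,\qquad W=1\ \text{on}\ \partial_P\cO_m,
\]
and Feynman-Kac applied with $\upsilon\equiv 0$ gives $W(t,x)=\E_x\bigl[\exp\bigl(\tfrac{1}{2}\int_0^{\rho_m}\vartheta^-(t+s,X^{0,\kappa}_s)\,\ud s\bigr)\bigr]$, so in particular $1\le W\le \e^{KT/2}$ for some $K=K(N,\kappa,\eps,m)>0$.

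The technical core will be proving $W\in C^{1,2;\alpha}(\overline\cO_m)$, which then yields $\|\partial_xW\|_{L^\infty(\overline\cO_m)}\le C(N,\kappa,\eps,m)$. Observe that the coefficients $\sigma_\kappa^2,\mu_\kappa,\vartheta^-$ are bounded and locally Hölder in $x$, non-degeneracy holds because $\sigma_\kappa\ge\kappa$, the lateral boundary $\{(t,\zeta_m(t)):t\in[0,T)\}$ is smooth by construction, and the boundary datum $W\equiv 1$ is \emph{constant}, so the zeroth-order compatibility at the corners $(T,\overline{\Theta}^{N}_{\kappa,m})$ and $(T,m)$ is automatic. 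A classical parabolic Schauder estimate (e.g.\ \cite[Thm.\ 3.7]{friedman2008partial}) will then deliver the $C^{1,2;\alpha}$-regularity of $W$ up to $\overline\cO_m$.

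To conclude, from $w=-2\log W$ and $W\ge 1$ we obtain $|\partial_x w|=2|\partial_x W|/W\le 2C$ on $\overline\cO_m$. Since $w(t,y)=0$ for $y\in\{\zeta_m(t),m\}$, the fundamental theorem of calculus yields $w(t,x)\ge -2C|x-y|$, which is the claim with $c\coloneqq 2C$. The hard part will be the Schauder step: the analogous bound would \emph{fail} on the unperturbed domain $[0,T]\times[0,\infty)$ because compatibility at the corner $(T,0)$ between the terminal profile of $W$ and the lateral Dirichlet datum $W\equiv 1$ is not automatic once boundary derivatives are involved; the smooth deformation $\zeta_m$ was introduced precisely to circumvent this defect.
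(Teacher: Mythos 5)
Your overall strategy coincides with the paper's: interpret $w$ as a linear-quadratic control value, write the HJB equation, apply the logarithmic (Cole--Hopf) transform $W=\e^{-w/2}$ to linearise, obtain $C^{1,2;\alpha}$ bounds for the linearised equation via \cite[Thm.\ 3.7]{friedman2008partial}, and translate the gradient bound back through $w=-2\ln W$ with $W\ge 1$. However, there is a genuine gap in the Schauder step. You assert that because the boundary datum $W\equiv 1$ is constant, ``the zeroth-order compatibility at the corners $(T,\overline\Theta^{N}_\kappa)$ and $(T,m)$ is automatic'', and you then invoke \cite[Thm.\ 3.7]{friedman2008partial}. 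But zeroth-order compatibility (continuity of the datum at the corner) is not enough for that theorem: it also requires the first-order compatibility \eqref{eq:comp}, namely that $(\partial_t+\cL_\kappa+\tfrac12\vartheta^-)\,\Gamma$ vanishes at the corners, which with $\Gamma\equiv 1$ reads $\vartheta^-(T,x)=0$ for $x\in\{\overline\Theta^N_\kappa,m\}$. At $x=\overline\Theta^N_\kappa$ we have $\Theta^N_{\kappa,m}(T,\overline\Theta^N_\kappa)=0$ by construction, but
\[
\e^{-rT}\vartheta(T,\overline\Theta^N_\kappa)=-\tfrac{16}{\eps^2\kappa^4}\big(\partial_x g^N_m(T,\overline\Theta^N_\kappa)\,\sigma_\kappa(\overline\Theta^N_\kappa)\big)^2,
\]
which is strictly negative unless $\partial_x g^N_m(T,\overline\Theta^N_\kappa)=0$. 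Hence $\vartheta^-(T,\overline\Theta^N_\kappa)>0$ in general and the compatibility condition fails, so the Schauder estimate you rely on is not applicable.

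You also misattribute the role of the deformed boundary $\zeta_m$. Moving the lower boundary up to $\overline\Theta^N_\kappa$ at time $T$ only ensures $\Theta^N_{\kappa,m}(T,\overline\Theta^N_\kappa)=0$; it does nothing about the extra quadratic term $(\partial_xg^N_m\,\sigma_\kappa)^2$ coming from the completion of the square in the Hamiltonian, which is the actual obstruction. The paper closes this gap by replacing $g^N_m$ with an auxiliary $\tilde g^N_m$ (constructed in Appendix \ref{app:gtilde}) that agrees with $g^N_m$ on $\partial_P\cO_m$ but satisfies $\partial_x\tilde g^N_m(T,\overline\Theta^N_\kappa)=\partial_x\tilde g^N_m(T,m)=0$ together with the positivity condition in \eqref{eq:tildegmN}; with $g^N_m$ so replaced, the compatibility \eqref{eq:comp} holds and the Schauder argument goes through. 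Your proof needs that additional step (or an equivalent regularisation of the datum near the corner) in order to be complete.
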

\begin{proof}
Denoting 
\begin{align}\label{eq:ham}
\cH[f](t,x)\coloneqq \inf_{\xi\in\R}\big(\partial_x f(t,x)\sigma_\kappa(x)\xi+\xi^2\big),
\end{align}
and recalling 
\begin{align*}
\ud Y^\upsilon_s=\big[\mu_\kappa(Y^\upsilon_s)+\sigma_\kappa(Y^\upsilon_s)\dot\upsilon_s\big]\ud s+\sigma_\kappa(Y^\upsilon_s)\ud W_s,\quad Y_0=x,
\end{align*}
the HJB equation associated to $w$ is formally given by 
\begin{align}\label{eq:quadlinprob}
\begin{cases}
\big(\partial_t f\!+\!\cL_\kappa f\!+\!\cH[f]\!-\!\vartheta^-\big)(t,x)=0, &\text{for $(t,x)\in \cO_m$,}\\
f(t,m)=f(t,\zeta_m(t))=0,&\text{for $t\in[0,T)$},\\
f(T,x)=0,&\text{for $x\in[\overline{\Theta}^{N}_{\kappa},m]$}.
\end{cases}
\end{align}
If we find a sufficiently regular solution $f$ of \eqref{eq:quadlinprob}, then by a standard verification argument $f=w$ (cf.\ \cite[Ch.\ III.7]{fleming2006controlled}, see (7.5) therein). It then remains to show that \eqref{eq:quadlinprob} admits a classical solution with bounded derivatives in $\overline\cO_m$.

We first prove the result under the additional assumption that
\begin{align}\label{eq:g_xmn=0}
\partial_x g^N_m(T,\overline{\Theta}^{N}_{\kappa})=0.
\end{align}
The infimum in the Hamiltonian \eqref{eq:ham} is achieved in $\xi=-\tfrac{\sigma_\kappa(x)}{2}\partial_x f(t,x)$, so that $\cH[f](t,x)=-\frac14(\sigma_\kappa(x)\partial_x f(t,x))^2$. Plugging that into \eqref{eq:quadlinprob} we get
\begin{align}\label{eq:quadlinpro2}
\begin{cases}
\big(\partial_tf\!+\!\cL_\kappa f\!-\!\tfrac14(\sigma_\kappa \partial_x f)^2-\!\vartheta^-\big)(t,x)=0, &\text{for $(t,x)\in \cO_m$,}\\
f(t,m)=f(t,\zeta_m(t))=0,&\text{for $t\in[0,T)$},\\
f(T,x)=0,&\text{for $x\in[\overline{\Theta}^{N}_{\kappa},m]$}.
\end{cases}
\end{align}
This type of PDEs can be linearised via a logarithmic transformation (cf.\ \cite[Ch.\ VI]{fleming2006controlled}). Letting $\Psi(t,x)=\exp(-\frac{1}{2}f(t,x))$, we obtain
\begin{align}\label{eq:PDEPsi}
\begin{cases}
\big(\partial_t\Psi\!+\! \cL_\kappa\Psi\!+\!\tfrac{1}{2}\vartheta^-\Psi\big)(t,x)=0, &\text{for $(t,x)\in \cO_m$,}\\
\Psi(t,m)=\Psi(t,\zeta_m(t))=1,&\text{for $t\in[0,T)$},\\
\Psi(T,x)=1,&\text{for $x\in[\overline{\Theta}^{N}_{\kappa},m]$}.
\end{cases}
\end{align}

Under \eqref{eq:g_xmn=0} the compatibility condition at $\{T\}\times\{\overline \Theta^{N}_{\kappa},m\}$ holds, i.e., $\Psi|_{\partial_P\cO_m}\eqqcolon\Gamma=1$ satisfies 
\begin{align}\label{eq:comp}
\lim_{s\to T}(\partial_t \Gamma+\cL_\kappa \Gamma+\frac12\vartheta^-\Gamma)(s,x)=\frac12\vartheta^-(T,x)=0,\quad\text{for $x\in\{\overline\Theta^{N}_{\kappa},m\}$}.
\end{align}
Then \cite[Thm.\ 3.7]{friedman2008partial} yields a unique solution $\Psi\in C^{1,2;\alpha}_b(\overline\cO_m)$ of \eqref{eq:PDEPsi}, for some $\alpha\in(0,1)$. Then, there is $K=K(N,\kappa,\eps,m)>0$ such that
$\|\Psi\|_{L^\infty(\overline{\cO}_m)}+\|\partial_x\Psi\|_{L^\infty(\overline{\cO}_m)}\leq K$, where the dependence on $\eps$ is due to the definition of $\vartheta$.
Finally, $\Psi\ge 1$ can be seen as follows: applying Dynkin's formula, for all $(t,x)\in\overline\cO_m$,
\begin{align*}
\Psi(t,x)=\E_{x}\Big[&\e^{\int_0^{\rho_m}\frac{1}{2}\vartheta^-(t+\lambda,X_\lambda^{0,\kappa})\ud \lambda}\Psi(t\!+\!\rho_m,X_{\rho_m}^{0,\kappa})\!\\
&-\!\int_0^{\rho_m}\!\e^{\int_0^{s}\frac{1}{2}\vartheta^-(t+\lambda,X_\lambda^{0,\kappa})\ud \lambda}\big(\partial_t\Psi\!+\!\cL_\kappa\Psi\!+\!\tfrac{1}{2}\vartheta^-\Psi\big)(t\!+\!s,X_{s}^{0,\kappa})\,\ud s\Big]\\
=\E_{x}\Big[&\e^{\int_0^{\rho_m}\frac{1}{2}\vartheta^-(t+\lambda,X_\lambda^{0,\kappa})\ud \lambda}\Big]\ge 1.
\end{align*}

Taking $f(t,x)=-2\ln(\Psi(t,x))$ it is not hard to verify that $f\le 0$ is the unique classical solution of \eqref{eq:quadlinpro2} (uniqueness follows by uniqueness of the solution to \eqref{eq:PDEPsi}). As explained, a standard verification argument yields $w=f$. Since $w=-2\ln \Psi$ and $\Psi\ge 1$, we also obtain
$\|\partial_x w\|_{L^\infty(\overline{\cO}_m)}\leq 2\|\partial_x \Psi\|_{L^\infty(\overline{\cO}_m)}\le 2K$. This concludes the proof under the additional assumption \eqref{eq:g_xmn=0}.

When condition \eqref{eq:g_xmn=0} is not satisfied, a priori we cannot guarantee the compatibility condition \eqref{eq:comp} at $y=\zeta_m(t)$. Instead, the compatibility condition at $y=m$ always holds because $\partial_x g^N_m(T,m)=0$. In order to restore the compatibility condition, we show in Appendix \ref{app:gtilde} that there is $\tilde{g}_m^N\in C^{1,2}(\overline\cO_m)$ such that: 
\begin{align}\label{eq:tildegmN}
\begin{aligned}
& g_m^N(t,y)=\tilde{g}_m^N(t,y)\quad \text{for all $t\in[0,T]$ and $y\in\{\zeta_m(t),m\}$};\\
& \partial_x\tilde{g}_m^N(T,\overline{\Theta}^{N}_{\kappa})=\partial_x\tilde{g}_m^N(T,m)=0;\\
&\!\big(\partial_t \tilde{g}_m^N+\cL_\kappa \tilde{g}_m^N-r \tilde{g}_m^N\big)(T,y)+h_m^N(T,y)\ge 0,\quad\text{for $y\in\{\overline\Theta^{N}_{\kappa},m\}$}.
\end{aligned}
\end{align}
Since $\tilde{g}_m^N$ agrees with $g_m^N$ on the boundary $(t,\zeta_m(t))$ for $t\in[0,T)$, then \eqref{eq:limxtoz} holds with $g_m^N$ therein replaced by $\tilde{g}_m^N$. The rest of the proof can be repeated verbatim but replacing everywhere $g^N_m$ with $\tilde g^N_m$. In particular, $\Theta^{N}_{\kappa,m}(t,x)$ is replaced by $\widetilde \Theta^{N}_{\kappa,m}(t,x)\coloneqq \big(\partial_t \tilde{g}_m^N+\cL_\kappa \tilde{g}_m^N-r \tilde{g}_m^N\big)(t,x)+h_m^N(t,x)$,
so that also $\vartheta(t,x)$ is replaced by
$\tilde \vartheta(t,x)\coloneqq \e^{rT}\frac{8}{\eps\kappa^2}[\widetilde \Theta^{N}_{\kappa,m}(t,x)-\frac{2}{\eps\kappa^2}(\partial_x \tilde g^N_m(t,x)\sigma_\kappa(x))^2]$.
Then, using the second and third conditions in \eqref{eq:tildegmN} it is easy to verify that \eqref{eq:comp} is restored.
\end{proof}

Lemma \ref{lem:fntbndr} gives us a bound on the gradient of $u^{N,\eps,\delta}_{\kappa,m}$ along the parabolic boundary $\partial_P\cO_m$. Therefore we can repeat line by line the proofs of \cite[Prop.\ 3 and 4]{bovo2022variational} to obtain a bound on the $C^{1,2;\gamma}(\overline{\cO}_m)$-norm of $u^{N,\eps,\delta}_{\kappa,m}$. Since the proofs are identical, we omit further details and refer the interested reader to the original paper.
\begin{proposition}[{\cite[Prop.\ 3 and 4]{bovo2022variational}}]\label{prop:locgradp}
Let $u^{N,\eps,\delta}_{\kappa,m} $ be a solution of Problem \ref{prb:penprob}. Then, there is $M_3=M_3(N,\kappa,\eps,m)$ such that $\|\partial_x u^{N,\eps,\delta}_{\kappa,m}\|_{L^\infty(\overline\cO_m)}\le M_3$.
Moreover, for any $\beta\in(0,1)$ there is $M_4=M_4(N,\kappa,\eps,\delta,m,\beta)$ such that
$\|u^{N,\eps,\delta}_{\kappa,m} \|_{C^{0,1;\beta}(\overline{\cO}_m)}\leq M_4$.
\end{proposition}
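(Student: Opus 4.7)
The plan is to take the boundary gradient estimate of Lemma \ref{lem:fntbndr} as the starting point and replicate verbatim the two interior estimates of \cite[Prop.\ 3 and 4]{bovo2022variational}, which are essentially independent arguments: first a Bernstein-type bound for $\partial_x u^{N,\eps,\delta}_{\kappa,m}$ on $\overline{\cO}_m$, and then a classical parabolic regularity bootstrap that upgrades this gradient bound to $C^{0,1;\beta}$ regularity.

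For the $L^\infty$ bound on $\partial_x u^{N,\eps,\delta}_{\kappa,m}$, write $u=u^{N,\eps,\delta}_{\kappa,m}$ and introduce $\phi\coloneqq |\partial_x u|^2$ (or, to bypass the non-smoothness of $(\cdot)^+$, a mollified variant $\Phi_\eta(g_m^N-u)$ with $\Phi_\eta$ smooth, convex and approximating $(\cdot)^+$, after which we let $\eta\to 0$). Formally differentiating \eqref{eq:penprob} in $x$ and multiplying by $\partial_x u$ produces a parabolic inequality for $\phi$ on $\cO_m$ in which each penalisation term contributes with a favourable sign at an interior maximum. More precisely: (i) the obstacle term $-\tfrac{1}{\delta}(g_m^N-u)^+$ differentiates into $\tfrac{1}{\delta}\mathds{1}_{\{u<g_m^N\}}(\partial_x u-\partial_x g_m^N)$, which combined with the uniform bound $|\partial_x g_m^N|\le \Lambda_N$ from \eqref{eq:alphaNMbnd} yields, upon multiplication by $\partial_x u$, a dissipative contribution at a positive interior maximum of $\phi$; (ii) the gradient penalisation $\psi_\eps(|\partial_x u|^2-(\bar\alpha^N_m)^2)$ differentiates into $2\psi'_\eps(\cdot)\,\partial_x u\,\partial_{xx} u - 2\psi'_\eps(\cdot)\,\bar\alpha^N_m\partial_x\bar\alpha^N_m$, and the convexity of $\psi_\eps$ together with $\psi'_\eps\ge 0$ make the first term absorb into the second-order part with the correct sign while the second is bounded by $\Lambda_N$ and its Lipschitz constant. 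The parabolic maximum principle then forces $\phi$ to attain its maximum on $\partial_P\cO_m$, where Lemma \ref{lem:fntbndr} provides the bound $\phi\le M_2^2$. This yields $M_3=M_3(N,\kappa,\eps,m)$.

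For the $C^{0,1;\beta}$ estimate, once $\partial_x u$ is bounded by $M_3$, the right-hand side of \eqref{eq:penprob}, namely $F\coloneqq -h_m^N-\tfrac{1}{\delta}(g_m^N-u)^+ +\psi_\eps(|\partial_x u|^2-(\bar\alpha^N_m)^2)$, is bounded in $L^\infty(\overline{\cO}_m)$ by a constant depending on $(N,\kappa,\eps,\delta,m)$ (the $\delta$-dependence entering through the obstacle penalisation, and the $\eps$-dependence through $\psi_\eps$, applied to the already controlled gradient). Since $\zeta_m$ and the lateral boundary data $g_m^N$ are smooth and the coefficients $(\mu_\kappa,\sigma_\kappa)$ are smooth and uniformly non-degenerate on $\overline{\cO}_m$, classical interior-and-boundary parabolic $L^p$ theory (as used in \cite[Prop.\ 4]{bovo2022variational}) gives $\|u\|_{W^{1,2;p}(\overline{\cO}_m)}\le C(N,\kappa,\eps,\delta,m,p)$ for every $p\ge 1$. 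Choosing $p>3/(1-\beta)$ and invoking the Sobolev embedding $W^{1,2;p}(\overline{\cO}_m)\hookrightarrow C^{0,1;\beta}(\overline{\cO}_m)$ recalled after \eqref{eq:inipde} delivers the bound $\|u\|_{C^{0,1;\beta}(\overline{\cO}_m)}\le M_4$.

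The \textbf{main obstacle} is the Bernstein-type step for $\phi$: verifying that the two penalisation terms with opposite signs in \eqref{eq:penprob} do not destroy the maximum-principle argument, and that the non-smoothness of the obstacle term $(g_m^N-u)^+$ can be handled by a regularisation scheme. This is exactly the delicate verification performed in \cite[Prop.\ 3]{bovo2022variational}, and it relies essentially on the convexity of $\psi_\eps$, the sign $\psi'_\eps\ge 0$, and the a priori bound $|\partial_x g_m^N|\le \Lambda_N$. Once this step is in place, the passage to the $C^{0,1;\beta}$ bound is a standard parabolic regularity argument and poses no further difficulty.
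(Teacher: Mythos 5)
Your proposal is correct and takes essentially the same approach as the paper: the paper simply declares that, once Lemma \ref{lem:fntbndr} supplies the boundary gradient bound, the proofs of \cite[Prop.\ 3 and 4]{bovo2022variational} can be repeated line by line, and your outline reconstructs precisely the content of those cited propositions (a Bernstein-type maximum-principle estimate for $|\partial_x u|^2$ with the boundary maximum controlled by Lemma \ref{lem:fntbndr}, followed by a parabolic $L^p$ bootstrap and Sobolev embedding). One minor caveat is that the dissipativity of the differentiated obstacle term at an interior maximum of $\phi=|\partial_x u|^2$ is not immediate but requires the maximum to be large compared to $\|\partial_x g_m^N\|_\infty\le\Lambda_N$, after which the lower-order contributions can be absorbed; this is indeed the delicate verification you flag, and it is handled exactly as in the cited reference.
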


Having established the gradient bound on $\overline\cO_m$ we can proceed to prove existence and uniqueness of a solution to Problem \ref{prb:penprob} via Schaefer's fixed point theorem. The proof is given in \cite[Thm.\ 2]{bovo2022variational} and we therefore omit it here.
\begin{theorem}[{\cite[Thm.\ 2]{bovo2022variational}}]\label{thm:exisolpenprb}
There exists a unique solution $u^{N,\eps,\delta}_{\kappa,m}$ of Problem \ref{prb:penprob}.
\end{theorem}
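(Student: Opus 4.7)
The plan is to prove uniqueness first and then use Schaefer's fixed point theorem for existence. Uniqueness is already in hand from Corollary \ref{cor:unique-pen}: any classical solution of Problem \ref{prb:penprob} admits the probabilistic representation \eqref{eq:probrap} in Proposition \ref{prp:probrap1}, and since $\overline v^{N,\eps,\delta}_{\kappa,m}$ and $\underline v^{N,\eps,\delta}_{\kappa,m}$ are uniquely determined by the data, so is the solution.

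For existence, I would linearise the nonlinearity and then apply Schaefer. Given $v\in C^{0,1;\beta}(\overline\cO_m)$ define $T(v)\coloneqq u$ where $u$ is the (unique) classical solution of the linear Cauchy-Dirichlet problem
\begin{align*}
\begin{cases}
\partial_t u+\cL_\kappa u-ru=-h^N_m-\tfrac1\delta\big(g^N_m-v\big)^++\psi_\eps\big(|\partial_x v|^2-(\bar\alpha^N_m)^2\big), &\text{on }\cO_m,\\
u=g^N_m, &\text{on }\partial_P\cO_m.
\end{cases}
\end{align*}
By Assumption \ref{ass:gen1} (with $\sigma_\kappa\ge\kappa$), the operator is uniformly parabolic with smooth coefficients on $\cO_m$, the boundary $\partial_P\cO_m$ is smooth by construction of $\zeta^\kappa_m$, and the compatibility condition at the corners $\{T\}\times\{\overline\Theta^N_\kappa,m\}$ holds because $u=g^N_m$ there matches the PDE data (this is precisely what motivated the deformation of the lower boundary in Section \ref{sec:approx}). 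Then \cite[Thm.\ 3.7]{friedman2008partial} produces a unique solution $u\in C^{1,2;\alpha}(\overline\cO_m)$ with Schauder estimates, so that $T:C^{0,1;\beta}(\overline\cO_m)\to C^{1,2;\alpha}(\overline\cO_m)$ is well-defined; continuity in $v$ follows from the continuity (in fact local Lipschitz regularity in the relevant variables) of $(v,\partial_x v)\mapsto (g^N_m-v)^++\psi_\eps(|\partial_x v|^2-(\bar\alpha^N_m)^2)$ together with linearity of the solution map, and compactness from the embedding $C^{1,2;\alpha}(\overline\cO_m)\hookrightarrow C^{0,1;\beta}(\overline\cO_m)$ for $\beta<\alpha$.

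It remains to produce the a priori bound required by Schaefer's theorem, i.e., to show that the set
\begin{align*}
S\coloneqq\big\{u\in C^{0,1;\beta}(\overline\cO_m)\,\big|\, u=\lambda T(u)\text{ for some }\lambda\in[0,1]\big\}
\end{align*}
is bounded in $C^{0,1;\beta}(\overline\cO_m)$. Any $u\in S$ solves
\begin{align*}
\partial_t u+\cL_\kappa u-ru=\lambda\!\left[-h^N_m-\tfrac1\delta(g^N_m-u)^++\psi_\eps(|\partial_x u|^2-(\bar\alpha^N_m)^2)\right]\quad\text{on }\cO_m,\qquad u=\lambda g^N_m\text{ on }\partial_P\cO_m.
\end{align*}
The key point is that all the estimates developed so far are structurally insensitive to the multiplier $\lambda\in[0,1]$: comparison with sub/super-solutions yields the $L^\infty$ bound of Lemma \ref{lem:polygrow} uniformly in $\lambda$; the probabilistic/Fleming transform argument of Lemma \ref{lem:fntbndr} and Lemma \ref{lem:lbw} yields the boundary gradient bound \eqref{eq:grdb}, again uniformly in $\lambda$ because the Hamiltonian estimate \eqref{eq:lowbnHepsm} and the lower bound on $\Theta^{N}_{\kappa,m}$ are preserved when scaled by $\lambda$; and then the Bernstein-type interior gradient bound together with the Hölder bound of Proposition \ref{prop:locgradp} extend verbatim. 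This yields $\|u\|_{C^{0,1;\beta}(\overline\cO_m)}\le M_4$ uniformly in $\lambda\in[0,1]$, so that $S$ is bounded. Schaefer's fixed point theorem then delivers a fixed point of $T$, i.e., a solution of Problem \ref{prb:penprob}. The main obstacle in this scheme is verifying that the $\lambda$-dependence does not spoil the delicate gradient estimate of Lemma \ref{lem:fntbndr} near $[0,T)\times\{\zeta_m(t)\}$; this reduces, as in the proof of that lemma, to rerunning the Fleming logarithmic transform on the $\lambda$-scaled auxiliary linear-quadratic problem and checking that the compatibility conditions \eqref{eq:comp} continue to hold after replacing $g^N_m$ by $\tilde g^N_m$ (or by $\lambda \tilde g^N_m$), which is immediate from \eqref{eq:tildegmN}.
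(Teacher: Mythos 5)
Your proposal follows the same route as the paper (which simply invokes \cite[Thm.\ 2]{bovo2022variational}): uniqueness from Corollary \ref{cor:unique-pen}, and existence via Schaefer's fixed-point theorem applied to a linearised map $T$, with the a priori bound on the Schaefer set coming from Lemma \ref{lem:polygrow}, Lemma \ref{lem:fntbndr} and Proposition \ref{prop:locgradp}. The overall structure is correct and matches the paper.

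One step in the construction of $T$ is, however, not justified as stated. You claim that for \emph{arbitrary} $v\in C^{0,1;\beta}(\overline\cO_m)$ the linear Cauchy--Dirichlet problem defining $T(v)$ satisfies the corner compatibility conditions needed for \cite[Thm.\ 3.7]{friedman2008partial}, ``because $u=g^N_m$ there matches the PDE data.'' That is not true for a general $v$: at $(T,\overline\Theta^{N}_{\kappa})$, using $\Theta^{N}_{\kappa,m}(T,\overline\Theta^{N}_{\kappa})=0$, first-order compatibility requires
\begin{align*}
0=-\tfrac1\delta\big(g^N_m-v\big)^+(T,\overline\Theta^{N}_{\kappa})+\psi_\eps\big(|\partial_x v|^2-(\bar\alpha^N_m)^2\big)(T,\overline\Theta^{N}_{\kappa}),
\end{align*}
which fails, e.g., whenever $v(T,\overline\Theta^{N}_{\kappa})<g^N_m(T,\overline\Theta^{N}_{\kappa})$ and the $\psi_\eps$-term vanishes. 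The boundary deformation $\zeta^\kappa_m$ (and the modification $\tilde g^N_m$) are engineered precisely to restore compatibility in the auxiliary linear problem \eqref{eq:PDEPsi} appearing in Lemma \ref{lem:lbw}, where the right-hand side is $\tfrac12\vartheta^-\Psi$ and vanishes at the terminal corner; this is a different PDE from the one defining $T(v)$, and the argument does not transfer. The standard remedy — and effectively what is done in \cite{bovo2022variational} — is to set up $T$ via $L^p$-parabolic theory rather than Schauder theory: for $v\in C^{0,1;\beta}(\overline\cO_m)$ the linear problem has a unique solution $T(v)\in W^{1,2;p}(\cO_m)\cap C(\overline\cO_m)$ with $\|T(v)\|_{W^{1,2;p}}$ controlled by the data, no corner compatibility required; then the Sobolev embedding $W^{1,2;p}\hookrightarrow C^{0,1;\beta}$ gives compactness of $T$, and the full $C^{1,2;\gamma}(\overline\cO_m)$ regularity is lifted a posteriori at the fixed point, where the PDE itself guarantees the compatibility identity above. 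With that adjustment the rest of your argument, including the $\lambda$-uniformity of the a priori bounds, goes through.
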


\section{Penalised problem on unbounded domain}\label{sec:penalised-b}

In this section we want to pass to the limit as $m\to\infty$ and show that $u^{N,\eps,\delta}_{\kappa,m}$ converges in a suitable sense to a function $u^{N,\eps,\delta}_{\kappa}$ that solves the penalised PDE on unbounded domain. This requires two main ingredients: (i) a uniform bound for the $W^{1,2;p}_{\ell oc}$-norm of $u^{N,\eps,\delta}_{\kappa,m}$ (away from $[0,T]\times\{0\}$) and (ii) equi-continuity of functions $u^{N,\eps,\delta}_{\kappa,m}$ on $[0,T]\times[0,m]$, uniformly in $m$.

For the $W^{1,2;p}_{\ell oc}$-bounds we can rely on analogous results from \cite{bovo2022variational}: 
\begin{proposition}[{\cite[Prop.\ 5 and 6]{bovo2022variational}}]\label{prop:gradbndUa}
For any closed rectangle $\cK\subset\cO$ there is $m_\cK\in\N$ such that $\cK\subset\mathrm{int}\,\cO_{m}$ for all $m\ge m_\cK$. Let $u^{N,\eps,\delta}_{\kappa,m}$ be the unique solution of Problem \ref{prb:penprob} on $\cO_m$, with $m\ge m_\cK$. 

There is $c_1=c_1(\cK)>0$ independent of $N$, $\kappa$, $\eps$, $\delta$ and $m$, such that $\|\partial_x u^{N,\eps,\delta}_{\kappa,m}\|_{L^\infty(\cK)}\le c_1$.
Moreover, for any $p\in(3,\infty)$, we have
$\|u^{N,\eps,\delta}_{\kappa,m}\|_{W^{1,2;p}(\cK)}+\|u^{N,\eps,\delta}_{\kappa,m}\|_{C^{0,1;\beta}(\cK)}\le c_2$,
with $\beta=1-\tfrac{3}{p}$ and for a constant $c_2=c_2(\cK,\eps,\delta,p)>0$.
\end{proposition}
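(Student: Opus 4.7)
The estimates asserted in the Proposition are purely \emph{interior}, so my strategy is to set aside the Dirichlet data on $\partial_P\cO_m$ entirely, establish a uniform gradient bound by a Bernstein-type argument, and then invoke interior parabolic $L^p$ theory. First I would handle the geometry: write $\cK=[t_1,t_2]\times[x_1,x_2]$ with $t_2<T$ and $x_1>0$, and choose $m_\cK$ so that $\zeta_m\equiv 0$ on $[0,t_2]$ (which holds as soon as $1/m<T-t_2$) and $m>x_2+1$; then $\cK\subset\mathrm{int}\,\cO_m$ for $m\ge m_\cK$, with a positive distance to $\partial_P\cO_m$. Slightly enlarge $\cK$ to a closed rectangle $\cK'$ still contained in $\mathrm{int}\,\cO_m$. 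On $\cK'$, the uniform convergence $\sigma_\kappa\to\sigma$ together with $\sigma>0$ on $(0,\infty)$ yields a $\kappa$-uniform lower bound $\sigma_\kappa\ge\sigma_{\min}(\cK')>0$ for $\kappa$ small, while linear growth of $(\mu_\kappa,\sigma_\kappa)$ produces matching $\kappa$-uniform upper bounds. For $N$ large enough that $\cK'\subset A^g_{N-1}\cap A^h_{N-1}$, one has $g^N\equiv g$ and $h^N\equiv h$ on $\cK'$; since $x_2+1<m$ we also have $\xi_{m-1}\equiv 1$ there, so $g^N_m,h^N_m$ and $\bar\alpha^N_m$ coincide with $g,h,\bar\alpha$ on $\cK'$, independently of $N$ and $m$.

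For the gradient bound $\|\partial_x u^{N,\eps,\delta}_{\kappa,m}\|_{L^\infty(\cK)}\le c_1(\cK)$ I would replicate the Bernstein calculation of \cite[Prop.~5]{bovo2022variational}. Set $v\coloneqq \partial_x u^{N,\eps,\delta}_{\kappa,m}$; differentiating \eqref{eq:penprob} in $x$ yields an equation for $v$ whose nonlinear source terms have advantageous signs at an interior maximum. Indeed, the obstacle penalisation $-\tfrac1\delta(g^N_m-u)^+$ contributes $+\tfrac1\delta\mathds{1}_{\{g^N_m>u\}}$ upon differentiating in $u$, with the correct sign for the maximum principle; and $\psi_\eps(|v|^2-(\bar\alpha^N_m)^2)$ produces $2\psi_\eps'\,v\,\partial_x v$, which paired with the diffusion term $\tfrac12\sigma_\kappa^2\partial^2_{xx}v$ and $\psi_\eps'\ge 0$ can be absorbed. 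Applying the maximum principle to $\Phi\coloneqq\eta^2 v^2+\lambda u^2$ on $\cK'$, for a cut-off $\eta\in C^\infty_c(\mathrm{int}\,\cK')$ with $\eta\equiv 1$ on $\cK$ and $\lambda>0$ suitably large, the remaining terms are controlled using the a priori bound $0\le u^{N,\eps,\delta}_{\kappa,m}\le M_1(m_\cK)$ from Lemma~\ref{lem:polygrow} (uniform in all parameters), the uniform ellipticity on $\cK'$, and Lipschitz bounds on $g,h,\bar\alpha$ restricted to $\cK'$ (independent of $N,\kappa,\eps,\delta,m$). This yields $c_1$ depending only on $\cK$.

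With $\|\partial_x u^{N,\eps,\delta}_{\kappa,m}\|_{L^\infty(\cK')}\le c_1$, the right-hand side of \eqref{eq:penprob},
\[
F\coloneqq -h^N_m-\tfrac1\delta(g^N_m-u^{N,\eps,\delta}_{\kappa,m})^+ +\psi_\eps\bigl(|\partial_x u^{N,\eps,\delta}_{\kappa,m}|^2-(\bar\alpha^N_m)^2\bigr),
\]
is bounded in $L^\infty(\cK')$ by a constant depending on $\cK,\eps,\delta$ but not on $N,\kappa,m$. Interior $L^p$ parabolic regularity applied to the linear equation $(\partial_t+\cL_\kappa-r)u^{N,\eps,\delta}_{\kappa,m}=F$ (e.g.\ via \cite{krylov1980controlled}) then furnishes $\|u^{N,\eps,\delta}_{\kappa,m}\|_{W^{1,2;p}(\cK)}\le c_2(\cK,\eps,\delta,p)$, and the parabolic Sobolev embedding $W^{1,2;p}(\cK)\hookrightarrow C^{0,1;\beta}(\cK)$ with $\beta=1-3/p$ gives the H\"older bound. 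The main obstacle is keeping $c_1$ \emph{independent of $\eps$ and $\delta$} in the Bernstein step: the penalisation terms are singular in these parameters, and only the favourable signs produced by differentiating them render the argument effective. This is the crucial observation imported from \cite{bovo2022variational}, and it transfers here precisely because the estimate is local in the interior of $\cO_m$, where neither the deformed lower boundary $\zeta_m$ nor the inhomogeneous Dirichlet data $g^N_m$ enter.
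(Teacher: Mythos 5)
Your proposal follows the paper's approach essentially verbatim: localize to a compact $\cK'$ strictly inside $\cO_m$ with a cut-off, observe that on such a compact the approximations $g^N_m, h^N_m, \bar\alpha^N_m$ coincide with $g, h, \bar\alpha^N$ and the coefficients $(\mu_\kappa,\sigma_\kappa)$ admit $\kappa$-uniform ellipticity and growth bounds, then transfer the Bernstein-type gradient estimate from \cite[Prop.~5]{bovo2022variational} and finish with interior $L^p$ parabolic regularity and the embedding $W^{1,2;p}\hookrightarrow C^{0,1;\beta}$. This is exactly the route the paper takes, and the paper likewise defers the Bernstein computation to the cited reference. One caveat worth flagging: your informal sign discussion for the two penalty terms glosses over the structural difficulty the paper emphasises in the Introduction (the two penalisations carry \emph{opposite} signs, and the uniform-in-$\eps,\delta$ bound is precisely the delicate part that \cite{bovo2022variational} had to resolve); that is not a gap in your proposal since you explicitly invoke \cite[Prop.~5]{bovo2022variational} for the details, but the phrase ``favourable signs'' understates the subtlety.
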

The statement of the first part of the proposition above is slightly different from the one for \cite[Prop.\ 5]{bovo2022variational}. That is due to the fact that in \cite{bovo2022variational} we could consider closed cylinders in $[0,T]\times\R^d$. Here instead we must stay away from $T$ in order to avoid intersecintg the boundary $\zeta_m(t)$. 

A few comments are in order to clarify why the proof of \cite[Prop.\ 5]{bovo2022variational} can be repeated verbatim. 
It is clear that for any compact $\cK\subset\cO$, there exists $m_\cK\in\N$ such that $\cK\subset\mathrm{int}\cO_m$ for all $m\ge m_\cK$. Then, in the proof of \cite[Prop.\ 5]{bovo2022variational} we must replace the cut-off function $\xi$ with another cut-off function $\xi_\cB$ that is equal to one in $\cK$ and it vanishes on the boundary of an open set $\cB$ such that $\cK\subset\cB\subset \cO_{m_\cK}$. The rest of the proof is unchanged and we can choose sufficiently large $m$ and $N$ so that $g^N_m=g$, $h^N_m=h$ and $\bar{\alpha}^N_m=\bar{\alpha}^N$ on $\cB$ and $\bar{\alpha}^N$ is arbitrarily close to $\bar{\alpha}$. We also notice that, with no loss of generality, $0<\min_{z\in\overline\cB}\sigma(z)\le \sigma_\kappa(x)\le \max_{z\in\overline\cB}[\sigma(z)+1]$ and $\sup_{z\in\overline\cB}|\mu_\kappa(x)|\le 1+\sup_{z\in\overline\cB}|\mu(x)|$, by uniform convergence on compacts of $(\mu_\kappa,\sigma_\kappa)$ to $(\mu,\sigma)$. That explains why the constants $c_1,c_2$ are independent of $N$, $\kappa$, $m$.

\subsection{Equi-continuity uniformly in \texorpdfstring{$m$}{m} and \texorpdfstring{$\delta$}{d}}\label{sec:equi_m}

The second ingredient in our recipe is the equi-continuity of functions $u^{N,\eps,\delta}_{\kappa,m}$ uniformly in $m$. 
The proof of the next theorem requires some technical lemmas, which are stated and proven after the theorem for the ease of exposition. Notice that
\begin{align}\label{eq:gmNDynk}
\begin{aligned}
\E[R^{w}_{\tau}g_m^N(t\!+\!\tau,X_{\tau}^{\nu,\kappa;x})]&=\E\Big[R^w_{\rho\wedge \tau}g_m^N(t\!+\!\rho\wedge \tau,X_{\rho\wedge \tau}^{\nu,\kappa;x})\!\\
&\qquad+\!\int_{\rho\wedge \tau}^{\tau}\!R^{w}_{s}(\Theta^{N}_{\kappa,m}\!-\!h_m^N\!-\!w_s\cdot g_m^N\!+\!\dot{\nu}_s\cdot\partial_x g_m^N)(t\!+\!s,X_{s}^{\nu,\kappa;x})\,\ud s\Big],
\end{aligned}
\end{align}
for arbitrary $(w,\nu)\in\cT_t^\delta\times\cA^{\circ}_{t,x}$ and $\rho,\tau\in\cT_t$, with $\Theta^{N}_{\kappa,m}$ as in \eqref{eq:ThetaNkm}. For random times $\rho\le \tau$ we use $[\![\rho,\tau]\!]$ and $(\!(\rho,\tau)\!)$ to denote closed and open random-time intervals.
\begin{theorem}\label{thm:equic_m}
For $p>1$, there is $c_0=c_0(p,N,\kappa,\eps)>0$ such that for all $m\in\N$
\begin{align*}
\big|u^{N,\eps,\delta}_{\kappa,m}(t_1,x_1)-u^{N,\eps,\delta}_{\kappa,m}(t_2,x_2)\big|\le c_0\big(|t_2-t_1|^{\frac{\gamma}{2}\wedge\frac{p-1}{8p}}+|x_2-x_1|^{\frac{p-1}{4p}}\big),
\end{align*}
for any $(t_1,x_1),(t_2,x_2)\in\cO_m$ with $|t_2-t_1|\vee|x_2-x_1|\le 1$.\end{theorem}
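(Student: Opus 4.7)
Writing $u:=u^{N,\eps,\delta}_{\kappa,m}$ throughout, my plan is to bound the spatial and temporal oscillations of $u$ separately via the saddle-point representation of Proposition \ref{prp:probrap1}. For the spatial oscillation, fix $t$ and $x_1<x_2$, and let $(\nu^*_i,w^*_i)$ be saddle-point strategies at $(t,x_i)$ for $i=1,2$. The saddle-point inequalities $\cJ_{t,x_1}(\nu^*_1,w^*_2)\le u(t,x_1)$ and $u(t,x_2)\le \cJ_{t,x_2}(\nu^*_1,w^*_2)$ give
\begin{align*}
u(t,x_2)-u(t,x_1)\le \cJ^{N,\kappa,\eps,\delta,m}_{t,x_2}(\nu^*_1,w^*_2)-\cJ^{N,\kappa,\eps,\delta,m}_{t,x_1}(\nu^*_1,w^*_2),
\end{align*}
and analogously in the reverse direction, reducing the task to estimating $|\cJ_{t,x_2}(\nu,w)-\cJ_{t,x_1}(\nu,w)|$ for a single admissible pair $(\nu,w)$. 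A preliminary energy bound arises from $H^{N,\eps}_m(\cdot,\cdot,y)\ge\tfrac{\eps}{4}|y|^2$ (cf.\ \eqref{eq:lowbnHepsm}) combined with the saddle inequality $\cJ(\nu^*,0)\le u\le K_3(1+|x|^2)$ from Lemma \ref{lem:polygrow}:
\begin{align*}
\E\Big[\!\int_0^{\rho_m}\!|\dot\nu^*_s|^2\,\ud s\Big]\le \tfrac{4\e^{rT}}{\eps}K_3(1+|x_1|^2),
\end{align*}
uniformly in $m,\delta$.

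\textbf{Spatial step.} Coupling $X^i:=X^{\nu,\kappa;x_i}$ via a common Brownian motion and the common control $\nu$, I would split $\cJ_{t,x_2}-\cJ_{t,x_1}$ into (i) the contribution up to $\rho^{(1)}_m\wedge\rho^{(2)}_m$ and (ii) the contribution between the two exit times. Piece (i) is controlled by standard SDE stability (coefficients $\mu_\kappa,\sigma_\kappa$ are bounded, with $\mu_\kappa$ Lipschitz and $\sigma_\kappa$ $\gamma$-H\"older, $\gamma>1/2$) together with the uniform-in-$m$ Lipschitz constants $\bar\alpha^N,L_N,\Lambda_N$ of $g^N_m,h^N_m,\bar\alpha^N_m$ from \eqref{eq:thetaN}--\eqref{eq:alphaNMbnd}. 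For piece (ii) I would apply the probabilistic reduction signalled in the introduction: the Girsanov transform with density $\exp(-\int_0^\cdot\tfrac{\mu_\kappa+\dot\nu}{\sigma_\kappa(X^i)}\,\ud W-\tfrac12\int_0^\cdot(\tfrac{\mu_\kappa+\dot\nu}{\sigma_\kappa(X^i)})^2\,\ud s)$ followed by the time change $s\mapsto\int_0^s\sigma_\kappa^2(X^i_u)\,\ud u$ (well-defined because $\sigma_\kappa\ge\kappa$) reduces each $X^i$ to a Brownian motion starting from $x_i$, whose hitting-time tails at the opposing level are explicit in $|x_2-x_1|$. Pairing bounded $L^p$-moments of the inverse Girsanov density (finite thanks to $|\mu_\kappa/\sigma_\kappa|\le\kappa^{-2}$ and the $L^2$ energy bound on $\dot\nu$) against the $L^{p/(p-1)}$-norm of the hitting-time quantity via H\"older then produces the exponent $(p-1)/(4p)$, with the constant depending only on $(p,N,\kappa,\eps)$.

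\textbf{Temporal step and main obstacle.} With $x_1=x_2=:x$ and $h:=t_2-t_1\in(0,1]$, I would use the DPP for the penalised game, applied at $\tau=h\wedge\rho_m$, to express $u(t_1,x)$ in terms of $u(t_2,\cdot)=u(t_1+h,\cdot)$ and decompose $u(t_2,x)-u(t_1,x)$ into: (a) the short-time payoff on $[\![0,h\wedge\rho_m]\!]$, bounded by $O(h)$ via the energy bound together with boundedness of $g^N_m,h^N_m,\bar\alpha^N_m$; (b) a time-translation contribution $O(h^{\gamma/2})$ from the $C^{1,2;\gamma}_{\ell oc}$-regularity of $g$ (Assumption \ref{ass:gen2}(i)) acting on the terminal/boundary data; and (c) a spatial displacement term $\E[u(t_2,X^{\nu^*,\kappa;x}_h)-u(t_2,x)]$ to which the spatial H\"older estimate applies, combined with $\E[|X^{\nu^*,\kappa;x}_h-x|^q]^{1/q}\le C\sqrt h$ (from boundedness of $\mu_\kappa,\sigma_\kappa$), giving $O(h^{(p-1)/(8p)})$. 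The temporal exponent $\gamma/2\wedge(p-1)/(8p)$ follows. The central difficulty is piece (ii) of the spatial estimate: classical PDE techniques on the penalised problem either blow up as $\delta\to 0$ (through $\delta^{-1}(g^N_m-u)^+$ in \eqref{eq:penprob}) or carry the $m$-dependent bound $M_3(N,\kappa,\eps,m)$ on $\partial_x u$ from Proposition \ref{prop:locgradp}, and neither suffices here. The Girsanov--time-change reduction to Brownian motion is what extracts the $m,\delta$-independent constants, and the specific shape of the H\"older exponents reflects the interplay between $L^p$-moments of the change of measure and Brownian hitting-time tails.
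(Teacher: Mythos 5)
Your reduction to estimating $\cJ_{t,x_2}(\nu,w)-\cJ_{t,x_1}(\nu,w)$ for a single pair, and the Girsanov--time-change strategy near the boundary, are the right probabilistic skeleton and mirror the paper. But there is a genuine gap in how you justify the $L^p$-moment bound on the inverse Girsanov density, and it is precisely the point where the $m,\delta$-independent constant must come from.

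Your Girsanov kernel is $\phi_s=-(\mu_\kappa(X^i_s)+\dot\nu_s)/\sigma_\kappa(X^i_s)$, and you claim finite $L^p$-moments of $\ud\P/\ud\Q$ ``thanks to $|\mu_\kappa/\sigma_\kappa|\le\kappa^{-2}$ and the $L^2$ energy bound on $\dot\nu$.'' This does not work: controlling $\E^\Q[(\ud\P/\ud\Q)^p]$ requires a \emph{pathwise} bound on $\int_0^{T-t}\phi_s^2\,\ud s$, not a bound in $L^1(\P)$. Indeed, writing out $\E^\Q[(\ud\P/\ud\Q)^p]=\E^\Q\big[\exp\big(-\!\int p\phi\,\ud W^\Q-\tfrac12\!\int(p\phi)^2\big)\exp\big(\tfrac{p(p-1)}{2}\!\int\phi^2\big)\big]$, the second exponential factor is only controlled if $\int\phi^2$ is almost surely bounded by a deterministic constant; an $L^2$-in-expectation bound on $\dot\nu$ gives nothing here (and, incidentally, your stated energy bound $\E[\int|\dot\nu^*|^2]\le C\eps^{-1}(1+|x_1|^2)$ is also $x$-dependent, which would poison the $c_0$ claimed in the theorem). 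The paper fixes this by introducing the cut-off stopping time $\tau_{M_6}$ from \eqref{eq:taum6}, restricting the Girsanov kernel to $[\![\rho_m^0,(\rho_m^0+\ell)\wedge\tau_{M_6}]\!]$, which yields the pathwise bound \eqref{eq:phinov}: $\tfrac12\int\phi^2\le T\kappa^{-4}+M_6\kappa^{-2}$, with $M_6=M_6(N,\kappa,\eps)$ only. On the complementary event $\{\rho_m^0+\ell>\tau_{M_6}\}$, it is not the Girsanov argument that saves the day but a separate structural observation (Lemma \ref{lem:equicon_m1}, especially \eqref{eq:equic_mK}): because $H^{N,\eps}_m(\cdot,\cdot,y)\ge\tfrac{\eps}{8}|y|^2+\tfrac{\eps}{8}|y|^2$ and the energy integral has exceeded $M_6$, the accumulated Hamiltonian dominates the bounded $\Theta^N_{\kappa,m}$ term and the relevant integral is nonnegative, so that event contributes nothing. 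This two-pronged splitting (cut-off + Hamiltonian-dominates) is the actual mechanism that makes the constants independent of $m$, $\delta$ and $x$, and it is missing from your proposal.

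Your temporal step is a genuinely different route: the paper compares $(t+s,X_s)$ with $(t+\eta+s,X_s)$ for a \emph{common} path $X$, so the geometry of $\cO_m$ immediately orders the exit times, and it never invokes the spatial estimate. Your DPP-at-$h\wedge\rho_m$ + spatial-H\"older-applied-to-displacement plan could in principle reproduce the exponent $\gamma/2\wedge(p-1)/(8p)$ (since $(\sqrt h)^{(p-1)/(4p)}=h^{(p-1)/(8p)}$), but you still need uniform control of the boundary event $\{\rho_m<h\}$, and you again run into the same Girsanov-moment issue there; once Step 1 is repaired with the $\tau_{M_6}$ mechanism, the paper's direct temporal argument is both simpler and uses the same machinery.
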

\begin{proof}
Fix $m\in\N$. Let $(t,x)\in\cO_m$ and take $\eta\in(0,1)$ such that $x+\eta\le m$. Set $u=u^{N,\eps,\delta}_{\kappa,m}$ and $\cJ_{t,x}=\cJ_{t,x}^{N,\kappa,\eps,\delta,m}$ for notational simplicity. Now we divide the proof into four steps. In Step 1 and Step 2, we prove a lower and an upper bound for $u(t,x+\eta)-u(t,x)$, respectively. In Step 3 and Step 4, we prove an upper and a lower bound for $u(t+\eta,x)-u(t,x)$, respectively. The controlled dynamics $(t+s,X^{\nu,\kappa}_s)$ is always extended to $s\in[0,\infty)$ with zero control after the exit from $\cO_m$.
\medskip

{\bf Step 1.} Let $\nu\in\cA_{t,x+\eta}^\circ$ be optimal for $u(t,x+\eta)$. Recalling \eqref{eq:rhom} we denote $\rho^\eta_m\coloneqq\rho_m(t,x+\eta;\nu,\kappa)$ and $\rho^0_m=\rho_m(t,x;\nu,\kappa)$. For the ease of exposition, we denote $X^{\nu,\kappa;x+\eta}$ and $X^{\nu,\kappa;x}$ by $X^\eta$ and $X^0$, respectively. We also use the compact notation $\bar\rho=\rho_m^0\wedge\rho_m^\eta$. 

Letting $w\in\cT_t^\delta$ be optimal for $u(t,x)$ and extending it to be $w_s=0$ for $s\in(\!(\rho_m^0,\infty)\!)$ we have
\begin{align}\label{eq:umtxeta1}
\begin{aligned}
u(t,x\!+\!\eta)&\ge \cJ_{t,x+\eta}(\nu,w)\\
&=\E\Big[R_{\bar{\rho}}^{w}g_m^N(t\!+\!\bar{\rho},X_{\bar{\rho}}^{\eta})\!+\!\!\int_0^{\bar{\rho}}\!\!R^{w}_s\big(h_m^N\!+\!w_sg_m^N\!+\! H_{m}^{N,\eps}(\cdot,\dot{\nu}_s)\big)(t\!+\!s,X_s^{\eta})\,\ud s\\
&\qquad +\!\int_{\bar \rho}^{\rho_m^\eta}\!R_s^{ w}\big(\Theta^{N}_{\kappa,m}\!+\!\dot{\nu}_s\cdot\partial_xg_m^N\!+\!H_{m}^{N,\eps}(\cdot,\dot{\nu}_s)\big)(t\!+\!s,X_s^{\eta})\ud s\Big],
\end{aligned}
\end{align}
where we used \eqref{eq:gmNDynk} with $(\tau,\rho,X^{\nu,\kappa;x})$ therein replaced by $(\rho_m^\eta,\rho_m^0,X^{\eta})$.
Similarly, we have
\begin{align}\label{eq:umtxeta2}
\begin{aligned}
u(t,x)
&\le \cJ_{t,x}(\nu,w)\\
&=\E\Big[R^{w}_{\bar{\rho}}g_m^N(t\!+\!\bar{\rho},X_{\bar{\rho}}^{0})\!+\!\!\int_0^{\bar{\rho}}\!\!R^{w}_s\big(h_m^N\!+\!w_sg_m^N\!+\! H_{m}^{N,\eps}(\cdot,\dot{\nu}_s)\big)(t\!+\!s,X_s^{0})\ud s\\
&\qquad +\!\int_{\bar \rho}^{\rho_m^0}\!R_s^w \Theta^{N}_{\kappa,m}(t\!+\!s,X_s^{0}) \ud s\Big],
\end{aligned}
\end{align}
where, on $\{\rho^\eta_m<\rho^0_m\}$ we have $\dot \nu_s=0$ and $ H_{m}^{N,\eps}(t+s,X_s^{0},\dot{\nu}_s)=H_{m}^{N,\eps}(t+s,X_s^{0},0) =0$ for $s\in(\!(\bar\rho,\rho^0_m]\!]$.

Combining \eqref{eq:umtxeta1} and \eqref{eq:umtxeta2} yields
\begin{align}\label{eq:equicspadiff}
\begin{aligned}
u(t,x+\eta)-u(t,x)&\ge \E\Big[R_{\bar{\rho}}^wg_m^N(t+\bar{\rho},X_{\bar{\rho}}^\eta)-R_{\bar{\rho}}^wg_m^N(t+\bar{\rho},X_{\bar{\rho}}^0)\\
&\qquad+ \int_0^{\bar{\rho}}R_s^w\big(h_m^N(t+s,X_s^\eta)-h_m^N(t+s,X_s^0)\big)\ud s
\\
&\qquad +\int_0^{\bar{\rho}}R_s^w\big(H_{m}^{N,\eps}(t+s,X_s^\eta,\dot{\nu}_s)-H_{m}^{N,\eps}(t+s,X_s^0,\dot{\nu}_s)\big)\ud s\\
&\qquad +\int_0^{\bar{\rho}}R_s^ww_s\big(g_m^N(t+s,X_s^\eta)-g_m^N(t+s,X_s^0)\big)\ud s\\
&\qquad+\mathds{1}_{\{\rho_m^\eta\ge\rho_m^0\}}\int_{\rho_m^0}^{\rho_m^\eta}R_s^w\big(\Theta^{N}_{\kappa,m}+\dot{\nu}_s\cdot\partial_xg_m^N+H_{m}^{N,\eps}(\cdot,\dot{\nu}_s)\big)(t+s,X_s^\eta)\ud s\\
&\qquad -\mathds{1}_{\{\rho_m^0\ge\rho_m^\eta\}}\int_{\rho_m^\eta}^{\rho_m^0}R_s^w\Theta^{N}_{\kappa,m}(t+s,X_s^0)\ud s\Big].
\end{aligned}
\end{align}
By definition of $H_{m}^{N,\eps}$ (see \eqref{eq:hmltn}) and since $x\mapsto (\bar{\alpha}^N_m)^2(t,x)$ is a $L_N$-Lipschitz function, 
\begin{align*}
&\big|H_{m}^{N,\eps}(t\!+\!s,X_s^\eta,\dot{\nu}_s)\!-\!H_{m}^{N,\eps}(t\!+\!s,X_s^0,\dot{\nu}_s)\big|\\
&\le \tfrac{1}{\eps}\big|(\bar{\alpha}^N_m)^2(t\!+\!s,X_s^\eta)\!-\!(\bar{\alpha}^N_m)^2(t\!+\!s,X_s^0)\big|\le \tfrac{1}{\eps}L_N|X_s^\eta-X_s^0|.
\end{align*}
Functions $h_m^N$ and $g_m^N$ are Lipschitz with constants $L_N$ and $\Lambda_N$, respectively. Using $R^w_s\le 1$, yields
\begin{align}\label{eq:equicngha}
\begin{aligned}
&\E\Big[R_{\bar{\rho}}^wg_m^N(t+\bar{\rho},X_{\bar{\rho}}^\eta)-R_{\bar{\rho}}^wg_m^N(t+\bar{\rho},X_{\bar{\rho}}^0)+ \int_0^{\bar{\rho}}R_s^w\big(h_m^N(t+s,X_s^\eta)-h_m^N(t+s,X_s^0)\big)\ud s
\\
&\quad +\int_0^{\bar{\rho}}R_s^w\big(H_{m}^{N,\eps}(t+s,X_s^\eta,\dot{\nu}_s)-H_{m}^{N,\eps}(t+s,X_s^0,\dot{\nu}_s)\big)\ud s\Big]\\
&\ge- \Lambda_N\E\Big[\big|X_{\bar{\rho}}^\eta-X_{\bar{\rho}}^0\big|\Big]-(1+\eps^{-1})L_N\int_0^T\E\Big[ \big|X_{s\wedge\bar{\rho}}^\eta-X_{s\wedge\bar{\rho}}^0\big| \Big]\ud s\\
&\ge- \big(\Lambda_N+(1+\eps^{-1})L_N T\big)\sup_{s\in[0,T]}\E\Big[\big|X_{s\wedge\bar{\rho}}^\eta-X_{s\wedge\bar{\rho}}^0\big|\Big].
\end{aligned}
\end{align}

For the fourth term under expectation in \eqref{eq:equicspadiff}
\begin{align*}
&\E\Big[\int_0^{\bar{\rho}}R_s^ww_s\big(g_m^N(t+s,X_s^\eta)-g_m^N(t+s,X_s^0)\big)\ud s\Big]\ge- \E\Big[\Lambda_N\int_0^{\bar{\rho}}R_s^w(r+w_s)|X_s^\eta-X_s^0|\ud s\Big].
\end{align*}
Using integration by parts with $-\ud R^w_s=R^w_s(r+w_s)\ud s$ and Tanaka's formula yields 
\begin{align*}
&\E\Big[\int_0^{\bar{\rho}}R_s^ww_s\big(g_m^N(t+s,X_s^\eta)-g_m^N(t+s,X_s^0)\big)\ud s\Big] \\
&\ge \Lambda_N\E\Big[R_{\bar{\rho}}^w\big|X_{\bar{\rho}}^\eta-X_{\bar{\rho}}^0\big|\!-\!\eta\!-\!\int_0^{\bar{\rho}}\!\!R_s^w\sign(X_s^\eta\!-\!X_s^0)(\mu_\kappa(X_s^\eta)\!-\!\mu_\kappa(X_s^0))\ud s\!-\!\int_0^{\bar{\rho}}\!\!R_s^w\ud L^0_s(X^\eta\!-\!X^0)\Big],
\end{align*}
where $(L^0_t)_{t\in[0,T]}$ is the local time at zero of the continuous semi-martingale $(X^\eta_t-X^0_t)_{t\in[0,T]}$. Using \cite[Cor.\ 3 of Thm.\ 75]{protter2005stochastic}, we know
\begin{align}\label{eq:loctime}
\begin{aligned}
L_{\bar \rho}^0(X^\eta-X^0)&=\lim_{\lambda\to0}\frac{1}{2\lambda}\int_0^{\bar \rho}\mathds{1}_{\{0\le |X^\eta_s-X^0_s|\le \lambda\}}(\sigma_\kappa(X^\eta_s)-\sigma_\kappa(X^0_s))^2\ud s\\
&\le \lim_{\lambda\to0}\frac{1}{2\lambda}\int_0^{\bar \rho}\mathds{1}_{\{0\le |X^\eta_s-X^0_s|\le \lambda\}}D_\gamma^2|X^\eta_s-X^0_s|^{2\gamma}\ud s\le \tfrac12 TD^2_\gamma\lim_{\lambda\to 0}\lambda^{2\gamma-1}=0,
\end{aligned}
\end{align}
where the first inequality holds because $\sigma_\kappa$ is $\gamma$-H\"older continuous with parameter $\gamma > 1/2$ by Assumption \ref{ass:gen1}.
Therefore, we have
\begin{align}\label{eq:equicngha1}
\begin{aligned}
&\E\Big[\int_0^{\bar{\rho}}R_s^ww_s\big(g_m^N(t+s,X_s^\eta)-g_m^N(t+s,X_s^0)\big)\ud s\Big]\\
&\ge-\Lambda_N\Big(\eta+\E\Big[\big|X_{\bar{\rho}}^\eta-X_{\bar{\rho}}^0\big| \Big]+\int_0^T\E\Big[\big|\mu_\kappa(X_{s\wedge\bar{\rho}}^\eta)-\mu_\kappa(X_{s\wedge\bar{\rho}}^0)\big|\Big]\ud s \Big)\\
&\ge-\Lambda_N\Big(\eta+\E\Big[\big|X_{\bar{\rho}}^\eta-X_{\bar{\rho}}^0\big| \Big]+D_1\int_0^T\E\Big[\big|X_{s\wedge\bar{\rho}}^\eta-X_{s\wedge\bar{\rho}}^0\big|\Big]\ud s \Big)\\
&\ge-\Lambda_N\Big(\eta+(1+TD_1)\sup_{s\in[0,T]}\E\Big[\big|X_{s\wedge\bar{\rho}}^\eta-X_{s\wedge\bar{\rho}}^0\big|\Big] \Big).
\end{aligned}
\end{align}

Plugging \eqref{eq:equicngha} and \eqref{eq:equicngha1} into \eqref{eq:equicspadiff} yields
\begin{align}\label{eq:equicm_a01}
\begin{aligned}
u(t,x\!+\!\eta)\!-\!u(t,x)&\ge -\Lambda_N\eta\!-\!\Big(\Lambda_N\!+\!(1+\eps^{-1})L_NT\!+\!\Lambda_N(1+TD_1)\Big)\sup_{s\in[0,T]}\E\Big[\big|X_{s\wedge\bar{\rho}}^\eta\!-\!X_{s\wedge\bar{\rho}}^0\big|\Big]\\
&\quad+\E\Big[\mathds{1}_{\{\rho_m^\eta\ge\rho_m^0\}}\int_{\rho_m^0}^{\rho_m^\eta}\!\!R_s^w\big(\Theta^{N}_{\kappa,m}+\dot{\nu}_s\cdot\partial_xg_m^N+H_{m}^{N,\eps}(\cdot,\dot{\nu}_s)\big)(t+s,X_s^\eta)\ud s\\
&\qquad\quad -\mathds{1}_{\{\rho_m^0\ge\rho_m^\eta\}}\int_{\rho_m^\eta}^{\rho_m^0}\!\!R_s^w \Theta^{N}_{\kappa,m}(t+s,X_s^0)\ud s\Big].
\end{aligned}
\end{align}
Now we need lower bounds on the remaining two integrals. Let $\beta,\ell>0$ be arbitrary constants, set $M_5=C_\Theta+2\eps^{-1}\Lambda_N^2$ and $M_6= 8\e^{rT}\eps^{-1}TM_5$ (with $C_\Theta$ as in \eqref{eq:ThetaNkmbnd}) and let 
\begin{align}\label{eq:taum6}
\tau_{M_6}\coloneqq\inf\Big\{s\ge \rho^0_m \,\Big|\,\int_{\rho^0_m}^s|\dot \nu_\lambda|^2\ud \lambda>M_6\Big\}\wedge(T-t).
\end{align}
We claim here and we will prove it in Lemmas \ref{lem:equicon_m} and \ref{lem:equicon_m1} that
\begin{align}\label{eq:equicon_1m}
\begin{aligned}
&\mathds{1}_{\{\rho_m^\eta\ge\rho_m^0\}}\!\!\int_{\rho_m^0}^{\rho_m^\eta}\!\!R_s^w\big(\Theta^{N}_{\kappa,m}+\dot{\nu}_s\cdot\partial_xg_m^N+H_{m}^{N,\eps}(\cdot,\dot{\nu}_s)\big)(t+s,X_s^\eta)\ud s \\
&\ge-M_5T\Big(\tfrac{\ell}{T}\!+\!\mathds{1}_{\big\{X^\eta_{\rho_m^0}-X^0_{\rho_m^0}\ge \beta\big\}}\mathds{1}_{\{\rho_m^\eta\ge\rho_m^0\}}\!+\!\mathds{1}_{\{\rho_m^0+\ell \le \tau_{M_6}\wedge\rho^\eta_m\}}\mathds{1}_{\big\{\inf_{0\leq s\le \ell}\big(X_{\rho_m^0+s}^{\eta}-X_{\rho_m^0}^{\eta}\big)>-\beta\big\}}\Big)
\end{aligned}
\end{align}
and
\begin{align}\label{eq:equicon_2m}
\begin{aligned}
\mathds{1}_{\{\rho_m^0\ge\rho_m^\eta\}}\int_{\rho_m^\eta}^{\rho_m^0}\!\!R_s^w \Theta^{N}_{\kappa,m}(t+s,X_s^0)\ud s&\le \!C_\Theta\ell \!+\!C_\Theta T\mathds{1}_{\big\{X_{\rho_m^\eta}^{\eta}-X_{\rho_m^\eta}^{0}\ge \beta\big\}}\mathds{1}_{\{\rho_m^0\ge\rho_m^\eta\}}\\
&\quad+\!C_\Theta T\mathds{1}_{\big\{\sup_{0\leq s\leq \ell}\big(X_{\rho_m^\eta+s}^{0}-X_{\rho_m^\eta}^{0}\big)\le \beta\big\}}\mathds{1}_{\{\rho^0_m\ge \rho^\eta_m+\ell\}}.
\end{aligned}
\end{align}

From now on $c_0>0$ is a constant that may vary from line to line and it may depend on $N$, $\kappa$ and $\eps$ but is independent of $\delta$ and $m$. Combining \eqref{eq:equicon_1m} and \eqref{eq:equicon_2m} with \eqref{eq:equicm_a01}, we get
\begin{align}\label{eq:equicm_a}
\begin{aligned}
u(t,x\!+\!\eta)\!-\!u(t,x)&\ge -c_0\Big[\ell\!+\!\eta\!+\!\sup_{s\in[0,T]}\E\big[|X_{s\wedge\bar{\rho}}^\eta\!-\!X_{s\wedge\bar{\rho}}^0|\big]\!+\!\P\Big(X^\eta_{\bar\rho}\ge X^0_{\bar\rho}+\beta\Big)\\
&\qquad\quad+\!\P\Big(\inf_{0\leq s\le \ell}\big(X_{\rho_m^0+s}^{\eta}\!-\!X_{\rho_m^0}^{\eta}\big)>-\beta,\rho_m^0+\ell \le \tau_{M_6}\wedge\rho^\eta_m\Big)\\
&\qquad\quad+\P\Big(\sup_{0\leq s\leq \ell}\big(X_{\rho_m^\eta+s}^{0}-X_{\rho_m^\eta}^{0}\big)\le \beta,\rho^0_m\ge \rho^\eta_m+\ell\Big)\Big].
\end{aligned}
\end{align}

In the next paragraphs, we study separately the terms on the right-hand side of \eqref{eq:equicm_a}. By It\^o-Tanaka formula and recalling $L_{s}^0(X^\eta\!-\!X^0)=0$ for $s\in[\![0,\bar{\rho}]\!]$ (cf.\ \eqref{eq:loctime}), we get 
\begin{align*}
\E\big[|X_{s\wedge\bar\rho}^\eta\!-\!X_{s\wedge\bar\rho}^0|\big]=\eta\!+\!\E\Big[\!\int_0^{s\wedge\bar{\rho}}\!\!\sign(X_t^\eta\!-\!X_t^0)\big(\mu_\kappa(X_t^\eta)\!-\!\mu_\kappa(X_t^0)\big)\ud t\Big]
\le \eta\!+\!D_1\int_0^{s}\!\! \E\big[|X_{t\wedge\bar\rho}^\eta\!-\!X_{t\wedge\bar\rho}^0|\big]\ud t
\end{align*}
where we used the Lipschitz property of $\mu_\kappa$. Gronwall's lemma yields
\begin{align}\label{eq:equicm_b}
\E\big[|X_{s\wedge\bar{\rho}}^\eta-X_{s\wedge\bar{\rho}}^0|\big]\le C_1 \eta ,\quad \text{for all $s\in[0,T]$},
\end{align}
where $C_1>0$ is a constant independent of $\eta$, $s$ and the control $\nu$. Similarly, by Markov's inequality
\begin{align}\label{eq:equicm_c}
\begin{aligned}
\P\Big(X^\eta_{\bar\rho}\ge X^0_{\bar\rho}\!+\!\beta\Big)\leq \P\Big(|X_{\bar{\rho}}^\eta-X_{\bar{\rho}}^0|\ge \beta\Big)\leq \tfrac{1}{\beta}\E\Big[|X_{\bar{\rho}}^\eta-X_{\bar{\rho}}^0|\Big]\leq C_1 \tfrac{\eta}{\beta}.
\end{aligned}
\end{align}

The term on the second line of \eqref{eq:equicm_a} is bounded using a measure-change and a time-change. For the measure-change let $(\phi_s)_{s\in[0,T]}$ be the adapted process
\begin{align}\label{eq:phichangemeas}
\phi_s\coloneqq-\frac{\mu_\kappa(X_s^\eta)+\dot{\nu}_s}{\sigma_\kappa(X_s^\eta)}\mathds{1}_{\{\rho_m^0\le s \le (\rho_m^0+\ell)\wedge\tau_{M_6}\}}
,\quad\text{for $s\in[0,T]$}.
\end{align}
Using $|\mu_\kappa(x)/\sigma_\kappa(x)|\le \kappa^{-2}$ and $|(\sigma_\kappa)^{-1}(x)|\le \kappa^{-1}$ for all $x\in[0,\infty)$, we have, $\P$-a.s.,
\begin{align}\label{eq:phinov}
\begin{aligned}
\frac12\int_0^{T-t}\!\!\phi_s^2\,\ud s
\le T\kappa^{-4}+\kappa^{-2}\int_{\rho^0_m}^{\tau_{M_6}\vee \rho^0_m}|\dot{\nu}_s|^2\ud s\le T\kappa^{-4}+M_6\kappa^{-2}.
\end{aligned}
\end{align}
Then, the Radon-Nikodym derivative
$\frac{\ud\Q}{\ud\P}|_{\cF_{T-t}}=\e^{\int_0^{T-t}\phi_s\,\ud W_s-\frac{1}{2}\int_0^{T-t}\phi_s^2\ud s}$, defines a measure-change and 
$W^\Q_s=W_s-\int_0^s \phi_\lambda\ud \lambda$ is a $\Q$-Brownian motion for $s\in[0,T-t]$. Notice that on the event $\{\tau_{M_6}\ge \rho^0_m+\ell\}$ we have for every $s\in[0,\ell]$,
\begin{align*}
X_{\rho_m^0+s}^\eta-X_{\rho_m^0}^\eta&=\int_{\rho_m^0}^{\rho_m^0+s}\!\!\big(\mu_\kappa(X_v^{\eta})\!+\!\dot\nu_v\big)\ud v\!+\!\int_{\rho_m^0}^{\rho_m^0+s}\!\!\sigma_\kappa(X_v^{\eta})\ud W_v=\int_{\rho_m^0}^{\rho_m^0+s}\!\!\sigma_\kappa(X_v^{\eta})\ud W^\Q_v.
\end{align*}
For $p>1$, we can rewrite the term on the second line of \eqref{eq:equicm_a} as
\begin{align}\label{eq:equicm_d}
\begin{aligned}
&\P\Big(\rho_m^0+\ell \le \tau_{M_6}\wedge\rho^\eta_m,\inf_{\rho^0_m\leq s\le\rho^0_m+ \ell}\Big(\int_{\rho_m^0}^{s}\!\!\big(\mu_\kappa(X_v^{\eta})\!+\!\dot\nu_v\big)\ud v\!+\!\int_{\rho_m^0}^{s}\!\!\sigma_\kappa(X_v^{\eta})\ud W_v\Big)>-\beta\Big)
\\
&=\E^{\Q}\Big[\Big(\tfrac{\ud \P}{\ud \Q}\Big|_{\cF_{T-t}}\Big)\mathds{1}_{\{\rho_m^0+\ell \le \tau_{M_6}\wedge\rho^\eta_m\}}\mathds{1}_{\big\{\inf_{0\leq s\leq \ell}\int_{\rho_m^0}^{\rho_m^0+s}\sigma_\kappa(X_v^{\eta})\ud W^\Q_v>-\beta\big\}}\Big]\\
&\leq\E^{\Q}\Big[\Big(\tfrac{\ud \P}{\ud \Q}\Big|_{\cF_{T-t}}\Big)^p\Big]^{\frac{1}{p}}\Q\Big(\inf_{0\leq s\leq \ell}\int_{\rho_m^0}^{\rho_m^0+s}\sigma_\kappa(X_v^{\eta})\ud W^\Q_v>-\beta\Big)^{\frac{p-1}{p}},
\end{aligned}
\end{align}
where $X^\eta_s$ is extended with $\dot \nu_s=0$ for $s\!\in\![\![\rho^\eta_m,\infty)\!)$. Setting $C_2\!=\!T\frac{1}{\kappa^{4}}\!+\!M_6\frac{1}{\kappa^{2}}$, from \eqref{eq:phinov} we get
\begin{align}\label{eq:equicm_d1}
\begin{aligned}
\E^{\Q}\Big[\Big(\tfrac{\ud \P}{\ud \Q}\big|_{\cF_{T-t}}\Big)^p\Big]&=\E^{\Q}\Big[\e^{-\int_0^{T-t}p \phi_s\ud W^\Q_s-\tfrac{p}{2}\int_0^{T-t}(\phi_s)^2\ud t}\Big]\\
&= \E^{\Q}\Big[\e^{-\int_0^{T-t}p \phi_s\ud W^\Q_s-\tfrac{1}{2}\int_0^{T-t}(p\phi_s)^2\ud t}
\e^{\tfrac{p}{2}(p-1)\int_0^{T-t}(\phi_s)^2\ud t}\Big]\\
&\le \e^{C_2p^2}\E^{\Q}\Big[\e^{-\int_0^{T-t}p \phi_s\ud W^\Q_s-\tfrac{1}{2}\int_0^{T-t}(p\phi_s)^2\ud t}
\Big]
\le\e^{C_2p^2}.
\end{aligned}
\end{align} 

By standard time-change (\cite[Thm.\ 8.2]{baldi2017stochastic}) we have $\int_{\rho_m^0}^{\rho_m^0+s}\sigma_\kappa(X^\eta_v)\ud W_v^\Q=B^\Q_{A_s}$, for $s\ge 0$, where $A_{s}\coloneqq\int_{\rho_m^0}^{\rho_m^0+s}(\sigma_\kappa(X^\eta_v))^2\ud v$ and $B^\Q$ is another $\Q$-Brownian motion (for a time-changed filtration). Since $\sigma_\kappa(x)\ge \kappa$, then $A_\ell\ge \kappa^2 \ell$. It follows that
\begin{align}\label{eq:equicm_d2}
\begin{aligned}
&\Q\Big(\inf_{0\leq s\leq \ell}\int_{\rho_m^0}^{\rho_m^0+s}\sigma_\kappa(X_v^{\eta})\ud W^\Q_v>-\beta\Big)=\Q\Big(\inf_{0\leq s\leq A_\ell} B^\Q_s >-\beta\Big)\\
&\leq \Q\big( \inf_{0\leq s\leq \kappa^2\ell}\! B^\Q_s>-\beta\big)=\Q(|B^\Q_{\kappa^2\ell}|<\beta)=\Q\big(|B^\Q_{1}|\le\tfrac{\beta}{\kappa \sqrt{\ell}}\big)\leq 2\tfrac{\beta}{\kappa \sqrt{\ell}},
\end{aligned}
\end{align}
where we used $\inf_{0\leq s\leq \kappa^2\ell}\! B^\Q_s=-|B^\Q_{\kappa^2\ell}|$ in law.
Plugging \eqref{eq:equicm_d1} and \eqref{eq:equicm_d2} into \eqref{eq:equicm_d}, we obtain
\begin{align}\label{eq:equicm_d3}
\P\Big(\rho_m^0+\ell \le \tau_{M_6}\wedge\rho^\eta_m,\inf_{0\leq s\le \ell}\big(X_{\rho_m^0+s}^{\eta}-X_{\rho_m^0}^{\eta}\big)>-\beta \Big)\le \e^{C_2p}\Big(2\tfrac{\beta}{\kappa\sqrt{\ell}}\Big)^{\frac{p-1}{p}}.
\end{align}

It remains to bound the term on the third line of \eqref{eq:equicm_a}. Since $\dot{\nu}_s=0$ for $s\in[\![\rho_m^\eta,\infty)\!)$, then 
\begin{align*}
X_{\rho_m^\eta+s}^0\!-\!X_{\rho_m^\eta}^0&=\int_{\rho_m^\eta}^{\rho_m^\eta+s}\!\! \mu_\kappa(X_v^{0})\ud v\!+\!\int_{\rho_m^\eta}^{\rho_m^\eta+s}\!\!\sigma_\kappa(X_v^{0})\ud W_v=\int_{\rho_m^\eta}^{\rho_m^\eta+s}\!\!\sigma_\kappa(X_v^{0})\ud\widetilde W^\Q_v=\widetilde B^\Q_{C_s},
\end{align*}
where $\widetilde{W}_s^\Q=W_s-\int_0^s \tilde{\phi}_\lambda\ud \lambda$ is a Brownian motion under the measure $\widetilde\Q$ defined via the Dol\'eans-Dade exponential of the adapted process 
$\tilde{\phi}_s\coloneqq-(\mu_\kappa/\sigma_\kappa)(X_s^0)\mathds{1}_{\{\rho_m^\eta\le s\le \rho_m^\eta+\ell\}}$. 
The final expression holds with $C_s\coloneqq\int_{\rho_m^\eta}^{\rho_m^\eta+s}(\sigma_\kappa(X^0_v))^2\ud v$ and $\widetilde B^{\Q}$ is another $\widetilde{\Q}$-Brownian motion. Since $\sigma_\kappa(x)\ge \kappa$, then similar calculations as in \eqref{eq:equicm_d2} yield
\begin{align}\label{eq:equicm_e}
\begin{aligned}
\P\Big(\sup_{0\leq s\leq \ell}\big(X_{\rho_m^\eta+s}^{0}\!-\!X_{\rho_m^\eta}^{0}\big)\!\le\! \beta,\rho^0_m\ge \rho^\eta_m\!+\!\ell\Big)\le\! \e^{C_2p}\widetilde{\Q}\Big(|\widetilde B^{\Q}_1|\!\le\! \tfrac{\beta}{\kappa\sqrt{\ell}}\Big)^{\frac{p-1}{p}}\!\!\le\! \e^{C_2p}\Big(2\tfrac{\beta}{\kappa\sqrt{\ell}}\Big)^{\frac{p-1}{p}}.
\end{aligned}
\end{align}

Plugging \eqref{eq:equicm_b}, \eqref{eq:equicm_c}, \eqref{eq:equicm_d3} and \eqref{eq:equicm_e} into \eqref{eq:equicm_a}, and changing constant $c_0>0$ as needed, we obtain
$u(t,x\!+\!\eta)\!-\!u(t,x)\ge -c_0\big(\eta+\ell+\eta/\beta+(\beta/\sqrt{\ell})^{\frac{p-1}{p}}\big)$.
Taking $\beta=\ell=\sqrt \eta$ and $\eta\le 1$, gives 
\begin{align}\label{eq:equicontm1}
u(t,x+\eta)-u(t,x)\ge -c_0 \eta^{1/4-1/4p}
\end{align}
with suitable $c_0=c_0(p,N,\kappa,\eps)>0$ independent of $m$ and $t$.
\medskip

{\bf Step 2.} An upper bound for $u(t,x+\eta)-u(t,x)$ is obtained with similar methods as in the step above. Let $\nu\in\cA_{t,x}^\circ$ be optimal for $u(t,x)$ and let $\rho_m^0\coloneqq\rho_m(t,x;\nu,\kappa)$ as in \eqref{eq:rhom}. We extend $\nu$ as $\dot\nu_s=0$ for $s\in[\![\rho_m^0,\infty)\!)$. Given the dynamics $(t+s,X^{\nu,\kappa;x+\eta}_s)$, we set $\rho^\eta_m\coloneqq\rho_m(t,x+\eta;\nu,\kappa)$.
Let $w\in\cT_t^\delta$ be optimal for $u(t,x+\eta)$ and let us extend it as $w_s=0$ for $s\in(\!(\rho_m^\eta,\infty)\!)$. For simplicity, we let $X^{\nu,\kappa;x+\eta}=X^{\eta}$, $X^{\nu,\kappa;x}=X^0$ and $\bar{\rho}=\rho_m^0\wedge\rho_m^\eta$. 

We have $u(t,x\!+\!\eta)\!-\!u(t,x)\le \cJ_{t,x+\eta}(\nu,w)-\cJ_{t,x}(\nu,w)$ and thus
\begin{align}\label{eq:equicm_b01}
\begin{aligned}
&u(t,x\!+\!\eta)\!-\!u(t,x)\\
&\le \E\Big[R_{\bar{\rho}}^wg_m^N(t+\bar{\rho},X_{\bar{\rho}}^\eta)-R_{\bar{\rho}}^wg_m^N(t+\bar{\rho},X_{\bar{\rho}}^0)\\
&\qquad+ \int_0^{\bar{\rho}}\!\!R_s^w\big[h_m^N(t\!+\!s,X_s^\eta)\!-\!h_m^N(t\!+\!s,X_s^0)\!+\!w_s\big(g_m^N(t\!+\!s,X_s^\eta)\!-\!g_m^N(t\!+\!s,X_s^0)\big)\big]\ud s
\\
&\qquad +\int_0^{\bar{\rho}}\!\!R_s^w\big(H_{m}^{N,\eps}(t\!+\!s,X_s^\eta,\dot{\nu}_s)\!-\!H_{m}^{N,\eps}(t\!+\!s,X_s^0,\dot{\nu}_s)\big)\ud s\!+\!\mathds{1}_{\{\rho_m^\eta\ge\rho_m^0\}}\!\!\int_{\rho_m^0}^{\rho_m^\eta}\!\!R_s^w\Theta^{N}_{\kappa,m}(t\!+\!s,X_s^\eta)\ud s\\
&\qquad -\mathds{1}_{\{\rho_m^0\ge\rho_m^\eta\}}\int_{\rho_m^\eta}^{\rho_m^0}\!\!R_s^w\big(\Theta^{N}_{\kappa,m}+\dot{\nu}_s\cdot\partial_xg_m^N+H_{m}^{N,\eps}(\cdot,\dot{\nu}_s)\big)(t+s,X_s^0)\ud s \Big].
\end{aligned}
\end{align}

By similar calculations as in \eqref{eq:equicngha} and \eqref{eq:equicngha1}, we find an upper bound 
\begin{align*}
\begin{aligned}
u(t,x\!+\!\eta)\!-\!u(t,x)&\le \Lambda_N\eta\!+\!\Big(\Lambda_N\!+\!(1\!+\!\eps^{-1})L_NT\!+\!\Lambda_N(1\!+\!TD_1)\Big)\sup_{s\in[0,T]}\E\Big[|X_{s\wedge\bar{\rho}}^\eta\!-\!X_{s\wedge\bar{\rho}}^0|\Big]\\
&\quad+\!\E\Big[\mathds{1}_{\{\rho_m^\eta\ge\rho_m^0\}}\int_{\rho_m^0}^{\rho_m^\eta}\!\!R_s^w\Theta^{N}_{\kappa,m}(t\!+\!s,X_s^\eta)\ud s \\
&\qquad\quad-\mathds{1}_{\{\rho_m^0\ge\rho_m^\eta\}}\int_{\rho_m^\eta}^{\rho_m^0}\!\!R_s^w\big(\Theta^{N}_{\kappa,m}\!+\!\dot{\nu}_s\cdot\partial_xg_m^N\!+\!H_{m}^{N,\eps}(\cdot,\dot{\nu}_s)\big)(t\!+\!s,X_s^0)\ud s \Big].
\end{aligned}
\end{align*}
Similarly to \eqref{eq:equicm_a01}, we need two bounds on the remaining two integrals. We let 
\begin{align}\label{eq:taum6_2}
\tau_{M_6}'\coloneqq\inf\Big\{s\ge \rho^\eta_m\,\Big|\, \int_{\rho^\eta_m}^s|\dot \nu_\lambda|^2\ud \lambda>M_6\Big\}\wedge(T-t).
\end{align}
This stopping time plays the same role as $\tau_{M_6}$ in Step 1 (cf.\ \eqref{eq:taum6}). Recalling also $M_5$ from the paragraph above \eqref{eq:taum6}, for arbitrary $\beta,\ell>0$ we have
\begin{align}\label{eq:equicon_3m}
\begin{aligned}
\mathds{1}_{\{\rho_m^\eta\ge\rho_m^0\}}\int_{\rho_m^0}^{\rho_m^\eta}\!\!R^w_s\Theta^{N}_{\kappa,m}(t\!+\!s,X_s^{\eta})\ud s 
&\le C_\Theta \ell \!+\!C_\Theta T\mathds{1}_{\big\{X_{\rho_m^0}^{\eta}-X_{\rho_m^0}^{0}\ge\beta\big\}}\mathds{1}_{\{\rho_m^\eta\ge \rho_m^0\}}
\\
&\quad +\!C_\Theta T\mathds{1}_{\{\rho_m^\eta>\rho_m^0+\ell\}}\mathds{1}_{\big\{\inf_{0\leq s\leq \ell}\big(X^{\eta}_{\rho_m^0+s}-X^{\eta}_{\rho_m^0}\big)>- \beta\big\}}
\end{aligned}
\end{align}
and
\begin{align}\label{eq:equicon_4m}
\begin{aligned}
&\mathds{1}_{\{\rho_m^0\ge\rho_m^\eta\}}\int_{\rho_m^\eta}^{\rho_m^0}\!\! R^w_s\big(\Theta^{N}_{\kappa,m}\!+\!\dot{\nu}_s\cdot\partial_xg_m^N\!+\!H_{m}^{N,\eps}(\cdot,\dot{\nu}_s)\big)(t\!+\!s,X_s^{0})\ud s\\
&\ge-M_5T\Big(\tfrac{\ell}{T}\!+\!\mathds{1}_{\{ X_{\rho_m^\eta}^{\eta}-X_{\rho_m^\eta}^{0}\ge \beta\}}\mathds{1}_{\{\rho_m^0\ge \rho_m^\eta\}}
\!+\mathds{1}_{\{\rho_m^\eta+\ell \le \tau_{M_6}'\wedge\rho^0_m\}}\mathds{1}_{\big\{\sup_{0\leq s\le \ell}\big(X^{0}_{\rho_m^\eta+s}-X^{0}_{\rho_m^\eta}\big)<\beta\big\}}\Big).
\end{aligned}
\end{align}
The two claims above will be proven in Lemmas \ref{lem:equicon_m} and \ref{lem:equicon_m1}. 

Combining \eqref{eq:equicon_3m} and \eqref{eq:equicon_4m} with \eqref{eq:equicm_b01}, and taking $c_0>0$ with the same conventions as in \eqref{eq:equicm_a}, we obtain
\begin{align}\label{eq:equicm_a1}
\begin{aligned}
u(t,x\!+\!\eta)\!-\!u(t,x)
&\le c_0\!\Big[\ell\!+\!\eta\!+\sup_{s\in[0,T]}\E\Big[|X_{s\wedge\bar{\rho}}^\eta\!-\!X_{s\wedge\bar{\rho}}^0|\Big]\!+\!\P\Big(X^\eta_{\bar\rho}\ge X_{\bar\rho}^0+\beta\Big)\\
&\quad+\!\P\Big(\rho_m^\eta+\ell \le \tau_{M_6}'\wedge\rho^0_m,\sup_{0\leq s\le \ell}\big(X_{\rho_m^\eta+s}^{0}-X_{\rho_m^\eta}^{0}\big)\le\beta\Big)\\
&\quad+ \P\Big(\inf_{0\leq s\leq \ell}\big(X_{\rho_m^0+s}^{\eta}-X_{\rho_m^0}^{\eta}\big)>- \beta,\rho^\eta_m\ge \rho^0_m+\ell\Big)\Big].
\end{aligned}
\end{align}
The last two expressions on the right-hand side of \eqref{eq:equicm_a1} are the analogue of the last two in \eqref{eq:equicm_a}, with $(\rho_m^0, X^\eta,\tau_{M_6}\!\wedge\!\rho^\eta_m)$ therein replaced by $(\rho_m^\eta, X^0,\tau_{M_6}'\!\wedge\!\rho^0_m)$ in \eqref{eq:equicm_a1}. Thus, similar calculations as those leading to \eqref{eq:equicm_d3} and \eqref{eq:equicm_e} allow us to obtain
\begin{align*}
\begin{aligned}
&\P\Big(\rho_m^\eta\!+\!\ell \le \tau_{M_6}'\wedge\rho^0_m,\sup_{0\leq s\le \ell}\!\big(X_{\rho_m^\eta+s}^{0}\!-\!X_{\rho_m^\eta}^{0}\big)\le\beta\Big)\\
&+\! \P\Big(\inf_{0\leq s\leq \ell}\big(X_{\rho_m^0+s}^{\eta}\!-\!X_{\rho_m^0}^{\eta}\big)>- \beta,\rho^\eta_m\ge \rho^0_m\!+\!\ell\Big)\le 
c_0\e^{C_2p}\Big(2\tfrac{\beta}{\kappa\sqrt{\ell}}\Big)^{\frac{p-1}{p}}.
\end{aligned}
\end{align*}
Since the constant $C_1>0$ in \eqref{eq:equicm_b} does not depend on the choice of $\nu$, as in \eqref{eq:equicm_c} we have $\P\big(X_{\bar\rho}^{\eta}\!-\!X_{\bar\rho}^{0}\ge\beta\big)\le C_1 \eta/\beta$. Plugging the latter two bounds into \eqref{eq:equicm_a1} and choosing $\beta=\ell=\sqrt{\eta}$ we get $u(t,x\!+\!\eta)\!-\!u(t,x)\le c_0\eta^{\frac14-\frac{1}{4p}}$ with the same $c_0$ as in \eqref{eq:equicontm1}. Thus $|u(t,x+\eta)-u(t,x)|\le c_0 \eta^{\frac14-\frac{1}{4p}}$.
\smallskip

{\bf Step 3.} In this and in the next step we are going to slightly abuse the notation by relabelling stopping times from steps 1 and 2 with different meaning. This avoids introducing heavier notations and it should cause no confusion, because steps 3 and 4 are independent of the previous ones.

Let $\eta\in(0,1)$ be such that $t+\eta\le T$. Let $\nu\in\cA_{t,x}^\circ$ be optimal for $u(t,x)$. Let $\rho_m^0\coloneqq\rho_m(t,x;\nu,\kappa)$ and $\rho_m^\eta=\rho_m(t+\eta,x;\nu,\kappa)$ be as in \eqref{eq:rhom}.
Let $w\in\cT_{t+\eta}^\delta$ be optimal for $u(t+\eta,x)$ and extend it to be $w_s=0$ for $s\in(\!(\rho_m^\eta,\infty)\!)$. For the ease of exposition, we denote $X^{\nu,\kappa;x}=X$. Due to the geometry of the set $\cO_m$ it is easy to verify that $\rho_m^\eta\le\rho_m^0$, $\P$-a.s. We have
\begin{align}
\begin{aligned}
u(t+\eta,x)&\le \cJ_{t+\eta,x}(\nu,w)\\
&= \E\Big[R_{\rho_m^\eta}^wg_m^N(t+\eta+\rho_m^\eta,X_{\rho_m^\eta})+\int_0^{\rho_m^\eta}\!\!R_s^w\big(h_m^N+w_sg_m^N+H^{N,\eps}_m(\cdot,\dot{\nu}_s)\big)(t+\eta+s,X_s)\ud s\Big]
\end{aligned}
\end{align}
and
\begin{align}
\begin{aligned}
u(t,x)&\ge \cJ_{t,x}(\nu,w) \\
&=\E\Big[R_{\rho_m^\eta}^wg_m^N(t+\rho_m^\eta,X_{\rho_m^\eta})+\int_0^{\rho_m^\eta}\!\!R_s^w\big(h_m^N+w_sg_m^N+H^{N,\eps}_m(\cdot,\dot{\nu}_s)\big)(t+s,X_s)\ud s
\\
&\qquad+\int_{\rho_m^\eta}^{\rho_m^0}\!\!R^w_s\big(\Theta^{N}_{\kappa,m}+\dot{\nu}_s\cdot\partial_x g_m^N+H^{N,\eps}_m(\cdot,\dot{\nu}_s)\big)(t+s,X_s)\ud s\Big],
\end{aligned}
\end{align}
where we used \eqref{eq:gmNDynk} with $(\tau,\sigma,X^{\nu,\kappa;x})=(\rho_m^0,\rho_m^\eta,X)$. Combining the two, we obtain
\begin{align}\label{eq:equitime2a}
\begin{aligned}
u(t+\eta,x)-u(t,x)&\le \E\Big[R_{\rho_m^\eta}^wg_m^N(t+\eta+\rho_m^\eta,X_{\rho_m^\eta})-R_{\rho_m^\eta}^wg_m^N(t+\rho_m^\eta,X_{\rho_m^\eta})\\
&\qquad+\int_0^{\rho_m^\eta}\!\!R_s^w\big(h_m^N(t+\eta+s,X_s)-h_m^N(t+s,X_s)\big)\ud s
\\
&\qquad+\int_0^{\rho_m^\eta}\!\!R_s^w\big(H^{N,\eps}_m(t+\eta+s,X_s,\dot{\nu}_s)-H^{N,\eps}_m(t+s,X_s,\dot{\nu}_s)\big)\ud s
\\
&\qquad+\int_0^{\rho_m^\eta}\!\!R_s^w w_s\big(g_m^N(t+\eta+s,X_s)-g_m^N(t+s,X_s)\big)\ud s
\\
&\qquad-\int_{\rho_m^\eta}^{\rho_m^0}\!\!R^w_s\big(\Theta^{N}_{\kappa,m}+\dot{\nu}_s\cdot\partial_x g_m^N+H^{N,\eps}_m(\cdot,\dot{\nu}_s)\big)(t+s,X_s)\ud s
\Big].
\end{aligned}
\end{align}
As functions of time, $g^N_m$, $h^N_m$ are Lipschitz and $H^{N,\eps}_m$ is $\gamma/2$-H\"older, with constants $\Lambda_N$, $L_N$ and $\eps^{-1}L_N$, respectively. Moreover, $R^w_s\le 1$ and $\int_0^tR^w_s w_s\ud s\le 1$ for any $t\in[0,T]$, $\P$-a.s. Recalling $\eta\in(0,1)$ we obtain an upper bound for the first four lines on the right-hand side of \eqref{eq:equitime2a} as
\begin{align}\label{eq:equitime2ab}
\begin{aligned}
\E\Big[&R_{\rho_m^\eta}^wg_m^N(t\!+\!\eta\!+\!\rho_m^\eta,X_{\rho_m^\eta})\!-\!R_{\rho_m^\eta}^wg_m^N(t\!+\!\rho_m^\eta,X_{\rho_m^\eta})\\
&+\!\!\int_0^{\rho_m^\eta}\!\!R_s^w\big[h_m^N(t\!+\!\eta\!+\!s,X_s)\!-\!h_m^N(t\!+\!s,X_s)\!+\!w_s\big(g_m^N(t\!+\!\eta\!+\!s,X_s)\!-\!g_m^N(t\!+\!s,X_s)\big)\big]\ud s
\\
&+\!\!\int_0^{\rho_m^\eta}\!\!R_s^w\big(H^{N,\eps}_m(t\!+\!\eta\!+\!s,X_s,\dot{\nu}_s)\!-\!H^{N,\eps}_m(t\!+\!s,X_s,\dot{\nu}_s)\big)\ud s\Big]\le \big(2\Lambda_N\!+\!(1\!+\!\eps^{-1})L_NT\big)\eta^\frac{\gamma}{2}.
\end{aligned}
\end{align}
Plugging \eqref{eq:equitime2ab} into \eqref{eq:equitime2a} and letting $c_0=c_0(N,\kappa,\eps)>0$ that may vary from line to line, we get
\begin{align}\label{eq:equitime2bc}
\begin{aligned}
&u(t\!+\!\eta,x)\!-\!u(t,x)\le c_0\eta^\frac{\gamma}{2}\!-\!\E\Big[\!\int_{\rho_m^\eta}^{\rho_m^0}\!\!R^w_s\big(\Theta^{N}_{\kappa,m}\!+\!\dot{\nu}_s\cdot\partial_x g_m^N\!+\!H^{N,\eps}_m(\cdot,\dot{\nu}_s)\big)(t\!+\!s,X_s)\ud s\Big].
\end{aligned}
\end{align}

Let $\beta,\ell>0$ be arbitrary constants with $t+\eta+\ell<T$, and similarly to \eqref{eq:taum6_2} let
\begin{align}\label{eq:taum6_3}
\tau_{M_6}'\coloneqq\inf\Big\{s\ge \rho^\eta_m\,\Big|\, \int_{\rho^\eta_m}^s|\dot \nu_\lambda|^2\ud \lambda>M_6\Big\}\wedge(T-t).
\end{align}
We claim here and we will prove it in Lemma \ref{lem:equicon_m2} that 
\begin{align}\label{eq:equicon_5m}
\begin{aligned}
&\int_{\rho_m^\eta}^{\rho_m^0}\!\!R^w_s\big(\Theta^{N}_{\kappa,m}\!+\!\dot{\nu}_s\cdot\partial_xg_m^N\!+\!H_{m}^{N,\eps}(\cdot,\dot{\nu}_s)\big)(t\!+\!s,X_s)\ud s \\
&\ge-M_5(\eta\!+\!\ell)\!-\!M_5T\mathds{1}_{\{\rho_m^\eta+\eta+\ell \le \tau_{M_6}'\wedge\rho^0_m\}}\mathds{1}_{\big\{X_{\rho_m^\eta+\eta}-X_{\rho_m^\eta}\ge\beta\big\}}\! \\
&\quad-\!M_5T\mathds{1}_{\{\rho_m^\eta+\eta+\ell \le \tau_{M_6}'\wedge\rho^0_m\}}\mathds{1}_{\big\{\inf_{0\leq s\le \ell}\big(X_{\rho_m^\eta+\eta+s}-X_{\rho_m^\eta+\eta}\big)>-\beta\big\}}.
\end{aligned}
\end{align}
Plugging \eqref{eq:equicon_5m} into \eqref{eq:equitime2bc} yields
\begin{align}\label{eq:equitime2b}
\begin{aligned}
u(t\!+\!\eta,x)\!-\!u(t,x)\le c_0\Big[&\eta^{\frac{\gamma}{2}}\!+\!\ell\!+\!\P\Big(\rho_m^\eta\!+\!\eta\!+\!\ell\le \tau_{M_6}'\wedge\rho^0_m, X_{\rho_m^\eta+\eta}\!-\!X_{\rho_m^\eta}\ge \beta\Big)\\
&+\!\P\Big(\rho_m^\eta\!+\!\eta\!+\!\ell \le \tau_{M_6}'\wedge\rho^0_m,\inf_{0 \leq s\le \ell}\big(X_{\rho_m^\eta+\eta+s}\!-\!X_{\rho_m^\eta+\eta}\big)>-\beta\Big)\Big].
\end{aligned}
\end{align}

First we show that 
\begin{align}\label{eq:claimtoprove}
\P\Big(\rho_m^\eta\!+\!\eta\!+\!\ell\le \tau_{M_6}'\wedge\rho^0_m, X_{\rho_m^\eta+\eta}\!-\!X_{\rho_m^\eta}\ge \beta\Big)\leq C_3\frac{\eta}{\beta^2},
\end{align}
for a constant $C_3>0$ independent of $\eta$, $\beta$ and $\ell$. We can assume with no loss of generality that 
$\P(\rho_m^\eta\!+\!\eta\!+\!\ell \le \tau_{M_6}'\wedge\rho^0_m)>0$,
as otherwise \eqref{eq:claimtoprove} holds trivially. We have
\begin{align*}
&\P\Big(\rho_m^\eta+\eta+\ell \le \tau_{M_6}'\wedge\rho^0_m,X_{\rho_m^\eta+\eta}-X_{\rho_m^\eta}\ge\beta\Big)\\
&\le \P\Big(\big|X_{(\rho_m^\eta+\eta)\wedge\tau'_{M_6}}-X_{\rho_m^\eta}\big|\ge\beta\Big) \le\tfrac{1}{\beta^2}\E\Big[\big|X_{(\rho_m^\eta+\eta)\wedge\tau_{M_6}'}-X_{\rho_m^\eta}\big|^2\Big].
\end{align*}
Since $\mu_\kappa$ and $\sigma_\kappa$ are bounded (cf.\ Section \ref{sec:approx}) by $\kappa^{-1}$, \eqref{eq:claimtoprove} holds because It\^o isometry yields
\begin{align*}
&\E\Big[\big|X_{(\rho^\eta_m+\eta)\wedge\tau_{M_6}'}-X_{\rho_m^\eta}\big|^2\Big]\\
&=\E\Big[\Big|\int_{\rho_m^\eta}^{(\rho^\eta_m+\eta)\wedge\tau_{M_6}'}\!\mu_\kappa(X_\lambda)\ud \lambda\!+\!\int_{\rho_m^\eta}^{(\rho^\eta_m+\eta)\wedge\tau_{M_6}'}\!\sigma_\kappa(X_\lambda)\ud W_\lambda+\int_{\rho_m^\eta}^{(\rho^\eta_m+\eta)\wedge\tau_{M_6}'}\!\dot{\nu}_\lambda\ud \lambda\Big|^2\Big]\\
&\le 3\kappa^{-2}(\eta^2\!+\!\eta)\!+\!3\eta\E\Big[\int_{\rho_m^\eta}^{\tau_{M_6}'}\!|\dot{\nu}_\lambda|^2\ud \lambda\Big]\le 3\kappa^{-2}(\eta^2\!+\!\eta)+3M_6\eta.
\end{align*}

The last term on the right-hand side of \eqref{eq:equitime2b} is the analogue of the one in the second line of \eqref{eq:equicm_a}, with $(\rho_m^0,X^\eta,\tau_{M_6}\!\wedge\!\rho^\eta_m)$ therein replaced by $(\rho_m^\eta+\eta,X,\tau_{M_6}'\!\wedge\!\rho^0_m)$ in \eqref{eq:equitime2b}. Thus, similar calculations lead to 
\begin{align*}
\P\Big(\rho_m^\eta+\eta+\ell \le \tau_{M_6}'\wedge\rho^0_m,\inf_{0 \leq s\le \ell}\big(X_{\rho_m^\eta+\eta+s}-X_{\rho_m^\eta+\eta}\big)>-\beta\Big)\le \e^{C_2p}\Big(2\tfrac{\beta}{\kappa\sqrt{\ell}}\Big)^{\frac{p-1}{p}}.
\end{align*}
In particular, for the change of measure in \eqref{eq:phichangemeas} we must replace $(\rho_m^0, X^\eta,\tau_{M_6})$ with $(\rho_m^\eta+\eta, X,\tau_{M_6}')$.
Plugging the equation above with \eqref{eq:claimtoprove} into \eqref{eq:equitime2b}, we obtain
\begin{align}\label{eq:equitimeupper}
u(t\!+\!\eta,x)\!-\!u(t,x)\le c_0\big(\eta^{\frac{\gamma}{2}}+\ell+\eta/\beta^2+(\beta/\sqrt{\ell})^{1-\frac{1}{p}}\big).
\end{align}

{\bf Step 4.} Let $\nu\in\cA_{t+\eta,x}^\circ$ be optimal for $u(t+\eta,x)$ and let $\rho_m^\eta=\rho_m(t+\eta,x;\nu,\kappa)$ as in \eqref{eq:rhom}. We extend $\nu$ with $\dot{\nu}_s=0$ for $s\in[\![\rho_m^\eta,\infty)\!)$. Let $\rho_m^0=\rho_m(t,x;\nu,\kappa)$ and take $w\in\cT_t^\delta$ optimal for $u(t,x)$. Set $X^{\nu,\kappa;x}\!=\!X$ for simplicity. The geometry of $\cO_m$ implies $\rho^0_m\!\ge\! \rho^\eta_m$ and by arguments as in Step 3
\begin{align*}
&u(t\!+\!\eta,x)\!-\!u(t,x)\\
&=\E\Big[R_{\rho_m^\eta}^wg_m^N(t\!+\!\eta\!+\!\rho_m^\eta,X_{\rho_m^\eta})\!-\!R_{\rho_m^\eta}^wg_m^N(t\!+\!\rho_m^\eta,X_{\rho_m^\eta})\\
&\qquad+\!\int_0^{\rho_m^\eta}\!\!R_s^w\big[h_m^N(t\!+\!\eta\!+\!s,X_s)\!-\!h_m^N(t\!+\!s,X_s)\!+\!w_s\big(g_m^N(t\!+\!\eta\!+\!s,X_s)\!-\!g_m^N(t\!+\!s,X_s)\big)\big]\ud s\\
&\qquad+\!\int_0^{\rho_m^\eta}\!\!R_s^w\big(H^{N,\eps}_m(t\!+\!\eta\!+\!s,X_s,\dot{\nu}_s)\!-\!H^{N,\eps}_m(t\!+\!s,X_s,\dot{\nu}_s)\big)\ud s\!-\!\int_{\rho_m^\eta}^{\rho_m^0}\!\!R^w_s\Theta^{N}_{\kappa,m}(t\!+\!s,X_s)\ud s\Big],
\end{align*}
where we used that $\dot{\nu}_s=0$ for $s\in[\![\rho_m^\eta,\rho_m^0]\!]$. Using the Lipschitz/H\"older property in time of $g_m^N$, $h_m^N$, $H^{N,\eps}_m$ as in \eqref{eq:equitime2ab}, we have with the same $c_0>0$ as in \eqref{eq:equitime2bc}
\begin{align}\label{eq:equicon_5m-}
u(t+\eta,x)-u(t,x)\ge-c_0\eta^\frac{\gamma}{2}-\E\Big[\int_{\rho_m^\eta}^{\rho_m^0}\!\!R^w_s\Theta^{N}_{\kappa,m}(t+s,X_s)\ud s \Big].
\end{align}

We claim here and we will prove it in Lemma \ref{lem:equicon_m2} that for $\ell,\beta>0$ (with $t+\eta+\ell<T$) 
\begin{align}\label{eq:equicon_6m}
\begin{aligned}
\int_{\rho_m^\eta}^{\rho_m^0}\!\! R^w_s\Theta^{N}_{\kappa,m}(t\!+\!s,X_s)\ud s &\leq C_\Theta(\eta\!+\!\ell)\!+\!C_\Theta T\mathds{1}_{\big\{X_{\rho_m^\eta+\eta}-X_{\rho_m^\eta}>\beta\big\}}\\
&\quad+\!C_\Theta T\mathds{1}_{\big\{\inf_{0 \leq s\leq \ell}\big(X_{\rho_m^\eta+\eta+s}-X_{\rho_m^\eta+\eta}\big)>- \beta\big\}}\mathds{1}_{\{\rho^0_m\ge \rho^\eta_m+\eta+\ell\}}.
\end{aligned}
\end{align}
Plugging \eqref{eq:equicon_6m} into \eqref{eq:equicon_5m-} yields
\begin{align}\label{eq:equicon_5m+}
\begin{aligned}
u(t\!+\!\eta,x)\!-\!u(t,x)\ge -c_0\Big[&\eta^\frac{\gamma}{2}\!+\!\ell\!+\!\P\Big( X_{\rho_m^\eta+\eta}\!-\!X_{\rho_m^\eta}\ge \beta\Big)\\
&+\!\P\Big(\inf_{0 \leq s\le \ell }\big(X_{\rho_m^\eta+\eta+s}\!-\!X_{\rho_m^\eta+\eta}\big)>\!-\beta,\rho^0_m\ge \rho^\eta_m+\eta+\ell\Big)\Big].
\end{aligned}
\end{align}

The second line in \eqref{eq:equicon_5m+} is analogue to \eqref{eq:equicm_e}, but with the supremum and $(\rho_m^\eta,X^0)$ therein replaced by the infimum and $(\rho_m^\eta+\eta,X)$. Thus, similar calculations lead to
\begin{align}\label{eq:equicon_5m2}
\P\Big(\inf_{0 \leq s\le \ell }\big(X_{\rho_m^\eta+\eta+s}-X_{\rho_m^\eta+\eta}\big)>-\beta,\rho^0_m\ge \rho^\eta_m+\eta+\ell\Big)\le \e^{C_2p}\Big(2\tfrac{\beta}{\kappa \sqrt{ \ell}}\Big)^{\frac{p-1}{p}}.
\end{align}
Recall that $X$ is uncontrolled on $[\![\rho_m^\eta,\rho_m^\eta+\eta]\!]$ so that Markov's inequality and standard estimates for SDEs yield
$\P\big( X_{\rho_m^\eta+\eta}-X_{\rho_m^\eta}\ge \beta\big)\leq \beta^{-2}\E\big[\big|X_{\rho_m^\eta+\eta}-X_{\rho_m^\eta}\big|^2\big]\le C_3\frac{\eta}{\beta^2}$,
with $C_3>0$ as in \eqref{eq:claimtoprove}.
Plugging those two bounds into \eqref{eq:equicon_5m+} yields 
$u(t+\eta,x)\!-\!u(t,x)\ge -c_0\big(\eta^\frac{\gamma}{2}\!+\!\ell\!+\!\eta/\beta^2\!+\!(\beta/\sqrt{\ell})^{1-\frac1p}\big)$. Combining with \eqref{eq:equitimeupper}
and taking $\beta=\ell=\eta^{1/4}$ yield $\big|u(t+\eta,x)-u(t,x)\big|\le c_0 \eta^{(\gamma/2)\wedge(1/8-1/8p)}$ for a constant $c_0=c_0(p,N,\kappa,\eps)>0$ independent of $m$ and $t$.
\end{proof}
We now prove the bounds in \eqref{eq:equicon_1m}, \eqref{eq:equicon_2m}, \eqref{eq:equicon_3m}, \eqref{eq:equicon_4m}, \eqref{eq:equicon_5m} and \eqref{eq:equicon_6m}. 
\begin{lemma}\label{lem:equicon_m}
The bounds in \eqref{eq:equicon_2m} and \eqref{eq:equicon_3m} hold.
\end{lemma}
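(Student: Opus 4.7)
My plan is to derive both \eqref{eq:equicon_2m} and \eqref{eq:equicon_3m} from a single pathwise comparison $X^\eta_s\ge X^0_s$ for all $s\in[0,T-t]$, $\P$-a.s. Since the two processes share the Brownian motion and the control $\nu$, while $\mu_\kappa$ is Lipschitz and $\sigma_\kappa$ is $\gamma$-H\"older with $\gamma>1/2$ (Assumption \ref{ass:gen1}), this comparison will follow from a Tanaka decomposition of $(X^\eta_s-X^0_s)^-$: the local-time term at zero vanishes by the same computation as in \eqref{eq:loctime}, and Gronwall's lemma with constant $D_1$ closes the estimate on $\E[(X^\eta_s-X^0_s)^-]$. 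Combined with $X^0_s\ge\zeta_m(t+s)$ on $[\![0,\rho^0_m]\!]$ and $X^\eta_s\le m$ on $[\![0,\rho^\eta_m]\!]$, this is the only non-trivial ingredient.

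For \eqref{eq:equicon_2m}, bounding $|\Theta^{N}_{\kappa,m}|\le C_\Theta$ by \eqref{eq:ThetaNkmbnd} together with $R^w_s\le 1$, and splitting the integral at $\rho^\eta_m+\ell$, I will first get
\begin{align*}
\mathds{1}_{\{\rho^0_m\ge\rho^\eta_m\}}\int_{\rho^\eta_m}^{\rho^0_m}R^w_s\Theta^{N}_{\kappa,m}(t+s,X^0_s)\,\ud s\le C_\Theta\ell+C_\Theta T\,\mathds{1}_{\{\rho^0_m\ge\rho^\eta_m+\ell\}}.
\end{align*}
It will then suffice to show that the intersection of $\{\rho^0_m\ge\rho^\eta_m+\ell\}$ with the complements of the two indicator events on the right-hand side of \eqref{eq:equicon_2m} is empty. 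The logical chain will be: comparison together with $X^0\ge\zeta_m(t+\cdot)$ rules out $X^\eta_{\rho^\eta_m}=\zeta_m(t+\rho^\eta_m)$ on $\{\rho^0_m>\rho^\eta_m\}$ (otherwise $X^0_{\rho^\eta_m}$ is sandwiched at the lower boundary and exits simultaneously), so $X^\eta_{\rho^\eta_m}=m$; the complement of $\{X^\eta_{\rho^\eta_m}-X^0_{\rho^\eta_m}\ge\beta\}$ then forces $X^0_{\rho^\eta_m}>m-\beta$; and the complement of $\{\sup_{0\le s\le\ell}(X^0_{\rho^\eta_m+s}-X^0_{\rho^\eta_m})\le\beta\}$ produces $s^*\in[0,\ell]$ with $X^0_{\rho^\eta_m+s^*}>m$, contradicting $X^0_s\le m$ on $[\![0,\rho^0_m]\!]$.

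The proof of \eqref{eq:equicon_3m} will be mirror-symmetric. Comparison rules out $X^0_{\rho^0_m}=m$ on $\{\rho^\eta_m>\rho^0_m\}$ (otherwise $X^\eta_{\rho^0_m}\ge m$ combined with $X^\eta\le m$ forces $\rho^\eta_m\le\rho^0_m$), so $X^0_{\rho^0_m}=\zeta_m(t+\rho^0_m)$. The complement of $\{X^\eta_{\rho^0_m}-X^0_{\rho^0_m}\ge\beta\}$ then gives $X^\eta_{\rho^0_m}<\zeta_m(t+\rho^0_m)+\beta$, and the complement of $\{\inf_{0\le s\le\ell}(X^\eta_{\rho^0_m+s}-X^\eta_{\rho^0_m})>-\beta\}$ produces $s^*\in[0,\ell]$ with $X^\eta_{\rho^0_m+s^*}<\zeta_m(t+\rho^0_m)\le\zeta_m(t+\rho^0_m+s^*)$, using monotonicity of $\zeta_m$. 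This forces $\rho^\eta_m\le\rho^0_m+\ell$, contradicting $\mathds{1}_{\{\rho^\eta_m>\rho^0_m+\ell\}}$; splitting the integral at $\rho^0_m+\ell$ then delivers the bound.

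The only delicate point I anticipate is justifying the comparison despite $\sigma_\kappa$ being only $\gamma$-H\"older, but this is essentially the Tanaka argument already carried out in \eqref{eq:loctime}. Everything else amounts to bookkeeping of indicators and exploiting the geometry of $\cO_m$ imposed by the lower boundary $\zeta_m$ and the lateral boundary $x=m$.
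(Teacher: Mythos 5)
Your proposal is correct and takes essentially the same route as the paper: bound $\Theta^N_{\kappa,m}$ by $C_\Theta$, split the residual integral at $\ell$, and use the geometry of $\cO_m$ (together with the pathwise ordering $X^\eta\ge X^0$ and monotonicity of $\zeta_m$) to absorb the long excursion into the appropriate indicator events; your emptiness-of-intersection argument is the contrapositive of the paper's inclusions \eqref{eq:inf0gamma} and \eqref{eq:inf0gamma2}. The one thing you add is an explicit justification of the comparison $X^\eta_s\ge X^0_s$ via Tanaka on $(X^\eta-X^0)^-$ plus Gronwall; the paper simply asserts $X^\eta\ge X^0$ without proof inside this lemma, relying on the reader to supply the standard argument (your Tanaka step, with the vanishing local time from \eqref{eq:loctime}, is exactly the right way to close that gap).
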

\begin{proof}
Let us first prove \eqref{eq:equicon_3m}. Recall that $X^{\eta}$ and $X^0$ denote $X^{\nu,\kappa;x+\eta}$ and $X^{\nu,\kappa;x}$, respectively, and $\dot{\nu}_s=0$ for $s\in[\![\rho_m^0\wedge\rho_m^\eta,\rho_m^\eta]\!]$.
On the event $\{\rho_m^\eta\ge \rho_m^0\}$ we have for $\ell>0$
\begin{align}\label{eq:equicon_m3a}
\begin{aligned}
\int_{\rho_m^0}^{\rho_m^\eta}\!\!R^w_s\Theta^{N}_{\kappa,m}(t+s,X_s^{\eta})\ud s 
\le C_\Theta(\rho_m^\eta\!-\!\rho_m^0) \le C_\Theta\ell\!+\!C_\Theta T\mathds{1}_{\{\rho_m^\eta>\rho_m^0+\ell\}},
\end{aligned}
\end{align}
where the first inequality uses that the function $\Theta^{N}_{\kappa,m}$ is bounded from above by $C_\Theta$ (see \eqref{eq:ThetaNkmbnd}) and the function $R^w_s\leq 1$ for any $w\in\cT_t^\delta$. For any $\beta>0$ 
\begin{align}\label{eq:equicon_m3b}
\begin{aligned}
\mathds{1}_{\{\rho_m^\eta>\rho_m^0+\ell\}}&= \mathds{1}_{\big\{X_{\rho_m^0}^{\eta}-X_{\rho_m^0}^{0}\ge \beta\big\}}\mathds{1}_{\{\rho_m^\eta>\rho_m^0+\ell\}}+\!\mathds{1}_{\big\{X_{\rho_m^0}^{\eta}-X_{\rho_m^0}^{0}<\beta\big\}}\mathds{1}_{\{\rho_m^\eta>\rho_m^0+\ell\}}\\
&\leq \mathds{1}_{\big\{X_{\rho_m^0}^{\eta}-X_{\rho_m^0}^{0}\ge\beta\big\}}\!+\!\mathds{1}_{\big\{X_{\rho_m^0}^{\eta}-X_{\rho_m^0}^{0}<\beta\big\}}\mathds{1}_{\{\rho_m^\eta>\rho_m^0+\ell\}}.
\end{aligned}
\end{align}
On the event $\{\rho_m^\eta>\rho_m^0+\ell\}$ we have $\rho_m^0<T-t$ and $X^0_{\rho_m^0}=\zeta_m(t+\rho_m^0)$ because $X^\eta\ge X^0$ and both processes are bounded between $s\mapsto \zeta_m(t+s)$ and $m$. Moreover, it must be 
$X^\eta_{\rho^0_m+s}\!-\zeta_m(t+\rho^0_m+s)>0$, for all $s\in[0,\ell]$ on $\{\rho_m^\eta>\rho_m^0+\ell\}$.
The latter inequality implies that for all $s\in[0,\ell]$ we also have $X^\eta_{\rho_m^0+s}-X^\eta_{\rho_m^0}>- \beta$ on the event $\{X_{\rho_m^0}^{\eta}-X_{\rho_m^0}^{0}<\beta\}\cap\{\rho_m^\eta>\rho_m^0+\ell\}$, because 
\begin{align*}
0<X^\eta_{\rho^0_m+s}-\zeta_m(t\!+\!\rho^0_m\!+\!s)&=X^\eta_{\rho^0_m+s}-X^\eta_{\rho^0_m}+(X^\eta_{\rho^0_m}-X^0_{\rho^0_m})+\zeta_m(t\!+\!\rho^0_m)-\zeta_m(t\!+\!\rho^0_m\!+\!s)\\
&\le X^\eta_{\rho^0_m+s}-X^\eta_{\rho^0_m}+\beta,
\end{align*}
by monotonicity of $\zeta_m$.
In conclusion
\begin{align}\label{eq:inf0gamma}
\Big\{X_{\rho_m^0}^{\eta}-X_{\rho_m^0}^{0}<\beta,\,\rho_m^\eta>\rho_m^0+\ell\Big\}\subseteq \Big\{\inf_{0\leq s\leq \ell}\big(X^\eta_{\rho_m^0+s}-X^\eta_{\rho_m^0}\big)\ge- \beta,\,\rho_m^\eta>\rho_m^0+\ell\Big\}.
\end{align}
Combining inequalities \eqref{eq:equicon_m3a}, \eqref{eq:equicon_m3b} and the inclusion \eqref{eq:inf0gamma}, we get \eqref{eq:equicon_3m}.

Similarly, we prove \eqref{eq:equicon_2m}. We must recall that now $\dot{\nu}_s=0$ for $s\in[\![\rho^0_m\wedge\rho_m^\eta,\rho_m^0]\!]$. Repeating the same estimates as in \eqref{eq:equicon_m3a} and \eqref{eq:equicon_m3b} but with $\rho_m^0$ in place of $\rho_m^\eta$, we obtain for $\ell,\beta>0$
\begin{align}\label{eq:equicon_m3c}
\begin{aligned}
\int_{\rho_m^\eta}^{\rho_m^0}\!\!R^w_s\Theta^{N}_{\kappa,m}(t\!+\!s,X_s^{0})\ud s \leq C_\Theta\Big( \ell\!+\! T\mathds{1}_{\big\{X_{\rho_m^\eta}^{\eta}-X_{\rho_m^\eta}^{0}\ge \beta\big\}}\!+\! T\mathds{1}_{\{\rho_m^0>\rho_m^\eta+\ell\}}\mathds{1}_{\big\{X_{\rho_m^\eta}^{\eta}-X_{\rho_m^\eta}^{0 }<\beta\big\}}\Big).
\end{aligned}
\end{align}
On the event $\{\rho_m^0>\rho_m^\eta+\ell\}$ we have $\rho^\eta_m<T-t$ and it must be $X^\eta_{\rho_m^\eta}=m$ because $m\ge X^\eta_s\ge X^0_s\ge \zeta_m(t+s)$ for all $s\in[\![0,\rho^0_m\wedge\rho^\eta_m]\!]$. Moreover, for all $s\in[0,\ell]$ it must be $X^0_{\rho^\eta_m+s}<m$ which, on the event $\{X_{\rho_m^\eta}^{\eta}-X_{\rho_m^\eta}^{0}<\beta\}\cap\{\rho_m^0>\rho_m^\eta+\ell\}$ implies also 
\[
0>X^0_{\rho^\eta_m+s}-m=X^0_{\rho^\eta_m+s}-X^0_{\rho^\eta_m}+X^0_{\rho^\eta_m}-X^\eta_{\rho^\eta_m}\ge X^0_{\rho^\eta_m+s}-X^0_{\rho^\eta_m}-\beta,
\]
for all $s\in[0,\ell]$. Therefore, 
\begin{align}\label{eq:inf0gamma2}
\Big\{X_{\rho_m^\eta}^{\eta}-X_{\rho_m^\eta}^{0}<\beta,\,\rho_m^0>\rho_m^\eta+\ell\Big\}\subseteq \Big\{\sup_{0\leq s\leq \ell}\big(X^0_{\rho_m^\eta+s}-X^0_{\rho_m^\eta}\big)\le \beta,\,\rho_m^0>\rho_m^\eta+\ell\Big\},
\end{align}
and we get \eqref{eq:equicon_2m} using \eqref{eq:inf0gamma2} into \eqref{eq:equicon_m3c}.
\end{proof}

\begin{lemma}\label{lem:equicon_m1}
The bounds in \eqref{eq:equicon_1m} and \eqref{eq:equicon_4m} hold.
\end{lemma}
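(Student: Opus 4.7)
My plan is pathwise: derive a pointwise lower bound on the integrand, then carry out a four-case analysis matching the three indicator contributions on the right-hand side of \eqref{eq:equicon_1m}. The pivotal observation is that in Step~1 of the proof of Theorem~\ref{thm:equic_m}, $w$ was extended by zero past $\rho_m^0$, so for $s\in[\rho_m^0,\rho_m^\eta]$
\[
R^w_s=R^w_{\rho_m^0}\e^{-r(s-\rho_m^0)},\qquad \e^{-rT}R^w_{\rho_m^0}\le R^w_s\le R^w_{\rho_m^0}.
\]
This explicit representation is what will ultimately let the $L^2$-bound on $\dot{\nu}$ encoded by $\tau_{M_6}$ feed into a lower bound independent of $\delta$.

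Combining $H_{m}^{N,\eps}(\cdot,\dot{\nu}_s)\ge\tfrac{\eps}{4}|\dot{\nu}_s|^2$ from \eqref{eq:lowbnHepsm}, $|\partial_x g_m^N|\le\Lambda_N$ from \eqref{eq:alphaNMbnd}, Young's inequality $\Lambda_N|\dot{\nu}_s|\le\tfrac{2\Lambda_N^2}{\eps}+\tfrac{\eps}{8}|\dot{\nu}_s|^2$, and $|\Theta^{N}_{\kappa,m}|\le C_\Theta$, I obtain the pointwise estimate
\[
\big(\Theta^{N}_{\kappa,m}+\dot{\nu}_s\partial_x g_m^N+H_{m}^{N,\eps}(\cdot,\dot{\nu}_s)\big)(t+s,X_s^\eta)\ge -M_5+\tfrac{\eps}{8}|\dot{\nu}_s|^2.
\]
Multiplying by $R^w_s$, factoring $R^w_{\rho_m^0}$ out, and using $-M_5\e^{-r(s-\rho_m^0)}\ge -M_5$ and $\e^{-r(s-\rho_m^0)}|\dot{\nu}_s|^2\ge \e^{-rT}|\dot{\nu}_s|^2$, on $\{\rho_m^\eta\ge\rho_m^0\}$ I get
\[
\int_{\rho_m^0}^{\rho_m^\eta}R^w_s\big(\Theta^{N}_{\kappa,m}+\dot{\nu}_s\partial_x g_m^N+H_{m}^{N,\eps}(\cdot,\dot{\nu}_s)\big)\ud s\ge R^w_{\rho_m^0}\Big[-M_5(\rho_m^\eta-\rho_m^0)+\tfrac{\eps\e^{-rT}}{8}\int_{\rho_m^0}^{\rho_m^\eta}|\dot{\nu}_s|^2\ud s\Big].
\]

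I will then split into four cases on $\{\rho_m^\eta\ge\rho_m^0\}$: (a) if $\rho_m^\eta<\rho_m^0+\ell$, the bracket is $\ge -M_5\ell$ and the factor $R^w_{\rho_m^0}\in[0,1]$ preserves this lower bound (whether the bracket is negative or positive); (b) if $\rho_m^\eta\ge\rho_m^0+\ell$ and $X^\eta_{\rho_m^0}-X^0_{\rho_m^0}\ge\beta$, the trivial bound $-M_5T$ matches the second indicator; (c) if $\rho_m^\eta\ge\rho_m^0+\ell$, $X^\eta_{\rho_m^0}-X^0_{\rho_m^0}<\beta$, and $\tau_{M_6}\ge\rho_m^0+\ell$, inclusion \eqref{eq:inf0gamma} from Lemma~\ref{lem:equicon_m} places us in the third indicator, so again $-M_5T$ suffices; (d) in the residual case $\rho_m^\eta\ge\rho_m^0+\ell$, $X^\eta_{\rho_m^0}-X^0_{\rho_m^0}<\beta$, $\tau_{M_6}<\rho_m^0+\ell$, one has $\tau_{M_6}\le\rho_m^\eta$ and $\int_{\rho_m^0}^{\tau_{M_6}}|\dot{\nu}|^2\ud s=M_6$, so the bracket is at least $-M_5T+\tfrac{\eps\e^{-rT}}{8}M_6=0$ thanks to the calibration $M_6=8\e^{rT}\eps^{-1}TM_5$, and thus the integral is $\ge 0\ge -M_5\ell$.

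The bound \eqref{eq:equicon_4m} is entirely symmetric: in Step~2 of Theorem~\ref{thm:equic_m}, $w$ is extended by zero past $\rho_m^\eta$ instead, and the same computation on the window $[\rho_m^\eta,\rho_m^0]$ yields the analogous bracketed estimate with $R^w_{\rho_m^\eta}$ factored out; the four cases then swap $(\rho_m^0,X^\eta,\tau_{M_6},\inf)$ with $(\rho_m^\eta,X^0,\tau'_{M_6},\sup)$, now invoking \eqref{eq:inf0gamma2} in place of \eqref{eq:inf0gamma}. The only delicate point, in both bounds, is case~(d): $R^w_s$ is not bounded below uniformly in $\delta$, and the argument goes through only because extending $w$ by zero past the relevant stopping time lets me factor $R^w_{\rho_m^0}$ (resp.\ $R^w_{\rho_m^\eta}$) out of the integral, so that the sign is decided by the bracket, which is rendered nonnegative by the precise tuning $M_6=8\e^{rT}\eps^{-1}TM_5$.
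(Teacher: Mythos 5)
Your argument is correct and follows essentially the same route as the paper: the pointwise Young-inequality bound $\Theta^{N}_{\kappa,m}+\dot\nu_s\partial_xg^N_m+H_m^{N,\eps}\ge -M_5+\tfrac{\eps}{8}|\dot\nu_s|^2$, exploiting $w\equiv 0$ past $\rho_m^0$ to factor $R^w_{\rho_m^0}$ out of the tail integral, the calibration $M_6=8\e^{rT}\eps^{-1}TM_5$ to make the residual bracket nonnegative when $\tau_{M_6}$ is hit, and the geometric inclusion from Lemma~\ref{lem:equicon_m} for the remaining indicator. The only cosmetic difference is that the paper factors out $\e^{r\rho_m^0}R^w_{\rho_m^0}$ and organizes the case split via a chain of intermediate displays rather than your four-case enumeration, and it invokes the strict-inequality versions of the inclusions stated in its own proof (which your cited \eqref{eq:inf0gamma} and \eqref{eq:inf0gamma2} imply after noting the underlying argument actually gives $>-\beta$ and $<\beta$), but these are presentational rather than substantive distinctions.
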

\begin{proof}
For \eqref{eq:equicon_1m} we recall $X^\eta=X^{\nu,\kappa;x+\eta}$, $X^0=X^{\nu,\kappa;x}$, $\rho^\eta_m$ and $\rho_m^0$. Using \eqref{eq:ThetaNkmbnd} and \eqref{eq:lowbnHepsm}, we get
\begin{align}\label{eq:bndgxmN}
\begin{aligned}
&\Theta^{N}_{\kappa,m}(t\!+\!s,X_s^{\eta})+\dot{\nu}_s\cdot\partial_xg_m^N(t\!+\!s,X_s^{\eta}) +H_{m}^{N,\eps}(t+s,X_s^{\eta},\dot{\nu}_s)\\
&\ge -C_\Theta-|\dot{\nu}_s|\cdot\|\partial_xg_m^N\|_{L^\infty(\overline{\cO}_m)} +\tfrac{\eps}{4}|\dot{\nu}_s|^2\\
&= -C_\Theta\!+\!\Big(\sqrt{\tfrac{\eps}{8}}|\dot{\nu}_s|\!-\!\tfrac{1}{2}\sqrt{\tfrac{8}{\eps}}\|\partial_xg_m^N\|_{L^\infty(\overline{\cO}_m)}\Big)^2\!-\!\tfrac{2}{\eps}\|\partial_xg_m^N\|_{L^\infty(\overline{\cO}_m)}^2\!+\!\tfrac{\eps}{8}|\dot{\nu}_s|^2\ge-M_5\!+\!\tfrac{\eps}{8}|\dot{\nu}_s|^2,
\end{aligned}
\end{align}
with $M_5=C_\Theta+2\eps^{-1}\Lambda_N^2$ and $\Lambda_N\ge \|\partial_x g^N_m\|_{L^\infty(\overline{\cO}_m)}$ as in \eqref{eq:alphaNMbnd}. For $\ell>0$, on the event $\{\rho^0_m\le \rho^\eta_m\}$ 
\begin{align}\label{eq:rhometa0}
\begin{aligned}
&\int_{\rho_m^0}^{\rho_m^\eta}\!\! R^w_s\big(\Theta^{N}_{\kappa,m}\!+\!\dot{\nu}_s\cdot\partial_xg_m^N\!+\!H_{m}^{N,\eps}(\cdot,\dot{\nu}_s)\big)(t\!+\!s,X_s^{\eta})\ud s \\
&\ge -\!M_5\ell\!+\!\mathds{1}_{\{\rho_m^\eta-\rho_m^0>\ell\}}\int_{\rho_m^0}^{\rho_m^\eta}\!\! R^w_s\big(\tfrac{\eps}{8}|\dot{\nu}_s|^2\!-\!M_5\big)\ud s,
\end{aligned}
\end{align}
where we used \eqref{eq:bndgxmN} in the inequality. 

For the last term on the right-hand side of \eqref{eq:rhometa0} recall $M_6= 8\e^{rT}\eps^{-1}TM_5$ and $\tau_{M_6}$ from \eqref{eq:taum6}. Since $w_\lambda=0$ for $\lambda\in(\!(\rho_m^0,T-t]\!]$ then $\e^{r s}R^w_{s}=\e^{r \rho^0_m}R^w_{\rho^0_m}$ for $s\in[\![\rho_m^0,T-t]\!]$. Using also \eqref{eq:bndgxmN}, on the event $\{\rho_m^\eta-\rho_m^0>\ell\}\cap\{\rho_m^0+\ell > \tau_{M_6}\}$ we have
\begin{align}\label{eq:equic_mK}
\begin{aligned}
\int_{\rho_m^0}^{\rho_m^\eta}\!\! R^w_s\big(\tfrac{\eps}{8}|\dot{\nu}_s|^2\!-\!M_5\big)\ud s
\ge\e^{r \rho^0_m}R^w_{\rho^0_m}\Big(\tfrac{\eps}{8}\e^{-rT}\int_{\rho_m^0}^{\tau_{M_6}}\!\!|\dot{\nu}_s|^2\ud s -M_5T\Big).
\end{aligned}
\end{align}
On $\{\rho_m^\eta\!-\!\rho_m^0>\ell\}\cap\{\rho_m^0\!+\!\ell > \tau_{M_6}\}$ it must be $\tau_{M_6}\!<\!T\!-\!t$ and thus $\tfrac{\eps}{8}\e^{-rT}\int_{\rho_m^0}^{\tau_{M_6}}\!\!|\dot{\nu}_s|^2\ud s=\tfrac{\eps}{8}\e^{-rT}M_6=M_5T$. Then, the right-hand side of \eqref{eq:equic_mK} is non-negative on $\{\rho_m^\eta-\rho_m^0>\ell\}\cap\{\rho_m^0+\ell > \tau_{M_6}\}$. Plugging \eqref{eq:equic_mK} into \eqref{eq:rhometa0} yields
\begin{align}\label{eq:rhometa0b}
\begin{aligned}
&\int_{\rho_m^0}^{\rho_m^\eta}\!\! R^w_s\big(\Theta^{N}_{\kappa,m}\!+\!\dot{\nu}_s\cdot\partial_xg_m^N\!+\!H_{m}^{N,\eps}(\cdot,\dot{\nu}_s)\big)(t\!+\!s,X_s^{\eta})\ud s \\
&\ge -\!M_5\ell\!-\!M_5T\mathds{1}_{\{\rho_m^\eta-\rho_m^0>\ell\}}\mathds{1}_{\{ \rho_m^0+\ell\le \tau_{M_6}\}}.
\end{aligned}
\end{align}
For any $\beta>0$, we then obtain
\begin{align*}
\begin{aligned}
&\int_{\rho_m^0}^{\rho_m^\eta}\!\! R^w_s\big(\Theta^{N}_{\kappa,m}\!+\!\dot{\nu}_s\cdot\partial_xg_m^N\!+\!H_{m}^{N,\eps}(\cdot,\dot{\nu}_s)\big)(t\!+\!s,X_s^{\eta})\ud s \\
&\ge -\!M_5\ell\!-\!M_5T\mathds{1}_{\{X_{\rho_m^0}^{\eta}\ge X_{\rho_m^0}^{0}\!+\beta\}}-\!M_5T\mathds{1}_{\{\rho_m^\eta-\rho_m^0>\ell\}}\mathds{1}_{\{\rho_m^0+\ell\le \tau_{M_6}\}}\mathds{1}_{\big\{X_{\rho_m^0}^{\eta}<X_{\rho_m^0}^{0}\!+\beta\big\}}\\
\end{aligned}
\end{align*}
Therefore, \eqref{eq:equicon_1m} holds, because by the same arguments as those leading to \eqref{eq:inf0gamma} we obtain
\begin{align}\label{eq:inf0gamma3}
\Big\{X_{\rho_m^0}^{\eta}<X_{\rho_m^0}^{0}+\beta,\,\rho_m^\eta>\rho_m^0+\ell\Big\}\subseteq \Big\{\inf_{0\leq s\leq \ell}\big(X^\eta_{\rho_m^0+s}-X^\eta_{\rho_m^0}\big)>- \beta,\,\rho_m^\eta>\rho_m^0+\ell\Big\}.
\end{align}

We show now that \eqref{eq:equicon_4m} holds. Repeating verbatim the first part of the proof until \eqref{eq:rhometa0b} but swapping the roles of the pairs $(\rho_m^\eta,X^{\eta})$ and $(\rho_m^0,X^{0})$ and replacing $\tau_{M_6}$ with $\tau_{M_6}'$ from \eqref{eq:taum6_2}, we have for $\ell>0$, on the event $\{\rho^0_m\ge \rho^\eta_m\}$ 
\begin{align*}
&\int_{\rho_m^\eta}^{\rho_m^0}\!\! R^w_s\big(\Theta^{N}_{\kappa,m}\!+\!\dot{\nu}_s\cdot\partial_xg_m^N\!+\!H_{m}^{N,\eps}(\cdot,\dot{\nu}_s)\big)(t\!+\!s,X_s^{0})\ud s \ge -M_5\ell\! -\! M_5T\mathds{1}_{\{\rho_m^0-\rho_m^\eta>\ell\}}\mathds{1}_{\{\rho_m^\eta+\ell \le \tau_{M_6}'\}}.
\end{align*}
For $\beta>0$, we have
\begin{align*}
&\int_{\rho_m^\eta}^{\rho_m^0}\!\! R^w_s\big(\Theta^{N}_{\kappa,m}\!+\!\dot{\nu}_s\cdot \partial_xg_m^N\!+\!H_{m}^{N,\eps}(\cdot,\dot{\nu}_s)\big)(t\!+\!s,X_s^{0})\ud s \\
&\ge -M_5\ell-M_5T\mathds{1}_{\big\{X_{\rho_m^\eta}^{\eta}\ge X_{\rho_m^\eta}^{0}+\beta\big\}} \!-\!M_5T\mathds{1}_{\{\rho_m^0-\rho_m^\eta>\ell\}}\mathds{1}_{\{\rho_m^\eta+\ell \le \tau'_{M_6}\}}\mathds{1}_{\big\{X_{\rho_m^\eta}^{\eta}<X_{\rho_m^\eta}^{0}+\beta\big\}}.
\end{align*}
Arguments as those leading to \eqref{eq:inf0gamma2} yield 
\begin{align*}
\Big\{\rho_m^0-\rho_m^\eta>\ell,\,X_{\rho_m^\eta}^{\eta}<X_{\rho_m^\eta}^{0}+\beta\Big\}\subseteq\Big\{\rho_m^0-\rho_m^\eta>\ell,\,\sup_{0\leq s\le \ell}\big(X_{\rho_m^\eta+s}^{0}-X_{\rho_m^\eta}^{0}\big)<\beta\Big\}
\end{align*}
and \eqref{eq:equicon_4m} holds.
\end{proof}
\begin{lemma}\label{lem:equicon_m2}
The bounds in \eqref{eq:equicon_5m} and \eqref{eq:equicon_6m} hold.
\end{lemma}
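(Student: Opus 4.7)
Both bounds follow the template of Lemmas \ref{lem:equicon_m} and \ref{lem:equicon_m1}, with the caveat that here we compare a single dynamics $X=X^{\nu,\kappa;x}$ under two time-shifts $t$ and $t+\eta$, so the extra interval of length $\eta$ accounts for the monotonicity of the boundary $s\mapsto\zeta_m(t+s)$. The geometric fact driving both proofs is the following: on the event $\{\rho^0_m\ge\rho^\eta_m+\eta+\ell\}$ with $\eta+\ell>0$, the exit at time $\rho^\eta_m$ is necessarily through the lower boundary (exit through $x=m$ would force $\rho^0_m=\rho^\eta_m$, and terminal-time exit would give $\rho^\eta_m=T-t-\eta$, hence $\rho^0_m\le T-t<\rho^\eta_m+\eta+\ell$, both impossible), so $X_{\rho^\eta_m}=\zeta_m(t+\eta+\rho^\eta_m)=\zeta_m(t+\rho^\eta_m+\eta)$.

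For \eqref{eq:equicon_6m}, use $0\le\Theta^{N}_{\kappa,m}\le C_\Theta$ and $R^w_s\le 1$ to obtain $\int_{\rho_m^\eta}^{\rho_m^0}R^w_s\Theta^{N}_{\kappa,m}(t+s,X_s)\,\ud s\le C_\Theta(\eta+\ell)+C_\Theta T\mathds{1}_{\{\rho^0_m>\rho^\eta_m+\eta+\ell\}}$. For any $\beta>0$, split the remaining indicator according to whether $X_{\rho^\eta_m+\eta}-X_{\rho^\eta_m}>\beta$. On the complement, intersected with $\{\rho^0_m>\rho^\eta_m+\eta+\ell\}$, for every $s\in[0,\ell]$ the process stays in $\cO_m$ and the geometric fact above give
\begin{align*}
X_{\rho^\eta_m+\eta+s}>\zeta_m(t+\rho^\eta_m+\eta+s)\ge \zeta_m(t+\rho^\eta_m+\eta)=X_{\rho^\eta_m}\ge X_{\rho^\eta_m+\eta}-\beta,
\end{align*}
so $X_{\rho^\eta_m+\eta+s}-X_{\rho^\eta_m+\eta}>-\beta$ for each $s\in[0,\ell]$; by continuity of $X$ on the compact interval $[0,\ell]$ the infimum is attained, hence is strictly greater than $-\beta$. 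This produces the indicator structure claimed in \eqref{eq:equicon_6m}.

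For \eqref{eq:equicon_5m}, denote by $I$ the integral on the left-hand side and apply the pointwise lower bound \eqref{eq:bndgxmN} so that the integrand is $\ge -M_5+\tfrac{\eps}{8}|\dot{\nu}_s|^2$. On $\{\rho^0_m\le \rho^\eta_m+\eta+\ell\}$ the trivial estimate $I\ge -M_5(\rho^0_m-\rho^\eta_m)\ge -M_5(\eta+\ell)$ suffices. Otherwise, exploit that $w_s=0$ for $s\ge \rho^\eta_m$ (by extension in Step 3), so $\e^{rs}R^w_s=\e^{r\rho^\eta_m}R^w_{\rho^\eta_m}$ on $[\rho^\eta_m,T-t]$; if moreover $\tau'_{M_6}<\rho^\eta_m+\eta+\ell$ then $\tau'_{M_6}<T-t$ (as $\tau'_{M_6}\le \rho^0_m\le T-t$) and $\int_{\rho^\eta_m}^{\tau'_{M_6}}|\dot{\nu}_\lambda|^2\ud\lambda=M_6$, so replaying the calculation of \eqref{eq:equic_mK} with $M_6=8\e^{rT}\eps^{-1}TM_5$ gives $I\ge 0$. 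The only remaining case is the bad event $\{\rho^\eta_m+\eta+\ell\le \tau'_{M_6}\wedge\rho^0_m\}$, on which we claim $A\coloneqq\mathds{1}_{\{X_{\rho^\eta_m+\eta}-X_{\rho^\eta_m}\ge\beta\}}$ or $B\coloneqq\mathds{1}_{\{\inf_{0\le s\le\ell}(X_{\rho^\eta_m+\eta+s}-X_{\rho^\eta_m+\eta})>-\beta\}}$ equals one: if both vanished, picking $s^*\in[0,\ell]$ achieving the infimum (by continuity of $X$) would yield
\begin{align*}
X_{\rho^\eta_m+\eta+s^*}\le X_{\rho^\eta_m+\eta}-\beta<X_{\rho^\eta_m}=\zeta_m(t+\eta+\rho^\eta_m)\le \zeta_m(t+\rho^\eta_m+\eta+s^*),
\end{align*}
forcing $\rho^0_m<\rho^\eta_m+\eta+s^*\le \rho^\eta_m+\eta+\ell$ and contradicting the bad event. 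Hence $A+B\ge 1$ there, and the crude bound $I\ge -M_5 T$ dominates $-M_5(\eta+\ell)-M_5 T(A+B)\le -M_5(\eta+\ell)-M_5 T$, completing \eqref{eq:equicon_5m}. The principal technical hurdle is precisely this geometric argument upgrading $A+B\ge 1$, together with the careful bookkeeping of $R^w_s$ and $\tau'_{M_6}$ across the two integration subintervals $[\rho^\eta_m,\rho^\eta_m+\eta]$ and $[\rho^\eta_m+\eta,\rho^0_m]$.
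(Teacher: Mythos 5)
Your proof is correct and follows essentially the same approach as the paper: split off the short interval of length $\eta+\ell$, use the pointwise bound \eqref{eq:bndgxmN} together with the $\tau'_{M_6}$ truncation and the fact that $\mathrm{e}^{rs}R^w_s$ is constant past $\rho^\eta_m$, and then translate the event $\{\rho^0_m>\rho^\eta_m+\eta+\ell\}$ via the geometry of $\cO_m$ (exit at $\rho^\eta_m$ forced through the moving lower boundary, monotonicity of $\zeta_m$); the paper states the inclusion directly while you argue it by contradiction, and you make explicit the elimination of exits through $x=m$ and at the terminal time, which the paper leaves implicit. One small imprecision: you invoke $0\le\Theta^{N}_{\kappa,m}\le C_\Theta$, but the paper only guarantees $|\Theta^{N}_{\kappa,m}|\le C_\Theta$; the lower bound is neither available nor needed, since the upper estimate in \eqref{eq:equicon_6m} only uses $\Theta^{N}_{\kappa,m}\le C_\Theta$ and $R^w_s\ge 0$.
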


\begin{proof}
For the proof of \eqref{eq:equicon_5m} recall that $X=X^{\nu,\kappa;x}$ and $\rho_m^0$ and $\rho_m^\eta$ are the exit times of $(t+s,X_s)$ and $(t+\eta+s,X_s)$ from the set $\cO_m$, respectively. Due to the geometry of $\cO_m$ it holds $\rho_m^0\ge\rho_m^\eta$. Recalling \eqref{eq:bndgxmN}, for $\ell>0$ we have 
\begin{align*}
&\int_{\rho_m^\eta}^{\rho_m^0}\!\! R^w_s\big(\Theta^{N}_{\kappa,m}\!+\!\dot{\nu}_s\cdot\partial_xg_m^N\!+\!H_{m}^{N,\eps}(\cdot,\dot{\nu}_s)\big)(t\!+\!s,X_{s})\ud s\\
&\ge-M_5(\eta +\ell)\!+\!\mathds{1}_{\{\rho_m^0-\rho^\eta_m>\eta+\ell\}}\int_{\rho_m^\eta}^{\rho_m^0}\!\!R^w_s\big(\tfrac{\eps}{8}\big|\dot \nu_s\big|^2-M_5\big)\ud s.
\end{align*}
The same argument as in \eqref{eq:equic_mK}, but with $\tau_{M_6}$ therein replaced by $\tau_{M_6}'$ as in \eqref{eq:taum6_3}, yields
\begin{align*}
&\int_{\rho_m^\eta}^{\rho_m^0}\!\! R^w_s\big(\Theta^{N}_{\kappa,m}\!+\!\dot{\nu}_s\cdot\partial_xg_m^N\!+\!H_{m}^{N,\eps}(\cdot,\dot{\nu}_s)\big)(t\!+\!s,X_{s})\ud s\\
&\ge -M_5(\eta\!+\!\ell)\!-\!M_5 T\mathds{1}_{\{\rho_m^\eta+\eta+\ell \le \tau_{M_6}'\}}\mathds{1}_{\{\rho_m^0-\rho_m^\eta>\eta+\ell\}}.
\end{align*}
For $\beta>0$ we further obtain 
\begin{align}\label{eq:taumeta0b1}
\begin{aligned}
&\int_{\rho_m^\eta}^{\rho_m^0}\!\! R^w_s\big(\Theta^{N}_{\kappa,m}\!+\!\dot{\nu}_s\cdot\partial_xg_m^N\!+\!H_{m}^{N,\eps}(\cdot,\dot{\nu}_s)\big)(t\!+\!s,X_{s})\ud s\\
&\ge -M_5(\eta\!+\!\ell)\!-\!M_5T\mathds{1}_{\{\rho_m^\eta+\eta+\ell\le \tau_{M_6}'\wedge\rho^0_m\}}\Big(\mathds{1}_{\big\{X_{\rho_m^\eta+\eta}-X_{\rho_m^\eta}>\beta\big\}}\!+\!\mathds{1}_{\big\{X_{\rho_m^\eta+\eta}-X_{\rho_m^\eta}\le \beta\big\}}\Big).
\end{aligned}
\end{align}

Due to the geometry of $\cO_m$, on $\{\rho_m^0>\rho_m^\eta+\eta+\ell\}$ it must be $X_{\rho_m^\eta}=\zeta(t+\eta+\rho_m^\eta)$ and $X_{s}>\zeta_m(t+s)$ for all $s\in[\![\rho_m^\eta+\eta,\rho_m^\eta+\eta+\ell]\!]$. Then, on $\{\rho_m^0>\rho_m^\eta+\eta+\ell\}\cap\{X_{\rho_m^\eta+\eta}-X_{\rho_m^\eta}\le\beta\}$ we have $X_{s}-X_{\rho_m^\eta+\eta}\ge X_{s}-X_{\rho_m^\eta}-\beta>\zeta_m(t+s)-\zeta_m(t+\rho_m^\eta+\eta)-\beta\ge -\beta$ for all $s\in[\![\rho_m^\eta+\eta,\rho_m^\eta+\eta+\ell]\!]$, where we used that $\zeta_m$ is non-decreasing in the third inequality. Therefore, 
\begin{align}\label{eq:taumeta0b2}
\begin{aligned}
&\Big\{X_{\rho_m^\eta+\eta}\!-\!X_{\rho_m^\eta}\le \beta,\,\rho_m^0\!>\!\rho_m^\eta\!+\!\eta\!+\!\ell\Big\}\\
&\subseteq\! \Big\{\inf_{\eta\leq s\leq \eta +\ell}\big(X_{\rho_m^\eta+s}\!-\!X_{\rho_m^\eta+\eta}\big)\ge- \beta,\,\rho_m^0\!>\!\rho_m^\eta\!+\!\eta\!+\!\ell\Big\}.
\end{aligned}
\end{align}
Plugging the latter into \eqref{eq:taumeta0b1} we obtain \eqref{eq:equicon_5m}.

To prove \eqref{eq:equicon_6m} we recall $\rho^\eta_m\le \rho^0_m$ and $\dot{\nu}_s=0$ for $s\in[\![\rho_m^\eta,\rho_m^0]\!]$. For $\ell,\beta>0$, using \eqref{eq:ThetaNkmbnd} we have
\begin{align*}
&\int_{\rho_m^\eta}^{\rho_m^0}\!\! R^w_s\Theta^{N}_{\kappa,m}(t\!+\!s,X_{s})\ud s\le C_\Theta \big(\rho^0_m-\rho^\eta_m\big)\big(\mathds{1}_{\{\rho^0_m-\rho^\eta_m\le \eta+\ell\}}+\mathds{1}_{\{\rho^0_m-\rho^\eta_m\ge \eta+\ell\}}\big) \\
&\le C_\Theta(\eta\!+\!\ell) \!+\!C_\Theta T\mathds{1}_{\{\rho_m^0>\rho_m^\eta+\eta+\ell\}}\Big(\mathds{1}_{\big\{X_{\rho_m^\eta+\eta}-X_{\rho_m^\eta}>\beta\big\}}\!+\!\mathds{1}_{\big\{X_{\rho_m^\eta+\eta}-X_{\rho_m^\eta}\le \beta\big\}}\Big).
\end{align*}
An inclusion as \eqref{eq:taumeta0b2} holds here as well, which concludes the proof of \eqref{eq:equicon_6m}.
\end{proof}

\subsection{Penalised problem on unbounded domain}

Combining Proposition \ref{prop:gradbndUa} with Theorem \ref{thm:equic_m}, we obtain existence and uniqueness of the solution to the penalised problem on $\overline\cO$. We denote by $C^{1,2;\gamma}_{Loc}(\overline\cO)$ the subset of $C(\overline\cO)\cap C^{1,2}(\cO)$ of functions whose derivatives are $\gamma$-H\"older continuous in any compact $\cK\subset\cO$. Clearly $C^{1,2;\gamma}_{Loc}(\overline\cO)\subset W^{1,2;p}_{\ell oc}(\cO)$.

\begin{theorem}\label{thm:highreguued}
There exists $u=u^{N,\eps,\delta}_{\kappa}\in C^{1,2;\gamma}_{Loc}(\overline\cO)$, for any $p\in(1,\infty)$ and $\gamma\in(0,1)$ as in Assumption \ref{ass:gen2}, that solves:
\begin{align}\label{eq:pdeinRdeps}
\begin{cases}(\partial_t\!+\!\cL_\kappa \!-\!r)u=\!-h^N\!-\!\frac{1}{\delta}\big(g^N \!-\!u\big)^+\!+\psi_\eps\big(|\partial_x u|^2\!-\!(\bar{\alpha}^N)^2\big), &\text{on $\cO$}, \\
u(T,x)=g^N(T,x), &\text{for all $x\in[0,\infty)$},\\
u(t,0)=g^N(t,0), &\text{for all $t\in[0,T]$}.
\end{cases}
\end{align}
Moreover, $0\le u^{N,\eps,\delta}_\kappa(t,x)\le K_3(1+|x|^2)$ for $(t,x)\in\overline\cO$ with $K_3>0$ as in Lemma \ref{lem:polygrow}.
\end{theorem}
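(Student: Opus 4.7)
The plan is to pass to the limit $m\to\infty$ in the family $\{u^{N,\eps,\delta}_{\kappa,m}\}_{m\in\N}$ of solutions to Problem \ref{prb:penprob}, using as key ingredients the uniform growth bound from Lemma \ref{lem:polygrow}, the uniform local $W^{1,2;p}$-bounds from Proposition \ref{prop:gradbndUa}, and the global equi-continuity from Theorem \ref{thm:equic_m}. First I would extend each $u^{N,\eps,\delta}_{\kappa,m}$ continuously to all of $\overline\cO$ by setting it equal to $g^N_m$ on $\overline\cO\setminus\overline\cO_m$; this extension is well-defined thanks to the Dirichlet datum on $\partial_P\cO_m$ and to $g^N_m\equiv 0$ on $\{x\ge m+1\}$. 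Combining Theorem \ref{thm:equic_m} inside $\overline\cO_m$ with the uniform Lipschitz regularity of $g^N_m$ outside, the extended family is uniformly bounded and uniformly equi-continuous on every compact of $\overline\cO$. Arzel\`a--Ascoli with a diagonal extraction then yields a subsequence $m_k\to\infty$ converging locally uniformly to a continuous $u^{N,\eps,\delta}_\kappa$ satisfying $0\le u^{N,\eps,\delta}_\kappa(t,x)\le K_3(1+|x|^2)$.

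For the interior regularity and the identification of the PDE, on any closed rectangle $\cK\subset\cO$ and any $m\ge m_\cK$, Proposition \ref{prop:gradbndUa} gives $\|u^{N,\eps,\delta}_{\kappa,m}\|_{W^{1,2;p}(\cK)}\le c_2$ for any $p>3$. By reflexivity and the compact embedding $W^{1,2;p}(\cK)\hookrightarrow C^{0,1;\beta}(\cK)$ with $\beta=1-3/p$, along a (relabelled) further subsequence one has $u^{N,\eps,\delta}_{\kappa,m_k}\rightharpoonup u^{N,\eps,\delta}_\kappa$ in $W^{1,2;p}(\cK)$ together with uniform convergence of both $u^{N,\eps,\delta}_{\kappa,m_k}$ and $\partial_x u^{N,\eps,\delta}_{\kappa,m_k}$ on $\cK$. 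For $m$ large enough depending on $\cK$ the penalised data agree with those of \eqref{eq:pdeinRdeps} ($g^N_m=g^N$, $h^N_m=h^N$, $\bar{\alpha}^N_m=\bar{\alpha}^N$ on $\cK$), so we may pass to the limit in \eqref{eq:penprob}: the parabolic terms converge weakly in $L^p(\cK)$, while the nonlinearities $\psi_\eps(|\partial_x u|^2-(\bar{\alpha}^N)^2)$ and $\tfrac1\delta(g^N-u)^+$ converge uniformly thanks to the uniform convergence of the gradients and of $u^{N,\eps,\delta}_{\kappa,m_k}$. Thus $u^{N,\eps,\delta}_\kappa\in W^{1,2;p}_{\ell oc}(\cO)$ solves the PDE in \eqref{eq:pdeinRdeps} in the strong $L^p$-sense. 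A standard parabolic Schauder bootstrap, applied once the right-hand side is known to be locally $\gamma$-H\"older, upgrades the regularity to $C^{1,2;\gamma}_{Loc}(\overline\cO)$.

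The main obstacle, in my view, is the identification of the boundary values. The condition $u^{N,\eps,\delta}_\kappa(T,x)=g^N(T,x)$ is immediate for $x\ge\overline\Theta^N_\kappa$, since for large $m$ one has $(T,x)\in\partial_P\cO_m$ and thus $u^{N,\eps,\delta}_{\kappa,m}(T,x)=g^N_m(T,x)\to g^N(T,x)$. For $x\in[0,\overline\Theta^N_\kappa)$, however, $(T,x)$ lies outside every $\overline\cO_m$; the identification then relies on the observation that the curve $s\mapsto \zeta^\kappa_m(s)$ sweeps the interval $[0,\overline\Theta^N_\kappa]$ during the vanishing window $[T-\tfrac1m,T]$. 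Approaching $(T,x)$ along the boundary points $(t_m(x),x)\in\partial_P\cO_m$ with $\zeta^\kappa_m(t_m(x))=x$ and $t_m(x)\uparrow T$, the uniform equi-continuity of Theorem \ref{thm:equic_m} combined with $u^{N,\eps,\delta}_{\kappa,m}(t_m(x),x)=g^N_m(t_m(x),x)$ yields $u^{N,\eps,\delta}_\kappa(T,x)=g^N(T,x)$. The Dirichlet condition on $[0,T]\times\{0\}$ is analogous: for each $t<T$ and $m$ large enough $\zeta^\kappa_m(t)=0$, so $u^{N,\eps,\delta}_{\kappa,m}(t,0)=g^N_m(t,0)=g^N(t,0)$, and continuity handles the corner $t=T$.
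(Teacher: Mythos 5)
Your proposal follows essentially the same route as the paper: extend to $\overline\cO$, use the equi-boundedness (Lemma \ref{lem:polygrow}) and equi-continuity (Theorem \ref{thm:equic_m}) for Arzel\`a--Ascoli, then use the uniform $W^{1,2;p}$-bounds (Proposition \ref{prop:gradbndUa}) for compactness on interior rectangles, pass to the limit in the PDE, and bootstrap. The one place where you do more work than necessary is the terminal boundary identification at $\{T\}\times[0,\overline\Theta^N_\kappa)$: since you extend by $g^N_m$ on $\overline\cO\setminus\overline\cO_m$ and, for $m$ large, $g^N_m=g^N$ there, the extended functions already satisfy $u^{N,\eps,\delta}_{\kappa,m}(T,x)=g^N(T,x)$ identically for all $x\in[0,n]$ once $m>n+1$; locally uniform convergence then gives the boundary value directly, with no need for the argument via the sweeping curve $\zeta^\kappa_m$. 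Your sweeping-curve argument is correct but redundant; it does, however, provide a useful sanity check that the boundary values are attained continuously from the interior, which is the property one ultimately wants.
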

\begin{proof}
We extend $u^{N,\eps,\delta}_{\kappa,m}$ to be equal to $g^N$ on the set $\overline\cO\setminus \cO_m$. So all functions $(u^{N,\eps,\delta}_{\kappa,m})_{m\in\N}$ are defined on $\overline\cO$ and continuous. The family $(u^{N,\eps,\delta}_{\kappa,m})_{m\in\N}$ is equi-bounded and equi-continuous on any compact subset of $\overline\cO$ by Lemma \ref{lem:polygrow} and by Theorem \ref{thm:equic_m}, respectively. For any $n\in\N$ we can extract a subsequence $(u^{N,\eps,\delta}_{\kappa,m^n_j})_{j\in\N}$ that converges uniformly on $[0,T]\times[0,n]$ to a function $u^{N,\eps,\delta}_{\kappa;n}$. Such function is continuous on $[0,T]\times[0,n]$ and $u^{N,\eps,\delta}_{\kappa;n}=g^N$ on $([0,T]\times\{0\})\cup(\{T\}\times[0,n])$. Up to extracting a further subsequence, $(u^{N,\eps,\delta}_{\kappa,m^n_j})_{j\in\N}$ also converges to $u^{N,\eps,\delta}_{\kappa;n+1}$ on $[0,T]\times[0,n+1]$. Hence, by construction $u^{N,\eps,\delta}_{\kappa;n}=u^{N,\eps,\delta}_{\kappa;n+1}$ on $[0,T]\times[0,n]$ for every $n\in\N$. Thus, we define $u^{N,\eps,\delta}_{\kappa}$ on $\overline\cO$ by taking, e.g., $u^{N,\eps,\delta}_{\kappa}=u^{N,\eps,\delta}_{\kappa;n}$ on $[0,T]\times[0,n]$.

Fix $n\in\N$ and consider $\cK_n=[0,T-\frac1n]\times[\frac1n,n]$. The subsequence $(u^{N,\eps,\delta}_{\kappa,m^n_j})_{j\in\N}$ constructed above is bounded in $W^{1,2;p}(\cK_n)$, thanks to Proposition \ref{prop:gradbndUa}. Therefore, we can extract a further subsequence, which for simplicity we still denote by $(u^{N,\eps,\delta}_{\kappa,m^n_j})_{j\in\N}$, such that for $j\to\infty$ 
\begin{align}\label{eq:weakconv}
\begin{array}{l}
u^{N,\eps,\delta}_{\kappa,m^n_j} \to u^{N,\eps,\delta}_{\kappa}\quad\text{and}\quad \partial_x u^{N,\eps,\delta}_{\kappa,m^n_j} \to \partial_x u^{N,\eps,\delta}_{\kappa}\quad\text{ in } C^{\alpha}(\overline{\cK}_n), \\[+5pt]
\partial_t u^{N,\eps,\delta}_{\kappa,m^n_j} \to \partial_t u^{N,\eps,\delta}_{\kappa}\quad\text{and}\quad \partial_{xx}u^{N,\eps,\delta}_{\kappa,m^n_j} \to \partial_{xx}u^{N,\eps,\delta}_{\kappa}\quad\text{weakly in }L^p(\cK_n),
\end{array}
\end{align}
where $u^{N,\eps,\delta}_{\kappa}$ is the same limit constructed in the first paragraph. This shows that, for any $n\in\N$, $u^{N,\eps,\delta}_{\kappa}\in W^{1,2;p}(\cK_n)$ and $\partial_x u^{N,\eps,\delta}_{\kappa}\in C^{\alpha}(\overline{\cK}_n)$. It follows that $u^{N,\eps,\delta}_{\kappa}\in W^{1,2;p}_{\ell oc}(\cO)$ and it is not hard to show that $u^{N,\eps,\delta}_{\kappa}$ is a strong solution of \eqref{eq:pdeinRdeps} with boundary conditions taken continuously from the interior of the domain. Thus, we can lift the regularity to $u^{N,\eps,\delta}_{\kappa}\in C^{1,2;\gamma}_{Loc}(\overline\cO)$ by a standard procedure (see, e.g., the proof of \cite[Thm.\ 3]{bovo2022variational}). 

Finally, the quadratic growth condition follows immediately from Lemma \ref{lem:polygrow}.
\end{proof}

We now give a probabilistic representation for $u^{N,\eps,\delta}_{\kappa}$ analogue of \eqref{eq:probrap} but on unbounded domain. Such representation implies that $u^{N,\eps,\delta}_\kappa$ is indeed the unique solution of \eqref{eq:pdeinRdeps}. Let 
\begin{align}\label{eq:hmltn2}
H^{N,\eps}(y)\coloneqq \sup_{p\in\mathbb{R}}\big\{y p-\psi_\eps\big(|p|^2- (\alpha^{N})^2\big)\big\}
\end{align}
and, for $\nu\in\cA^\circ_{t,x}$,
\begin{align}\label{eq:rhoknu}
\rho=\rho(t,x;\nu,\kappa)=\inf\big\{s\ge 0\,\big|\, X_s^{\nu,\kappa;x}\le 0\big\}\wedge(T-t). 
\end{align}
For arbitrary $(\nu,w)\in\cA^\circ_{t,x}\times \cT^\delta_t$, let us denote by $\cJ^{N,\kappa,\eps,\delta}_{t,x}(\nu,w)$ a payoff analogue of \eqref{eq:Jpen} but with $g_m^N,h_m^N,H^{N,\eps}_{m}$ and $\rho_{m}$ therein replaced by $g^N,h^N,H^{N,\eps}$ and $\rho$, respectively. 

\begin{proposition}\label{lem:prbraprRD}
Let $u^{N,\eps,\delta}_{\kappa}$ be a solution of \eqref{eq:pdeinRdeps} as in Theorem \ref{thm:highreguued}. Then 
\begin{align}\label{eq:valueunbdd}
u^{N,\eps,\delta}_{\kappa}(t,x)=\inf_{\nu\in\cA^\circ_{t,x}}\sup_{w\in\cT^\delta_t}\cJ^{N,\kappa,\eps,\delta}_{t,x}(\nu,w)=\sup_{w\in \cT^\delta_t}\inf_{\nu\in\cA^\circ_{t,x}}\cJ^{N,\kappa,\eps,\delta}_{t,x}(\nu,w).
\end{align}
The controlled SDE \eqref{eq:SDEcntrll} admits a unique solution $(X_{s\wedge\rho}^{\nu^*})_{s\in[0,T-t]}$ when $\nu=\nu^*$ is defined by 
\begin{align}\label{eq:dfnoptw*2b}
\dot{\nu}_s^*\coloneqq -2\psi_{\eps}'\Big(|\partial_x u^{N,\eps,\delta}_{\kappa}(t+s,X_s^*)|^2- (\bar{\alpha}^N)^2\Big)\partial_x u^{N,\eps,\delta}_{\kappa}(t+s,X_s^*),\quad s\in[\![0,\rho)\!),
\end{align}
$\dot \nu^*_s=0$ for $s\in[\![\rho,T\!-\!t]\!]$, and $\nu^*$ is optimal for the minimiser. For any $\nu\in\cA_{t,x}^\circ$, the process
\begin{align}\label{eq:dfnoptw*2a}
w^*_s=w^*_s(\nu)\coloneqq\tfrac1\delta \mathds{1}_{\big\{u^{N,\eps,\delta}_{\kappa}(t+s,X_s^{\nu,\kappa})\leq g^N(t+s,X_s^{\nu,\kappa})\big\}},
\end{align}
is optimal for the maximiser.
\end{proposition}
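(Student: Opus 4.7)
I would follow the classical verification scheme for singular-controller/stopper games, exploiting the local $C^{1,2;\gamma}$-regularity of $u\coloneqq u^{N,\eps,\delta}_\kappa$ established in Theorem \ref{thm:highreguued} together with the PDE \eqref{eq:pdeinRdeps} and its boundary conditions. For $(t,x)\in\overline\cO$ and an arbitrary pair $(\nu,w)\in\cA^\circ_{t,x}\times\cT^\delta_t$, I would apply It\^o's formula to the process $s\mapsto R^w_s u(t+s,X^{\nu,\kappa}_s)$ on $[\![0,\rho\wedge\tau_n]\!]$, where $\tau_n\coloneqq\inf\{s\ge 0:|X^{\nu,\kappa}_s|\ge n\}$ localises the martingale part. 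Using the quadratic-growth bound $|u|\le K_3(1+x^2)$ from Theorem \ref{thm:highreguued} and standard SDE moment estimates (restricting if necessary to the effectively relevant subclass of controls, in the spirit of Lemma \ref{lem:Aopt} so that the Hamiltonian cost $H^{N,\eps}(\dot\nu_s)$ is integrable), one passes to the limit $n\to\infty$ and obtains a genuine identity on $[\![0,\rho]\!]$ with integrable terms.

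Plugging the PDE \eqref{eq:pdeinRdeps} into the drift and using that $u\equiv g^N$ on $([0,T]\times\{0\})\cup(\{T\}\times[0,\infty))$ so that $u(t+\rho,X^{\nu,\kappa}_\rho)=g^N(t+\rho,X^{\nu,\kappa}_\rho)$, a direct computation yields
\begin{align*}
u(t,x)-\cJ^{N,\kappa,\eps,\delta}_{t,x}(\nu,w)=\E\Big[\int_0^\rho R^w_s\big(A_s+B_s\big)(t+s,X^{\nu,\kappa}_s)\,\ud s\Big],
\end{align*}
where $A_s\coloneqq \tfrac{1}{\delta}(g^N-u)^++w_s(u-g^N)$ and $B_s\coloneqq -\psi_\eps(|\partial_xu|^2-(\bar\alpha^N)^2)-\dot\nu_s\,\partial_xu-H^{N,\eps}(\dot\nu_s)$. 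Since $w_s\in[0,1/\delta]$, splitting on $\{g^N\ge u\}$ and $\{g^N<u\}$ gives $A_s\ge 0$ pointwise, with equality precisely when $w=w^*(\nu)$ as in \eqref{eq:dfnoptw*2a}. The Fenchel-type inequality $H^{N,\eps}(y)\ge yp-\psi_\eps(p^2-(\bar\alpha^N)^2)$, applied with $p=-\partial_xu$ and $y=\dot\nu_s$, gives $B_s\le 0$ pointwise, with equality precisely when $\dot\nu_s=\dot\nu^*_s$ as in \eqref{eq:dfnoptw*2b} (this is the first-order condition for the supremum defining $H^{N,\eps}$).

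Combining these observations, $B_s\equiv 0$ when $\nu=\nu^*$, so $u(t,x)\ge\cJ(\nu^*,w)$ for every $w$, with equality at $w=w^*(\nu^*)$; dually, $A_s\equiv 0$ when $w=w^*(\nu)$, so $u(t,x)\le\cJ(\nu,w^*(\nu))$ for every $\nu$, with equality at $\nu=\nu^*$. The sandwich $\sup_w\inf_\nu\cJ\le\inf_\nu\sup_w\cJ$ then forces equality throughout, proving \eqref{eq:valueunbdd} and identifying $(\nu^*,w^*(\nu^*))$ as a saddle point.

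The main obstacle I anticipate is verifying that $\nu^*$ is admissible, i.e., $\nu^*\in\cA^\circ_{t,x}$. Existence and uniqueness of the SDE \eqref{eq:SDEcntrll} with drift $\mu_\kappa(\cdot)+\dot\nu^*_s$ follow on each compact of $\cO$ from local Lipschitz continuity of $\partial_xu^{N,\eps,\delta}_\kappa$ (via $C^{1,2;\gamma}_{Loc}$ regularity) and boundedness of $\psi'_\eps$, together with localisation up to $\rho\wedge\tau_n$. The required control $\E[|\nu^*|^2_{T-t}]<\infty$ can be obtained from the gradient bound of Proposition \ref{prop:gradbndUa} on compacts away from $[0,T]\times\{0\}$ combined with classical estimates on $X^{\nu^*,\kappa}$, extending $\nu^*$ to be constant after $\rho$ so that the absorbing-boundary condition $X^{\nu^*,\kappa}_{\tau_0}=0$ on $\{\tau_0<T-t\}$ holds automatically by path continuity. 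A similar but simpler argument handles $w^*(\nu)$, which is progressively measurable and bounded by $1/\delta$ by construction.
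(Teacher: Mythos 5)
Your verification scheme (It\^o's formula, the $A_s+B_s$ decomposition, the Fenchel inequality for $H^{N,\eps}$, and identification of $(\nu^*,w^*)$) is the right one and mirrors the paper's. However, two steps hide genuine gaps.

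First, your localization via $\tau_n=\inf\{s:|X^{\nu,\kappa}_s|\ge n\}$ only bounds $X$ from above. Before $\rho\wedge\tau_n$ the process can approach $\{x=0\}$ and $\{t=T\}$ arbitrarily closely, but $u=u^{N,\eps,\delta}_\kappa$ is only $C^{1,2}$ on the open set $\cO$ with derivative bounds on compacts of $\cO$, so Dynkin's formula is not licit on $[\![0,\rho\wedge\tau_n]\!]$. The paper stops at $\theta_{n,m}=\lambda_n\wedge\beta_m$ with $\lambda_n=\inf\{s:X_s\le n^{-1}\}$, $\beta_m=\inf\{s:X_s\ge m\}$, and truncates time at $T_k=T-t-k^{-1}$; you need all three caps, and then the non-trivial case analysis for the limit $\lim_{n}X^*_{\theta_{n,\infty}\wedge T_t}$ at the boundary. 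Second, and more seriously, your argument for $\E[|\nu^*|^2_{T-t}]<\infty$ does not work as stated: Proposition \ref{prop:gradbndUa} is a local gradient bound on compacts $\cK\subset\cO$ and gives no control of $\partial_x u^{N,\eps,\delta}_\kappa$ near $[0,T]\times\{0\}$, exactly where $X^*$ may spend time before $\rho$; hence it cannot bound $\dot\nu^*$ there. The correct mechanism is structural, not via a global gradient bound: the paper invokes \cite[Prop.\ 7, Eqs.\ (98)--(99)]{bovo2022variational}, which extracts the $L^2$-estimate $\E[(\nu^{*,\pm}_{\theta_\infty\wedge T_t})^2]\le c(\eps)(1+|x|^2)$ from the quadratic growth of $u$ (Lemma \ref{lem:polygrow}) together with the coercivity $H^{N,\eps}(y)\ge\tfrac{\eps}{4}|y|^2$ from \eqref{eq:lowbnHepsm}. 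Without this, both the well-posedness of $X^*$ up to $\rho$ and the dominated-convergence passage to the limit in your argument remain unjustified.
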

\begin{proof}
The proof follows ideas from \cite[Prop.\ 7]{bovo2022variational} but the admissibility and optimality of $\nu^*$ require some refinements to the original arguments. Again we simplify notations by taking $u=u^{N,\eps,\delta}_{\kappa}$ and for arbitrary $\nu\in\cA^\circ_{t,x}$ setting $X^\nu=X^{\nu,\kappa;x}$. For $n,m, k\in\N$, let $\theta_{n,m}=\beta_m\wedge\lambda_n$, where $\beta_m=\inf\{s\ge 0\,|\, X^{\nu}_s\ge m\}$ and $\lambda_n=\inf\{s\ge 0\,|\, X^{\nu}_s\le n^{-1}\}$, and set $T_t=T-t$ and $T_k=T_t\!-\!k^{-1}$. Since $u\in C^{1,2;\gamma}([0,T_k]\times[n^{-1},m])$, by It\^o's formula 
\begin{align}\label{eq:dyn0}
\begin{aligned}
u(t,x)=\E_x\Big[&R^w_{\theta_{n,m}\wedge T_k}u(t+\theta_{n,m}\wedge T_k,X^\nu_{\theta_{n,m}\wedge T_k})\\
&+\int_0^{\theta_{n,m}\wedge T_k}\!R^w_s\big[-(\partial_t+\cL_\kappa-r)u+w_su-\partial_xu\cdot\dot{\nu}_s \big](t+s,X_s^\nu)\ud s\Big].
\end{aligned}
\end{align}
For $w_s=w^*_s$ we have $\frac{1}{\delta}(g^N-u)^++w^*_su= w^*_sg^N$ and using that $u$ solves \eqref{eq:pdeinRdeps} and 
\begin{align}\label{eq:Hineq}
-\partial_xu(t+s,X_s^\nu)\cdot\dot{\nu}_s-\psi_\eps\big(|\partial_xu(t+s,X_s^\nu)|^2-(\bar{\alpha}^N)^2\big)\le H^{N,\eps}(\dot{\nu}_s),\quad s\in[\![0,\rho)\!),
\end{align}
we obtain the upper bound
\begin{align*}
\begin{aligned}
u(t,x)\le \E\Big[R^{w^*}_{\theta_{n,m}\wedge T_k}u(t\!+\!\theta_{n,m}\wedge T_k,X^\nu_{\theta_{n,m}\wedge T_k})\!+\!\int_0^{\theta_{n,m}\wedge T_k}\!\!\!R^{w^*}_s\big[h^N\!+\!w^*_sg^N\!+\!H^{N,\eps}(\dot{\nu}_s)\big](t\!+\!s,X_s^\nu)\ud s\Big].
\end{aligned}
\end{align*}
Since $\nu\in\cA^\circ_{t,x}$, it is not hard to check that $\E[\sup_{0\le s\le T-t}|X^\nu_s|^2]<\infty$ by standard SDE estimates.
Letting $n,m,k\to\infty$ and using dominated convergence (justified by quadratic growth of $u$) and continuity of $u$ it is not difficult to verify that 
$u(t,x)\le \sup_{w\in\cT^\delta_t}\inf_{\nu\in\cA^\circ_{t,x}}\cJ^{N,\kappa,\eps,\delta}_{t,x}(\nu,w)$.

For the reverse inequality, we repeat the steps above with $\nu=\nu^*$ and arbitrary $w$. Notice that the dynamics $X^*$ is well-posed on $[\![0,\theta_{n,m}]\!]$ (cf.\ \cite[Thm.\ 2.5.7]{krylov1980controlled}). First we have \eqref{eq:dyn0} with $(w,\nu^*)$. Then we notice that equality holds in \eqref{eq:Hineq} whereas $\frac{1}{\delta}(g^N-u)^++w_s u\ge w_sg^N$. Thus, letting also $k\to \infty$ we obtain 
\begin{align}\label{eq:uNkde_opt}
\begin{aligned}
u(t,x)\ge \E\Big[&R^{w}_{\theta_{n,m}\wedge T_t}u(t\!+\!\theta_{n,m}\wedge T_t,X^*_{\theta_{n,m}\wedge T_t})\\
&+\!\int_0^{\theta_{n,m}\wedge T_t}\!\!R^{w}_s\big[h^N\!+\!w_sg^N\!+\!H^{N,\eps}(\dot{\nu}^*_s) \big](t\!+\!s,X_s^*)\ud s\Big].
\end{aligned}
\end{align}
It remains to justify taking limits as $n,m\to\infty$. The sequence $(\theta_{n,m})_{n,m}$ is increasing in both indexes so the limits $\theta_{n,\infty}\coloneqq\lim_{m\to\infty}\theta_{n,m}$ and $\theta_{\infty}\coloneqq\lim_{n\to\infty}\theta_{n,\infty}$ are well-defined with $\theta_{n,\infty}\le \lambda_n$. By \cite[Prop.\ 7, Eqs.\ (98)-(99)]{bovo2022variational}, there is $c(\eps)>0$ such that 
\begin{align}\label{eq:nubound}
\E[(\nu^{*,\pm}_{\theta_\infty\wedge T_t})^2]\le c(\eps)(1+|x|^2),
\end{align}
where the limits
$\nu^{*,\pm}_{\theta_\infty\wedge T_t}=\lim_{n,m\to\infty}\nu^{*,\pm}_{\theta_{n,m}\wedge T_t}$ are well-defined by monotonicity. 

In order to extend the dynamics of $X^*$ to $[\![0,\rho]\!]$, we have by classical SDE estimates
\begin{align*}
\P(\beta_m<T_t\wedge\lambda_n)&=\P\Big(\sup_{s\in[0,\theta_{n,m}\wedge T_t]}X^*_s\ge m\Big)\le \frac{1}{m^2}\E\Big[\sup_{s\in[0,\theta_{n,m}\wedge T_t]}|X^*_s|^2\Big]\\
&\le \frac{C}{m^2}\big(1+|x|^2+\E[(\nu^{*,+}_{\theta_{n,m}\wedge T_t})^2+(\nu^{*,-}_{\theta_{n,m}\wedge T_t})^2]\big)\le \frac{1}{m^2}c'(\eps)(1+|x|^2),
\end{align*}
where the final inequality holds by \eqref{eq:nubound} with some constant $c'(\eps)>0$.
Sending $m\to\infty$, we have that $\P(\beta_m<T_t\wedge\lambda_n)\uparrow \P(\beta_\infty\le T_t\wedge\lambda_n)=0$ so that $X^*$ does not explode in finite time (i.e., $\theta_{n,\infty}=\lambda_n$, $\P$-a.s.). Therefore, the process $(X^*_{s\wedge \lambda_n})_{s\in[0,T]}$ is well-defined on the stochastic interval $[\![0,\lambda_n\wedge T_t]\!]=[\![0,\theta_{n,\infty}\wedge T_t]\!]$. Next we let $n\to\infty$ so that $\theta_{n,\infty}\uparrow \theta_\infty$. Since $\theta_\infty\wedge T_t=\rho$, then $\nu^*$ is finite on $[\![0,\rho]\!]$ by \eqref{eq:nubound}. When $\theta_\infty(\omega)> T_t$ we have $\lim_{n\to\infty}X^*_{\theta_{n,\infty}\wedge T_t}(\omega)=X^*_{T_t}(\omega)>0$ and when $\theta_\infty(\omega)\le T_t$ we have $\theta_{n,\infty}(\omega)\le T_t$ for all $n\in\N$ so that $X^*_{\theta_{n,\infty}}(\omega)=n^{-1}$ and $X^*_{\theta_\infty}(\omega)=0$. Then, $\lim_{n\to\infty}X^*_{\theta_{n,\infty}\wedge T_t}=X^*_\rho$, $\P$-a.s., and letting $n\to\infty$ in \eqref{eq:uNkde_opt}, dominated convergence and $u(t\!+\!\rho,X^*_{\rho})=g^N(t\!+\!\rho,X^*_{\rho})$ yield
\begin{align*}
\begin{aligned}
u(t,x)&\ge \E\Big[R^w_{\rho}u(t\!+\!\rho,X^*_{\rho})\!+\!\int_0^{\rho}\!\!R^w_s\big[h^N\!+\!w_sg^N\!+\!H^{N,\eps}(\dot{\nu}^*_s) \big](t\!+\!s,X_s^*)\ud s\Big]=\cJ_{t,x}^{N,\kappa,\eps,\delta}(\nu^*,w).
\end{aligned}
\end{align*}
By the arbitrariness of $w$, we have $u(t,x)\ge\inf_{\nu\in\cA_{t,x}^\circ}\sup_{w\in\cT_t^\delta} \cJ_{t,x}^{N,\kappa,\eps,\delta}(\nu,w)$, which proves \eqref{eq:valueunbdd}. Optimality of the pair $(w^*,\nu^*)$ is easily deduced upon noticing that all inequalities become equalities when we repeat the arguments from \eqref{eq:dyn0} with $(w,\nu)=(w^*,\nu^*)$.
\end{proof}

We close this section recalling bounds for the penalty terms in the PDE \eqref{eq:pdeinRdeps}, uniformly in $N$, $\kappa$, $\eps$ and $\delta$. The proofs of the next two lemmas are omitted and can be found in \cite{bovo2022variational}.

\begin{lemma}[{\cite[Lem.\ 4 and 5]{bovo2022variational}}]\label{lem:bndobstpen2}
We have $\tfrac{1}{\delta}(g^N-u^{N,\eps,\delta}_{\kappa})^+\le K_2$ and $\partial_t u^{N,\eps,\delta}_{\kappa} \leq K_4$ on $\cO$, where $K_4>0$ depends only on $K_0$ and $K_2$ from Assumption \ref{ass:gen2}.
\end{lemma}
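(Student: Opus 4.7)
The plan involves proving the two bounds separately.

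For the first inequality $\tfrac{1}{\delta}(g^N-u^{N,\eps,\delta}_\kappa)^+\leq K_2$, I would apply the parabolic maximum principle to $\phi\coloneqq g^N-u^{N,\eps,\delta}_\kappa$. From the PDE \eqref{eq:pdeinRdeps} and the definition of $\Theta^N_\kappa$,
\[
(\partial_t+\cL_\kappa-r)\phi = \Theta^N_\kappa + \tfrac{1}{\delta}\phi^+ -\psi_\eps\bigl(|\partial_x u^{N,\eps,\delta}_\kappa|^2-(\bar{\alpha}^N)^2\bigr),
\]
with $\phi=0$ on the parabolic boundary $([0,T]\times\{0\})\cup(\{T\}\times[0,\infty))$. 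Since $g^N$ is bounded while $u^{N,\eps,\delta}_\kappa\geq 0$ grows at most quadratically (Lemma \ref{lem:polygrow}), $\phi^+$ is bounded and vanishes at infinity. If $\sup\phi^+>0$, it is achieved at an interior point $(t^*,x^*)\in\cO$, where $\partial_t\phi=\partial_x\phi=0$ and $\partial_{xx}\phi\leq 0$. The condition $\partial_x\phi=0$ forces $\partial_x u^{N,\eps,\delta}_\kappa(t^*,x^*)=\partial_x g^N(t^*,x^*)$, so $|\partial_x u^{N,\eps,\delta}_\kappa|\leq\bar{\alpha}^N$ at this point by Assumption \ref{ass:gen2}(ii), which makes $\psi_\eps(\cdots)=0$. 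Combined with $\cL_\kappa\phi\leq 0$ and $-r\phi\leq 0$, the PDE yields $0\geq\Theta^N_\kappa(t^*,x^*)+\tfrac{1}{\delta}\phi^+(t^*,x^*)$, hence $\tfrac{1}{\delta}\phi^+(t^*,x^*)\leq -\Theta^N_\kappa(t^*,x^*)\leq K_2$, using that $\Theta^N_\kappa\geq -K_2$ inherits the bound from Assumption \ref{ass:gen2}(iv) (up to negligible corrections as $N\to\infty$ and $\kappa\to 0$).

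For the second inequality $\partial_t u^{N,\eps,\delta}_\kappa\leq K_4$ the plan is a parabolic comparison with a time-shifted auxiliary function. For $\eta>0$ small, define $\psi(t,x)\coloneqq u^{N,\eps,\delta}_\kappa(t+\eta,x)-u^{N,\eps,\delta}_\kappa(t,x)-K\eta$ on $[0,T-\eta]\times[0,\infty)$ and seek the smallest $K$ depending only on $K_0$ and $K_2$ such that $\sup\psi\leq 0$. At a positive interior maximum of $\psi$, $\partial_x u^{N,\eps,\delta}_\kappa(t+\eta,\cdot)=\partial_x u^{N,\eps,\delta}_\kappa(t,\cdot)$, so the $\psi_\eps$ contributions cancel in the difference of the nonlinearities. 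The one-sided Lipschitz bounds $h^N(t+\eta,x)-h^N(t,x)\leq K_0\eta$ and $g^N(t+\eta,x)-g^N(t,x)\leq K_0\eta$ from Assumption \ref{ass:gen2}(i), combined with monotonicity of $(\cdot)^+$, show that for $K\geq K_0$ the $\tfrac{1}{\delta}(\cdot)^+$ contribution is non-negative while the $h^N$ contribution is at least $-K_0\eta$; this, together with $-r\psi\leq 0$, rules out interior maxima with $\psi>0$ once $K$ is chosen large enough relative to $K_0$ (and $r$).

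The main obstacle is the analysis on the terminal boundary $\{t=T-\eta\}$: the first inequality gives $u^{N,\eps,\delta}_\kappa(T-\eta,x)\geq g^N(T-\eta,x)-K_2\delta\geq g^N(T,x)-K_0\eta-K_2\delta$, hence $\psi(T-\eta,x)\leq(K_0-K)\eta+K_2\delta$. A naive comparison then yields only $\partial_t u^{N,\eps,\delta}_\kappa\leq K+K_2\delta/\eta$, which blows up as $\eta\downarrow 0$ and is useless. Circumventing this requires a sharper comparison, carried out in \cite[Lem.\ 5]{bovo2022variational}: one augments the shift $K\eta$ by a $\delta$-dependent corrector (for example, a term proportional to $(T-t)\wedge c\delta$) engineered to absorb the penalty-induced slack near $t=T$ uniformly in $(\eps,\delta)$, exploiting that the penalty $\tfrac{1}{\delta}(g^N-u)^+$ is itself already bounded by $K_2$. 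The spatial boundary is benign: on $\{x=0\}$, $\psi(t,0)=g^N(t+\eta,0)-g^N(t,0)-K\eta\leq(K_0-K)\eta\leq 0$ as soon as $K\geq K_0$; the behaviour as $x\to\infty$ is controlled via the quadratic-growth bound from Lemma \ref{lem:polygrow}. Once these four contributions are reconciled, dividing by $\eta$ and letting $\eta\downarrow 0$ delivers $\partial_t u^{N,\eps,\delta}_\kappa\leq K_4$ with $K_4=K_4(K_0,K_2)$ independent of $N$, $\kappa$, $\eps$ and $\delta$.
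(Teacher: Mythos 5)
The paper itself does not supply a proof of this lemma; it refers the reader to [Lem.\ 4 and 5] of \cite{bovo2022variational}. So the comparison below is against the internal soundness of your sketch rather than against an in-text argument.

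Your maximum-principle argument for the first bound is correct in its core mechanism: the identity $(\partial_t+\cL_\kappa-r)(g^N-u)=\Theta^N_\kappa+\tfrac1\delta(g^N-u)^+-\psi_\eps(|\partial_xu|^2-(\bar\alpha^N)^2)$ is right, the observation that at an interior maximum $\partial_x u=\partial_x g^N$ kills the penalisation term $\psi_\eps$ is the key step, and the conclusion $\tfrac1\delta(g^N-u)^+\le -\Theta^N_\kappa$ at the maximum follows. Two caveats, though. First, the claim that $\phi^+=(g^N-u)^+$ ``vanishes at infinity'' is unsupported: $g^N$ is bounded but need not decay, and $u\ge 0$, so the supremum over $\cO$ may not be attained. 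The standard remedy is to prove the bound first for $u^{N,\eps,\delta}_{\kappa,m}$ on the bounded cylinder $\cO_m$, where $\phi$ vanishes on the whole parabolic boundary and the sup is achieved, then pass to the limit in $m$; alternatively, perturb by $\lambda(1+x^2)$. Second, the bound $\Theta^N_\kappa\ge -K_2$ does not follow automatically from Assumption \ref{ass:gen2}(iv), which concerns $\Theta$, not its $(N,\kappa)$-approximation; this needs to be built into the construction of $(g^N,h^N,\mu_\kappa,\sigma_\kappa)$. You flag this, but ``negligible corrections as $N\to\infty$, $\kappa\to 0$'' is not the right fix since the lemma asserts the bound for every fixed $N,\kappa$.

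For the second bound there is a genuine gap beyond what you acknowledge. You correctly diagnose the terminal-slice obstruction: from the first bound, $u(T-\eta,x)\ge g^N(T-\eta,x)-K_2\delta$, so $\psi(T-\eta,x)\le (K_0-K)\eta+K_2\delta$, which forces $K\gtrsim K_0+K_2\delta/\eta$ and blows up as $\eta\downarrow 0$. The proposed ``$\delta$-dependent corrector'' of the form $(T-t)\wedge c\delta$ is asserted but not carried out, and as stated it would not close the argument: the corrector must be large at $t=T-\eta$ to absorb the $K_2\delta$ slack, but is then carried through the interior estimate and dividing by $\eta$ at the end still leaves a $\delta/\eta$ term. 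There is also a second, unflagged obstruction in the interior analysis: at a positive interior maximum of $\psi$ the conclusion is $-r\phi\ge -K_0\eta$ where $\phi=u(t+\eta,\cdot)-u(t,\cdot)>K\eta$, which yields a contradiction only when $r>0$ (via $K>K_0/r$). When $r=0$, which the standing assumptions permit, the comparison on the right-hand side reduces to $0\ge -K_0\eta$ and gives no information, so the interior maximum principle is inconclusive. One must then introduce an exponential weight in time (changing the effective discount to $r+\lambda>0$), which your proposal does not do. In short: the first bound is essentially right modulo technicalities; the second bound is not proved by the sketch, and the appeal to \cite[Lem.\ 5]{bovo2022variational} stands in for the missing argument rather than describing it.
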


The lemma above is used to prove the next one.
\begin{lemma}[{\cite[Prop.\ 5, Lem.\ 6 and Thm.\ 4]{bovo2022variational}}]\label{lem:W12pbound}
For any compact $\cK\subset\cO$ and $p\in(1,\infty)$, there are constants $M_7=M_7(\cK)>0$ and $ M_8=M_8(\cK,p)>0$, independent of $N$, $\kappa$, $\eps$ and $\delta$, such that 
\begin{align}\label{eq:gradpeneq}
\| \partial_x u^{N,\eps,\delta}_{\kappa}\|_{L^\infty(\cK)}\leq M_7,\quad\big\|\psi_\eps\big(|\partial_x u^{N,\eps,\delta}_{\kappa}|^2-(\bar{\alpha}^N)^2\big)\big\|_{L^\infty(\cK)}\le M_7, 
\end{align} 
and $\|u^{N,\eps,\delta}_{\kappa}\|_{W^{1,2;p}(\cK)}\leq M_8$.
\end{lemma}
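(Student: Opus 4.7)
My plan is to inherit the $m$-uniform bounds already proved for $u^{N,\eps,\delta}_{\kappa,m}$ on the bounded domain $\cO_m$ (Proposition \ref{prop:gradbndUa}) and push them through the limit $m\to\infty$ established in Theorem \ref{thm:highreguued}, complemented by a Bernstein-type argument for the uniform-in-$\eps$ bound on $\psi_\eps(\cdot)$, in the spirit of \cite[Prop.\ 5, Lem.\ 6]{bovo2022variational}. For the first bound in \eqref{eq:gradpeneq}, given any compact $\cK\subset\cO$ I would embed it into a closed rectangle $\cK'\subset\cO$ and apply Proposition \ref{prop:gradbndUa} on $\cK'$ to get $\|\partial_x u^{N,\eps,\delta}_{\kappa,m}\|_{L^\infty(\cK')}\le c_1(\cK')$ uniformly in $N,\kappa,\eps,\delta$ and in $m$ large enough that $\cK'\subset\mathrm{int}\,\cO_m$. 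The subsequential $C^\alpha$-convergence $\partial_x u^{N,\eps,\delta}_{\kappa,m_j^n}\to \partial_x u^{N,\eps,\delta}_\kappa$ from \eqref{eq:weakconv} preserves the bound, yielding $M_7:=c_1(\cK')$.

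For the bound on $\psi_\eps(|\partial_x u^{N,\eps,\delta}_\kappa|^2-(\bar{\alpha}^N)^2)$, which is the delicate part, I would work directly on $\cO$ exploiting the $C^{1,2;\gamma}_{Loc}(\overline\cO)$ regularity from Theorem \ref{thm:highreguued}. Setting $\Phi:=|\partial_x u^{N,\eps,\delta}_\kappa|^2$, I would differentiate \eqref{eq:pdeinRdeps} in $x$, multiply by $2\partial_x u^{N,\eps,\delta}_\kappa$ and derive a differential inequality for $\Phi$. Localising through a cut-off $\xi\in C^\infty_c(\cO)$ with $\xi\equiv 1$ on $\cK$, I would apply a maximum-principle argument to an auxiliary function of the form $\Psi:=\xi\cdot\psi_\eps(\Phi-(\bar{\alpha}^N)^2)+\lambda\,\xi\,\Phi$, for a suitably chosen $\lambda>0$. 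The key algebraic cancellations rely on $\psi_\eps'\ge 0$, convexity of $\psi_\eps$, and the ``good sign'' of $\tfrac{1}{\delta}|\partial_x u|^2\mathds{1}_{\{g^N\ge u\}}$ arising from differentiating $\tfrac{1}{\delta}(g^N-u)^+$ in $x$ and multiplying by $\partial_x u$. The gradient bound from Step~1 enters as a pointwise control of the linear coefficients generated along the way. The situation here is actually simpler than in Section \ref{sec:penalised-a}, because $\bar{\alpha}^N$ is a constant rather than the $x$-dependent $\bar{\alpha}^N_m(t,x)$.

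With these two pointwise bounds and Lemma \ref{lem:bndobstpen2} (which gives $\tfrac{1}{\delta}(g^N-u^{N,\eps,\delta}_\kappa)^+\le K_2$ and $\partial_t u^{N,\eps,\delta}_\kappa\le K_4$ uniformly), the right-hand side of \eqref{eq:pdeinRdeps} is bounded in $L^\infty(\cK')$ uniformly in $N,\kappa,\eps,\delta$ on a slightly enlarged compact $\cK'\supset\cK$ still contained in $\cO$. Interior $L^p$ estimates for linear parabolic equations (Krylov/Calder\'on--Zygmund), applied to $(\partial_t+\cL_\kappa-r)u^{N,\eps,\delta}_\kappa=f$ with $f\in L^\infty(\cK')$, then yield $\|u^{N,\eps,\delta}_\kappa\|_{W^{1,2;p}(\cK)}\le M_8$, uniform in all parameters. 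To apply such estimates we use that on $\cK'$ the coefficients $\mu_\kappa,\sigma_\kappa$ are $C^1$ and $\sigma_\kappa$ is bounded below by a positive constant depending only on $\cK'$ (by Assumption \ref{ass:gen1}(iii) and uniform convergence on compacts of $\sigma_\kappa\to\sigma$), so constants can be taken independent of $\kappa$ up to enlarging $\cK'$.

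The hard part is Step~2: producing a bound on $\psi_\eps(\cdot)$ that does not blow up as $\eps\to 0$. A direct use of the explicit form $\psi_\eps(y)=(y-\eps)/\eps$ for $y\ge 2\eps$ would give only $O(\eps^{-1})$ control. The gain must come from the differential structure of the PDE itself and from a careful choice of the auxiliary function $\Psi$, which has to absorb the cross-derivative terms produced by $\partial_x[\psi_\eps(|\partial_x u|^2-(\bar{\alpha}^N)^2)]$ via the convexity of $\psi_\eps$ rather than by brute force; the sign of $\psi_\eps''\ge 0$ is what makes this possible.
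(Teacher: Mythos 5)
The paper does not actually prove this lemma; it inherits it wholesale from \cite[Prop.~5, Lem.~6, Thm.~4]{bovo2022variational}, after Lemma~\ref{lem:bndobstpen2} has supplied the inputs $\tfrac1\delta(g^N-u^{N,\eps,\delta}_\kappa)^+\le K_2$ and $\partial_t u^{N,\eps,\delta}_\kappa\le K_4$. Your Steps~1 and~3 match that scheme: the local $L^\infty$ gradient bound follows by passing Proposition~\ref{prop:gradbndUa} through the $m\to\infty$ limit \eqref{eq:weakconv}, and once the three terms on the right-hand side of \eqref{eq:pdeinRdeps} are in $L^\infty(\cK')$ uniformly, interior parabolic $L^p$ estimates give the $W^{1,2;p}$ bound. (The aside about $\bar\alpha^N$ vs.\ $\bar\alpha^N_m$ is moot: on any fixed compact $\cK\subset\cO$ and $m$ large they coincide anyway.)

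The genuine gap is in Step~2. Your ``good sign'' observation — that differentiating $\tfrac1\delta(g^N-u)^+$ in $x$ and multiplying by $2\partial_x u$ yields $-\tfrac{2}{\delta}|\partial_x u|^2\mathds{1}_{\{g^N\ge u\}}$ — is only half the computation: the same operation also produces the cross term $\tfrac{2}{\delta}\partial_x g^N\,\partial_x u\,\mathds{1}_{\{g^N\ge u\}}$, which is $O(1/\delta)$ and has no favourable sign where $|\partial_x u|\le\bar\alpha^N$; it is absorbed by the good term only on $\{|\partial_x u|>\bar\alpha^N\}$, so the maximum principle must be set up on that contact set. More seriously, when you pass to the equation for $z:=\psi_\eps(\Phi-(\bar\alpha^N)^2)$, all the lower-order, obstacle and source contributions from the $\Phi$-equation acquire a prefactor $\psi_\eps'$, and since $\psi_\eps'\approx 1/\eps$ on the active set these are $O(1/\eps)$ terms with no definite sign. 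The only term that can absorb them is $\psi_\eps'\sigma_\kappa^2(\partial_{xx}u)^2$, and establishing that $(\partial_{xx}u)^2$ is large enough where $z$ is large requires re-reading $\partial_{xx}u$ off the original PDE \eqref{eq:pdeinRdeps} after inserting precisely the two bounds of Lemma~\ref{lem:bndobstpen2}. You invoke that lemma only in Step~3, where in fact the $\partial_t u\le K_4$ bound does no work (the right-hand side of \eqref{eq:pdeinRdeps} is bounded without it); the paper's phrase ``the lemma above is used to prove the next one'' points exactly to the $\psi_\eps$ step. As written, your Step~2 omits this essential input and does not explain how the $O(1/\eps)$ and $O(1/\delta)$ contributions are killed, so it does not yet constitute a proof of the second bound in \eqref{eq:gradpeneq}.
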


\section{Value of the original game}\label{sec:final}

In this section, we prove Theorem \ref{thm:usolvar}. First we prove it under {\bf A.1} and then under {\bf A.2}. 

\subsection{Value of the game with bounded data}\label{sec:vkN}
Lemma \ref{lem:W12pbound} guarantees boundedness of the family $(u^{N,\eps,\delta}_\kappa)_{\delta,\eps,\kappa,N}$ in $W^{1,2;p}_{\ell oc}(\cO)$. Moreover, the modulus of continuity in Theorem \ref{thm:equic_m} is independent of $\delta$. Then, repeating analogous limiting considerations as in the proof of Theorem \ref{thm:highreguued} we obtain a function $u^{N,\eps}_\kappa\in C(\overline \cO)\cap W^{1,2;p}_{\ell oc}(\cO)$ defined as
\begin{align}\label{eq:limu1}
u^{N,\eps}_\kappa\coloneqq \lim_{n\to\infty}u^{N,\eps,\delta_n}_\kappa,
\end{align}
where $(\delta_n)_{n\in\N}$ is a sequence (possibly depending on $N$, $\kappa$, $\eps$) converging to zero.
For $(t,x)\in\overline\cO$, let 
\begin{align*}
\cJ_{t,x}^{N,\kappa,\eps}(\tau,\nu)\coloneqq \E_x\Big[\e^{-r(\tau\wedge\rho)}g^N(t\!+\!\tau\wedge\rho,X_{\tau\wedge\rho}^{\nu,\kappa})\!+\!\int_0^{\tau\wedge\rho}\!\e^{-rs}\big(h^N(t\!+\!s,X_s^{\nu,\kappa})\!+\!H^{N,\eps}(\dot{\nu}_s)\big)\,\ud s\Big],
\end{align*}
with $(\tau,\nu)\in\cT_t\times \cA_{t,x}^\circ$ and $\rho$ as in \eqref{eq:rhoknu}.
 
\begin{lemma}\label{lem:uepsmuN}
We have $u^{N,\eps}_{\kappa}(t,x)\!=\!\sup_{\tau\in\cT_t}\inf_{\nu\in\cA_{t,x}^\circ}\cJ_{t,x}^{N,\kappa,\eps}(\tau,\nu)\!=\!\inf_{\nu\in\cA_{t,x}^\circ}\sup_{\tau\in\cT_t}\cJ_{t,x}^{N,\kappa,\eps}(\tau,\nu)$, for $(t,x)\in\overline\cO$. The pair $(\tau^{N,\kappa,\eps},\nu^{N,\kappa,\eps})$ with $\nu^{N,\kappa,\eps}$ as in \eqref{eq:dfnoptw*2b}, but with $u^{N,\eps}_\kappa$ instead of $u^{N,\eps,\delta}_\kappa$, and
\begin{align*}
\tau^{N,\kappa,\eps}=\tau^{N,\kappa,\eps}(t,x;\nu,\kappa)=\inf\big\{s\ge0\,\big|\, u^{N,\eps}_{\kappa}(t+s,X_s^{\nu,\kappa;x})=g^N(t+s,X_s^{\nu,\kappa;x})\big\},
\end{align*}
is a saddle point.
\end{lemma}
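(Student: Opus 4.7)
The plan is to identify $u^{N,\eps}_{\kappa}$ as the solution of a variational inequality of pure obstacle type (without gradient constraint, but with the Hamiltonian penalty $\psi_\eps$) and then run a verification argument producing both a saddle point and the value. I would split the proof into two parts.

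Part 1 (limit variational inequality). I would pass to the limit $\delta_n\to 0$ in \eqref{eq:pdeinRdeps} to show that $u^{N,\eps}_\kappa$ satisfies: (a) $u^{N,\eps}_\kappa\ge g^N$ on $\overline\cO$, a direct consequence of Lemma \ref{lem:bndobstpen2} via $0\le (g^N-u^{N,\eps,\delta_n}_\kappa)^+\le K_2\delta_n\to 0$; (b) $(\partial_t+\cL_\kappa-r)u^{N,\eps}_\kappa+h^N-\psi_\eps(|\partial_x u^{N,\eps}_\kappa|^2-(\bar\alpha^N)^2)\le 0$ a.e.\ on $\cO$, obtained by passing to the limit in \eqref{eq:pdeinRdeps} using the weak $L^p_{\ell oc}$ convergence of $\partial_t u^{N,\eps,\delta_n}_\kappa$ and $\partial_{xx} u^{N,\eps,\delta_n}_\kappa$ and the uniform convergence of $\partial_x u^{N,\eps,\delta_n}_\kappa$ provided by Lemma \ref{lem:W12pbound} together with the compact embedding $W^{1,2;p}\hookrightarrow C^{0,1;\beta}$; (c) equality holds a.e.\ on the open set $\{u^{N,\eps}_\kappa>g^N\}$, because on any compact contained in this set one has $\tfrac{1}{\delta_n}(g^N-u^{N,\eps,\delta_n}_\kappa)^+=0$ eventually by uniform convergence; (d) the boundary/terminal data $u^{N,\eps}_\kappa(t,0)=g^N(t,0)$ and $u^{N,\eps}_\kappa(T,x)=g^N(T,x)$ are inherited from \eqref{eq:pdeinRdeps} by continuity.

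Part 2 (verification). I would apply Krylov's It\^o formula for $W^{1,2;p}_{\ell oc}$ functions (cf.\ \cite[Thm.\ 2.10.1]{krylov1980controlled}) to $e^{-rs}u^{N,\eps}_\kappa(t+s,X_s)$, localising through the stopping times $\theta_{n,m}=\beta_m\wedge\lambda_n$ from the proof of Proposition \ref{lem:prbraprRD} in order to stay in compacts of $\cO$ where $u^{N,\eps}_\kappa$ has the required regularity. For the upper bound fix $\tau^*=\tau^{N,\kappa,\eps}$ and any $\nu\in\cA^\circ_{t,x}$: on $[\![0,\tau^*\wedge\rho)\!)$ we have $u^{N,\eps}_\kappa>g^N$, so by (c) the PDE holds with equality; combined with the Fenchel-type inequality $-\partial_x u^{N,\eps}_\kappa\cdot \dot\nu_s-\psi_\eps(|\partial_x u^{N,\eps}_\kappa|^2-(\bar\alpha^N)^2)\le H^{N,\eps}(\dot\nu_s)$ (coming from the definition \eqref{eq:hmltn2} via $p=-\partial_x u^{N,\eps}_\kappa$), It\^o's formula yields $u^{N,\eps}_\kappa(t,x)\le \cJ^{N,\kappa,\eps}_{t,x}(\tau^*,\nu)$, where at the endpoint one uses either the definition of $\tau^*$ or the boundary/terminal data from (d). For the lower bound take $\nu^*=\nu^{N,\kappa,\eps}$ and arbitrary $\tau\in\cT_t$: well-posedness of $X^*$ on $[\![0,\rho]\!]$ follows as in Proposition \ref{lem:prbraprRD} from the $L^2$ estimate \cite[Eqs.\ (98)-(99)]{bovo2022variational} on $\nu^{*,\pm}$; by choice of $\dot\nu^*_s$ the Fenchel inequality above becomes an equality, and using (b) in the It\^o expansion together with $u^{N,\eps}_\kappa\ge g^N$ at $\tau\wedge\rho$ gives $u^{N,\eps}_\kappa(t,x)\ge \cJ^{N,\kappa,\eps}_{t,x}(\tau,\nu^*)$. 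Chaining $\inf_\nu\sup_\tau\cJ\le \sup_\tau \cJ(\tau,\nu^*)\le u^{N,\eps}_\kappa\le \inf_\nu \cJ(\tau^*,\nu)\le \sup_\tau\inf_\nu\cJ$ with the trivial reverse inequality closes all gaps and identifies $(\tau^*,\nu^*)$ as a saddle point.

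The main obstacle is keeping rigorous control of the It\^o expansion under only $W^{1,2;p}_{\ell oc}(\cO)$-regularity: bounds and coefficients are known to be uniform only on compacts of $\cO$, so I would carefully argue through $\theta_{n,m}\wedge T_k$ and send $k\to\infty$, $n\to\infty$, $m\to\infty$ in this order, exploiting the quadratic growth of $u^{N,\eps}_\kappa$ from Theorem \ref{thm:highreguued}, the $L^2$-integrability of $\nu^*$ for the minimiser side and the boundedness of $w$ for the maximiser side, and the continuous matching $u^{N,\eps}_\kappa=g^N$ at the boundary/terminal data to identify the limit of the terminal term. A secondary subtlety is that the complementarity (c) must hold in a sufficiently strong sense to handle the set $\{u^{N,\eps}_\kappa>g^N\}$ a.e., which I would extract from the uniform-on-compacts vanishing of the obstacle penalty.
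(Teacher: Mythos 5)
Your proposal is correct, and it takes a genuinely different route from the paper's. The paper performs It\^o's formula at the $\delta$-level, where $u^{N,\eps,\delta}_\kappa$ is a classical $C^{1,2}$ solution of \eqref{eq:pdeinRdeps}, and then passes to the limit $\delta\to 0$ directly inside the resulting probabilistic inequality (handling the term $\tfrac1\delta(g^N-u^{N,\eps,\delta}_\kappa)^+$ via Lemma \ref{lem:bndobstpen2} and the choice $\tau=\tau^{N,\kappa,\eps}$). You instead first extract, from the same ingredients, a static variational-inequality characterisation of the limit $u^{N,\eps}_\kappa$ in $W^{1,2;p}_{\ell oc}(\cO)$ (obstacle $u\ge g^N$, one-sided PDE everywhere, equality a.e.\ on $\{u>g^N\}$, correct boundary data), and only then run a verification argument by Krylov's It\^o formula for Sobolev functions. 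The two routes trade technicalities: the paper never needs the It\^o--Krylov formula nor a.e.\ complementarity, but it must track the vanishing of the penalty term along the trajectory when interchanging the $\delta$-limit with the expectation; your route isolates all approximation issues into a static PDE statement, at the cost of invoking the It\^o--Krylov formula on the less regular $u^{N,\eps}_\kappa$. Your localisation through $\theta_{n,m}\wedge T_k$ and the appeal to the quadratic growth, the $L^2$-estimate on $\nu^{*,\pm}$ from Proposition \ref{lem:prbraprRD}, and the boundary matching $u^{N,\eps}_\kappa=g^N$ are all correct and mirror the paper's needs. Both arguments are valid and yield the saddle point; the paper's is marginally shorter because it recycles \eqref{eq:dyn0}--\eqref{eq:Hineq} verbatim, whereas yours makes the structure of the limiting obstacle problem explicit, which some readers may find clearer.
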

\begin{proof}
The proof follows very closely the one of \cite[Thm.\ 6]{bovo2022variational} and it is similar to our proof of Proposition \ref{lem:prbraprRD}. Therefore we only provide an outline. As in \eqref{eq:dyn0} and \eqref{eq:Hineq} with $w_s\equiv 0$ we obtain
\begin{align*}
\begin{aligned}
u^{N,\eps,\delta}_\kappa(t,x)&\le \E\Big[\e^{-r(\theta_{n,m}\wedge \tau)}u^{N,\eps,\delta}_\kappa(t\!+\!\theta_{n,m}\wedge \tau,X^\nu_{\theta_{n,m}\wedge \tau})\\
&\qquad\!+\!\int_0^{\theta_{n,m}\wedge \tau}\!\!\e^{-rs}\big[h^N\!+\!\tfrac1\delta(g^N\!-\!u^{N,\eps,\delta}_\kappa)^+\!+\!H^{N,\eps}(\dot{\nu}_s)\big](t\!+\!s,X_s^\nu)\ud s\Big],
\end{aligned}
\end{align*}
for any $\tau\in\cT_t$. Letting first $\delta \to 0$ along the sequence $(\delta_n)_{n\in\N}$ defined in \eqref{eq:limu1}, and using Lemma \ref{lem:bndobstpen2}, and then letting $n,m\to\infty$ we obtain $u^{N,\eps}_\kappa(t,x)\le \cJ^{N,\kappa,\eps}_{t,x}(\tau^{N,\kappa,\eps},\nu)$ (cf.\ details in \cite[p.\ 31]{bovo2022variational}). Thus $u^{N,\eps}_\kappa$
is smaller than the lower value of the game. 

For the lower bound, the PDE for $u^{N,\eps,\delta}_\kappa$ and \eqref{eq:dyn0} with $w_s\equiv 0$, $\nu\in\cA^\circ_{t,x}$, $\tau\in\cT_t$, yield 
\begin{align}\label{eq:uNkeps,delta}
\begin{aligned}
u^{N,\eps,\delta}_\kappa(t,x)\ge \E_x\Big[&\e^{-r(\theta_{n,m}\wedge \tau)}u^{N,\eps,\delta}_\kappa(t\!+\!\theta_{n,m}\wedge \tau,X^\nu_{\theta_{n,m}\wedge \tau})\\
&+\int_0^{\theta_{n,m}\wedge \tau}\!\!\!\e^{-rs}\big[h^N\!-\!\psi_\eps\big(|\partial_x u^{N,\eps,\delta}_\kappa|^2\!-\!(\bar \alpha^N)^2\big)\!-\!\partial_x u^{N,\eps,\delta}_\kappa\cdot\dot{\nu}_s \big](t\!+\!s,X_s^\nu)\ud s\Big],
\end{aligned}
\end{align}
where the inequality holds because we dropped the term $\frac1\delta(g^N-u^{N,\eps,\delta}_\kappa)^+$. Letting $\delta\to 0$ along the sequence $(\delta_n)_{n\in\N}$ and using Lemma \ref{lem:W12pbound} we obtain the same expression as in \eqref{eq:uNkeps,delta} but with $u^{N,\eps,\delta}_\kappa$ replaced everywhere by $u^{N,\eps}_\kappa$. Then we choose $\nu^{N,\kappa,\eps}$ as in \eqref{eq:dfnoptw*2b}, but replacing $u^{N,\eps,\delta}_\kappa$ with $u^{N,\eps}_\kappa$, and notice that $X^{\nu^{N,\kappa,\eps}}$ is well-posed because \eqref{eq:nubound} continues to hold and the rest of the argument is as in Proposition \ref{lem:prbraprRD}. Moreover, we notice that Lemma \ref{lem:bndobstpen2} implies $u^{N,\eps}_\kappa\ge g^N$. Combining these two observations yields $u^{N,\eps}_\kappa(t,x)\ge \cJ^{N,\kappa,\eps}_{t,x}(\tau,\nu^{N,\kappa,\eps})$. Hence, $u^{N,\eps}_\kappa$ is greater than the upper value of the game.
Thus the value exists and the pair $(\tau^{N,\kappa,\eps},\nu^{N,\kappa,\eps})$ is a saddle point. 
\end{proof}

Next we show equi-continuity of the family $(u^{N,\eps}_{\kappa})_{\eps\in(0,1)}$. 
\begin{proposition}\label{prop:equiuepsmuN}
There are $c_0=c_0(p,N,\kappa)>0$ and $\lambda=\lambda(N,\kappa)>0$ such that for all $\eps\in(0,1)$
\begin{align*}
|u^{N,\eps}_{\kappa}(t,x)-u^{N,\eps}_{\kappa}(s,y)|\le c_0|x-y|^{\frac{p-1}{4p}}+\lambda|t-s|
\end{align*}
for any $s,t\in[0,T]$ and $x,y\in[0,\infty)$ with $|s-t|\vee|x-y|\le 1$.
\end{proposition}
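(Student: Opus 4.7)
The plan is to adapt the four-step argument of Theorem \ref{thm:equic_m} to the stopper vs.\ singular-controller game whose value $u^{N,\eps}_\kappa$ is provided by Lemma \ref{lem:uepsmuN}, with the stopping time $\tau$ now playing the role previously taken by the maximiser $w\in\cT^\delta_t$. Two structural simplifications enter in this unbounded-domain limit: the Hamiltonian $H^{N,\eps}(y)$ from \eqref{eq:hmltn2} depends neither on $(t,x)$ nor on any localisation, and the obstacle $g^N$ together with the running cost $h^N$ are Lipschitz in both variables on $\overline\cO$. As a consequence the cross-differences of $H^{N,\eps}$ at distinct state points vanish identically, and the $\eta^{\gamma/2}$ time-H\"older contribution that appeared in \eqref{eq:equitime2ab} disappears, leaving only linear-in-$\eta$ terms from $g^N$ and $h^N$.

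\textbf{Spatial regularity.} Fix $(t,x)$ and $\eta\in(0,1)$. Using Lemma \ref{lem:uepsmuN}, pick $\nu^*$ optimal for the minimiser at $(t,x+\eta)$ and $\tau^*$ optimal for the maximiser at $(t,x)$, and evaluate $\cJ^{N,\kappa,\eps}$ at both starting points with this common pair, which produces
\[
u^{N,\eps}_\kappa(t,x+\eta)-u^{N,\eps}_\kappa(t,x)\ge \cJ^{N,\kappa,\eps}_{t,x+\eta}(\tau^*,\nu^*)-\cJ^{N,\kappa,\eps}_{t,x}(\tau^*,\nu^*),
\]
and symmetrically for the upper bound by swapping the two starting points. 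Expanding as in \eqref{eq:equicspadiff}, the $H^{N,\eps}$-cross term is identically zero, and the residue decomposes into Lipschitz differences of $g^N,h^N$ (bounded through It\^o--Tanaka and Gronwall as in \eqref{eq:equicm_b}--\eqref{eq:equicm_c}) plus an extra running cost on the symmetric difference of the two exit intervals $[\![\rho^0\wedge\rho^\eta,\rho^0\vee\rho^\eta]\!]$. For the latter I will reproduce the splitting of Lemmas \ref{lem:equicon_m}--\ref{lem:equicon_m1}, controlling the slow-exit event via a Girsanov shift of the drift $\mu_\kappa/\sigma_\kappa$ combined with a Dambis--Dubins--Schwarz time change and the uniform ellipticity $\sigma_\kappa\ge\kappa>0$. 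Balancing $\beta=\ell=\sqrt{\eta}$ then produces the exponent $(p-1)/(4p)$.

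\textbf{Time regularity.} The one-sided bound $\partial_t u^{N,\eps,\delta}_\kappa\le K_4$ from Lemma \ref{lem:bndobstpen2} passes to the limit $\delta\to 0$ and gives $u^{N,\eps}_\kappa(t+\eta,x)-u^{N,\eps}_\kappa(t,x)\le K_4\eta$ for free. For the reverse direction I will apply the same saddle-point device as for the spatial estimate, prepending an idle interval of length $\eta$ to transport admissible strategies across the two time horizons. Since $g^N,h^N$ are Lipschitz in $t$ (with constants $\bar\alpha^N,L_N$) and $H^{N,\eps}$ is time-independent, every leading contribution is linear in $\eta$, so no $\ell,\beta$ balancing is required and the bound is Lipschitz in $|t-s|$ with constant $\lambda=\lambda(N,\kappa)$.

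\textbf{Main obstacle.} The delicate issue is the $\eps$-independence of $c_0$, which is absent in Theorem \ref{thm:equic_m} (where $M_5,M_6$ carry explicit $\eps^{-1}$ factors through the truncation times $\tau_{M_6}$). I plan to replace those pathwise truncations by an $\eps$-uniform moment bound on $\int_0^{\rho}|\dot\nu^*_s|^2\ud s$ extracted from Lemma \ref{lem:W12pbound}: the $\eps$-uniform estimates $\|\partial_x u^{N,\eps}_\kappa\|_{L^\infty(\cK)}\le M_7$ and $\|\psi_\eps(|\partial_x u^{N,\eps}_\kappa|^2-(\bar\alpha^N)^2)\|_{L^\infty(\cK)}\le M_7$ on compacts $\cK\subset\cO$ control the energy of the optimal control through \eqref{eq:dfnoptw*2b}, and thereby the exponential factor in \eqref{eq:equicm_d1}. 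Close to the Dirichlet boundary $\{x=0\}$ the interior gradient bound is not available; there the $\eps$-uniform estimate must be obtained from the Dirichlet condition $u^{N,\eps}_\kappa(\cdot,0)=g^N(\cdot,0)$ together with the a priori bound $0\le u^{N,\eps}_\kappa(t,x)-g^N(t,x)\le C_\Theta(N,\kappa)\,\E_x[\rho]$, which is obtained by evaluating at $\nu\equiv 0$ and exploiting that $\E_x[\rho]\to 0$ as $x\to 0$ at a rate depending only on $N,\kappa$.
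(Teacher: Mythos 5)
Your plan is broadly on the right track for the upper spatial bound and the time regularity, but it contains a genuine gap on the lower spatial bound, and the paragraph labelled \emph{Main obstacle} proposes a fix that cannot work.

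\textbf{Where the gap is.} Your lower-bound pairing (take $\nu^*$ optimal for the minimiser at $(t,x+\eta)$, $\tau^*$ optimal for the maximiser at $(t,x)$, and compare $\cJ_{t,x+\eta}(\tau^*,\nu^*)-\cJ_{t,x}(\tau^*,\tilde\nu^*)$, where $\tilde\nu^*$ is $\nu^*$ truncated at $\rho^0\coloneqq\rho(t,x;\nu^*,\kappa)$) produces, after Dynkin on $[\![\tau^*\wedge\rho^0,\tau^*\wedge\rho^\eta]\!]$, a residual integral of $\Theta^N_\kappa+\dot\nu^*\partial_x g^N+H^{N,\eps}(\dot\nu^*)$ evaluated along $X^{\nu^*,\kappa;x+\eta}$. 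On this residual interval $\dot\nu^*$ is, in general, non-zero, because the natural truncation of $\nu^*$ is at $\rho^\eta\ge\rho^0$, not $\rho^0$. Lower-bounding this residual with the only available pointwise inequality \eqref{eq:bndgxmN} inevitably brings in $M_5=C_\Theta+2\eps^{-1}\Lambda_N^2$ and the truncation time $\tau_{M_6}$ with $M_6\propto\eps^{-1}$; both are genuinely $\eps$-dependent, and no choice of $\beta,\ell$ removes that. So your announced plan to ``reproduce the splitting of Lemmas \ref{lem:equicon_m}--\ref{lem:equicon_m1}'' will not deliver a constant $c_0$ independent of $\eps$.

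\textbf{Why your proposed fix fails.} Replacing the pathwise truncation $\tau_{M_6}$ by a moment bound on $\int_0^\rho|\dot\nu^*_s|^2\ud s$ via Lemma \ref{lem:W12pbound} is not viable. The bounds in Lemma \ref{lem:W12pbound} are on compacts $\cK\subset\cO$, hence never uniform up to $\{x=0\}$; the optimal feedback \eqref{eq:dfnoptw*2b} satisfies $|\dot\nu^*|\lesssim\eps^{-1}\|\partial_x u^{N,\eps}_\kappa\|_\infty$ wherever the gradient constraint is active, and even the paper's own estimate \eqref{eq:nubound} gives $\E[(\nu^{*,\pm}_{\theta_\infty\wedge T_t})^2]\le c(\eps)(1+|x|^2)$ with an $\eps$-\emph{dependent} constant. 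The boundary argument you sketch (a modulus for $u^{N,\eps}_\kappa-g^N$ near $\{x=0\}$ from $\E_x[\rho]\to 0$) gives continuity of $u$, not an $\eps$-uniform moment bound on the optimal control; it does not plug the hole.

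\textbf{What the paper does instead.} The $\eps$-issue is avoided, not overcome, by a different choice of strategy transport. For the upper bound, take $\nu$ optimal for $u(t,x)$ (the \emph{smaller} start) and extend it with $\dot\nu_s=0$ for $s\ge\rho^0$; since $\rho^0\le\rho^\eta$, the residual interval $[\![\rho^0,\rho^\eta]\!]$ carries $\dot\nu\equiv 0$, so the Girsanov kernel is just $-\mu_\kappa/\sigma_\kappa$ and the exponent constant reduces to $T\kappa^{-4}$, free of $\eps$. For the lower bound, take $\nu^\eta$ optimal for $u(t,x+\eta)$, truncate it at $\rho^0=\rho(t,x;\nu^\eta,\kappa)$, and evaluate $u(t,x+\eta)\ge\cJ_{t,x+\eta}(\nu^\eta,\tau\wedge\rho^0)$ with the stopping time \emph{also} clipped at $\rho^0$; both payoffs then integrate over the identical interval $[\![0,\tau\wedge\rho^0]\!]$, the $H^{N,\eps}(\dot\nu^\eta)$ integrands cancel exactly, and there is no residual at all. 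This is the decisive structural device that makes $c_0$ independent of $\eps$, and it is absent from your proposal. (For completeness: the same device appears in the time estimate; the residual there carries $\dot\nu=0$ because $\nu$ is extended with zero after $\rho_t$, so again no $\tau_{M_6}$ is needed. Your shortcut via $\partial_t u^{N,\eps,\delta}_\kappa\le K_4$ for the one-sided upper bound in time is fine, but the lower bound still requires the same care.)

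A further, minor, discrepancy: you propose the $L^1$ It\^o--Tanaka estimate \eqref{eq:equicm_b}, whereas the paper uses the $L^2$ bound \eqref{eq:L2etaeps} (available here since $\mu_\kappa,\sigma_\kappa$ are Lipschitz), giving $\eta^2/\beta^2$ rather than $\eta/\beta$. Both ultimately yield the exponent $(p-1)/(4p)$ after taking $\beta=\ell=\sqrt\eta$, so this does not affect the statement, but it is worth noting.
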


\begin{proof}
Fix $\eps\in(0,1)$. Let $(t,x)\in\cO$ and $\eta\in(0,1)$. Set $u=u^{N,\eps}_{\kappa}$ and $\cJ_{t,x}=\cJ_{t,x}^{N,\kappa,\eps}$ for simplicity. We first prove equi-continuity in space and then we prove equi-continuity in time. 

{\bf Step 1}. Let $\tau^\eta\in\cT_t$ be optimal for $u(t,x+\eta)$ and let $\nu\in\cA_{t,x}^\circ$ be optimal for $u(t,x)$. Recall $\rho=\rho(t,x;\nu,\kappa)$ as in \eqref{eq:rhoknu} and extend $\nu_s$ as $\dot{\nu}_s=0$ for $s\in[\![\rho,\infty)\!)$. Denote $\rho^\eta=\rho(t,x+\eta;\nu,\kappa)$ and notice that $\rho\le \rho^\eta$, $\P$-a.s. For simplicity, we denote $X^{\nu,\kappa;x}=X$ and $X^{\nu,\kappa;x+\eta}=X^\eta$. Since $u(t,x\!+\!\eta)\le \cJ_{t,x+\eta}(\nu,\tau^\eta)$ and $u(t,x)\ge \cJ_{t,x}(\nu,\tau^\eta)$, by Dynkin's formula on $[\![\rho,\rho^\eta]\!]$ (cf.\ \eqref{eq:gmNDynk}) we have 
\begin{align*}
u(t,x\!+\!\eta)&\le\E\Big[\e^{-r(\tau^\eta\wedge\rho)}g^N(t\!+\!\tau^\eta\wedge\rho,X^\eta_{\tau^\eta\wedge\rho})\!+\!\!\int_0^{\tau^\eta\wedge\rho}\!\!\e^{-rs}\big(h^N(t\!+\!s,X_s^\eta)\!+\!H^{N,\eps}(\dot{\nu}_s)\big)\ud s\\
&\qquad+\!\!\int_{\tau^\eta\wedge\rho}^{\tau^\eta\wedge\rho^\eta}\!\!\e^{-rs}\Theta_{\kappa}^N(t\!+\!s,X_s^\eta)\ud s\Big]
\end{align*}
and
\begin{align*}
u(t,x)\ge \E\Big[\e^{-r(\tau^\eta\wedge\rho)}g^N(t\!+\!\tau^\eta\wedge\rho,X_{\tau^\eta\wedge\rho})\!+\!\!\int_0^{\tau^\eta\wedge\rho}\!\!\e^{-rs}\big(h^N(t\!+\!s,X_s)\!+\!H^{N,\eps}(\dot{\nu}_s)\big)\ud s\Big].
\end{align*}
Combining the two inequalities, we obtain
\begin{align}
\begin{aligned}
u(t,x\!+\!\eta)\!-\!u(t,x)
&\le \E\Big[\e^{-r(\tau^\eta\wedge \rho)}\big(g^N(t\!+\!\tau^\eta\!\wedge\! \rho,X_{\tau^\eta\wedge \rho}^\eta)\!-\!g^N(t\!+\!\tau^\eta\!\wedge\! \rho,X_{\tau^\eta\wedge \rho})\big) \!\\
&\qquad+\!\!\int_0^{\tau^\eta\wedge \rho}\!\!\e^{-rs}\big(h^N(t\!+\!s,X_s^\eta)\!-\!h^N(t\!+\!s,X_s)\big)\ud s\! +\!\!\int_{\tau^\eta\wedge \rho}^{\tau^\eta\wedge \rho^\eta}\!\!\e^{-rs}\Theta^N_\kappa(t+s,X_s^\eta)\ud s\Big].
\end{aligned}
\end{align}
Since $g^N$ and $h^N$ are Lipschitz with constants $\bar{\alpha}^N$ and $L_N$, respectively, and $r\ge0$, we have
\begin{align}\label{eq:equic_eps_0}
\begin{aligned}
u(t,x\!+\!\eta)\!-\!u(t,x)&\le \E\Big[\bar{\alpha}^N|X_{\tau^\eta\wedge \rho}^\eta\!-\!X_{\tau^\eta\wedge \rho}|\!+\!L_N\!\int_0^{\tau^\eta\wedge \rho}\!\!|X_s^\eta\!-\!X_s|\ud s \!+\!\int_{\tau^\eta\wedge \rho}^{\tau^\eta\wedge \rho^\eta}\!\!\e^{-rs}\Theta^N_\kappa(t\!+\!s,X_s^\eta)\ud s\Big]\\
&\le(\bar{\alpha}^N\!+\!TL_N)\E\Big[\sup_{s\in[0,\rho]}|X_s^\eta\!-\!X_s|^2\Big]^{\frac{1}{2}}\!+\!\E\Big[\int_{\tau^\eta\wedge \rho}^{\tau^\eta\wedge \rho^\eta}\!\!\e^{-rs}\Theta^N_\kappa(t\!+\!s,X_s^\eta)\ud s\Big].
\end{aligned}
\end{align}
For $\ell,\beta>0$ and $C_\Theta>0$ as in \eqref{eq:ThetaNkmbnd}, similar calculations as in \eqref{eq:equicon_3m} from Lemma \ref{lem:equicon_m} lead to (recall that here $\rho^\eta\ge \rho$, $\P$-a.s.)
\begin{align}\label{eq:equic_eps_1}
\begin{aligned}
&\E\Big[\int_{\tau^\eta\wedge \rho}^{\tau^\eta\wedge \rho^\eta}\!\e^{-rs}\Theta^N_\kappa(t\!+\!s,X_s^\eta)\ud s\Big]\\
&\le C_\Theta\Big(\ell\!+\!T\P\big(X_{\rho}^\eta\!-\!X_{\rho} \ge\beta\big)\!+\!T\P\Big(\inf_{0\le s\le \ell}\big(X^{\eta}_{\rho+s}-X^{\eta}_{\rho}\big)>-\beta,\rho^\eta\ge \rho+\ell\Big)\Big)\\
&\le C_\Theta\Big(\ell\!+\!T\frac{1}{\beta^2}\E\Big[\sup_{s\in[0,\rho]}|X_s^\eta\!-\!X_s|^2\Big]\!+\!T\P\Big(\inf_{0\le s\le \ell}\big(X^{\eta}_{\rho+s}\!-\!X^{\eta}_{\rho}\big)>-\beta,\rho^\eta\ge \rho+\ell\Big)\Big).
\end{aligned}
\end{align}
Noticing that $X^\eta$ and $X^0$ are controlled by the same $\nu$, and $\mu_\kappa$, $\sigma_\kappa$ are Lipschitz, we obtain 
\begin{align}\label{eq:L2etaeps}
\E\Big[\sup_{s\in[0,\rho]}|X_s^\eta-X_s|^2\Big]\le C_1\eta^2
\end{align}
where $C_1$ is a constant independent of $\eta$ and $\nu$. The last term in \eqref{eq:equic_eps_1} is the analogue of the last term on the right-hand side of \eqref{eq:equicm_a1} but with $\rho$ in place of $\rho^0_m$. Thus by the same arguments based on measure-change and time-change we have
\begin{align*}
\P\Big(\inf_{0\le s\le \ell}\big(X^{\eta}_{\rho+s}-X^{\eta}_{\rho}\big)>-\beta,\rho^\eta\ge \rho+\ell\Big)\le \e^{C_2p}\Big(2\tfrac{\beta}{\kappa\sqrt{\ell}}\Big)^{\frac{p-1}{p}}.
\end{align*}
Combining these bounds with \eqref{eq:equic_eps_1} we get
\begin{align}\label{eq:equic_eps_3}
u(t,x+\eta)-u(t,x)\le c_0\big(\eta+\ell+\tfrac{\eta^2}{\beta^2}+\big(\beta/\sqrt{\ell}\big)^{1-\frac1p}\big).
\end{align}

Now we prove a lower bound. Let $\tau\in\cT_t$ be optimal for $u(t,x)$ and let $\nu^\eta\in\cA_{t,x+\eta}^\circ$ be optimal for $u(t,x+\eta)$. Denote $\rho=\rho(t,x;\nu^\eta,\kappa)$ and notice that $\dot \nu^\eta_s\,\mathds{1}_{\{s\le \rho\}}$ defines a control in $\cA^\circ_{t,x}$. Let $X^{\nu^\eta,\kappa;x+\eta}$ and $X^{\nu^\eta,\kappa;x}$ be denoted by $X^{\eta}$ and $X$, respectively. We have
\begin{align*}
&u(t,x\!+\!\eta)\!-\!u(t,x)\ge \cJ_{t,x+\eta}(\nu^\eta,\tau\!\wedge\!\rho)\!-\!\cJ_{t,x}(\nu^\eta,\tau)\\
&= \E\Big[\e^{-r(\tau\wedge \rho)}\big(g^N(t\!+\!\tau\!\wedge\! \rho,X_{\tau\wedge \rho}^\eta)\!-\!g^N(t\!+\!\tau\!\wedge\! \rho,X_{\tau\wedge \rho})\big)\! +\!\int_0^{\tau\wedge \rho}\!\!\e^{-rs}\big(h^N(t\!+\!s,X_s^\eta)\!-\!h^N(t\!+\!s,X_s)\big)\ud s\Big]\\
&\ge -(\bar{\alpha}^N+TL_N)\E\Big[\sup_{s\in[0,\rho]}|X_s^\eta-X_s|^2\Big]^{\frac12}\ge -(\bar{\alpha}^N+TL_N)\sqrt{C_1}\eta,
\end{align*}
by calculations as in \eqref{eq:equic_eps_0} and \eqref{eq:L2etaeps}. Combining with \eqref{eq:equic_eps_3} we have $|u(t,x+\eta)-u(t,x)|\le c_0\big(\eta+\ell+\frac{\eta^2}{\beta^2}+\big(\beta/\sqrt{\ell}\big)^{1-\frac1p}\big)$. Taking $\beta=\ell=\sqrt{\eta}$ and $\eta\le 1$ yields $|u(t,x+\eta)-u(t,x)|\le c_0\eta^{1/4-1/4p}$ with $c_0=c_0(\kappa,N)>0$ independent of $\eps$ and $t$.

{\bf Step 2}. We prove now Lipschitz continuity in time. Let $T\ge t>s\ge 0$. Let $\tau\in\cT_{t}\subset\cT_{s}$ be optimal for $u(t,x)$ and let $\nu\in\cA_{s,x}^\circ$ be optimal for $u(s,x)$. For $\rho_t=\rho(t,x;\nu,\kappa)$ and $\rho_s=\rho(s,x;\nu,\kappa)$ we have $\rho_s\wedge\tau=\rho_t\wedge\tau$. Then
\begin{align}\label{eq:ubut}
\begin{aligned}
&u(t,x)-u(s,x)\le \cJ_{t,x}(\nu,\tau)-\cJ_{s,x}(\nu,\tau)\\
&=\E_x\Big[\e^{-r(\rho_t\wedge\tau)}\big(g^N(t\!+\!\rho_t\wedge\tau,X_{\rho_t\wedge\tau}^\nu)\!-\!g^N(s\!+\!\rho_t \wedge\tau,X_{\rho_t\wedge\tau}^\nu)\big)\\
&\qquad\;\;+\!\int_0^{\rho_t\wedge\tau}\!\!\e^{-r v}\big(h^N(t\!+\!v,X_v^\nu)\!-\!h^N(s\!+\!v,X_v^\nu)\big)\ud v\Big] \\
&\le\E_x\Big[\e^{-r(\rho_t\wedge\tau)}\bar{\alpha}^N|t-s|+\int_0^{\rho_t\wedge\tau}\e^{-r v}L_N|t-s|\ud v\Big] \le(\bar{\alpha}^N+TL_N)|t-s|,
\end{aligned}
\end{align}
because $g^N$ and $h^N$ are Lipschitz by \eqref{eq:thetaN} with constant $\bar{\alpha}^N$ and $L_N$, respectively.

For the reverse inequality, let $\tau\in\cT_{s}$ be optimal for $u(s,x)$ and let $\nu\in\cA_{t,x}^\circ$ be optimal for $u(t,x)$. We set $\dot \nu_v=0$ for $v\in[\![\rho_t,\infty)\!)$ and $\tau_t=\tau\wedge(T-t)$. Since $u(t,x)-u(s,x)\ge \cJ_{t,x}(\nu,\tau_t)-\cJ_{s,x}(\nu,\tau)$, an application of Dynkin's formula on $[\![\tau_t,\rho_s\wedge\tau]\!]$ yields (cf.\ \eqref{eq:gmNDynk}) 
\begin{align}\label{eq:lbut}
\begin{aligned}
u(t,x)\!-\!u(s,x)
\ge\E_x\Big[&\e^{-r(\rho_t\wedge\tau_t)}\big(g^N(t\!+\!\rho_t\wedge\tau_t,X_{\rho_t\wedge\tau_t}^\nu)\!-\!g^N(s\!+\!\rho_t\wedge\tau_t,X_{\rho_t\wedge\tau_t}^{\nu})\big)\\
&+\!\int_0^{\rho_t\wedge\tau_t}\!\!\e^{-r v}\big(h^N(t\!+\!v,X_v^\nu)\!-\!h^N(s\!+\!v,X_v^{\nu})\big)\ud v\\
&-\mathds{1}_{\{T-t<\rho_t\wedge\tau\}}\!\int_{T-t}^{\rho_s\wedge\tau}\!\!\e^{-r v}\Theta^{N}_{\kappa}(s\!+\!v,X_v^{\nu})\ud v\Big]\ge -(\bar{\alpha}^N\!+\!L_NT\!+\!C_\Theta)|t\!-\!s|,
\end{aligned}
\end{align}
where we used again that $g^N$, $h^N$ are Lipschitz in time with constant $\bar{\alpha}^N$ and $L_N$, respectively, and $\Theta^{N}_{\kappa}$ is bounded by $C_\Theta$. Combining \eqref{eq:ubut} and \eqref{eq:lbut} we conclude.
\end{proof}

Lemma \ref{lem:W12pbound} and Proposition \ref{prop:equiuepsmuN} imply $(u^{N,\eps}_\kappa)_{\eps\in(0,1)}$ bounded in $W^{1,2;p}_{\ell oc}$ and equi-continuous on $\overline \cO$. We define, as in the proof of Theorem \ref{thm:highreguued} 
\begin{align}\label{eq:limu2}
u^{N}_\kappa\coloneqq\lim_{n\to\infty}u^{N,\eps_n}_\kappa,
\end{align}
where the limit is along a sequence $(\eps_n)_{n\in\N}$ decreasing to zero and it is understood weakly in $W^{1,2;p}(\cK)$ for any compact $\cK\subset\cO$ and strongly in $L^\infty(\cK)$ for any compact in $\overline \cO$. Then we obtain a probabilistic representation of $u^N_\kappa$, similarly to Lemma \ref{lem:uepsmuN}.
For $(t,x)\in\overline\cO$ and $(\nu,\tau)\in\cA_{t,x}\times\cT_t$, let 
\begin{align}\label{eq:5.1b}
\begin{aligned}
\cJ_{t,x}^{N,\kappa}(\nu,\tau)&=\E_x\Big[\e^{-r(\tau\wedge\rho)}g^N(t\!+\!\tau\wedge\rho,X_{\tau\wedge\rho}^{\nu,\kappa})\\
&\quad\qquad+\!\int_0^{\tau\wedge\rho}\!\!\!\!\e^{-rs} h^N(t\!+\!s,X_s^{\nu,\kappa})\ud s\!+\!\int_{[0,\tau\wedge\rho]}\!\!\!\!\e^{-rs} \bar{\alpha}^N \ud |\nu|_s\Big].
\end{aligned}
\end{align}
\begin{remark}\label{rem:Ak}
Notice that we should use the notation $\cA^\kappa_{t,x}$ for the class of admissible controls, because we want $X^{\nu,\kappa}_{\rho}=0$ and $\nu_{s}-\nu_{\rho }=0$ for $s\in[\![\rho,T-t]\!]$. However, we prefer to avoid adding new notations when no confusion shall arise. 
\end{remark}

\begin{lemma}\label{lem:uNkappagame}
We have~$u^{N}_{\kappa}(t,x)\!=\!\sup_{\tau\in\cT_t}\inf_{\nu\in\cA_{t,x}}\cJ_{t,x}^{N,\kappa}(\nu,\tau)\!=\!\inf_{\nu\in\cA_{t,x}}\sup_{\tau\in\cT_t}\cJ_{t,x}^{N,\kappa}(\nu,\tau)$,~for $(t,x)\in\overline \cO$.
Moreover,~the~stopping~time~$\sigma_*^{N,\kappa}\wedge\tau_*^{N,\kappa}$~with~$\tau_*^{N,\kappa}(t,x;\nu,\kappa)$~and~$\sigma_*^{N,\kappa}(t,x;\nu,\kappa)$~defined~as
\begin{align}\label{eq:tau*Nkappa}
\begin{aligned}
\tau_*^{N,\kappa}&\coloneqq\inf\big\{s\ge 0\,\big|\, u_\kappa^N(t+s,X_s^{\nu,\kappa;x})=g^N(t+s,X_s^{\nu,\kappa;x})\big\}\wedge (T-t),\\
\sigma_*^{N,\kappa}&\coloneqq\inf\big\{s\ge 0\,\big|\, u_\kappa^N(t+s,X_{s-}^{\nu,\kappa;x})=g^N(t+s,X_{s-}^{\nu,\kappa;x})\big\}\wedge (T-t),
\end{aligned}
\end{align}
is optimal for the stopper for any $\nu\in\cA_{t,x}$.
\end{lemma}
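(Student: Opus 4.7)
The plan is to adapt the proof strategy of Lemma \ref{lem:uepsmuN}, pushing the penalisation parameter $\eps$ to zero. As $\eps\to 0$, the Hamiltonian control cost $H^{N,\eps}(\dot\nu_s)\,\ud s$ degenerates into $\bar\alpha^N\ud|\nu|_s$, while the gradient penalty enforces the hard bound $|\partial_x u|\le\bar\alpha^N$; correspondingly the admissible class enlarges from $\cA^\circ_{t,x}$ to $\cA_{t,x}$. Two matching one-sided bounds will close the identities.

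\emph{Stopper side.} For arbitrary $\nu\in\cA_{t,x}$, I apply a generalised It\^o formula to $s\mapsto\e^{-rs}u^N_\kappa(t+s,X^{\nu,\kappa}_s)$ on $[\![0,\tau_*^{N,\kappa}\wedge\sigma_*^{N,\kappa}\wedge\rho]\!]$. This is legitimate since $u^N_\kappa\in W^{1,2;p}_{\ell oc}(\cO)\cap C(\overline\cO)$: I mollify on compact subdomains of $\cO$ and pass to the limit, using the quadratic bound from Theorem \ref{thm:highreguued} and $\E[|\nu|^2_{T-t}]<\infty$ from Lemma \ref{lem:Aopt} for uniform integrability. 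Three key properties of $u^N_\kappa$, inherited from the limits $(\delta,\eps)\to 0$ in Lemmas \ref{lem:bndobstpen2}--\ref{lem:W12pbound}, drive the bound: \emph{(i)} $u^N_\kappa\ge g^N$ pointwise (since $(g^N-u^{N,\eps,\delta}_\kappa)^+\le\delta K_2\to 0$); \emph{(ii)} $|\partial_x u^N_\kappa|\le\bar\alpha^N$ a.e.\ (since $\psi_\eps$ bounded forces $|\partial_x u^{N,\eps,\delta}_\kappa|^2\le(\bar\alpha^N)^2+\eps(M_7+1)$); \emph{(iii)} $(\partial_t+\cL_\kappa-r)u^N_\kappa\ge -h^N$ a.e.\ on $\{u^N_\kappa>g^N\}$, inherited from the weak limit of the penalised PDE (the obstacle penalty is non-positive). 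By definition of $\tau_*^{N,\kappa}$, (iii) applies on $[\![0,\tau_*^{N,\kappa}\wedge\sigma_*^{N,\kappa})\!)$. The continuous part of $\nu$ contributes $\int\partial_x u^N_\kappa\,\ud\nu^c_s$, dominated by $\bar\alpha^N\int\ud|\nu^c|_s$ via (ii); each jump contributes an increment $u^N_\kappa(\cdot,X^\nu_s)-u^N_\kappa(\cdot,X^\nu_{s-})$, bounded in absolute value by $\bar\alpha^N|\Delta\nu_s|$ through the embedding $W^{1,2;p}\hookrightarrow C^{0,1;\gamma}$. At the terminal time $u^N_\kappa=g^N$: at $\tau_*^{N,\kappa}$ by continuity of $u^N_\kappa$, at $\sigma_*^{N,\kappa}$ via the left-continuous variant (detecting a jump of $\nu$ landing in the coincidence set), and at $\rho$ via the Dirichlet condition $u^N_\kappa(\cdot,0)=g^N(\cdot,0)$ from Theorem \ref{thm:highreguued}. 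Collecting,
\[
u^N_\kappa(t,x)\le\cJ^{N,\kappa}\bigl(\nu,\tau_*^{N,\kappa}\wedge\sigma_*^{N,\kappa}\bigr)\le\sup_\tau\cJ^{N,\kappa}(\nu,\tau),\qquad\nu\in\cA_{t,x},
\]
so $u^N_\kappa\le\overline v$ and $\tau_*^{N,\kappa}\wedge\sigma_*^{N,\kappa}$ is optimal for the stopper.

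\emph{Controller side.} From Lemma \ref{lem:uepsmuN}, $u^{N,\eps_n}_\kappa(t,x)\ge\cJ^{N,\kappa,\eps_n}(\tau,\nu^{N,\kappa,\eps_n})$ for every $\tau\in\cT_t$. Plugging $p=\bar\alpha^N\operatorname{sign}(y)$ into the sup defining $H^{N,\eps_n}(y)$ in \eqref{eq:hmltn2} (with $\psi_{\eps_n}(-(\bar\alpha^N)^2)=0$) gives the elementary bound $H^{N,\eps_n}(y)\ge\bar\alpha^N|y|$. Since $\nu^{N,\kappa,\eps_n}\in\cA^\circ_{t,x}$ is absolutely continuous,
\[
\int_0^{\tau\wedge\rho}\!\e^{-rs}H^{N,\eps_n}(\dot\nu^{N,\kappa,\eps_n}_s)\,\ud s\ge\int_{[0,\tau\wedge\rho]}\!\e^{-rs}\bar\alpha^N\ud|\nu^{N,\kappa,\eps_n}|_s,
\]
hence $\cJ^{N,\kappa,\eps_n}(\tau,\nu^{N,\kappa,\eps_n})\ge\cJ^{N,\kappa}(\nu^{N,\kappa,\eps_n},\tau)$. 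Taking $\sup_\tau$ first and using $\nu^{N,\kappa,\eps_n}\in\cA^\circ_{t,x}\subset\cA_{t,x}$ yields $u^{N,\eps_n}_\kappa\ge\sup_\tau\cJ^{N,\kappa}(\nu^{N,\kappa,\eps_n},\tau)\ge\overline v$. Alternatively, taking $\inf_\nu$ at fixed $\tau$ and then $\sup_\tau$ produces $u^{N,\eps_n}_\kappa\ge\underline v$. Letting $n\to\infty$ with $u^{N,\eps_n}_\kappa\to u^N_\kappa$ locally uniformly gives $u^N_\kappa\ge\overline v$ and $u^N_\kappa\ge\underline v$. Combined with the stopper-side bound $u^N_\kappa\le\overline v$ and the trivial $\underline v\le\overline v$, this forces $\underline v=\overline v=u^N_\kappa$.

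\emph{Main obstacle.} The principal difficulty is the rigorous execution of It\^o's formula at the Sobolev regularity of $u^N_\kappa$ against a c\`adl\`ag bounded-variation control with absorption on $\{x=0\}$: mollification must be carried out on compact subdomains of $\cO$ (staying away from the boundary where only continuity is available), and the limit passed with dominated convergence via the quadratic growth of $u^N_\kappa$. A second delicate point is the choice of the left-continuous variant $\sigma_*^{N,\kappa}$ in the Markov rule: it is tailored precisely so that the Sobolev jump estimate $|u^N_\kappa(\cdot,X^\nu_s)-u^N_\kappa(\cdot,X^\nu_{s-})|\le\bar\alpha^N|\Delta\nu_s|$ becomes tight at the stopping time when a jump of $\nu$ lands in the coincidence set $\{u^N_\kappa=g^N\}$, thereby delivering optimality rather than mere sub-optimality of the rule.
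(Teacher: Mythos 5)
Your proposal is correct in spirit and reaches the same conclusion, but it takes a different route for the central step, and that route carries a real technical cost that the paper deliberately avoids. You propose to apply a generalised It\^o/Krylov formula \emph{directly} to $u^N_\kappa$, which lives only in $W^{1,2;p}_{\ell oc}(\cO)\cap C(\overline\cO)$, against a c\`adl\`ag bounded-variation control with jumps and an absorbing boundary. The paper instead applies It\^o to the penalised solution $u^{N,\eps,\delta}_\kappa$, which is genuinely $C^{1,2;\gamma}_{Loc}(\overline\cO)$ by Theorem \ref{thm:highreguued}, producing the identity \eqref{eq:ito} at classical regularity, and \emph{then} passes $(\delta,\eps)\to 0$ along the sequence from \eqref{eq:limu1}--\eqref{eq:limu2}, using Lemma \ref{lem:bndobstpen2} to kill the obstacle penalty and Lemma \ref{lem:W12pbound} for the gradient control. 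This ordering is what buys the paper a clean argument: one never needs an It\^o--Meyer formula at Sobolev regularity, and the localisation $\theta_{n,m}$ combined with continuity up to $[0,T]\times\{0\}$ and the a priori quadratic bound handles the absorbing boundary by dominated convergence. Your mollification plan on compact subdomains of $\cO$ can likely be pushed through, but it duplicates work the paper has already done in the $(\delta,\eps)$ passage and leaves the jump term and the boundary behaviour to be re-justified from scratch, which is exactly the place where these arguments tend to hide errors.

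The remaining ingredients match the paper. Your controller-side bound $H^{N,\eps}(y)\ge\bar\alpha^N|y|$ obtained by choosing $p=\bar\alpha^N\sign(y)$ in \eqref{eq:hmltn2} is precisely the inequality the paper uses, and your derivation of $u^{N,\eps}_\kappa\ge\overline v^{N,\kappa}$ and $u^{N,\eps}_\kappa\ge\underline v^{N,\kappa}$ from it is correct. Your jump analysis at the terminal time --- using $|\partial_x u^N_\kappa|\le\bar\alpha^N$ on $\{\sigma_*\ge\tau_*\}$ and the $\bar\alpha^N$-Lipschitz bound for $g^N$ on $\{\sigma_*<\tau_*\}$ --- reproduces the two inequalities the paper records in \eqref{eq:sigma*opt}--\eqref{eq:tau*opt}, and your explanation of why the left-limit variant $\sigma_*^{N,\kappa}$ is needed (to catch a jump of $\nu$ landing in the coincidence set) is exactly the point. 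The properties (i)--(iii) you extract for $u^N_\kappa$ are all legitimately inherited from Lemmas \ref{lem:bndobstpen2}--\ref{lem:W12pbound}. In short: correct argument, different organisation, with the difference being that you defer the It\^o application to the stage where regularity is weakest, whereas the paper performs it at the stage where regularity is strongest and takes limits afterward.
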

\begin{proof}
The proof follows very closely the one in \cite[Thm.\ 6]{bovo2022variational}. Therefore we only outline it here. First we observe that by \eqref{eq:limu1} and \eqref{eq:limu2} we can select a sequence $(\delta_i,\eps_i)_{i\in\N}$ so that $u^{N}_\kappa=\lim_{i\to\infty}u^{N,\eps_i,\delta_i}_\kappa$ weakly in $W^{1,2;p}(\cK)$ and strongly in $L^\infty(\cK)$. Take $(\tau,\nu)\in\cT_t\times\cA_{t,x}$. We can apply \eqref{eq:dyn0} with $w_s\equiv 0$ and replacing $T_k$ by $\tau$ and $\dot \nu_s\ud s$ by $\ud \nu_s$ (notice that we must account for jumps of the controlled dynamics). Then, using that $u^{N,\eps,\delta}_\kappa$ solves the PDE \eqref{eq:pdeinRdeps}, we obtain
\begin{align}\label{eq:ito}
\begin{aligned}
u^{N,\eps,\delta}_\kappa(t,x)=\E_x\Big[&\e^{-r(\theta_{n,m}\wedge \tau)}u^{N,\eps,\delta}_\kappa(t\!+\!\theta_{n,m}\!\wedge\! \tau,X^\nu_{\theta_{n,m}\wedge \tau-})\\
&+\int_0^{\theta_{n,m}\wedge \tau}\!\!\e^{-rs}\big[h^N\!+\!\tfrac1\delta\big(g^N\!-\!u^{N,\eps,\delta}_\kappa\big)^+\!-\!\psi_\eps\big(|\partial_x u^{N,\eps,\delta}_\kappa|^2\!-\!(\bar \alpha^N)^2\big)\big](t\!+\!s,X_s^\nu)\ud s\\
&-\int_0^{\theta_{n,m}\wedge \tau}\!\!\e^{-r s}\partial_xu^{N,\eps,\delta}_\kappa(t\!+\!s,X_s^\nu)\ud \nu^c_s-\!\!\!\!\sum_{s\in[0,\theta_{n,m}\wedge \tau)}\!\!\e^{-r s}\Delta u^{N,\eps,\delta}_\kappa(t\!+\!s,X_s^\nu)\Big],
\end{aligned}
\end{align}
where $\Delta u^{N,\eps,\delta}_\kappa(t+s,X_s^\nu)=u^{N,\eps,\delta}_\kappa(t+s,X_s^\nu)-u^{N,\eps,\delta}_\kappa(t+s,X_{s-}^\nu)$ and $\nu^c$ is the continuous part of $\nu$. Now we take $\tau=\tau_*^{N,\kappa}\wedge\sigma_*^{N,\kappa}$ and drop the term with $\psi_\eps$. Letting $(\delta,\eps)\to (0,0)$ along the sequence $(\delta_i,\eps_i)_{i\in\N}$ and finally taking $n,m\to\infty$ we can show (cf.\ \cite[Thm.\ 6, p.\ 31]{bovo2022variational})
\begin{align}\label{eq:uNkopt}
\begin{aligned}
u^{N}_\kappa(t,x)\le\E_x\Big[&\e^{-r(\tau_*^{N,\kappa}\wedge\sigma_*^{N,\kappa})}u^N_\kappa(t\!+\!\tau_*^{N,\kappa}\!\wedge\!\sigma_*^{N,\kappa},X^\nu_{\tau_*^{N,\kappa}\wedge\sigma_*^{N,\kappa}-})\!+\!\int_0^{\tau_*^{N,\kappa}\wedge\sigma_*^{N,\kappa}}\!\!\!\e^{-rs}h^N(t\!+\!s,X_s^\nu)\ud s\\
&-\int_0^{\tau_*^{N,\kappa}\wedge\sigma_*^{N,\kappa}}\!\!\e^{-r s}\partial_x u^{N}_\kappa(t+s,X_s^\nu)\ud \nu^c_s-\sum_{s\in[0,\tau_*^{N,\kappa}\wedge\sigma_*^{N,\kappa})}\e^{-r s}\Delta u^{N,\eps,\delta}_\kappa(t+s,X_s^\nu)\Big].
\end{aligned}
\end{align}
When taking limits as $(\eps,\delta)\to 0$, the penalty term $\frac1\delta(g^N-u^{N,\eps,\delta}_\kappa)^+$ can be shown to vanish thanks to Lemma \ref{lem:bndobstpen2}. Recalling $\theta_\infty=\lim_{n\to\infty}\lim_{m\to\infty}\theta_{n,m}$ we have $\theta_\infty\le \rho$ and we must ensure that $\theta_\infty\ge \tau_*^{N,\kappa}\wedge\sigma_*^{N,\kappa}$ to justify \eqref{eq:uNkopt}. On the event $\{\theta_\infty<T-t\}\cap\{X^{\nu}_{\theta_{\infty}-}=0\}$ we have $\theta_\infty\ge \sigma_*^{N,\kappa}$ because $u^{N}_\kappa(t,0)=g^N(t,0)$. Similarly, $\theta_\infty\ge \tau_*^{N,\kappa}$ on the event $\{\theta_\infty<T-t\}\cap\{X^{\nu}_{\theta_{\infty}}=0\}$. 

Finally, we show an upper bound for the right-hand side of \eqref{eq:uNkopt}. On $\{\sigma_*^{N,\kappa}< \tau_*^{N,\kappa}\}$ we have
\begin{align}\label{eq:sigma*opt}
\begin{aligned}
&u^N_\kappa(t\!+\!\tau_*^{N,\kappa}\!\wedge\!\sigma_*^{N,\kappa},X^\nu_{\tau_*^{N,\kappa}\wedge\sigma_*^{N,\kappa}-})=u^N_\kappa(t\!+\!\sigma_*^{N,\kappa},X^\nu_{\sigma_*^{N,\kappa}-})\\
&=g^N(t\!+\!\sigma_*^{N,\kappa},X^\nu_{\sigma_*^{N,\kappa}-})\le g^N(t\!+\!\sigma_*^{N,\kappa},X^\nu_{\sigma_*^{N,\kappa}})+\bar{\alpha}^N\Delta\nu_{\sigma_*^{N,\kappa}},
\end{aligned}
\end{align}
by using that $g^N$ is Lipschitz with constant smaller than $\bar{\alpha}^N$.
On $\{\sigma_*^{N,\kappa}\ge \tau_*^{N,\kappa}\}$
\begin{align}\label{eq:tau*opt}
\begin{aligned}
&u^N_\kappa(t\!+\!\tau_*^{N,\kappa}\!\wedge\!\sigma_*^{N,\kappa},X^\nu_{\tau_*^{N,\kappa}\wedge\sigma_*^{N,\kappa}-})=u^N_\kappa(t\!+\!\tau_*^{N,\kappa},X^\nu_{\tau_*^{N,\kappa}-})\\
&\le u^N_\kappa(t\!+\!\tau_*^{N,\kappa},X^\nu_{\tau_*^{N,\kappa}})+\bar{\alpha}^N\Delta\nu_{\tau_*^{N,\kappa}}= g^N(t\!+\!\tau_*^{N,\kappa},X^\nu_{\tau_*^{N,\kappa}})+\bar{\alpha}^N\Delta\nu_{\tau_*^{N,\kappa}},
\end{aligned}
\end{align}
where the inequality holds because $|\partial_x u^{N}_\kappa|\le \bar \alpha^N$ thanks to the second equation in \eqref{eq:gradpeneq}. 

Plugging \eqref{eq:sigma*opt} and \eqref{eq:tau*opt} into \eqref{eq:uNkopt} and using again that $|\partial_x u^{N}_\kappa|\le \bar\alpha_N$ to bound the final two terms in \eqref{eq:uNkopt} we obtain $u^{N}_\kappa(t,x)\le \cJ_{t,x}^{N,\kappa}(\nu,\tau_*^{N,\kappa}\wedge\sigma_*^{N,\kappa})$.
By the arbitrariness of $\nu$ and the sub-optimality of $\tau_*^{N,\kappa}\wedge\sigma_*^{N,\kappa}$, we obtain $u^{N}_{\kappa}(t,x)\le\sup_{\tau\in\cT_t}\inf_{\nu\in\cA_{t,x}}\cJ_{t,x}^{N,\kappa}(\nu,\tau)$.

We show now that $ u^{N}_{\kappa}$ is greater or equal than the upper value of the game. Let us go back to \eqref{eq:ito} and take $\nu=\nu^{N,\kappa,\eps}$ as in Lemma \ref{lem:uepsmuN} (so there are no jumps). Then we obtain \eqref{eq:uNkeps,delta}. Taking limits along the sequence from \eqref{eq:limu1} we obtain the same expression as \eqref{eq:uNkeps,delta} but with $u^{N,\eps}_\kappa$ instead of $u^{N,\eps,\delta}_\kappa$. By the choice of $\nu$ we have 
\[
-\psi_\eps\big(|\partial_x u^{N,\eps}_\kappa|^2-(\bar \alpha^N)^2\big)(t+s,X_s^\nu)-\partial_x u^{N,\eps}_\kappa(t+s,X_s^\nu)\cdot\dot{\nu}_s=H^{N,\eps}(\dot \nu_s).
\]

Moreover, $H^{N,\eps}(\dot{\nu}_s)\ge \bar{\alpha}^N|\dot{\nu}_s|$ and Lemma \ref{lem:bndobstpen2} implies $u^{N,\eps}_\kappa\ge g^N$. Then
\begin{align*}
\begin{aligned}
u^{N,\eps}_\kappa(t,x)
\ge \E_x\Big[&\e^{-r(\theta_{n,m}\wedge \tau)}g^{N}(t\!+\!\theta_{n,m}\wedge \tau,X^{\nu}_{\theta_{n,m}\wedge \tau})\!+\!\int_0^{\theta_{n,m}\wedge \tau}\!\e^{-rs}\big(h^N(t+s,X_s^{\nu})\!+\bar{\alpha}^N|\dot{\nu}_s|\big)\ud s\Big].
\end{aligned}
\end{align*}
Passing to the limit as $n,m\to\infty$ we have $\lim_n\lim_m\theta_{n,m}=\rho$, $\P$-a.s., because the control is continuous. Thus, we get $u^{N,\eps}_\kappa(t,x)\ge \cJ_{t,x}^{N,\kappa}(\nu,\tau)$.
By arbitrariness of $\tau$ and sub-optimality of $\nu=\nu^{N,\kappa,\eps}$, we have $u^{N,\eps}_{\kappa}(t,x)\ge\inf_{\nu\in\cA_{t,x}}\sup_{\tau\in\cT_t}\cJ_{t,x}^{N,\kappa}(\nu,\tau)$. Therefore, passing to the limit as $\eps\downarrow0$ yields $u^{N}_{\kappa}(t,x)\ge\inf_{\nu\in\cA_{t,x}}\sup_{\tau\in\cT_t}\cJ_{t,x}^{N,\kappa}(\nu,\tau)$, which completes the proof.
\end{proof}

\begin{remark}[{\bf Proof of Theorem \ref{thm:usolvar}}]\label{rem:thm}
The lemma above proves Theorem \ref{thm:usolvar} under the condition {\bf A1}, because the approximations indexed by $N$ and $\kappa$ are superfluous in that case. 

Thanks to Lemmas \ref{lem:bndobstpen2} and \ref{lem:W12pbound}, the same arguments of proof as in \cite[Thm.\ 5]{bovo2022variational} allow us to pass to the limit in the sequence $u^{N,\eps_n,\delta_n}_{\kappa}$ and obtain that $u^N_\kappa$
is a solution of Problem \ref{prb:varineq}. Repeating verbatim the proof from \cite[Thm.\ 6]{bovo2022variational} (cf.\ p.\ 32 therein) we obtain maximality of the solution.
\end{remark}

\subsection{Value of the game with unbounded data}\label{sec:valueunb}
Throughout this section we work under Assumption {\bf A.2} of Theorem \ref{thm:usolvar}. 
In this case, we show in the next lemma that optimal controls are non-increasing. Recall $\nu=\nu^+-\nu^-$. 

\begin{lemma}\label{lem:nu^-}
Let $\cA_{t,x}^-\coloneqq\{\nu\in\cA_{t,x}:\nu_{T-t}^+=0,\P\text{-a.s.}\}$. For any $(t,x)\in\overline\cO$, it holds
\begin{align*}
u^N_\kappa(t,x)=\inf_{\nu\in\cA_{t,x}^-}\sup_{\tau\in\cT_t}\cJ_{t,x}^{N,\kappa}(\nu,\tau)=\sup_{\tau\in\cT_t}\inf_{\nu\in\cA_{t,x}^-}\cJ_{t,x}^{N,\kappa}(\nu,\tau).
\end{align*}
\end{lemma}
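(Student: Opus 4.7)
The plan is to exploit the inclusion $\cA^-_{t,x}\subseteq \cA_{t,x}$ for the ``easy'' direction, and then establish the matching upper bound $\inf_{\cA^-_{t,x}}\sup_\tau \cJ^{N,\kappa}_{t,x}(\nu,\tau)\le u^N_\kappa(t,x)$ by constructing, for each $\nu\in\cA_{t,x}$, a reduced control $\tilde\nu\in\cA^-_{t,x}$ with $\sup_\tau \cJ^{N,\kappa}_{t,x}(\tilde\nu,\tau)\le \sup_\tau \cJ^{N,\kappa}_{t,x}(\nu,\tau)$. Combined with the standard minimax inequality $\sup_\tau \inf_{\cA^-}\le \inf_{\cA^-}\sup_\tau$, this reduction together with Lemma~\ref{lem:uNkappagame} collapses both iterated optima onto $u^N_\kappa(t,x)$.

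To construct $\tilde\nu$ I would informally discard the upward component $\nu^+$. Let $\tilde X$ solve
\[
\tilde X_s=x+\int_0^s\mu_\kappa(\tilde X_u)\,\ud u+\int_0^s\sigma_\kappa(\tilde X_u)\,\ud W_u-\nu^-_s
\]
on $[\![0,\tilde\rho]\!]$, where $\tilde\rho\coloneqq\inf\{s\ge0:\tilde X_s\le 0\}\wedge(T-t)$; existence and pathwise uniqueness follow from standard SDE theory under Assumption~\ref{ass:gen1} (Lipschitz $\mu_\kappa$, $\gamma$-H\"older $\sigma_\kappa$ with $\gamma>1/2$), with the c\`adl\`ag BV process $-\nu^-$ playing the role of a singular driver. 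If a jump of $\nu^-$ at $\tilde\rho$ would send $\tilde X$ strictly below $0$, I truncate that jump so that $\Delta\tilde\nu^-_{\tilde\rho}=\tilde X_{\tilde\rho-}$ and $\tilde X_{\tilde\rho}=0$; after $\tilde\rho$, $\tilde\nu^-$ is held constant. Since $\tilde\nu^-_s\le\nu^-_s$, the integrability $\E[(\tilde\nu^-_{T-t})^2]<\infty$ is inherited from $\nu$, hence $\tilde\nu\coloneqq-\tilde\nu^-\in\cA^-_{t,x}$.

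The key step is the pathwise comparison $\tilde X_s\le X^{\nu,\kappa}_s$ on $[\![0,\tilde\rho]\!]$ (whence $\tilde\rho\le\rho$). On $[\![0,\tilde\rho)\!)$, $\tilde\nu^-_s=\nu^-_s$, so
\[
X^{\nu,\kappa}_s-\tilde X_s=\int_0^s[\mu_\kappa(X^{\nu,\kappa}_u)-\mu_\kappa(\tilde X_u)]\,\ud u+\int_0^s[\sigma_\kappa(X^{\nu,\kappa}_u)-\sigma_\kappa(\tilde X_u)]\,\ud W_u+\nu^+_s,
\]
and Tanaka-Meyer applied to $(\tilde X-X^{\nu,\kappa})^+$, combined with the vanishing-local-time argument used in \eqref{eq:loctime} (which needs $\gamma>1/2$) and Gronwall, yields the comparison. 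Because $\tilde X$ is absorbed at $\tilde\rho$, $\cJ^{N,\kappa}_{t,x}(\tilde\nu,\tau)$ depends on $\tau$ only through $\tau\wedge\tilde\rho$, so $\sup_{\tau\in\cT_t}\cJ^{N,\kappa}_{t,x}(\tilde\nu,\tau)=\sup_{\tau\in\cT_t,\tau\le\tilde\rho}\cJ^{N,\kappa}_{t,x}(\tilde\nu,\tau)$. For any such $\tau\le\tilde\rho\le\rho$ we have $\tau\wedge\tilde\rho=\tau\wedge\rho=\tau$, so the monotonicity of $g^N$ and $h^N$ in $x$ (preserved by the approximation scheme of Section~\ref{sec:approx} under {\bf A.2}) together with $\tilde X\le X^{\nu,\kappa}$ gives the pointwise inequalities for the terminal and running costs, while the control-cost comparison reduces to $|\tilde\nu|_s\le|\nu|_s$ on $[\![0,\tilde\rho]\!]$, which is immediate from the construction ($\ud\tilde\nu^-_s=\ud\nu^-_s$ on $[\![0,\tilde\rho)\!)$, and the truncated jump at $\tilde\rho$ only reduces the variation further). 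Hence $\cJ^{N,\kappa}_{t,x}(\tilde\nu,\tau)\le \cJ^{N,\kappa}_{t,x}(\nu,\tau)$ for all $\tau\le\tilde\rho$, and taking $\inf$ over $\nu\in\cA_{t,x}$ closes the argument via Lemma~\ref{lem:uNkappagame}.

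The hardest part of the plan is the bookkeeping at the absorption time $\tilde\rho$ when $\nu^-$ jumps there: the truncation of $\Delta\tilde\nu^-_{\tilde\rho}$ must preserve adaptedness and enforce $X^{\tilde\nu,\kappa}_{\tilde\rho}=0$ exactly, and one must check that every monotonicity inequality ($\tilde X\le X^{\nu,\kappa}$ and $|\tilde\nu|_s\le|\nu|_s$) survives across such jumps; once that is handled, the core comparison recycles tools already established in the paper.
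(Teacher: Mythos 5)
Your proposal is correct and follows essentially the same route as the paper: discard $\nu^+$, truncate the terminal jump so the controlled process is absorbed exactly at zero, obtain the pathwise domination $\tilde X\le X^{\nu,\kappa}$ and the total-variation comparison $|\tilde\nu|_s\le|\nu|_s$, and then invoke spatial monotonicity of $g^N$ and $h^N$ together with the minimax inequality to sandwich both iterated optima onto $u^N_\kappa$. The only cosmetic difference is that the paper asserts the comparison $\tilde X\le X^{\nu,\kappa}$ directly while you spell out the Tanaka--Meyer/vanishing-local-time/Gronwall argument, and the paper phrases the reduction via $\eta$-optimal controls rather than your exact pathwise payoff inequality; these are equivalent.
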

\begin{proof}
Let $(t,x)\in\overline\cO$. Let $\nu\in\cA_{t,x}$ be $\eta$-optimal for $u^N_\kappa(t,x)$. Set $\xi\coloneqq-\nu^-$ and $\bar{\rho}=\rho(t,x;\xi,\kappa)$ as in \eqref{eq:rhoknu}. A priori $\xi\notin\cA_{t,x}$ because it could make the process $X^{\xi,\kappa}$ jump strictly below zero. However, the process $X^{\xi,\kappa}$ is well-defined. Then, we construct an admissible control $\tilde \nu\in\cA^-_{t,x}$ as $\tilde{\nu}_s=\xi_s\mathds{1}_{[0,\bar{\rho})}(s)+\big(\xi_{\bar{\rho}-}-|X^{\xi,\kappa}_{\bar{\rho}-}|\big)\mathds{1}_{[\bar{\rho},T-t]}(s)$. We have $X^{\tilde\nu,\kappa}_s\ge 0$ for all $s\in[\![0,\bar \rho]\!]$, $\P$-a.s. Moreover, simple comparison yields $X^{\tilde \nu,\kappa}_s\le X^{\nu,\kappa}_s$ for all $s\in[\![0,\bar \rho]\!]$ and, letting $\tilde{\rho}=\rho(t,x;\tilde{\nu},\kappa)$ and $\rho=\rho(t,x;\nu,\kappa)$, it is clear that $\tilde{\rho}=\bar{\rho}\le \rho$, $\P$-a.s.

Since $\nu$ is $\eta$-optimal for $u^N_\kappa$, letting $\sigma\in\cT_t$ be $\eta$-optimal for $\sup_{\tau\in\cT_t}\cJ_{t,x}^{N,\kappa}(\tilde \nu,\tau)$ we get
\begin{align*}
&u^N_\kappa(t,x)-\inf_{\xi\in\cA_{t,x}^-}\sup_{\tau\in\cT_t}\cJ_{t,x}^{N,\kappa}(\xi,\tau)\ge\cJ_{t,x}^{N,\kappa}(\nu,\sigma\wedge\tilde{\rho})-\cJ_{t,x}^{N,\kappa}(\tilde \nu,\sigma)-2\eta\\
&= \E_x\Big[\e^{-r(\tilde{\rho}\wedge\sigma)}\big(g^N(t+\tilde{\rho}\wedge\sigma,X_{\tilde{\rho}\wedge\sigma}^{\nu,\kappa})-g^N(t+\tilde{\rho}\wedge\sigma,X_{\tilde{\rho}\wedge\sigma}^{\tilde{\nu},\kappa})\big)\\
&\qquad\quad +\!\!\int_0^{\tilde{\rho}\wedge\sigma}\!\!\e^{-rs}\big(h^N(t\!+\!s,X_s^{\nu,\kappa})\!-\!h^N(t\!+\!s,X_s^{\tilde \nu,\kappa})\big)\ud s\!+\!\int_{[0,\tilde{\rho}\wedge\sigma]}\!\!\e^{-rs}\bar{\alpha}^N\big(\ud |\nu|_s\!-\!\ud |\tilde \nu|_s\big)\Big]-2\eta,
\end{align*}
where we used $\tilde{\rho}\le\rho$. Since $|\nu|_s\ge |\tilde \nu|_s$ and $X^{\nu,\kappa}_s\ge X^{\tilde \nu,\kappa}_s$, for all $s\in[\![0,\tilde\rho]\!]$, $\P$-a.s., then spatial monotonicity of $g^N$ and $h^N$ yields $u^N_\kappa(t,x)-\inf_{\xi\in\cA_{t,x}^-}\sup_{\tau\in\cT_t}\cJ_{t,x}^N(\xi,\tau)\ge -2\eta$. Since $\eta$ is arbitrary, 
\begin{align*}
u^N_\kappa(t,x)\ge \inf_{\xi\in\cA_{t,x}^-}\sup_{\tau\in\cT_t}\cJ_{t,x}^{N,\kappa}(\xi,\tau)\ge\sup_{\tau\in\cT_t}\inf_{\xi\in\cA_{t,x}^-}\cJ_{t,x}^{N,\kappa}(\xi,\tau)\ge u^N_\kappa(t,x),
\end{align*}
where the last inequality holds because $\cA_{t,x}^-\subset\cA_{t,x}$.
\end{proof}

Next we prove convergence of the process $X^{\nu,\kappa}$ to the solution $X^\nu$ of \eqref{eq:prcXcntrll} when $\kappa\to 0$. Here we must recall the notation from Remark \ref{rem:Ak} and distinguish $\cA_{t,x}$ and $\cA^\kappa_{t,x}$. Analogously, we draw a distinction between $\cA^-_{t,x}$ and $\cA^{\kappa,-}_{t,x}$ with obvious meaning of the latter notation.
\begin{lemma}\label{lem:convkappa}
Let $(t,x)\in\cO$ and set $\cB^\kappa_{t,x}\coloneqq \cA_{t,x}^-\cup \cA_{t,x}^{\kappa,-}$. Recall $\tau_0=\tau_0(t,x;\nu)$ as in \eqref{eq:tau0} and set $\tau_\kappa\coloneqq\rho(t,x;\nu,\kappa)$ as in \eqref{eq:rhoknu}. Then, for any compact $U\subset[0,\infty)$
\begin{align*}
\lim_{\kappa\downarrow 0}\sup_{x\in U}\sup_{\nu\in\cB^\kappa_{t,x}}\sup_{s\in[0,T-t]}\E_x\big[|X^{\nu,\kappa}_{s\wedge\tau_0\wedge\tau_\kappa}-X^{\nu}_{s\wedge\tau_0\wedge\tau_\kappa}|\big]=0,
\end{align*}
where $X^\nu_{0-}=X^{\nu,\kappa}_{0-}=x$ under $\P_x$.
\end{lemma}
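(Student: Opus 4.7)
Define $Z_s \coloneqq X^\nu_s - X^{\nu,\kappa}_s$. Because the control $\nu \in \cB^\kappa_{t,x}$ enters additively in both SDEs (with identical jumps $\Delta\nu_s$), these jumps cancel in $Z$, which is therefore continuous with $Z_0 = 0$ and
\begin{align*}
\ud Z_u = \bigl[\mu(X^\nu_u) - \mu_\kappa(X^{\nu,\kappa}_u)\bigr]\ud u + \bigl[\sigma(X^\nu_u) - \sigma_\kappa(X^{\nu,\kappa}_u)\bigr]\ud W_u.
\end{align*}
Writing $\widetilde Y_s \coloneqq X^{\nu,\kappa}_s - \nu_s$ and using $X^{\nu,\kappa}_s \ge 0$ on $[\![0,\tau_\kappa]\!]$ together with $\nu_s \le 0$ (since $\nu^+ \equiv 0$), the linear-growth bound \eqref{eq:lgc} (uniform in $\kappa$ with constant $D_1$) combined with Burkholder-Davis-Gundy and Gronwall give $\sup_{\kappa,\,\nu \in \cB^\kappa_{t,x},\,x \in U}\E_x\bigl[\sup_{s \le T-t}(|X^\nu_{s\wedge\tau_0}|^2 + |X^{\nu,\kappa}_{s\wedge\tau_\kappa}|^2)\bigr] \le C_U$. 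Let $\theta_R \coloneqq \inf\{s \ge 0 : X^\nu_s \vee X^{\nu,\kappa}_s \ge R\}$, so Markov yields $\P_x(\theta_R \le T-t) \le C_U/R^2$, uniformly in $\nu, x, \kappa$.

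\textbf{Yamada-Watanabe smoothing.} Take $a_n \downarrow 0$ with $\int_{a_n}^{a_{n-1}} z^{-2\gamma}\,\ud z = n$, where $\gamma > 1/2$ is the H\"older exponent in Assumption \ref{ass:gen1}(ii), and let $(\phi_n)_{n\in\N} \subset C^2(\R)$ be the classical Yamada-Watanabe approximants: $\phi_n \uparrow |\cdot|$, $0 \le \phi'_n \le 1$, $|z| - \phi_n(z) \le a_{n-1}$, and $\phi''_n(z) \le \tfrac{2}{n|z|^{2\gamma}}\mathds{1}_{[a_n,a_{n-1}]}(|z|)$. Put $\vartheta \coloneqq \theta_R \wedge \tau_0 \wedge \tau_\kappa$ and apply It\^o's formula to the continuous semimartingale $\phi_n(Z_{\cdot\wedge\vartheta})$; the local-martingale part has zero expectation since $\sigma, \sigma_\kappa, \phi'_n$ are all bounded on the stopped trajectory.

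\textbf{Estimates and Gronwall.} Set $\delta_\kappa(R) \coloneqq \sup_{y \in [0,R]}|\mu(y) - \mu_\kappa(y)|$ and $\eta_\kappa(R) \coloneqq \sup_{y \in [0,R]}|\sigma(y) - \sigma_\kappa(y)|$; both vanish as $\kappa \downarrow 0$ by the uniform-on-compacts convergence posited in Section \ref{sec:approx}. Splitting $\mu(X^\nu) - \mu_\kappa(X^{\nu,\kappa})$ through $\mu(X^{\nu,\kappa})$, using the Lipschitz continuity of $\mu$, $|\phi'_n| \le 1$, and $|Z| \le \phi_n(Z) + a_{n-1}$ one obtains
\begin{align*}
\E_x\Big[\int_0^{s\wedge\vartheta}\phi'_n(Z_u)\bigl[\mu(X^\nu_u) - \mu_\kappa(X^{\nu,\kappa}_u)\bigr]\,\ud u\Big] \le D_1\int_0^s g_n^\kappa(u,x)\,\ud u + (D_1 a_{n-1} + \delta_\kappa(R))T,
\end{align*}
with $g_n^\kappa(u,x) \coloneqq \E_x[\phi_n(Z_{u\wedge\vartheta})]$. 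For the quadratic variation term, $(a+b)^2 \le 2a^2 + 2b^2$ together with the H\"older bound $|\sigma(X^\nu) - \sigma(X^{\nu,\kappa})|^2 \le D_\gamma^2|Z|^{2\gamma}$ and the support/size of $\phi''_n$ produce
\begin{align*}
\tfrac{1}{2}\E_x\Big[\int_0^{s\wedge\vartheta}\phi''_n(Z_u)\bigl[\sigma(X^\nu_u) - \sigma_\kappa(X^{\nu,\kappa}_u)\bigr]^2\ud u\Big] \le \tfrac{2D_\gamma^2 T}{n} + \tfrac{2\eta_\kappa(R)^2 T}{n\,a_n^{2\gamma}}.
\end{align*}
Gronwall's inequality then yields
\begin{align*}
g_n^\kappa(s,x) \le \e^{D_1 T}\Big[(D_1 a_{n-1} + \delta_\kappa(R))T + \tfrac{2D_\gamma^2 T}{n} + \tfrac{2\eta_\kappa(R)^2 T}{n\,a_n^{2\gamma}}\Big],
\end{align*}
and the pointwise inequality $|Z_{s\wedge\vartheta}| \le \phi_n(Z_{s\wedge\vartheta}) + a_{n-1}$ upgrades this to a bound on $\E_x[|Z_{s\wedge\vartheta}|]$ with an additional $a_{n-1}$ summand.

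\textbf{Passage to the limit and main obstacle.} For fixed $R, n$, sending $\kappa \downarrow 0$ makes $\delta_\kappa(R), \eta_\kappa(R) \to 0$ and annihilates the $\kappa$-dependent summands; subsequently sending $n \to \infty$ kills the remaining $a_{n-1}$ and $1/n$ terms. The localisation is then removed via
\begin{align*}
\bigl|\E_x[|Z_{s\wedge\tau_0\wedge\tau_\kappa}|] - \E_x[|Z_{s\wedge\vartheta}|]\bigr| \le 2\sqrt{C_U}\,\P_x(\theta_R \le T-t)^{1/2} \le 2C_U/R
\end{align*}
by letting $R \to \infty$, with all bounds uniform in $\nu \in \cB^\kappa_{t,x}$ and $x \in U$. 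The principal obstacle is structural: a direct Tanaka approach to $|Z|$ yields a local time $L^0(Z)$ whose naive upper bounds diverge, because the coefficient-mismatch term $|\sigma(X^{\nu,\kappa}) - \sigma_\kappa(X^{\nu,\kappa})|$ does not vanish on $\{Z = 0\}$ (in contrast with \eqref{eq:loctime}, where the two dynamics share the same diffusion coefficient). The Yamada-Watanabe mollification circumvents this, but at the price of a mixed term $\eta_\kappa(R)^2/(n\,a_n^{2\gamma})$ that would blow up as $n \to \infty$; this forces the careful ordering of limits $\kappa \to 0$ \emph{before} $n \to \infty$, with the localisation parameter $R$ optimised last.
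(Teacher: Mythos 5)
Your proof is correct, and it takes a genuinely different route from the paper's. You correctly identify the key obstacle --- the local time of $Z=X^\nu-X^{\nu,\kappa}$ does not vanish (unlike in \eqref{eq:loctime}) because the two dynamics have different diffusion coefficients --- and you deal with it via the classical Yamada--Watanabe $C^2$ mollifiers $\phi_n$, whose second derivative is supported away from the origin, with an outer localisation at level $R$. The paper instead applies It\^o--Tanaka directly and bounds the resulting local time with the one-sided exponential-cutoff estimate from \cite[Lem.~5.1]{deangelis2019numerical}, and then crucially exploits the specific construction $\sigma_\kappa=\sigma+\kappa$, $\mu_\kappa=\mu$ on $[0,\kappa^{-1}]$, together with Cauchy--Schwarz against $\P_x(\bar X^\kappa_s>\kappa^{-1})=O(\kappa^2)$, to control the coefficient-mismatch term. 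Your approach is less dependent on the precise form of $(\mu_\kappa,\sigma_\kappa)$: it only uses uniform-on-compacts convergence encoded in $\delta_\kappa(R),\eta_\kappa(R)\to 0$, with the localisation radius $R$ playing the role that $\kappa^{-1}$ plays in the paper. What you lose is a single clean rate in $\kappa$ (the paper's bound is $C(\eps+\kappa^2\eps^{-1}+\eps^{2p\gamma-1}+\dots)$ with $\kappa/\eps\to0$); what you gain is a more modular and standard argument that does not need the explicit construction on $[0,\kappa^{-1}]$. The ordering you insist on --- choose $R$, then $n$, then send $\kappa\to 0$ --- is indeed essential and matches the role of the joint $\eps,\kappa\to0$ with $\kappa/\eps\to0$ in the paper.

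One minor imprecision: the uniform second-moment bound is cleanest via comparison with the uncontrolled processes, $0\le X^{\nu,\kappa}_{s\wedge\tau_\kappa}\le X^{0,\kappa}_{s\wedge\tau_\kappa}$ and $0\le X^{\nu}_{s\wedge\tau_0-}\le X^{0}_{s\wedge\tau_0-}$ (with a separate one-line argument for the terminal overshoot $|X^\nu_{\tau_0}|\le|\Delta\nu_{\tau_0}|\le X^{0,\kappa}_{\tau_0-}$ when $\nu\in\cA^{\kappa,-}_{t,x}$), as in the paper's \eqref{eq:est1}--\eqref{eq:est2}. Your route through $\widetilde Y_s=X^{\nu,\kappa}_s-\nu_s$ works once you note that the SDE for $\widetilde Y$ stopped at $\tau_\kappa$ admits the closed estimate $|\mu_\kappa(X^{\nu,\kappa}_u)|+\sigma_\kappa(X^{\nu,\kappa}_u)\le 2D_1(1+\widetilde Y_{u\wedge\tau_\kappa})$ on $[\![0,\tau_\kappa]\!]$, but you should say so explicitly since the coefficients depend on $X^{\nu,\kappa}$ rather than $\widetilde Y$.
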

\begin{proof}
Let $\nu\in\cB_{t,x}^\kappa$. Set $Y^{\kappa}_s\coloneqq X^{\nu,\kappa}_{s}-X^\nu_{s}$. Since $Y^{\kappa}$ is a continuous semi-martingale on $[\![0,\tau_0\wedge\tau_\kappa]\!]$, by It\^o-Tanaka's formula we have
\begin{align*}
|Y^{\kappa}_{s\wedge\tau_0\wedge\tau_\kappa}|= \int_0^{s\wedge\tau_0\wedge\tau_\kappa}\!\sign(Y^{\kappa}_\lambda)\ud Y^{\kappa}_\lambda+L^0_{s\wedge\tau_0\wedge\tau_\kappa}(Y^{\kappa}),
\end{align*}
where $L^0(Y^\kappa)$ is the local time at zero of the process $Y^\kappa$.
Taking expectation and using that the stochastic integral is a martingale (possibly up to a localisation procedure), we have
\begin{align}\label{eq:Ykappatau}
\begin{aligned}
\E_x\big[|Y^{\kappa}_{s\wedge\tau_0\wedge\tau_\kappa}|\big]=\E_x\Big[\int_0^{s\wedge\tau_0\wedge\tau_\kappa}\!\sign(Y^{\kappa}_\lambda)\big(\mu_\kappa(X^{\nu,\kappa}_\lambda)-\mu(X^\nu_\lambda)\big)\ud \lambda+L^0_{s\wedge\tau_0\wedge\tau_\kappa}(Y^{\kappa})\Big].
\end{aligned}
\end{align}
For any $\eps>0$, \cite[Lem.\ 5.1]{deangelis2019numerical} yields
\begin{align}\label{eq:L0Ykappatau}
\begin{aligned}
\E_x\Big[L^0_{s\wedge\tau_0\wedge\tau_\kappa}(Y^{\kappa})\Big]&\le 4\eps-2\E_x\Big[\int_0^{s\wedge\tau_0\wedge\tau_\kappa}\!\Big(\mathds{1}_{\{Y^{\kappa}_\lambda\in(0,\eps)\}}+\mathds{1}_{\{Y^{\kappa}_\lambda\ge\eps\}}\e^{1-\frac{Y_\lambda^{\kappa}}{\eps}}\Big)\ud Y_\lambda^{\kappa}\Big]\\
&\quad+\frac{1}\eps\E_x\Big[\int_0^{s\wedge\tau_0\wedge\tau_\kappa}\!\mathds{1}_{\{Y_\lambda^{\kappa}>\eps\}}\e^{1-\frac{Y_\lambda^{\kappa}}{\eps}}\ud\langle Y^{\kappa}\rangle_\lambda \Big],
\end{aligned}
\end{align}
where $\langle Y^\kappa\rangle$ is the quadratic variation of $Y^\kappa$. We need to bound the right-hand side above. For that we distinguish two cases: when $\nu\in\cA^{\kappa,-}_{t,x}$ and when $\nu\in\cA^-_{t,x}$. For $\nu\in\cA^{\kappa,-}_{t,x}$, we have $X^{0,\kappa}_s\ge X^{\nu,\kappa}_s\ge 0$ for all $s\in[\![0,\tau_\kappa]\!]$ and 
\begin{align}\label{eq:est1}
\E_x\Big[\sup_{\lambda\in[0,\tau_\kappa]} \big|X^{\nu,\kappa}_\lambda\big|^p\Big]\le \E_x\Big[\sup_{\lambda\in[0,T-t]}|X^{0,\kappa}_\lambda|^p\Big]\le C_p\big(1+|x|^p\big),
\end{align}
for some $C_p>0$ that can be taken independent of $\kappa$, by standard SDE estimates (see \cite[Cor.\ 2.5.10]{krylov1980controlled}). It may be $X^\nu_{\tau_0-}\ge0$ and $X^{\nu}_{\tau_0}<0$. In that case $X^{\nu}_{\tau_0}\ge -|\Delta\nu_{\tau_0}|$ but since $\nu\in\cA^{\kappa,-}_{t,x}$, then also $|\Delta\nu_{\tau_0}|\le X^{\nu,\kappa}_{\tau_0-}\le X^{0,\kappa}_{\tau_0-}$. Therefore, using $0\le X^\nu_\lambda\le X^0_\lambda$ for $\lambda\in[\![0,\tau_0)\!)$ and $|X^\nu_{\tau_0}|\le |\Delta \nu_{\tau_0}|$
\begin{align}\label{eq:est2}
\E_x\Big[\sup_{\lambda\in[0,\tau_0]}|X^\nu_\lambda|^p\Big]\le \E_x\Big[\sup_{\lambda\in[0,T-t]}|X^{0}_\lambda|^p\Big]+\E_x\Big[\sup_{\lambda\in[0,T-t]}|X^{0,\kappa}_\lambda|^p\Big]\le {C}_p\big(1+|x|^p\big),
\end{align}
where $C_p>0$ can be chosen as in \eqref{eq:est1} with no loss of generality.
Analogous considerations yield the same bounds when $\nu\in\cA^-_{t,x}$. 
 
With no loss of generality, we can assume $|\mu_\kappa(x)|\!\le\! |\mu(x)|$ and $\sigma_\kappa(x)\!\le\! \sigma(x)\!+\!\kappa$ for $x\!\in\![0,\infty)$ with $\mu_\kappa(x)\!=\!\mu(x)$ and $\sigma_\kappa(x)\!=\! \sigma(x)\!+\!\kappa$ for $x\!\in\![0,\kappa^{-1}]$. Linear growth of $\mu,\sigma$ and \eqref{eq:est1}--\eqref{eq:est2} imply 
\begin{align*}
\E_x\Big[\int_0^{s\wedge\tau_0\wedge\tau_\kappa}\!\!\Big(\big|f_\kappa(X^{\nu,\kappa}_\lambda)\big|^2\!+\!\big|f(X^\nu_\lambda)\big|^2\Big)\ud \lambda\Big]\!\le\! C \Big(1\!+\!\E_x\Big[\sup_{\lambda\in[0,s\wedge\tau_0\wedge\tau_\kappa]}\!\big(|X^{\nu,\kappa}_\lambda\big|^2 \!+\! \big|X^\nu_\lambda\big|^2\big)\Big]\Big)\!\le\! C(1\!+\!|x|^2),
\end{align*}
for $f_\kappa\in\{\mu_\kappa,\sigma_\kappa\}$, $f\in\{\mu,\sigma\}$ and some $C>0$ that may change from one inequality to the next but it is independent of $\kappa$ and $x$.

Setting $\bar X^\kappa_{s}\coloneqq\sup_{\lambda\in[0,s\wedge\tau_0\wedge\tau_\kappa]} |X^{\nu,\kappa}_\lambda|$, Markov's inequality combined with \eqref{eq:est1} and \eqref{eq:est2} yield $\P_x(\bar X^\kappa_{s}>\kappa^{-1})\le C(1+|x|^2)\kappa^2$.
For the second term on the right-hand side of \eqref{eq:L0Ykappatau} we have 
\begin{align}\label{eq:est4}
\begin{aligned}
&\,\Big|\E_x\Big[\int_0^{s\wedge\tau_0\wedge\tau_\kappa}\!\!\Big(\mathds{1}_{\{Y^{\kappa}_\lambda\in(0,\eps)\}}\!+\!\mathds{1}_{\{Y^{\kappa}_\lambda\ge\eps\}}\e^{1-\eps^{-1}Y_\lambda^{\kappa}}\Big)\ud Y_\lambda^{\kappa}\Big]\Big|\!\le\! \E_x\Big[\int_0^{s\wedge\tau_0\wedge\tau_\kappa}\!\big|\mu_\kappa(X^{\nu,\kappa}_\lambda)\!-\!\mu(X^\nu_\lambda)\big|\ud \lambda\Big]\\
&\,\le \E_x\Big[\mathds{1}_{\{\bar X^\kappa_s\le \kappa^{-1}\}}\!\int_0^{s\wedge\tau_0\wedge\tau_\kappa}\!\!\big|\mu(X^{\nu,\kappa}_\lambda)\!-\!\mu(X^\nu_\lambda)\big|\ud \lambda\Big]\\
&\quad+\!\E_x\Big[\mathds{1}_{\{\bar X^\kappa_s> \kappa^{-1}\}}\!\int_0^{s\wedge\tau_0\wedge\tau_\kappa}\!\!\Big(\big|\mu_\kappa(X^{\nu,\kappa}_\lambda)\big|\!+\!\big|\mu(X^\nu_\lambda)\big|\Big)\ud \lambda\Big]\\
&\,\le D_1\E_x\Big[\int_0^{s\wedge\tau_0\wedge\tau_\kappa}\!\big|Y^{\kappa}_\lambda\big|\ud \lambda\Big]\!+\!\E_x\Big[\int_0^{s\wedge\tau_0\wedge\tau_\kappa}\!\Big(\big|\mu_\kappa(X^{\nu,\kappa}_\lambda)\big|^2\!+\!\big|\mu(X^\nu_\lambda)\big|^2\Big)\ud \lambda\Big]^\frac12\P\big(\bar X^\kappa_s> \kappa^{-1}\big)^{\frac12}\\
&\,\le
D_1\E_x\Big[\int_0^{s\wedge\tau_0\wedge\tau_\kappa}\!\big|Y_\lambda^{\kappa}|\ud \lambda\Big]\!+\!C(1+|x|^2)\kappa,
\end{aligned}
\end{align}
where $D_1$ is the Lipschitz constant for $\mu$ and we used Cauchy-Schwarz for the third inequality. For the last term on the right-hand side of \eqref{eq:L0Ykappatau}, notice $\ud\langle Y^{\kappa}\rangle_\lambda=\big(\sigma_\kappa(X^{\nu,\kappa}_\lambda)-\sigma(X^\nu_\lambda)\big)^2\ud \lambda$ so that
\begin{align}\label{eq:est5}
\begin{aligned}
&\frac{1}\eps\E_x\Big[\int_0^{s\wedge\tau_0\wedge\tau_\kappa}\!\mathds{1}_{\{Y_\lambda^{\kappa}>\eps\}}\e^{1-\frac{Y_\lambda^{\kappa}}{\eps}}\ud\langle Y^{\kappa}\rangle_\lambda \Big]\\
&=\frac{1}\eps\E_x\Big[\mathds{1}_{\{\bar X^\kappa_s\le \kappa^{-1}\}}\int_0^{s\wedge\tau_0\wedge\tau_\kappa}\!\mathds{1}_{\{Y_\lambda^{\kappa}>\eps\}}\e^{1-\frac{Y_\lambda^{\kappa}}{\eps}}\big(\kappa+\sigma(X^{\nu,\kappa}_\lambda)-\sigma(X^\nu_\lambda)\big)^2\ud \lambda \Big]\\
&\quad+\frac{1}\eps\E_x\Big[\mathds{1}_{\{\bar X^\kappa_s> \kappa^{-1}\}}\int_0^{s\wedge\tau_0\wedge\tau_\kappa}\!\mathds{1}_{\{Y_\lambda^{\kappa}>\eps\}}\e^{1-\frac{Y_\lambda^{\kappa}}{\eps}}\big(\sigma_\kappa(X^{\nu,\kappa}_\lambda)-\sigma(X^\nu_\lambda)\big)^2\ud \lambda \Big]\\
&\le\frac{2T\kappa^2}{\eps}+\frac{2}\eps\E_x\Big[\int_0^{s\wedge\tau_0\wedge\tau_\kappa}\!\mathds{1}_{\{Y_\lambda^{\kappa}>\eps\}}\e^{1-\frac{Y_\lambda^{\kappa}}{\eps}}\big(\sigma(X^{\nu,\kappa}_\lambda)-\sigma(X^\nu_\lambda)\big)^2\ud \lambda \Big]\\
&\quad+\frac{1}\eps\E_x\Big[\mathds{1}_{\{\bar X^\kappa_s> \kappa^{-1}\}}\int_0^{s\wedge\tau_0\wedge\tau_\kappa}\!\mathds{1}_{\{Y_\lambda^{\kappa}>\eps\}}\e^{1-\frac{Y_\lambda^{\kappa}}{\eps}}\big(\sigma_\kappa(X^{\nu,\kappa}_\lambda)-\sigma(X^\nu_\lambda)\big)^2\ud \lambda \Big].
\end{aligned}
\end{align}
Since $\sigma\in C^\gamma([0,\infty))$ with $\gamma\in(\frac12,1)$, for any $p\in\big(\frac{1}{2\gamma},1\big)$, and using a constant $C>0$ that may vary from line to line but it is independent of $\eps$, $p$, $\kappa$ and $\nu$, we have
\begin{align}\label{eq:est5.1}
\begin{aligned}
&\frac{2}\eps\E_x\Big[\int_0^{s\wedge\tau_0\wedge\tau_\kappa}\!\!\!\mathds{1}_{\{Y_\lambda^{\kappa}>\eps\}}\e^{1-\frac{Y_\lambda^{\kappa}}{\eps}}\big(\sigma(X^{\nu,\kappa}_\lambda)\!-\!\sigma(X^\nu_\lambda)\big)^2\ud \lambda \Big]\\
&=\frac{2}\eps\E_x\Big[\int_0^{s\wedge\tau_0\wedge\tau_\kappa}\!\!\!\mathds{1}_{\{Y_\lambda^{\kappa}\in(\eps,\eps^p)\}}\e^{1-\frac{Y_\lambda^{\kappa}}{\eps}}\big(\sigma(X^{\nu,\kappa}_\lambda)\!-\!\sigma(X^\nu_\lambda)\big)^2\ud \lambda \Big]\\
&\quad+ \!\frac{2}\eps\E_x\Big[\int_0^{s\wedge\tau_0\wedge\tau_\kappa}\!\!\!\mathds{1}_{\{Y_\lambda^{\kappa}\ge \eps^p\}}\e^{1-\frac{Y_\lambda^{\kappa}}{\eps}}\big(\sigma(X^{\nu,\kappa}_\lambda)\!-\!\sigma(X^\nu_\lambda)\big)^2\ud \lambda \Big]\\
&\le \frac{2}\eps C\E_x\Big[\int_0^{s\wedge\tau_0\wedge\tau_\kappa}\!\!\!\mathds{1}_{\{Y_\lambda^{\kappa}\in(\eps,\eps^p)\}}\big|Y^\kappa_\lambda\big|^{2\gamma}\ud \lambda \Big]\!+ \!C\frac{2}\eps\e^{1-\frac{1}{\eps^{1-p}}}\E_x\Big[\int_0^{s\wedge\tau_0\wedge\tau_\kappa}\!\!\!\big(1\!+\!|X^{\nu,\kappa}_\lambda|^2\!+\!|X^\nu_\lambda|^2\big)\ud \lambda \Big]\\
&\le 2C\eps^{2\gamma p-1} T\!+\!C \frac{2}\eps\e^{1-\frac{1}{\eps^{1-p}}} \big(1\!+\!|x|^2\big) T,
\end{aligned}
\end{align}
where for the final inequality we used \eqref{eq:est1} and \eqref{eq:est2}. For the final term on the right-hand side of \eqref{eq:est5} we use Cauchy-Schwarz inequality to obtain
\begin{align}\label{eq:est5.3}
\begin{aligned}
&\frac{1}{\eps}\E_x\Big[\mathds{1}_{\{\bar X^\kappa_s> \kappa^{-1}\}}\int_0^{s\wedge\tau_0\wedge\tau_\kappa}\!\big(\sigma_\kappa(X^{\nu,\kappa}_\lambda)\!-\!\sigma(X^\nu_\lambda)\big)^2\ud \lambda \Big]\\
&\le\frac{2}{\eps}\E_x\Big[\mathds{1}_{\{\bar X^\kappa_s> \kappa^{-1}\}}\int_0^{s\wedge\tau_0\wedge\tau_\kappa}\!\big(|\sigma_\kappa(X^{\nu,\kappa}_\lambda)|^2\!+\!|\sigma(X^\nu_\lambda)|^2\big)\ud \lambda \Big]\\
&\le\frac{C T}{\eps}\P\big(\bar X^\kappa_s>\kappa^{-1}\big)^{\frac12}\E_x\Big[\Big(1\!+\!\sup_{\lambda\in[0,T-t]}|X^{0,\kappa}_\lambda|^2\!+\!\sup_{\lambda\in[0,T-t]}|X_\lambda^{0}|^2\Big)^2 \Big]^\frac12\le \frac{\kappa}{\eps}C T(1+|x|^2).
\end{aligned}
\end{align}
Plugging \eqref{eq:est5.1} and \eqref{eq:est5.3} into \eqref{eq:est5} and relabelling $C>0$, we obtain
\begin{align}\label{eq:est6}
\begin{aligned}
\quad\frac{1}\eps\E_x\Big[\int_0^{s\wedge\tau_0\wedge\tau_\kappa}\!\!\!\!\!\mathds{1}_{\{Y_\lambda^{\kappa}>\eps\}}\e^{1-\frac{Y_\lambda^{\kappa}}{\eps}}\ud\langle Y^{\kappa}\rangle_\lambda \Big]\!\le\! C\Big(\frac{\kappa^2}{\eps}\!+\!\eps^{2p\gamma-1}\!+\!\frac{\e^{1-\eps^{p-1}}}{\eps}\!+\!\frac{\kappa}{\eps}\Big)(1\!+\!|x|^2).
\end{aligned}
\end{align}
Arguments as those used in \eqref{eq:est4} yield a bound for the first term on the right-hand side of \eqref{eq:Ykappatau}, 
\begin{align}\label{eq:est7}
\begin{aligned}
\E_x\Big[ \int_0^{s\wedge\tau_0\wedge\tau_\kappa}\!\!\!\sign(Y^{\kappa}_\lambda)\big(\mu_\kappa(X^{\nu,\kappa}_\lambda)\!-\!\mu(X^\nu_\lambda)\big)\ud \lambda\Big]\le D_1\E_x\Big[\int_0^{s\wedge\tau_0\wedge\tau_\kappa}\!\big|Y_\lambda^{\kappa}|\ud \lambda\Big]\!+\!C(1\!+\!|x|^2)\kappa.
\end{aligned}
\end{align}
Therefore, plugging \eqref{eq:L0Ykappatau} with \eqref{eq:est4}, \eqref{eq:est6} and \eqref{eq:est7} into \eqref{eq:Ykappatau}, we obtain
\begin{align*}
\E\big[|Y^{\kappa}_{s\wedge\tau_0\wedge\tau_\kappa}|\big]&\le C\Big(\eps\!+\!\frac{\kappa^2}{\eps}\!+\!\eps^{2p\gamma-1}\!+\!\frac{\e^{1-\eps^{p-1}}}{\eps}\!+\!\frac{\kappa}{\eps}\!+\!\kappa\Big)(1\!+\!|x|^2)\!+\!C\E_x\Big[\int_0^{s\wedge\tau_0\wedge\tau_\kappa}\!\big|Y_\lambda^{\kappa}|\ud \lambda\Big].
\end{align*}
By Gronwall's inequality $\E[|Y^{\kappa}_{s\wedge\tau_0\wedge\tau_\kappa}|]\le C(\eps\!+\!\kappa^2\eps^{-1}\!+\!\eps^{2p\gamma-1}\!+\!\e^{1-\eps^{p-1}}\eps^{-1}\!+\!\kappa\eps^{-1}\!+\!\kappa)(1\!+\!|x|^2)$.
Now we let $\eps$ and $\kappa$ go to zero so that $\kappa/\eps \to 0$. Since $2p\gamma-1>0$ and $p<1$, we conclude the proof.
\end{proof}

Since $(u^{N}_\kappa)_{\kappa\in(0,1)}$ is bounded in $W^{1,2;p}_{\ell oc}$ by Lemma \ref{lem:W12pbound}, then we set
$u^{N}\coloneqq\lim_{n\to\infty}u^{N}_{\kappa_n}$,
where the limit is along a sequence $(\kappa_n)_{n\in\N}$ decreasing to zero and it is understood weakly in $W^{1,2;p}(\cK)$ for any compact in $\cK\subset\cO$ and strongly in $L^\infty(\cK)$ for any compact in $\cK\subset\overline\cO$, as in the proof of Theorem \ref{thm:highreguued}. In order to obtain a probabilistic representation for $u^N$, we introduce $\cJ^N_{t,x}(\nu,\tau)\coloneqq\cJ^{N,0}_{t,x}(\nu,\tau)$ (cf.\ \eqref{eq:5.1b}), i.e., the underlying dynamics in the game is $X^{\nu;x}=X^{\nu,0;x}$.

\begin{proposition}\label{prop:uNkappato0}
We have $u^N\in C(\overline\cO)$, with 
\[
u^N(t,x)=\underline u^N(t,x)\coloneqq\sup_{\tau\in\cT_t}\inf_{\nu\in\cA_{t,x}}\cJ_{t,x}^N(\nu,\tau)=\inf_{\nu\in\cA_{t,x}}\sup_{\tau\in\cT_t}\cJ_{t,x}^N(\nu,\tau)\eqqcolon \overline u^N(t,x),\quad\text{for $(t,x)\in\overline\cO$}.
\]
\end{proposition}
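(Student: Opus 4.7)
The plan is to mirror the strategy used for the $\kappa$-game in Lemma \ref{lem:uNkappagame}, and add a controlled passage $\kappa_n\downarrow 0$ exploiting the stability estimate of Lemma \ref{lem:convkappa}. First I would verify $u^N\in C(\overline\cO)$ with $u^N(t,0)=g^N(t,0)$: interior continuity follows by Sobolev embedding from the uniform $W^{1,2;p}_{\ell oc}(\cO)$-bound in Lemma \ref{lem:W12pbound}, while equi-continuity of $(u^N_\kappa)_\kappa$ up to $[0,T]\times\{0\}$ is obtained from the pointwise estimate $|u^N_\kappa(t,x)-g^N(t,0)|\le\bar\alpha^N x$, which follows from the Dirichlet condition $u^N_\kappa(t,0)=g^N(t,0)$ together with the uniform gradient bound $\|\partial_x u^N_\kappa\|_{L^\infty}\le\bar\alpha^N$ in \eqref{eq:gradpeneq}. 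Arzelà-Ascoli then lifts the convergence.

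For the upper bound $\overline u^N(t,x)\le u^N(t,x)$, fix $\eta>0$ and, using Lemmas \ref{lem:nu^-} and \ref{lem:uNkappagame}, pick $\nu^\kappa\in\cA^{\kappa,-}_{t,x}$ with $\sup_\tau \cJ^{N,\kappa}_{t,x}(\nu^\kappa,\tau)\le u^N_\kappa(t,x)+\eta$. Since the payoff evaluated at $\tau=T-t$ contains the term $\bar\alpha^N\,\E_x[|\nu^\kappa|_{T-t}]$, one has a uniform-in-$\kappa$ bound on the expected total variation. A Helly-type argument as in \cite[Lem.\ 3.1]{bovo2023}, applied pathwise to the non-decreasing processes $-\nu^\kappa$ and combined with Skorokhod's representation, extracts a subsequence $\nu^{\kappa_n}$ converging to a non-increasing càdlàg limit $\nu^*$; after truncation at $\tau_0(t,x;\nu^*)$ the resulting control lies in $\cA^-_{t,x}$ and the payoff is not increased. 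For each fixed $\tau\in\cT_t$, Lemma \ref{lem:convkappa} together with Lipschitz/boundedness of $g^N,h^N$ and dominated convergence yield $\cJ^{N,\kappa_n}_{t,x}(\nu^{\kappa_n},\tau)\to\cJ^N_{t,x}(\nu^*,\tau)$ along the subsequence, so $\sup_\tau \cJ^N_{t,x}(\nu^*,\tau)\le u^N(t,x)+\eta$ and arbitrariness of $\eta$ gives the inequality.

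For the lower bound $u^N(t,x)\le\underline u^N(t,x)$, I would first pass to the weak $W^{1,2;p}_{\ell oc}(\cO)$-limit in the PDE for $u^N_\kappa$ (using $\mu_\kappa\to\mu$ and $\sigma_\kappa\to\sigma$ uniformly on compacts) to obtain that $u^N$ solves Problem \ref{prb:varineq} with data $(\bar\alpha^N,g^N,h^N)$, so that in particular $|\partial_x u^N|\le\bar\alpha^N$, $u^N\ge g^N$, and $(\partial_t+\cL-r)u^N\le -h^N$ in the weak sense, with $u^N=g^N$ on $\partial_P\cO$. For any $\nu\in\cA^-_{t,x}$, define $\tau^N_*(\nu),\sigma^N_*(\nu)$ as in \eqref{eq:tau*Nkappa} with $u^N_\kappa$ replaced by $u^N$, and apply the Itô-Krylov formula to $\e^{-rs}u^N(t+s,X^\nu_s)$ on $[\![0,\tau^N_*\wedge\sigma^N_*]\!]$. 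The variational inequalities together with the bound $|\partial_x u^N|\le\bar\alpha^N$ (which controls jumps of the controlled process by $\bar\alpha^N|\Delta\nu_s|$ as in \eqref{eq:sigma*opt}--\eqref{eq:tau*opt}) yield $u^N(t,x)\le\cJ^N_{t,x}(\nu,\tau^N_*\wedge\sigma^N_*)$. Arbitrariness of $\nu$ combined with the sub-optimality of $\tau^N_*\wedge\sigma^N_*$, following the closing argument of Lemma \ref{lem:uNkappagame}, then gives $u^N(t,x)\le\underline u^N(t,x)$.

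The main obstacle will be the upper bound, specifically the Helly-type limit passage: verifying that the singular payoff term $\int\e^{-rs}\bar\alpha^N\,\ud|\nu^\kappa|_s$ passes to the limit along the chosen subsequence, and that the truncated limit $\nu^*$ is genuinely admissible in $\cA^-_{t,x}$. Both rely critically on the stability estimate of Lemma \ref{lem:convkappa} (which, crucially, is uniform in $\nu$ over $\cB^\kappa_{t,x}$) together with the lower-semicontinuity of the total variation under pathwise Helly convergence of non-decreasing processes.
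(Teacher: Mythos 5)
Your proposal takes a genuinely different route from the paper, and unfortunately both halves contain gaps that the paper's actual argument is specifically designed to avoid.

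The paper does not compactify the controls at all. Instead, it proves the two one-sided bounds directly: starting from an $\eta$-optimal non-increasing control $\nu$ for the $\kappa$-game (restriction justified by Lemma \ref{lem:nu^-}), it constructs via the truncation \eqref{eq:nuadmissible} a control $\tilde\nu$ admissible for the \emph{same} dynamics driven by the \emph{limiting} coefficients, observes the pointwise domination $|X^{\tilde\nu,0}_s-X^{\nu,\kappa}_s|\le |X^{\nu,0}_s-X^{\nu,\kappa}_s|$ on $[\![0,\tilde\tau_0]\!]$ (cf.\ \eqref{eq:XX}), and then estimates the payoff difference by Lipschitz continuity of $g^N,h^N$ plus Lemma \ref{lem:convkappa} applied to the pair $(X^{\nu,\kappa},X^{\nu,0})$ with the \emph{same} control. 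The symmetric argument gives the other bound. Continuity of $u^N$ is then a free consequence of the locally uniform convergence $u^N_\kappa\to u^N$.

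By contrast, your upper bound rests on a Helly--Skorokhod extraction which has several serious problems. First, Lemma \ref{lem:convkappa} only compares $X^{\nu,\kappa}$ with $X^{\nu,0}$ for a \emph{fixed} control $\nu$; it says nothing about $X^{\nu^{\kappa_n},\kappa_n}-X^{\nu^*,0}$ when the controls also vary, and the difference $X^{\nu^{\kappa_n},0}-X^{\nu^*,0}$ would need a separate stability estimate with respect to the control that Helly/weak convergence does not provide (singular controls do not depend continuously on the control in any topology compatible with the expected-total-variation bound). Second, the expected-total-variation bound does not give pathwise uniform boundedness, so Helly's theorem cannot be applied $\omega$-by-$\omega$ in a measurable way; and passing through Skorokhod's representation replaces the probability space, which ruins the admissibility requirement $X^\nu_{\tau_0}=0$ because $\tau_0$ is a discontinuous path functional of the \emph{joint} law of $(\nu,W)$. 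Third, even granting convergence of the controls, the terminal payoff $g^N(t+\tau\wedge\tau_0,X^\nu_{\tau\wedge\tau_0})$ and the exit time $\tau_0$ itself need not converge under Helly convergence of $\nu^{\kappa_n}$.

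Your lower bound is also structurally different from, and more demanding than, the paper's. You propose to first pass to the limit in the PDE to show $u^N$ solves Problem \ref{prb:varineq} and then run an Itô--Krylov verification on the $\kappa=0$ dynamics. But the proof of Proposition \ref{prop:uNkappato0} deliberately avoids verification at this stage: the paper does verification only in Lemma \ref{lem:uNkappagame} (where the diffusion $\sigma_\kappa\ge\kappa$ is uniformly elliptic), and carries it to the $\kappa=0$ dynamics only in the optimal-stopping-strategy part of Theorem \ref{thm:usolvar}, and there under the \emph{additional} hypothesis that $\sigma$ is Lipschitz, precisely because the localization near the degenerate boundary $x=0$ requires extra stability estimates (\eqref{eq:Xkappato01}--\eqref{eq:Xkappato02}). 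Your sketch would effectively front-load that machinery without acknowledging the extra assumption.

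Finally, the Arzelà--Ascoli continuity argument is also incomplete: the spatial Lipschitz bound $|u^N_\kappa(t,x)-g^N(t,0)|\le\bar\alpha^N x$ near $x=0$ is correct (it follows from $u^N_\kappa$ solving Problem \ref{prb:varineq}), but the time-modulus of continuity of $(u^N_\kappa)_\kappa$ uniformly in $\kappa$ is not established by anything you cite. Proposition \ref{prop:equiuepsmuN} gives a Lipschitz-in-time constant $\lambda(N,\kappa)$ that degenerates as $\kappa\to 0$ (because $C_\Theta(N,\kappa)$ controls the global bound on $\Theta^N_\kappa$, which involves $\mu_\kappa,\sigma_\kappa\le\kappa^{-1}$), so you cannot invoke it directly. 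The paper never needs this equi-continuity because uniform-on-compacts convergence is proved directly.

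In short: the paper's construction \eqref{eq:nuadmissible} plus the pointwise inequality \eqref{eq:XX} is the key device that keeps the \emph{same} control on both sides, making Lemma \ref{lem:convkappa} applicable; your proposal replaces this by a control-compactness route that the singular structure of the problem does not support.
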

\begin{proof}
Let $U\subset[0,\infty)$ be compact. First we show that 
\begin{align}\label{eq:limsupumuN2}
\qquad\liminf_{\kappa\to0}\inf_{(t,x)\in[0,T]\times U}\big(u^{N}_{\kappa}(t,x)-\overline{u}^N(t,x)\big)\ge 0.
\end{align}
Let $\nu\in\cA^{\kappa,-}_{t,x}$ be an $\eta$-optimal for $u^{N}_{\kappa}(t,x)$. For simplicity, set $X^\kappa=X^{\nu,\kappa;x}$ and $\tau_\kappa=\rho(t,x;\nu,\kappa)$ as in \eqref{eq:rhoknu}. Similarly, let $X^{\nu}=X^{\nu;x}$ and let $\tau_0=\tau_0(t,x;\nu)$ be as in \eqref{eq:tau0}. It may be $\nu\notin\cA_{t,x}$ because $X^\nu_{\tau_0}<0$. 
Define a new control $\tilde{\nu}\in\cA_{t,x}$ as 
\begin{align}\label{eq:nuadmissible}
\quad\tilde{\nu}_s= \nu_s\mathds{1}_{[0, \tau_\kappa\wedge\tau_0)}(s)\!+\!(\nu_{(\tau_\kappa\wedge\tau_0)-}\!-\!X^{\nu}_{(\tau_\kappa\wedge\tau_0)-})\mathds{1}_{[\tau_\kappa\wedge\tau_0,T-t]}(s).
\end{align}
Setting $\tilde X=X^{\tilde{\nu}}$ and $\tilde\tau_0\coloneqq\tau_0(t,x;\tilde \nu)$, we have $\tilde\tau_0\le \tau_0 $, $\P$-a.s.\ and $\tilde X_s=X^\nu_s$ for $s\in[\![0,\tau_\kappa\wedge \tilde\tau_0)\!)$. On $\{\tau_\kappa\le \tau_0\}$ the process $\tilde X$ makes a downward jump at $\tau_\kappa$ of size $X^\nu_{\tau_\kappa-}\ge 0$ so that $\tau_\kappa=\tilde\tau_0$ and $\Delta\tilde{\nu}_{\tau_\kappa}-\Delta\nu_{\tau_\kappa}=-X_{\tau_\kappa}^\nu$. Instead, on $\{\tau_\kappa>\tau_0\}$, we have $\tau_0= \tilde\tau_0<\tau_\kappa$ and $\Delta\nu_{\tau_\kappa}-\Delta\tilde{\nu}_{\tau_\kappa}\ge 0$. Note for future reference that since $0=X^\kappa_{\tau_\kappa}-\tilde X_{\tau_\kappa}\le |X^\kappa_{\tau_\kappa}-X^\nu_{\tau_\kappa}|$, on $\{\tilde \tau_0=\tau_\kappa<\tau_0\}$, and $0\le X^\kappa_{\tilde \tau_0}-\tilde X_{\tilde \tau_0}\le X^\kappa_{\tilde\tau_0}-X^\nu_{\tilde \tau_0}$, on $\{\tau_\kappa\ge \tau_0=\tilde\tau_0\}$, then
\begin{align}\label{eq:XX}
|X^\kappa_{s}-\tilde X_{s}|\le |X^\kappa_s- X^\nu_s|,\quad\text{for $s\in[\![0,\tilde\tau_0]\!]$.}
\end{align}

Given $\eta\in(0,1)$ we choose $\tau\in\cT_t$ such that $\sup_{\sigma\in\cT_t}\cJ_{t,x}^N(\tilde{\nu},\sigma)\le \cJ_{t,x}^N(\tilde{\nu},\tau)+\eta$. Since $\tilde\tau_0\le \tau_\kappa$, then 
\begin{align}\label{eq:limsupumuN1a1}
\begin{aligned}
&u^{N}_{\kappa}(t,x)\!-\!\overline{u}^N(t,x)\ge \cJ_{t,x}^{N,\kappa}(\nu,\tau\!\wedge\!\tilde\tau_0)\!-\!\cJ_{t,x}^{N}(\tilde{\nu},\tau)\!-\!2\eta\\
&=\E_x\Big[\e^{-r(\tau\wedge\tilde \tau_0)}\Big(g^N(t\!+\!\tau\wedge\tilde \tau_0,X_{\tau\wedge\tilde \tau_0}^{\kappa})\!-\!g^N(t\!+\!\tau\wedge\tilde \tau_0,\tilde X_{\tau\wedge\tilde \tau_0})\Big)\\ 
&\quad+\!\int_0^{\tau\wedge\tilde \tau_0}\!\!\e^{-rs}\big(h^N(t\!+\!s,X_s^{\kappa})\!-\!h^N(t\!+\!s,\tilde X_s)\big)\ud s \!+\!\int_{[0,\tau\wedge\tilde \tau_0]}\!\!\!\e^{-rs}\bar{\alpha}^N\,\big(\ud|\nu|_s-\ud|\tilde{\nu}|_s\big)\Big]\!-\!2\eta.
\end{aligned}
\end{align}
Recalling $X^\kappa_{\tau_\kappa}=0$, notice that
\begin{align}\label{eq:jump1}
\begin{aligned}
&\int_{[0,\tau\wedge\tilde \tau_0]}\!\!\!\e^{-rs}\bar{\alpha}^N\,\big(\ud|\nu|_s-\ud|\tilde{\nu}|_s\big)=\e^{-r(\tau\wedge\tilde \tau_0)}\bar{\alpha}^N\,\big(\Delta\nu_{\tau\wedge\tilde \tau_0}-\Delta\tilde{\nu}_{\tau\wedge\tilde \tau_0}\big)\\
&\ge -\mathds{1}_{\{\tau_\kappa=\tilde\tau_0\}\cap\{\tau\ge \tilde\tau_0\}}\e^{-r(\tau\wedge\tilde \tau_0)}\bar{\alpha}^N |X_{\tau_\kappa}^\nu|=-\mathds{1}_{\{\tau_\kappa=\tilde\tau_0\}\cap\{\tau\ge \tilde\tau_0\}}\e^{-r(\tau\wedge\tilde \tau_0)}\bar{\alpha}^N |X_{\tau_\kappa}^\nu-X_{\tau_\kappa}^\kappa|.
\end{aligned}
\end{align}
Plugging \eqref{eq:jump1} into \eqref{eq:limsupumuN1a1} and using the Lipschitz property of $g^N$ and $h^N$ yield
\begin{align}\label{eq:limsupumuN1}
\begin{aligned}
u^{N}_{\kappa}(t,x)\!-\!\overline{u}^N(t,x)&\ge-\bar \alpha^N\E_x\big[\big|X_{\tau\wedge\tilde\tau_0}^{\kappa}\!-\!\tilde X_{\tau\wedge\tilde\tau_0}\big|\!+\!|X_{\tau\wedge\tilde\tau_0}^{\kappa}\!-\! X_{\tau\wedge\tilde\tau_0}^\nu|\big]\\
&\quad-L_N\int_0^{T-t}\E_x\big[\big|X_{s\wedge\tau\wedge\tilde\tau_0}^{\kappa}\!-\!\tilde X_{s\wedge\tau\wedge\tilde\tau_0}\big|\big]\ud s\!-\!2\eta\\
&\ge -(2\bar{\alpha}^N\!+\!TL_N)\sup_{s\in[0,T-t]}\sup_{x\in U}\sup_{\nu\in\cB^\kappa_{t,x}}\E_x\Big[|X_{s\wedge \tau\wedge\tilde\tau_0}^{\kappa}\!-\! X^\nu_{s\wedge \tau\wedge\tilde\tau_0}|\Big]\!-\!2\eta,
\end{aligned}
\end{align}
where the last inequality holds by \eqref{eq:XX} with $\cB^\kappa_{t,x}$ as in Lemma \ref{lem:convkappa}. 
Using Lemma \ref{lem:convkappa} and recalling that $\eta$ is arbitrary we get
\eqref{eq:limsupumuN2}.

Now we prove
\begin{align}\label{eq:limsupumuN0}
\qquad\limsup_{\kappa\to0}\sup_{(t,x)\in[0,T]\times U}\big(u^{N}_{\kappa}(t,x)- \underline{u}^N(t,x)\big)\le 0.
\end{align}
For $\nu\in\cA_{t,x}^-$, let $X^{\nu}=X^{\nu;x}$, $\tau_0\coloneqq\tau_0(t,x;\nu)$, $X^{\kappa}=X^{\nu,\kappa;x}$ and $\tau_\kappa=\rho(t,x;\nu,\kappa)$. Define $\tilde{\nu}$ as in \eqref{eq:nuadmissible} but replacing $(\tau_\kappa,\tau_0,X^{\nu})$ therein with $(\tau_0,\tau_\kappa,X^{\kappa})$ (mind the order), so that $\tilde{\nu}\in\cA_{t,x}^{\kappa,-}$. Finally, set $\tilde{X}^\kappa= X^{\tilde{\nu},\kappa;x}$ and $\tilde{\tau}_\kappa=\rho(t,x;\tilde{\nu},\kappa)$. Notice that $\tilde{\tau}_\kappa\le \tau_0$, $\P$-a.s. for any $\nu\in\cA_{t,x}^-$, by analogous arguments to those following \eqref{eq:nuadmissible}. Moreover, on $\{\tau_0\le \tau_\kappa\}$ we have $\Delta\tilde\nu_{\tau_0}-\Delta\nu_{\tau_0}= -X^\kappa_{\tau_0}$ and $\tilde\tau_\kappa=\tau_0$. Instead, on $\{\tau_\kappa< \tau_0\}$ we have $\Delta\tilde\nu_{\tau_\kappa}\ge \Delta\nu_{\tau_\kappa}$ and $\tilde\tau_\kappa=\tau_\kappa$. Finally, $|X^\nu_{s}-\tilde X^\kappa_{s}|\le |X^\nu_{s}-X^\kappa_{s}|$ for $s\in[\![0,\tilde\tau_\kappa]\!]$, because $0=X^\nu_{\tau_0}-\tilde X^\kappa_{\tau_0}\le |X^\nu_{\tau_0}-X^\kappa_{\tau_0}|$, on $\{\tilde \tau_\kappa=\tau_0\le \tau_\kappa\}$, and $0\le X^\nu_{\tilde \tau_\kappa}-\tilde X^\kappa_{\tilde \tau_\kappa}\le X^\nu_{\tilde\tau_\kappa}-X^\kappa_{\tilde \tau_\kappa}$, on $\{\tau_0> \tau_\kappa=\tilde\tau_\kappa\}$.

Let $\tau\in\cT_t$ be optimal for $u^{N}_{\kappa}(t,x)$ and let $\nu\in\cA_{t,x}^-$ be such that $\inf_{\nu'\in\cA_{t,x}}\cJ_{t,x}^{N}(\nu',\tau\wedge\tilde{\tau}_\kappa)\ge \cJ_{t,x}^{N}(\nu,\tau\wedge\tilde{\tau}_\kappa)-\eta$ for $\eta\in(0,1)$. Then, by analogous estimates to those in \eqref{eq:limsupumuN1a1}--\eqref{eq:limsupumuN1} we arrive at 
\begin{align}\label{eq:limsupT}
\begin{aligned}
u^{N}_{\kappa}(t,x)\!-\!\underline{u}^N(t,x)&\le\cJ_{t,x}^{N,\kappa}(\tilde{\nu},\tau)\!-\!\cJ_{t,x}^{N}(\nu,\tau\!\wedge\!\tilde{\tau}_\kappa)+\eta\\
&\le (2\bar{\alpha}^N\!+\!TL_N)\sup_{s\in[0, T-t]}\sup_{x\in U}\sup_{\nu\in\cB^\kappa_{t,x}}\E_x\Big[|X_{s\wedge\tau\wedge\tau_\kappa\wedge\tau_0}^{\nu,\kappa}\!-\!X^\nu_{s\wedge\tau\wedge\tau_\kappa\wedge\tau_0}|\Big]+\eta,
\end{aligned}
\end{align}
where we also use that $\tilde\tau_\kappa\wedge\tau_0=\tau_\kappa\wedge\tau_0$. Then, Lemma \ref{lem:convkappa} yields \eqref{eq:limsupumuN0} by arbitrariness of $\eta$. Combining \eqref{eq:limsupumuN2} and \eqref{eq:limsupumuN0} we conclude. We deduce $u^N\in C(\overline \cO)$ as (locally) uniform limit of continuous functions.
\end{proof}

It remains to remove the hypothesis of boundedness of functions $g^N$ and $h^N$. 
Noticing that Lemma \ref{lem:nu^-} holds independently of $\kappa$, we have that the same result holds for $u^{N}$.

\begin{corollary}\label{cor:nu^-}
We have $u^N(t,x)=\displaystyle\inf_{\nu\in\cA_{t,x}^-}\sup_{\tau\in\cT_t} \cJ_{t,x}^N(\nu,\tau)=\sup_{\tau\in\cT_t}\inf_{\nu\in\cA_{t,x}^-}\cJ_{t,x}^N(\nu,\tau)$, for all $(t,x)\in\overline\cO$.
\end{corollary}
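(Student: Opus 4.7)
The plan is to upgrade the proof of Proposition \ref{prop:uNkappato0} by restricting the admissible-control classes to the purely non-increasing ones and then passing to the limit $\kappa\to 0$ as already done there. Since $\cA^-_{t,x}\subset\cA_{t,x}$, taking infima over the smaller set only enlarges the value, so the trivial inequalities
\[
u^N(t,x)=\underline u^N(t,x)\le \sup_{\tau\in\cT_t}\inf_{\nu\in\cA^-_{t,x}}\cJ^N_{t,x}(\nu,\tau)\le \inf_{\nu\in\cA^-_{t,x}}\sup_{\tau\in\cT_t}\cJ^N_{t,x}(\nu,\tau)
\]
hold for free. Hence it suffices to prove the single reverse inequality
\begin{align}\label{eq:plan-cor-ineq}
\inf_{\nu\in\cA^-_{t,x}}\sup_{\tau\in\cT_t}\cJ^N_{t,x}(\nu,\tau)\le u^N(t,x).
\end{align}

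To establish \eqref{eq:plan-cor-ineq} I would mimic the $\kappa$-limit carried out in \eqref{eq:limsupumuN1a1}--\eqref{eq:limsupumuN1}. Fix $\eta\in(0,1)$ and, using Lemma \ref{lem:nu^-} (whose conclusion indeed gives the representation with the restricted class $\cA^{\kappa,-}_{t,x}$, not just $\cA^\kappa_{t,x}$), select $\nu\in\cA^{\kappa,-}_{t,x}$ that is $\eta$-optimal for $u^N_\kappa(t,x)=\inf_{\nu'\in\cA^{\kappa,-}_{t,x}}\sup_\sigma \cJ^{N,\kappa}_{t,x}(\nu',\sigma)$. Construct $\tilde\nu$ exactly as in \eqref{eq:nuadmissible}. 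The key observation is that this construction preserves membership in the purely non-increasing class: since $\nu^+\equiv 0$ and the prescribed jump $-X^\nu_{(\tau_\kappa\wedge\tau_0)-}\le 0$ only increases $\nu^-$, one has $\tilde\nu\in\cA^-_{t,x}$. Now pick $\tau\in\cT_t$ with $\cJ^N_{t,x}(\tilde\nu,\tau)\ge \sup_\sigma\cJ^N_{t,x}(\tilde\nu,\sigma)-\eta\ge \inf_{\nu'\in\cA^-_{t,x}}\sup_\sigma \cJ^N_{t,x}(\nu',\sigma)-\eta$, and repeat verbatim the chain \eqref{eq:limsupumuN1a1}--\eqref{eq:limsupumuN1} to obtain
\[
u^N_\kappa(t,x)-\inf_{\nu'\in\cA^-_{t,x}}\sup_{\sigma}\cJ^N_{t,x}(\nu',\sigma)\ge -(2\bar{\alpha}^N+TL_N)\sup_{s\in[0,T-t]}\sup_{\nu'\in\cB^\kappa_{t,x}}\E_x\!\big[|X^{\nu',\kappa}_{s\wedge\tau\wedge\tilde\tau_0}-X^{\nu'}_{s\wedge\tau\wedge\tilde\tau_0}|\big]-2\eta,
\]
with $\cB^\kappa_{t,x}$ as in Lemma \ref{lem:convkappa}.

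Finally, letting $\kappa\downarrow 0$ along the sequence $(\kappa_n)$ used to define $u^N$, Proposition \ref{prop:uNkappato0} yields $u^N_\kappa(t,x)\to u^N(t,x)$ while Lemma \ref{lem:convkappa} drives the supremum on the right-hand side to $0$ uniformly in $\nu'\in\cB^\kappa_{t,x}$. Since $\eta>0$ is arbitrary, inequality \eqref{eq:plan-cor-ineq} follows and the chain of trivial inequalities collapses to equalities, completing the proof.

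The only point that requires care, and which I would identify as the main potential obstacle, is verifying that the construction \eqref{eq:nuadmissible} remains legitimate when restricted to $\cA^{\kappa,-}_{t,x}$: one must check both that $\tilde\nu\in\cA^-_{t,x}$ (only a downward jump is added, so $\tilde\nu^+=\nu^+=0$) and that the bound \eqref{eq:XX} on $|X^\kappa_s-\tilde X_s|$ still applies, as it was originally derived for $\nu\in\cA^{\kappa,-}_{t,x}$ in the proof of Proposition \ref{prop:uNkappato0}. Both are immediate from the construction, so no new estimates are needed, and Lemma \ref{lem:convkappa} applied to $\cB^\kappa_{t,x}=\cA^-_{t,x}\cup\cA^{\kappa,-}_{t,x}$ supplies the remaining quantitative control.
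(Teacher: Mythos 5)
Your proof is correct, and it takes a genuinely different route from the one the paper intends. The paper's one-line justification — ``Lemma \ref{lem:nu^-} holds independently of $\kappa$'' — means: the proof of Lemma \ref{lem:nu^-} (drop $\nu^+$, absorb the overshoot, compare trajectories, use monotonicity of $g^N,h^N$) never uses non-degeneracy or boundedness of the coefficients, so one can repeat it verbatim for the game with the $\kappa=0$ dynamics, whose value is $u^N$ by Proposition \ref{prop:uNkappato0}; the comparison $X^{\tilde\nu}\le X^\nu$ remains valid with only $\gamma$-H\"older $\sigma$, $\gamma>1/2$. You instead strengthen the argument inside the proof of Proposition \ref{prop:uNkappato0}: you observe that the control $\tilde\nu$ built in \eqref{eq:nuadmissible} from a non-increasing $\nu\in\cA^{\kappa,-}_{t,x}$ is itself non-increasing (the appended jump $-X^\nu_{(\tau_\kappa\wedge\tau_0)-}\le 0$), so the quantity that $u^N_\kappa$ is compared against can be taken to be $\inf_{\nu'\in\cA^-_{t,x}}\sup_\tau\cJ^N_{t,x}$ rather than $\overline u^N$, and then you pass to the $\kappa\to 0$ limit via Lemma \ref{lem:convkappa} exactly as before. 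Your observation that $\tilde\nu\in\cA^-_{t,x}$ is the only new ingredient, and it is correct; your reduction to the single inequality $\inf_{\nu\in\cA^-}\sup_\tau\cJ^N\le u^N$ via the standard $\sup\inf\le\inf\sup$ sandwich is also correct. The paper's route is leaner (no need to re-run the $\kappa\to 0$ limit), while yours avoids having to verify separately that the pathwise comparison arguments in Lemma \ref{lem:nu^-} transfer cleanly to the $\kappa=0$ dynamics and instead inherits everything from the approximating games; both are valid.
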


Since the bounds obtained in Lemma \ref{lem:W12pbound} are independent of $N$, then $(u^{N})_{N\in\N}$ is bounded in $W^{1,2;p}_{\ell oc}$. We define $u\coloneqq\lim_{n\to\infty}u^{N_n}$ along a sequence $(N_n)_{n\in\N}$ increasing to $+\infty$, weakly in $W^{1,2;p}(\cK)$ for any compact $\cK\subset\cO$ and strongly in $L^\infty(\cK)$ for any compact $\cK\subset\overline\cO$ (as in the proof of Theorem \ref{thm:highreguued}). Then, 
\begin{align}\label{eq:penalties}
u(t,x)\ge g(t,x)\quad\text{and}\quad |\partial_x u(t,x)|\le \bar{\alpha},\quad\text{for all $(t,x)\in\cO$},
\end{align}
because $u^N_\kappa$ solves Problem \ref{prb:varineq} (cf.\ Remark \ref{rem:thm}).

\begin{theorem}\label{thm:game}
We have $u\in C(\overline\cO)$. Moreover, $u=\underline v=\overline v$ on $\overline\cO$ for $\underline v$ and $\overline v$ as in \eqref{eq:lowuppvfnc}.
\end{theorem}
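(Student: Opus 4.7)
The plan is to establish continuity of $u$ first, then to prove $u=\overline v$ by a uniform-convergence argument and $u\le\underline v$ by a verification argument; combined with the trivial $\underline v\le\overline v$, this yields $u=\underline v=\overline v$.

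Continuity of $u$ on $\overline\cO$ is immediate: each $u^N$ lies in $C(\overline\cO)$ by Proposition~\ref{prop:uNkappato0}, and $u^{N_n}\to u$ strongly in $L^\infty(\cK)$ for every compact $\cK\subset\overline\cO$ by construction, so $u$ is the locally uniform limit of continuous functions.

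For the identity $u=\overline v$, I would restrict both games to non-positive controls. Corollary~\ref{cor:nu^-} gives $u^N=\inf_{\nu\in\cA_{t,x}^-}\sup_{\tau\in\cT_t}\cJ^N_{t,x}(\nu,\tau)$, and the proof of Lemma~\ref{lem:nu^-} transfers verbatim to the original payoff (using only the monotonicity in $x$ of $g,h$ from~\textbf{A.2}) to yield the analogous identity $\overline v(t,x)=\inf_{\nu\in\cA_{t,x}^-}\sup_{\tau\in\cT_t}\cJ_{t,x}(\nu,\tau)$. The argument of Lemma~\ref{lem:Aopt}---based on $\sup_\tau\cJ(\nu,\tau)\ge\e^{-rT}\bar\alpha\,\E_x[|\nu|_{T-t}]$ and the uniform quadratic bounds $u^N,\overline v\le C(1+|x|^2)$ (from Lemma~\ref{lem:polygrow} and the growth conditions of Assumption~\ref{ass:gen2}(iii))---further restricts the infimum to $\cA_{t,x}^{opt}$ with constant $K_5$ independent of $N$, since $\bar\alpha^N\ge\bar\alpha$. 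The main technical step is the uniform convergence
\[
\lim_{N\to\infty}\sup_{\nu\in\cA_{t,x}^-\cap\cA_{t,x}^{opt}}\sup_{\tau\in\cT_t}\big|\cJ^{N}_{t,x}(\nu,\tau)-\cJ_{t,x}(\nu,\tau)\big|=0,
\]
which I expect to be the main obstacle. For $\nu\in\cA_{t,x}^-$ the pathwise bound $0\le X^{\nu}_s\le X^0_s$ on $[\![0,\tau_0]\!]$ (valid under Assumption~\ref{ass:gen1}(ii) by pathwise uniqueness via the Yamada--Watanabe criterion) gives $\nu$-uniform moment estimates $\E_x[\sup_s|X^\nu_s|^p]\le C_p(1+|x|^p)$; arranging the approximations of Section~\ref{sec:approx} so that $g^N\uparrow g$ and $h^N\uparrow h$ pointwise, the errors $|g^N-g|$ and $|h^N-h|$ in $\cJ^N-\cJ$ are controlled via Cauchy--Schwarz and Markov's inequality on $\{\sup_s X^\nu_s\ge N-1\}$ together with the linear/quadratic growth of $g,h$, while the control-cost contribution $(\bar\alpha^N-\bar\alpha)\E_x[|\nu|_{T-t}]\le K_5(\bar\alpha^N-\bar\alpha)(1+|x|^2)$ vanishes as $\bar\alpha^N\downarrow\bar\alpha$. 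Passing to the inf-sup then gives $u^N\to\overline v$, whence $u=\overline v$.

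For the bound $u\le\underline v$ I would mimic the verification argument of Lemma~\ref{lem:uNkappagame} applied to $u$. The function $u$ solves Problem~\ref{prb:varineq} with data $(g,h,\bar\alpha,\mu,\sigma)$: this is obtained by passing to the limit in the variational inequalities satisfied by $u^N$ (as observed in Remark~\ref{rem:thm} at the level of $u^N_\kappa$), combining the uniform $W^{1,2;p}_{\ell oc}(\cO)$ bounds of Lemma~\ref{lem:W12pbound} with the weak convergence $u^N\to u$. Defining $\tau_*,\sigma_*$ from $u$ via~\eqref{eq:taustar}, an It\^o--Krylov formula applied to $u$ on $[\![0,\tau_*\wedge\sigma_*]\!]$, combined with the variational inequality and the gradient bound $|\partial_x u|\le\bar\alpha$ from~\eqref{eq:penalties}, delivers $u(t,x)\le\cJ_{t,x}(\nu,\tau_*\wedge\sigma_*)$ for every $\nu\in\cA_{t,x}$. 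Hence $u\le\inf_{\nu\in\cA_{t,x}}\cJ_{t,x}(\nu,\tau_*\wedge\sigma_*)\le\underline v$, and combined with $u=\overline v\ge\underline v$ we conclude $u=\underline v=\overline v$.
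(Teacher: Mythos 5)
The continuity claim and the $u=\overline v$ direction are fine: the uniform-over-$(\nu,\tau)$ payoff comparison is a slightly heavier but workable substitute for the paper's near-optimal--strategy comparison, and the reduction to $\cA^-_{t,x}\cap\cA^{opt}_{t,x}$ transfers as you say. However, there is a genuine gap in your verification step for $u\le\underline v$. The stopping rule $\sigma_*(\nu)\wedge\tau_*(\nu)$ from~\eqref{eq:taustar} is a \emph{feedback} (non-anticipative) rule: for each fixed $\nu$ it is an $\F$-stopping time, but as $\nu$ varies it is a different element of $\cT_t$. Hence $u\le\inf_{\nu}\cJ_{t,x}(\nu,(\tau_*\wedge\sigma_*)(\nu))$ does \emph{not} imply $u\le\underline v$. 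Indeed, $\underline v=\sup_{\tau\in\cT_t}\inf_\nu\cJ_{t,x}(\nu,\tau)$ takes the supremum over a \emph{single} open-loop stopping time applied against all $\nu$, while your verification only produces a $\nu$-dependent rule; generically the feedback value dominates $\underline v$ rather than being dominated by it. What the verification does give you is $\inf_\nu\cJ_{t,x}(\nu,\tau_*(\nu)\wedge\sigma_*(\nu))\le\inf_\nu\sup_\tau\cJ_{t,x}(\nu,\tau)=\overline v$, which you already know. This is precisely why the paper's proof of $u\le\underline v$ is a direct comparison, not a verification: it fixes a single ($\eta$-)optimal $\tau\in\cT_t$ for the approximating game $u^N=\sup_\tau\inf_\nu\cJ^N_{t,x}(\nu,\tau)$, a near-optimal $\nu\in\cA^{opt}_{t,x}$ for $\inf_{\nu'}\cJ_{t,x}(\nu',\tau)$, and estimates $\cJ^N_{t,x}(\nu,\tau)-\cJ_{t,x}(\nu,\tau)$; the stopping time stays open-loop throughout. (Also note the paper reserves the verification argument for the \emph{additional} statement about the optimal feedback strategy of the stopper, which is legitimately a non-anticipative claim, and it proves it \emph{after} the value is known.)

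A simple repair within your own framework: the uniform-in-$(\nu,\tau)$ convergence $\cJ^N\to\cJ$ that you establish in Step~1 passes through \emph{both} $\inf\sup$ and $\sup\inf$. Since $u^N=\sup_\tau\inf_{\nu\in\cA^-\cap\cA^{opt}}\cJ^N_{t,x}(\nu,\tau)$ by Corollary~\ref{cor:nu^-} and Lemma~\ref{lem:Aopt}, and $\underline v=\sup_\tau\inf_{\nu\in\cA^-\cap\cA^{opt}}\cJ_{t,x}(\nu,\tau)$ by the same monotonicity/budget arguments (for each fixed $\tau$, the controller prefers $\cA^-$ and $\cA^{opt}$), the very same uniform bound gives $u^N\to\underline v$ as well. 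This would replace your flawed Step~2 entirely and conclude $u=\underline v=\overline v$.
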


\begin{proof}
Let $U\subset[0,\infty)$ be compact.
First we prove $\limsup_{N\to\infty}\sup_{(t,x)\in[0,T]\times U} (u^N- \underline{v})(t,x)\le 0$. For that we recall that we can restrict our attention to $\nu\in\cA_{t,x}^{opt}$ by Lemma \ref{lem:Aopt}. Let $\tau\in\cT_t$ be optimal for $u^N(t,x)$ and let $\nu\in\cA_{t,x}^{opt}$ be such that $\inf_{\nu'\in\cA_{t,x}^{opt}}\cJ_{t,x}(\nu',\tau)\ge \cJ_{t,x}(\nu,\tau)-\eta$. Notice that $\tau$ and $\nu$ depend on $N$. Assuming with no loss of generality that $0\le \bar{\alpha}^N-\bar{\alpha}\le \frac{1}{N}$ and recalling that $g^N\le g$ and $h^N\le h$, we obtain
\begin{align*}
u^N(t,x)-\underline{v}(t,x)&\le \cJ^N_{t,x}(\nu,\tau)-\cJ_{t,x}(\nu,\tau)+\eta\\
&=\E_x\Big[\e^{-r(\tau\wedge\tau_0)}\big(g^N(t+\tau\wedge\tau_0,X_{\tau\wedge\tau_0}^\nu)-g(t+\tau\wedge\tau_0,X_{\tau\wedge\tau_0}^\nu)\big)\\
&\qquad\quad+\int_0^{\tau\wedge\tau_0}\e^{-rs}\big(h^N(t+s,X_s^\nu)-h(t+s,X_s^\nu)\big)\,\ud s\\
&\qquad\quad+\int_{[0,\tau\wedge\tau_0]}\e^{-rs}(\bar{\alpha}^N-\bar{\alpha})\ud |\nu|_s\Big]+\eta\le \frac{1}{N}\E\big[|\nu|_{T-t}\big]+\eta.
\end{align*}
Using that $\nu\in\cA_{t,x}^{opt}$, we have $u^N(t,x)-\underline{v}(t,x)\le \frac{K_5}{N}(1+x^2)+\eta$. Taking supremum over $[0,T]\times U$ and limits we obtain our first claim.

Now, we show $\liminf_{N\to\infty}\inf_{(t,x)\in[0,T]\times U}(u^N- \overline{v})(t,x)\ge 0$, which will conclude the proof.
Let $\nu\in\cA_{t,x}^-$ be $\eta$-optimal for $u^N(t,x)$ and let $\tau\in\cT_t$ be $\eta$-optimal for $\sup_{\tau\in\cT_t}\cJ_{t,x}(\nu,\tau)$ (notice that $\tau$ and $\nu$ depend on $N$). Set 
$\rho_N\coloneqq\inf\{s\ge 0\,|\,(t+s,X_s^\nu)\notin A^g_{N-1}\cap A^h_{N-1}\}$ and recall that $g^N=g$ on $A^g_{N-1}$ and $h^N=h$ on $A^h_{N-1}$. Then, by linear growth of $g$ and quadratic growth of $h$ we obtain
\begin{align*}
u^N(t,x)-\overline{v}(t,x)&\ge\cJ_{t,x}^{N}(\nu,\tau)-\cJ_{t,x}(\nu,\tau)-2\eta\\
&=\E_x\Big[\e^{-r(\tau\wedge\tau_0)}\big(g^N(t+\tau\wedge\tau_0,X_{\tau\wedge\tau_0}^\nu)-g(t+\tau\wedge\tau_0,X_{\tau\wedge\tau_0}^\nu)\big)\\
&\qquad\quad+\int_0^{\tau\wedge\tau_0}\e^{-rs}\big(h^N(t+s,X_s^\nu)-h(t+s,X_s^\nu)\big)\,\ud s\Big]-2\eta\\
&\ge-K_1(1+T) \E_x\Big[\Big(1+\sup_{0\leq s\le T-t}|X^\nu_s|^2\Big)\mathds{1}_{\{\rho_N<\tau_0\}}\Big]-2\eta,
\end{align*}
with $K_1$ from Assumption \ref{ass:gen2}(iii).
Since $\nu\in\cA_{t,x}^-$, then $0\le X^{\nu}_s\le X_s^0$ for all $s\in[\![0,\tau_0]\!]$, $\P$-a.s., where $X^0$ is the uncontrolled process. Since $x\mapsto g(t,x)$ and $x\mapsto h(t,x)$ are non-decreasing we also have $\rho_N\ge \sigma_N$ where $\sigma_N\coloneqq\inf\{s\ge 0\,|\,(t+s,X_s^0)\notin A^g_{N-1}\cap A^h_{N-1}\}$. By monotonicity of $g$ and $h$ we can also assume $\{\sigma_N\le \tau_0\}\subseteq\{\bar X^0_{\tau_0}\ge M_N\}$ for suitable $M_N\ge 0$ ($\lim_{N\to\infty}M_N=\infty$) and $\bar X^0_s\coloneqq\sup_{0\le r\le s}X^0_r$.
Therefore, we have
\begin{align*}
u^N(t,x)-\overline{v}(t,x)\ge-K_1 \sup_{x\in\cK}\E_x\Big[\Big(1+|\bar X^0_T|^4\Big)\Big]^\frac12 \P_x\big(\bar X^0_{\tau_0}\ge M_N\big)^\frac12-2\eta.
\end{align*}
Classical estimates for SDEs guarantee $\sup_{x\in U}\E_x[|\bar X^0_T|^4]<\infty$ and therefore $\P_x\big(\bar X^0_{\tau_0}\ge M_N\big)\to 0$ as $N\to\infty$, uniformly on $U$. Thus, $\liminf_{n\to\infty}\inf_{(t,x)\in[0,T]\times U} (u^N-\overline{v})(t,x)\ge -\eta$. By arbitrariness of $\eta$, we conclude. Clearly $u\in C(\overline\cO)$ as (locally) uniform limit of continuous functions.
\end{proof}

The proof of Lemma \ref{lem:nu^-} can be repeated verbatim for $u$ replacing $u^N_\kappa$. Hence, the corollary holds: 
\begin{corollary}\label{cor:nu^-2}
We have $v(t,x)=\displaystyle\inf_{\nu\in\cA_{t,x}^-}\sup_{\tau\in\cT_t}\cJ_{t,x}(\nu,\tau)=\sup_{\tau\in\cT_t}\inf_{\nu\in\cA_{t,x}^-}\cJ_{t,x}(\nu,\tau)$, for any $(t,x)\in\overline\cO$.
\end{corollary}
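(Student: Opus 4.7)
The plan is to mimic verbatim the proof of Lemma \ref{lem:nu^-}, but with the value $v$ of the original game in place of the approximated value $u^N_\kappa$, and with the dynamics $X^{\nu;x}$ in place of $X^{\nu,\kappa;x}$. The only subtlety is that we must verify the constructions still produce admissible controls in $\cA_{t,x}$ under the (possibly degenerate) coefficients $\mu,\sigma$ of Assumption \ref{ass:gen1}, and that we have the existence of $\eta$-optimal controls for $v$, which is now available from Theorem \ref{thm:game} (the game admits a value).

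First I would fix $(t,x)\in\overline\cO$ and $\eta>0$ and pick $\nu\in\cA_{t,x}$ that is $\eta$-optimal in the sense that $v(t,x)\ge \sup_{\tau\in\cT_t}\cJ_{t,x}(\nu,\tau)-\eta$; such $\nu$ exists because $v=\overline v$ by Theorem \ref{thm:game}. Writing $\nu=\nu^+-\nu^-$ in Jordan decomposition, I set $\xi\coloneqq -\nu^-$ and let $\bar\rho\coloneqq\tau_0(t,x;\xi)$ be the absorption time for the dynamics $X^{\xi;x}$. A priori $\xi$ may not be admissible because it could generate a jump strictly below zero at $\bar\rho$, so I replace it by
\[
\tilde\nu_s\coloneqq \xi_s\,\mathds{1}_{[0,\bar\rho)}(s)+\big(\xi_{\bar\rho-}-|X^{\xi;x}_{\bar\rho-}|\big)\,\mathds{1}_{[\bar\rho,T-t]}(s),
\]
which belongs to $\cA^-_{t,x}$, and satisfies $X^{\tilde\nu;x}_s\ge 0$ for $s\in[\![0,\bar\rho]\!]$ $\P$-a.s., together with $\tilde\rho\coloneqq\tau_0(t,x;\tilde\nu)=\bar\rho\le \tau_0(t,x;\nu)\eqqcolon\rho$ $\P$-a.s. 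By comparison one also obtains $X^{\tilde\nu}_s\le X^{\nu}_s$ for all $s\in[\![0,\bar\rho]\!]$ and $|\tilde\nu|_s\le |\nu|_s$ for all $s\in[\![0,\bar\rho]\!]$.

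Next, I pick $\sigma\in\cT_t$ that is $\eta$-optimal for $\sup_{\tau\in\cT_t}\cJ_{t,x}(\tilde\nu,\tau)$ and estimate
\[
v(t,x)-\inf_{\xi\in\cA^-_{t,x}}\sup_{\tau\in\cT_t}\cJ_{t,x}(\xi,\tau)\ge \cJ_{t,x}(\nu,\sigma\wedge\tilde\rho)-\cJ_{t,x}(\tilde\nu,\sigma)-2\eta.
\]
Using $\tilde\rho\le\rho$ to handle the integrals and terminal payoff up to the shorter absorption time, the monotonicity of $x\mapsto g(t,x)$ and $x\mapsto h(t,x)$ from \textbf{A.2} combined with $X^{\tilde\nu}_s\le X^{\nu}_s$ yield pointwise inequalities $g(\cdot,X^{\tilde\nu})\le g(\cdot,X^\nu)$ and $h(\cdot,X^{\tilde\nu})\le h(\cdot,X^\nu)$ on $[\![0,\sigma\wedge\tilde\rho]\!]$, while $|\tilde\nu|_s\le |\nu|_s$ gives the corresponding bound on the singular-cost term. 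Therefore the difference on the right-hand side is bounded below by $-2\eta$, and letting $\eta\downarrow 0$ gives
\[
v(t,x)\ge \inf_{\xi\in\cA^-_{t,x}}\sup_{\tau\in\cT_t}\cJ_{t,x}(\xi,\tau)\ge \sup_{\tau\in\cT_t}\inf_{\xi\in\cA^-_{t,x}}\cJ_{t,x}(\xi,\tau)\ge v(t,x),
\]
where the last inequality is immediate from $\cA^-_{t,x}\subset\cA_{t,x}$ and the equality $v=\underline v$ from Theorem \ref{thm:game}. This yields the claim. The only step that requires any thought is checking that the truncated control $\tilde\nu$ is indeed admissible (i.e., $X^{\tilde\nu}_{\tilde\rho}=0$ and $\tilde\nu$ is frozen after $\tilde\rho$), which follows directly from the explicit construction; everything else is a monotonicity argument identical to the one already carried out in Lemma \ref{lem:nu^-}.
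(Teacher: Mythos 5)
Your proposal is correct and follows essentially the same approach as the paper, which simply observes that the proof of Lemma~\ref{lem:nu^-} can be repeated verbatim with $v$ in place of $u^N_\kappa$ and the original dynamics $X^\nu$ in place of $X^{\nu,\kappa}$. You correctly identify the only points that need checking when transplanting the argument (existence of $\eta$-optimal controls for $v$, guaranteed by Theorem~\ref{thm:game}, and admissibility of the truncated control $\tilde\nu$), and the monotonicity argument from {\bf A.2} goes through unchanged.
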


\begin{remark}
Assuming also $|g(t,x)\!-\!g(s,x)|\!+\!|h(t,x)\!-\!h(s,x)|\le c(1\!+\!|x|)|t\!-\!s|$, for some $c>0$, 
it is indeed possible to show that $v$ is locally Lipschitz on $\overline\cO$. Since $|\partial_x v|\le \bar\alpha$ on $\cO$ by the variational inequality and $0\le v(t,x)\le \bar \alpha x$, it only remains to use arguments as those employed several times in the paper for direct comparisons for $v(t,x)-v(s,x)$. This result is not part of Theorem \ref{thm:usolvar}, we avoid further lengthy calculations. 
\end{remark}

\begin{proposition}\label{thm:limfuncum} 
The function $v$ is the maximal solution of Problem \ref{prb:varineq}.
\end{proposition}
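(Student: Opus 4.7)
The plan has two parts: (i) show that $v$ is itself a solution of Problem \ref{prb:varineq}, and (ii) establish maximality among all such solutions via a verification argument.

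For (i), Remark \ref{rem:thm} already records that each $u^N_\kappa$ solves Problem \ref{prb:varineq} for the data $(g^N,h^N,\bar{\alpha}^N,\mu_\kappa,\sigma_\kappa)$. Since $v=\lim_{N\to\infty}\lim_{\kappa\to 0}u^N_\kappa$ weakly in $W^{1,2;p}_{\ell oc}(\cO)$ and strongly in $L^\infty_{\ell oc}(\overline\cO)$ (by Proposition \ref{prop:uNkappato0} and Theorem \ref{thm:game}), and since $(g^N,h^N,\bar{\alpha}^N,\mu_\kappa,\sigma_\kappa)\to(g,h,\bar{\alpha},\mu,\sigma)$ locally uniformly, most of the conditions in \eqref{eq:inipde} transfer to $v$ routinely: the obstacle inequality $v\geq g$ and the gradient bound $|\partial_x v|\leq\bar{\alpha}$ are recorded in \eqref{eq:penalties}; the Dirichlet conditions on $[0,T]\times\{0\}$ and $\{T\}\times[0,\infty)$ pass by uniform convergence on compacts of $\overline\cO$; and quadratic growth follows from Lemma \ref{lem:polygrow}.

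The three interior relations require a short argument which I would carry out as follows. Fix a closed rectangle $\cK\subset \cC^v$. Continuity of $v$ yields $\inf_\cK(v-g)>0$, so for $N$ large and $\kappa$ small, $u^N_\kappa>g^N$ on $\cK$. The variational inequality for $u^N_\kappa$ then gives $(\partial_t+\cL_\kappa-r)u^N_\kappa\geq -h^N$ a.e.\ on $\cK$, and weak $L^p(\cK)$ convergence of $\partial_t u^N_\kappa$, $\partial^2_{xx}u^N_\kappa$ yields $(\partial_t+\cL-r)v\geq -h$ a.e.\ on $\cK$; exhausting $\cC^v$ gives the global inequality. An analogous argument on a rectangle $\cK\subset \cI^v$, exploiting uniform convergence of the continuous Sobolev representative $\partial_x u^N_\kappa\to\partial_x v$ on compacts, produces $(\partial_t+\cL-r)v\leq -h$ a.e.\ on $\cI^v$; on $\cC^v\cap \cI^v$ the two combine into the equation.

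For (ii), let $\tilde u$ be any solution of Problem \ref{prb:varineq}; since $v=\overline v$ by Theorem \ref{thm:game}, it suffices to show $\tilde u\leq \overline v$. Define
\[
\tilde\tau_*(\nu)\coloneqq \inf\{s\geq 0\,|\,\tilde u(t+s,X^\nu_s)=g(t+s,X^\nu_s)\}\wedge(T-t).
\]
I would apply It\^o's formula in the Sobolev sense to $\e^{-rs}\tilde u(t+s,X^\nu_s)$ on a localising sequence $\theta_{n,m}\wedge\tilde\tau_*(\nu)$ constructed as in Proposition \ref{lem:prbraprRD}, using: the inequality $(\partial_t+\cL-r)\tilde u\geq -h$ on $\{s<\tilde\tau_*\wedge\tau_0\}\subset \cC^{\tilde u}$ to produce the running cost $h$; the bound $|\partial_x\tilde u|\leq \bar{\alpha}$ to control both $\ud\nu^c$ and the jump contributions $|\Delta\tilde u|\leq\bar{\alpha}|\Delta\nu|$; and the identification $\tilde u=g$ at $\tilde\tau_*\wedge\tau_0$ (by definition of $\tilde\tau_*$ on $\{\tilde\tau_*<\tau_0\}$, by the Dirichlet condition at $[0,T]\times\{0\}$ on $\{\tau_0<T-t\le \tilde\tau_*\}$, and by the terminal condition on $\{\tilde\tau_*=\tau_0=T-t\}$). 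This yields $\tilde u(t,x)\leq \cJ_{t,x}(\nu,\tilde\tau_*(\nu))$ for every $\nu\in\cA_{t,x}$, and taking the infimum over $\nu$ gives $\tilde u\leq \overline v=v$.

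The main technical obstacle is justifying the generalised It\^o formula for $\tilde u\in W^{1,2;p}_{\ell oc}(\cO)$ paired with a c\`adl\`ag control $\nu$ with possibly infinitely many jumps, and handling the limits of the localising sequence at the absorbing boundary $[0,T]\times\{0\}$ (so that $X^\nu_{\theta_\infty}=0$ whenever $\theta_\infty<T-t$). Both points are exactly the ones already treated in \cite[Thm.\ 6]{bovo2022variational}; the proof there transfers \emph{mutatis mutandis} to the present setting, with the terminal region $\{T\}\times\R^d$ replaced by $([0,T]\times\{0\})\cup(\{T\}\times[0,\infty))$ on which $\tilde u=g$. I would therefore refer the reader to that proof for the detailed stochastic calculus and limit passages, after verifying that the Dirichlet condition at $\{0\}$ does not introduce any genuinely new difficulty.
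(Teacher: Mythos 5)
Your proof is correct and follows essentially the same path as the paper. The paper's own proof passes to the limit directly along a diagonal sequence $u^{N_j,\varepsilon_j,\delta_j}_{\kappa_j}$ (citing [Thm.~5, bovo2022variational] together with Lemmas~\ref{lem:bndobstpen2}--\ref{lem:W12pbound} for the penalty-term bounds that force the three interior relations), whereas you pass through the intermediate functions $u^N_\kappa$, which Remark~\ref{rem:thm} already certifies as solutions of the variational problem with the smoothed data $(g^N,h^N,\bar\alpha^N,\mu_\kappa,\sigma_\kappa)$, and then transfer the variational inequality to $v$ by the exhaustion argument on $\mathcal{C}^v$ and $\mathcal{I}^v$; both routes rely on the same compactness (weak $W^{1,2;p}_{\ell oc}$ plus $C^{0,1}$ on compacts) and on $\bar\alpha^N\downarrow\bar\alpha$, and each gives exactly what is needed. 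For maximality you reproduce the verification argument of [Thm.~6, bovo2022variational], which is precisely what the paper cites, and you correctly identify the only genuinely new item — the Dirichlet boundary at $[0,T]\times\{0\}$ must be added to the terminal region where $\tilde u=g$ in the It\^o identity. One small remark: you run the verification with the stopping time $\tilde\tau_*(\nu)$ built on $X^\nu_s$ only, whereas the paper's optimal rule combines $\tau_*$ and $\sigma_*$ (the latter built on $X^\nu_{s-}$); for \emph{maximality} this is immaterial, since one only needs a single $\tau$ with $\tilde u(t,x)\le\cJ_{t,x}(\nu,\tau)$ for each $\nu$, and the jump at $\tilde\tau_*$ is absorbed into the running control cost via $|\partial_x\tilde u|\le\bar\alpha$ exactly as in \eqref{eq:sigma*opt}--\eqref{eq:tau*opt}.
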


\begin{proof}
Thanks to Lemmas \ref{lem:bndobstpen2} and \ref{lem:W12pbound}, and using the same arguments of proof as in \cite[Thm.\ 5]{bovo2022variational}, when passing to the limit along a suitable sequence $\hat u_j\coloneqq u^{N_j,\eps_j,\delta_j}_{\kappa_j}$ we obtain that the limit function $u$
is a solution of Problem \ref{prb:varineq} (cf.\ also \eqref{eq:penalties}). In Theorem \ref{thm:game}, we showed that $u$ is equal to the value of the game \eqref{eq:valfunc}.
Finally, we can repeat verbatim the proof from \cite[Thm.\ 6]{bovo2022variational} (cf.\ p.\ 32 therein) to obtain maximality of the solution.
\end{proof}

Collecting the results from this section we have proven the first part of Theorem \ref{thm:usolvar} under {\bf A.2}.

\subsection{An optimal stopping strategy}

The last section of the paper is devoted to finding an optimal strategy for the stopper under {\bf A.2} of Theorem \ref{thm:usolvar}. We additionally assume that the diffusion coefficient $\sigma$ be Lipschitz with constant $D_1$. The requirement is needed to apply \cite[Thm.\ 2.5.9]{krylov1980controlled} so that for a constant $C>0$ independent of $\kappa\in(0,1)$ we have
\begin{align*}
\E_x\Big[\sup_{s\in[0,T-t]}\!\big|X^{\nu,\kappa}_{s\wedge\tau_0\wedge\tau_\kappa}\!-\!X^{\nu,0}_{s\wedge\tau_0\wedge\tau_\kappa}\big|^2\Big]\!\le\! C\E_x\Big[\int_0^{\tau_0}\!\!\!\Big(\big|\mu(X^{\nu,0}_s)\!-\!\mu_\kappa(X^{\nu,0}_s)\big|^2\!+\!\big|\sigma(X^{\nu,0}_s)\!-\!\sigma_\kappa(X^{\nu,0}_s)\big|^2\Big)\ud s\Big].
\end{align*}
As in the proof of Lemma \ref{lem:convkappa} we can assume $\mu_\kappa(x)=\mu(x)$ and $\sigma_\kappa(x)=\sigma(x)+\kappa$ for $x\in[0,\kappa^{-1}]$. Now we restrict our attention to $\nu\in\cB^\kappa_{t,x}$ (cf.\ Lemma \ref{lem:convkappa}) and $x\in U$ with $U\subset[0,\infty)$ compact. Letting $\bar X^\nu_s\coloneqq \sup_{0\le r\le s}|X^{\nu,0}_r|$ and $\bar X^0_s\coloneqq \sup_{0\le r\le s}|X^{0,0}_r|$ and continuing from the estimate above we have
\begin{align}\label{eq:Xkappato01}
\begin{aligned}
&\lim_{\kappa\to 0}\sup_{(t,x)\in[0,T]\times U}\sup_{\nu\in\cB^\kappa_{t,x}}\E_x\Big[\sup_{s\in[0,T-t]}\big|X^{\nu,\kappa}_{s\wedge\tau_0\wedge\tau_\kappa}-X^{\nu,0}_{s\wedge\tau_0\wedge\tau_\kappa}\big|^2\Big]\\
&\le C\lim_{\kappa\to 0}\sup_{(t,x)\in[0,T]\times U}\sup_{\nu\in\cB^\kappa_{t,x}}\E_x\Big[\mathds{1}_{\{\bar X^\nu_{\tau_0}\le\kappa^{-1}\}}T\kappa^2+\mathds{1}_{\{\bar X^\nu_{\tau_0}>\kappa^{-1}\}}\int_0^{\tau_0}\!\!\!\Big(1+\big|X^{\nu,0}_s\big|^2\Big)\ud s\Big]\\
&\le C\lim_{\kappa\to 0}\Big(\kappa^2 +\sup_{x\in U}\P_x\big(\bar X^0_{T}>\kappa^{-1}\big)^\frac12\E_x\big[\big(1+\big|\bar X^{0}_{T}\big|^4\big)\big]^\frac12\Big)=0,
\end{aligned}
\end{align}
where $C>0$ may vary from line to line and we used $0\le X^{\nu,0}_s\le X^{0,0}_s$ for all $s\in[\![0,\tau_0]\!]$. The limit holds because $\P_x(\bar X^0_{T}>\kappa^{-1})\to 0$ as $\kappa\to 0$, uniformly for $x\in U$.

In order to state the next result we need to introduce some notation. Fix $(t,x)\in\overline\cO$ and $\nu\in\cA_{t,x}^-$. Let $\tau_\kappa=\rho(t,x;\nu,\kappa)$ be as in \eqref{eq:rhoknu}. For each $\kappa\in(0,1)$ we define $\nu^\kappa\in\cA_{t,x}^{\kappa,-}$ as 
\begin{align}\label{eq:nukap}
\nu_s^\kappa= \nu_s\mathds{1}_{[0, \tau_\kappa)}(s)\!+\!(\nu_{\tau_\kappa-}\!-\!X^{\nu,\kappa}_{\tau_\kappa-})\mathds{1}_{[\tau_\kappa,T-t]}(s).
\end{align}
Then $\rho(t,x;\nu^\kappa,\kappa)=\tau_\kappa$, $\P$-a.s., and $\big|X^{\nu^\kappa,\kappa}_{s}-X^{\nu,0}_{s}\big|\le \big|X^{\nu,\kappa}_{s}-X^{\nu,0}_{s}\big|$ for all $s\in[\![0,\tau_0\wedge\tau_\kappa]\!]$. It follows 
\begin{align}\label{eq:Xkappato02}
\begin{aligned}
&\lim_{\kappa\to 0}\sup_{x\in U}\E_x\Big[\sup_{s\in[0,T-t]}\big|X^{\nu^\kappa,\kappa}_{s\wedge\tau_0\wedge\tau_\kappa}-X^{\nu,0}_{s\wedge\tau_0\wedge\tau_\kappa}\big|^2\Big]\\
&\le \lim_{\kappa\to 0}\sup_{x\in U}\E_x\Big[\sup_{s\in[0,T-t]}\big|X^{\nu,\kappa}_{s\wedge\tau_0\wedge\tau_\kappa}-X^{\nu,0}_{s\wedge\tau_0\wedge\tau_\kappa}\big|^2\Big]=0,
\end{aligned}
\end{align}
by \eqref{eq:Xkappato01}. Combining the above with \eqref{eq:limsupumuN2} and \eqref{eq:limsupumuN0} we can select a sequence $(\kappa_j)_{j\in\N}$ such that 
\begin{align}\label{eq:uniflim}
\lim_{j\to\infty}\sup_{s\in[0,\tau_0\wedge\tau_{\kappa_j}]}\big(\big|Z^{\nu^{\kappa_j},\kappa_j}_{s}-Z^{\nu}_{s}\big|+\big|Z^{\nu^{\kappa_j},\kappa_j}_{s-}-Z^{\nu}_{s-}\big|\big)=0,\quad\P_x-a.s.,
\end{align}
where $Z^{\nu,\kappa}_s\coloneqq (u^N_\kappa-g^N)(t+s,X^{\nu,\kappa}_s)$ and $Z^{\nu}_s\coloneqq (u^N-g^N)(t+s,X^{\nu,0}_s)$ (see details in \cite[Lemma 4.6]{bovo2023b}). We omit the proof of the next two lemmas, because it is a repetition of the proof of \cite[Lemma 4.6]{bovo2023b} and it is based on \eqref{eq:uniflim}.

\begin{lemma}\label{lem:stop_conv}
Recall $\sigma_*^{N,\kappa}$ and $\tau_*^{N,\kappa}$ from \eqref{eq:tau*Nkappa}. Fix $(t,x)\in\cO$, $\nu\in\cA^-_{t,x}$ and $\nu^\kappa\in\cA^{\kappa,-}_{t,x}$ introduced above. There is a sequence $(\kappa_j)_{j\in\N}$ converging to zero and such that 
\begin{align*}
\liminf_{j\to\infty}\big(\sigma_*^{N,\kappa_j}\wedge\tau_*^{N,\kappa_j}\big)(t,x;\nu^{\kappa_j})\ge (\sigma^N_*\wedge\tau^N_*)(t,x;\nu),\quad\text{$\P_x$-a.s.},
\end{align*}
where
\begin{align}
\sigma_*^N&=\sigma_*^N(t,x;\nu)=\inf\big\{s\ge 0\,\big|\,u^N(t+s,X_{s-}^{\nu;x})=g^N(t+s,X_{s-}^{\nu;x})\big\},\\
\tau_*^N&=\tau_*^N(t,x;\nu)=\inf\big\{s\ge 0\,\big|\,u^N(t+s,X_s^{\nu;x})=g^N(t+s,X_s^{\nu;x})\big\}.
\end{align}

\end{lemma}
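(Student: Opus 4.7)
The plan is to read $\tau^\kappa := (\sigma_*^{N,\kappa}\wedge\tau_*^{N,\kappa})(t,x;\nu^\kappa)$ and $\tau := (\sigma_*^N\wedge\tau_*^N)(t,x;\nu)$ as entry times into $\{0\}$ for the non-negative c\`adl\`ag processes $Z^{\nu^\kappa,\kappa}$ and $Z^\nu$, namely
\[
\tau^\kappa = \inf\{s\ge 0\,|\,Z^{\nu^\kappa,\kappa}_s=0\text{ or }Z^{\nu^\kappa,\kappa}_{s-}=0\}\wedge(T-t),
\]
and similarly for $\tau$; non-negativity follows from $u^N_\kappa\ge g^N$ (Lemma \ref{lem:bndobstpen2}) and $u^N\ge g^N$ (cf.\ \eqref{eq:penalties}). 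Along the way I would also record $\tau\le \tau_0$ and $\tau^\kappa\le \tau_\kappa$ using the Dirichlet conditions $u^N_\kappa(\cdot,0)=g^N(\cdot,0)$ and $u^N(\cdot,0)=g^N(\cdot,0)$, which guarantees that the horizon in \eqref{eq:uniflim} automatically covers the times we care about.

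Then, working pathwise, I would fix $\omega$ in the full-measure event on which \eqref{eq:uniflim} holds along the prescribed sequence $(\kappa_j)_{j\in\N}$, and pick any $s<\tau(\omega)$. By definition of $\tau$, both $Z^\nu_u(\omega)>0$ and $Z^\nu_{u-}(\omega)>0$ for every $u\in[0,s]$, and this forces
\[
m := \inf_{u\in[0,s]}\min\{Z^\nu_u(\omega),\,Z^\nu_{u-}(\omega)\}>0,
\]
because any minimising sequence $u_n\to u^*\in[0,s]$ yields either $Z^\nu_{u_n}(\omega)\to Z^\nu_{u^*}(\omega)>0$ (when $u_n\downarrow u^*$ or $u_n=u^*$ eventually, by right-continuity) or $Z^\nu_{u_n}(\omega)\to Z^\nu_{u^*-}(\omega)>0$ (when $u_n\uparrow u^*$ strictly), with an analogous dichotomy for the left-limits. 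The uniform convergence in \eqref{eq:uniflim} then gives $\min\{Z^{\nu^{\kappa_j},\kappa_j}_u(\omega),\,Z^{\nu^{\kappa_j},\kappa_j}_{u-}(\omega)\}\ge m/2$ for every $u\in[0,s]$ and all $j$ sufficiently large, hence $\tau^{\kappa_j}(\omega)\ge s$. Taking $\liminf_{j\to\infty}$ and then letting $s\uparrow \tau(\omega)$ concludes.

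The main obstacle is technical rather than conceptual: one must ensure that the interval $[0,s]$ on which \eqref{eq:uniflim} is invoked is genuinely contained in $[\![0,\tau_0\wedge\tau_{\kappa_j}]\!]$ for all $j$ large. Since $s<\tau(\omega)\le \tau_0(\omega)$, this amounts to upgrading \eqref{eq:Xkappato02} (and/or Lemma \ref{lem:convkappa}) to the $\P_x$-a.s.\ convergence $\tau_{\kappa_j}(\omega)\to \tau_0(\omega)$, which is obtainable along a further subsequence by a standard extraction. A related delicate point is the convergence of \emph{left-limits} in \eqref{eq:uniflim}: the construction \eqref{eq:nukap} makes $\nu^\kappa$ coincide with $\nu$ on $[\![0,\tau_\kappa)\!)$, so the jump structure of $X^{\nu^{\kappa_j},\kappa_j}$ matches that of $X^\nu$ prior to $\tau_\kappa$, and combining this with the equi-continuity of $u^N_\kappa$ uniform in $\kappa$ (Proposition \ref{prop:equiuepsmuN} and Lemma \ref{lem:W12pbound}) is what promotes the convergence of the underlying state processes to the uniform convergence of $Z^{\nu^{\kappa_j},\kappa_j}$ together with its left-limits. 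This is precisely the step at which the argument reproduces the corresponding passage in \cite[Lemma 4.6]{bovo2023b}.
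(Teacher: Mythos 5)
The paper omits the proof of this lemma, pointing instead to \cite[Lemma 4.6]{bovo2023b} and the almost-sure uniform convergence \eqref{eq:uniflim}, so there is no in-text argument to compare against; your reconstruction via entry times into $\{0\}$ for the non-negative c\`adl\`ag processes $Z$ is the natural reading and is essentially correct. One refinement: your Step~5 invokes \eqref{eq:uniflim} on all of $[0,s]$, and you then propose to justify this by upgrading Lemma~\ref{lem:convkappa} to $\tau_{\kappa_j}\to\tau_0$ a.s.\ — but this is both unnecessary and not obviously ``a standard extraction'' (hitting times are only lower semicontinuous under pathwise convergence, so $\limsup_j\tau_{\kappa_j}\le\tau_0$ would need its own argument). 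The cleaner route is a contradiction: if $\tau^{\kappa_j}(\omega)<s$, then (since $\{u:Z^{\nu^{\kappa_j},\kappa_j}_u=0\text{ or }Z^{\nu^{\kappa_j},\kappa_j}_{u-}=0\}$ is closed) the infimum is attained at $u^*=\tau^{\kappa_j}(\omega)$, i.e.\ $Z^{\nu^{\kappa_j},\kappa_j}_{u^*}=0$ or $Z^{\nu^{\kappa_j},\kappa_j}_{u^*-}=0$; since $u^*\le\tau^{\kappa_j}\le\tau_{\kappa_j}$ and $u^*<s<\tau\le\tau_0$, this $u^*$ lies in $[\![0,\tau_0\wedge\tau_{\kappa_j}]\!]$, where \eqref{eq:uniflim} applies directly, and for $j$ large this forces $\min\{Z^\nu_{u^*},Z^\nu_{u^*-}\}<m$, contradicting the definition of $m$. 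This removes the ``main obstacle'' entirely and uses only the inequalities $\tau\le\tau_0$, $\tau^{\kappa_j}\le\tau_{\kappa_j}$ that you already recorded. The remainder — the positivity $m>0$ via compactness of the closure $\{Z^\nu_u,Z^\nu_{u-}:u\in[0,s]\}$, and the final passage $s\uparrow\tau(\omega)$ — is fine as you wrote it.
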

Since $u^N\to v$ uniformly on compacts by Theorem \ref{thm:game} and the dynamics is not affected by the choice of $N$, the same arguments as in \cite[Lemma 4.3]{bovo2023}.
\begin{lemma}\label{lem:stop_conv2}
Recall $\sigma_*$ and $\tau_*$ from \eqref{eq:taustar}.
For $(t,x)\in\cO$ and $\nu\in\cA^-_{t,x}$ we have 
\begin{align*}
\liminf_{N\to\infty}\big(\sigma_*^{N}\wedge\tau_*^{N}\big)(t,x;\nu)\ge (\sigma_*\wedge\tau_*)(t,x;\nu),\quad\text{$\P_x$-a.s.}
\end{align*}
\end{lemma}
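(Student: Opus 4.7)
The plan is to argue by contradiction on an $\omega$-by-$\omega$ basis, exploiting two crucial features of the setup: the trajectory $X = X^\nu$ does not depend on $N$, and by Theorem \ref{thm:game} together with $g^N \equiv g$ on $A^g_{N-1}$ (with $A^g_{N-1} \uparrow \overline\cO$) both $u^N \to v$ and $g^N \to g$ locally uniformly on $\overline\cO$. First I would fix $\omega$ in the full-measure event on which $s \mapsto X_s(\omega)$ is càdlàg on $[0,T-t]$ and bounded by some $R(\omega) > 0$. Then for all $N \geq N_0(\omega)$ the graph of $X(\omega)$ sits inside the compact $K = [0,T]\times[0,R(\omega)]$ on which $g^N \equiv g$, and $u^N \to v$ uniformly on $K$. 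Write $\bar\tau^N = (\sigma_*^N \wedge \tau_*^N)(t,x;\nu)(\omega)$ and $\bar\tau = (\sigma_* \wedge \tau_*)(t,x;\nu)(\omega)$.

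Suppose toward a contradiction that $\liminf_N \bar\tau^N < \bar\tau$. Extract a subsequence with $\bar\tau^N \to \bar\tau^\infty < \bar\tau$, and by further extraction assume that (i) throughout the subsequence one of $\tau_*^N$, $\sigma_*^N$ realises the minimum, and (ii) $\bar\tau^N$ is monotone. Consider first the case $\bar\tau^N = \tau_*^N$. By right-continuity of $s \mapsto X_s$ and continuity of $u^N$, $g^N$, the defining set of $\tau_*^N$ is closed from the right; thus either $\bar\tau^N < T-t$ and $u^N(t+\bar\tau^N, X_{\bar\tau^N}) = g^N(t+\bar\tau^N, X_{\bar\tau^N})$, or $\bar\tau^N = T-t$ in which case the same equality holds via the terminal condition in \eqref{eq:pdeinRdeps}. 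For $N \geq N_0$ this reads
\[
u^N(t+\bar\tau^N, X_{\bar\tau^N}) = g(t+\bar\tau^N, X_{\bar\tau^N}).
\]

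If $\bar\tau^N \downarrow \bar\tau^\infty$, right-continuity of $X$ gives $X_{\bar\tau^N} \to X_{\bar\tau^\infty}$; passing to the limit using continuity of $v$, $g$ and uniform convergence on $K$ yields $v(t+\bar\tau^\infty, X_{\bar\tau^\infty}) = g(t+\bar\tau^\infty, X_{\bar\tau^\infty})$, so $\tau_*(\nu)(\omega) \leq \bar\tau^\infty < \bar\tau(\omega)$, a contradiction. If instead $\bar\tau^N \uparrow \bar\tau^\infty$ strictly, then $X_{\bar\tau^N} \to X_{\bar\tau^\infty-}$ (the left limit) and the same limiting argument yields $v(t+\bar\tau^\infty, X_{\bar\tau^\infty-}) = g(t+\bar\tau^\infty, X_{\bar\tau^\infty-})$, hence $\sigma_*(\nu)(\omega) \leq \bar\tau^\infty < \bar\tau(\omega)$, again a contradiction. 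The symmetric case $\bar\tau^N = \sigma_*^N$ is handled identically after swapping the roles of $X_\cdot$/$X_{\cdot-}$ and $\tau_*$/$\sigma_*$ (using left-continuity of $s \mapsto X_{s-}$ when $\bar\tau^N \uparrow \bar\tau^\infty$, and noting $X_{\bar\tau^N-} \to X_{\bar\tau^\infty}$ when $\bar\tau^N \downarrow \bar\tau^\infty$ provided the limit is not constant equal to $\bar\tau^\infty$, with the constant case reducing to the previous analysis).

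The main bookkeeping difficulty is the dichotomy between $\bar\tau^N$ approaching $\bar\tau^\infty$ from above (producing a limit point $X_{\bar\tau^\infty}$) versus from below (producing $X_{\bar\tau^\infty-}$), compounded with the distinction between the $\tau$- and $\sigma$-type definitions. The choice of working with the minimum $\sigma_* \wedge \tau_*$ is exactly what absorbs both scenarios: the right-continuous limit contradicts the definition of $\tau_*$, the left-limit scenario contradicts the definition of $\sigma_*$. Once this dichotomy is organised, the rest of the argument is immediate from the locally uniform convergences granted by Theorem \ref{thm:game}.
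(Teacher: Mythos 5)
Your approach is correct and self-contained, which is more than the paper provides: the paper only invokes \cite[Lemma 4.3]{bovo2023} after noting $u^N\to v$ locally uniformly and the $N$-independence of the dynamics. Your pathwise argument — fixing $\omega$, confining the bounded trajectory $(X_s(\omega))_{s\le T-t}$ to a single compact $K$ on which $g^N\equiv g$ and $u^N\to v$ uniformly, then arguing by contradiction via a monotone subsequence and absorbing the right-limit/left-limit dichotomy through the min $\sigma_*\wedge\tau_*$ — is exactly the argument one expects behind that citation.

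There is, however, one small imprecision. You justify that the equality $u^N=g^N$ is attained at $\bar\tau^N$ by arguing that the defining set is closed from the right, but this is correct only for $\tau_*^N$ (where $s\mapsto u^N(t+s,X_s)-g^N(t+s,X_s)$ is right-continuous). The set $B_N=\{s\,|\,u^N(t+s,X_{s-})=g^N(t+s,X_{s-})\}$ defining $\sigma_*^N$ is closed from the \emph{left}, so its infimum need not a priori be attained, and your clause ``handled identically after swapping'' does not supply the missing step. The clean fix is to note that $A_N\cup B_N$ \emph{is} closed from the right — if $s_n\downarrow s$ with $s_n\in B_N$ then $X_{s_n-}\to X_s$ (the right limit of the caglad path $X_{\cdot-}$ is the càdlàg path $X_\cdot$), whence $s\in A_N$ — and $\bar\tau^N=\sigma_*^N\wedge\tau_*^N=\inf(A_N\cup B_N)\in A_N\cup B_N$. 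The dichotomy you should then extract along the subsequence is ``$\bar\tau^N\in A_N$'' vs.\ ``$\bar\tau^N\in B_N$'' rather than ``$\tau_*^N$ realises the min'' vs.\ ``$\sigma_*^N$ realises the min''; after that adjustment the case analysis you wrote (decreasing $\Rightarrow$ contradicts $\tau_*$, increasing $\Rightarrow$ contradicts $\sigma_*$, with the caglad evaluation giving the mirror pairing) goes through verbatim. This is a one-line repair, not a flaw in the overall strategy.
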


Next we show optimality of $\sigma_*\wedge\tau_*$, thus concluding the proof of Theorem \ref{thm:usolvar} under {\bf A.2}. 
\begin{proposition}
Let $(t,x)\in\overline\cO$. We have $v(t,x)\le \cJ_{t,x}(\nu,\sigma_*(\nu)\wedge\tau_*(\nu))$ for any $\nu\in\cA^-_{t,x}$. Then Corollary \ref{cor:nu^-2} implies 
\begin{align*}
v(t,x)=\inf_{\nu\in\cA_{t,x}} \cJ_{t,x}(\nu,\sigma_*(\nu)\wedge\tau_*(\nu)).
\end{align*}
\end{proposition}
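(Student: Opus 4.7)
My plan is to pass to the limit in two successive stages ($\kappa\downarrow 0$ followed by $N\uparrow\infty$) in the inequality furnished by Lemma \ref{lem:uNkappagame}, which provides the optimality of the corresponding stopping rule at the level of the bounded, regularised game. Fix $\nu\in\cA^-_{t,x}$, and for every $\kappa\in(0,1)$ let $\nu^\kappa\in\cA^{\kappa,-}_{t,x}$ be the control constructed in \eqref{eq:nukap}. Write $\hat\tau_\kappa\coloneqq(\sigma_*^{N,\kappa}\wedge\tau_*^{N,\kappa})(t,x;\nu^\kappa)$ and $\hat\tau^N\coloneqq(\sigma_*^{N}\wedge\tau_*^{N})(t,x;\nu)$. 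Lemma \ref{lem:uNkappagame} yields the starting inequality
\[
u^N_\kappa(t,x)\le \cJ^{N,\kappa}_{t,x}(\nu^\kappa,\hat\tau_\kappa).
\]
Once $v(t,x)\le\cJ_{t,x}(\nu,\sigma_*(\nu)\wedge\tau_*(\nu))$ has been established for every $\nu\in\cA^-_{t,x}$, the displayed identity in the proposition follows by combining it with Corollary \ref{cor:nu^-2} (which gives $\inf_{\nu\in\cA^-_{t,x}}\cJ_{t,x}(\nu,\sigma_*\wedge\tau_*)\ge v$) and the trivial sub-optimality $\cJ_{t,x}(\nu,\sigma_*\wedge\tau_*)\le\sup_\tau\cJ_{t,x}(\nu,\tau)$.

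In the first stage, carried out along the sequence $(\kappa_j)$ from Lemma \ref{lem:stop_conv}, the left-hand side converges to $u^N(t,x)$ by construction of $u^N$ in Section \ref{sec:vkN}. For the right-hand side I would use: (i) the $L^2$-convergence $X^{\nu^{\kappa_j},\kappa_j}\to X^{\nu,0}$ on $[\![0,\tau_{\kappa_j}\wedge\tau_0]\!]$ from \eqref{eq:Xkappato02}; (ii) Lemma \ref{lem:stop_conv}, which after extracting a further subsequence gives $\hat\tau_{\kappa_j}\to\hat\tau_\infty\ge\hat\tau^N$ $\P$-a.s.; (iii) the boundedness and Lipschitz regularity of $g^N,h^N,\bar\alpha^N$, combined with $|\nu^{\kappa_j}|_{T-t}\le|\nu|_{T-t}+\sup_{s\le T-t}X^{0,\kappa_j}_s$ and the moment bounds on $\nu\in\cA^{-}_{t,x}$ inherited from \eqref{eq:est2}. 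Dominated convergence then delivers $u^N(t,x)\le\cJ^N_{t,x}(\nu,\hat\tau_\infty)$. The second stage $N\uparrow\infty$ follows the same pattern but is simpler because the dynamics $X^\nu$ does not depend on $N$: I would invoke Theorem \ref{thm:game} for the LHS, Lemma \ref{lem:stop_conv2} for the stopping times, and monotone/dominated convergence for the RHS with $g^N\uparrow g$, $h^N\uparrow h$, $\bar\alpha^N\downarrow\bar\alpha$, using the sub-optimality-preserving bound $X^\nu\le X^{0,0}$ and the growth estimates in Assumption \ref{ass:gen2}(iii) for uniform integrability.

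The main obstacle is that Lemmas \ref{lem:stop_conv}--\ref{lem:stop_conv2} supply only the liminf inequality $\liminf \hat\tau_{\kappa_j}\ge\hat\tau^N$ (and analogously at the $N\uparrow\infty$ level), so the two passages above produce a payoff at $\hat\tau_\infty\ge\hat\tau^N$ rather than directly at $\hat\tau^N$ itself. To close the gap I would apply Dynkin's formula to $\e^{-rs}u^N(t+s,X^{\nu}_s)$ on the stochastic interval $[\![\hat\tau^N,\hat\tau_\infty]\!]$ and exploit the variational inequality of Problem \ref{prb:varineq}: by definition $\hat\tau^N$ is the first entry of $(t+s,X^\nu_{s\pm})$ into the closed set $\{u^N=g^N\}$, so $u^N=g^N$ at both endpoints by continuity; the gradient bound $|\partial_x u^N|\le\bar\alpha^N$ controls the term involving $\mathrm{d}\nu^c_s$ and the jumps; finally, at (a.e.) points of $[\![\hat\tau^N,\hat\tau_\infty]\!]$ where $|\partial_x u^N|<\bar\alpha^N$, the PDE half of the variational inequality gives $(\partial_t+\cL-r)u^N\le -h^N$. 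These ingredients, manipulated as in \cite[Lem.\ 4.6]{bovo2023b}, yield $\cJ^N_{t,x}(\nu,\hat\tau_\infty)\le \cJ^N_{t,x}(\nu,\hat\tau^N)$, and hence $u^N(t,x)\le \cJ^N_{t,x}(\nu,\hat\tau^N)$. Iterating this argument at the second stage produces the sought inequality $v(t,x)\le \cJ_{t,x}(\nu,\sigma_*(\nu)\wedge\tau_*(\nu))$, completing the proof.
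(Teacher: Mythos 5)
Your proposal correctly identifies the central obstacle — Lemmas \ref{lem:stop_conv}--\ref{lem:stop_conv2} yield only $\liminf$ inequalities, so a naive passage to the limit produces a stopping time $\hat\tau_\infty\ge\hat\tau^N$ rather than $\hat\tau^N$ itself. However, your proposed repair does not work, and the paper handles this issue in a genuinely different (and simpler) way.

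The fix you sketch asks Dynkin's formula on $[\![\hat\tau^N,\hat\tau_\infty]\!]$ to deliver $\cJ^N_{t,x}(\nu,\hat\tau_\infty)\le \cJ^N_{t,x}(\nu,\hat\tau^N)$. Expanding $\e^{-rs}u^N(t+s,X^\nu_s)$ over that interval and using $u^N=g^N$ at the two endpoints, the difference of payoffs equals
\begin{align*}
\cJ^N_{t,x}(\nu,\hat\tau_\infty)-\cJ^N_{t,x}(\nu,\hat\tau^N)
=\E\Big[\int_{\hat\tau^N}^{\hat\tau_\infty}\!\e^{-rs}\big[(\partial_t+\cL-r)u^N+h^N\big]\ud s
+\text{(control terms)}\Big].
\end{align*}
For $\nu\in\cA^-_{t,x}$ the control terms have the form $\int(\bar\alpha^N-\partial_x u^N)\ud\nu^{-,c}+\sum(\Delta u^N+\bar\alpha^N|\Delta\nu|)$, which are \emph{non-negative} by the gradient bound; and the drift $(\partial_t+\cL-r)u^N+h^N$ is $\le 0$ only on $\cI^{u^N}$ — exactly the restriction you note. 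On $\cC^{u^N}\cap\{|\partial_x u^N|=\bar\alpha^N\}$ the variational inequality gives the \emph{opposite} sign $(\partial_t+\cL-r)u^N\ge -h^N$, and on $\{u^N=g^N\}\cap\{|\partial_x u^N|=\bar\alpha^N\}$ there is no information at all. Since nothing prevents the path from re-entering $\cC^{u^N}$ during $[\![\hat\tau^N,\hat\tau_\infty]\!]$ (indeed $\hat\tau^N$ is a \emph{first} entry time), the drift integral is of indefinite sign and the inequality $\cJ^N_{t,x}(\nu,\hat\tau_\infty)\le \cJ^N_{t,x}(\nu,\hat\tau^N)$ does not follow. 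A secondary issue: you say $u^N=g^N$ at $\hat\tau_\infty$ ``by continuity'', but $\hat\tau_\infty$ is not a hitting time of $\{u^N=g^N\}$; establishing this would itself require passing to the limit in $u^{N}_{\kappa_j}=g^N$ at $\hat\tau_{\kappa_j}$, which is extra work and not what ``by continuity'' suggests.

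The paper sidesteps the entire difficulty by \emph{capping} the stopping times before taking any limits: it sets $\rho_*^{N}\coloneqq\sigma_*^N\wedge\tau_*^N\wedge\sigma_*\wedge\tau_*$ and $\rho_*^{N,\kappa}\coloneqq\sigma_*^{N,\kappa}\wedge\tau_*^{N,\kappa}\wedge\rho_*^N$, and inserts $\rho_*^{N,\kappa}$ into the Itô-level inequality \eqref{eq:uNkopt} rather than invoking the statement of Lemma \ref{lem:uNkappagame} (that inequality is valid for any $\tau\le\sigma_*^{N,\kappa}\wedge\tau_*^{N,\kappa}$). Then $\rho_*^{N,\kappa}\le\rho_*^N$ by construction while Lemma \ref{lem:stop_conv} gives $\liminf_\kappa\rho_*^{N,\kappa}\ge\rho_*^N$, so the limit is exactly $\rho_*^N$ and is attained \emph{from below} (which is also what guarantees convergence of the left limits $X^\nu_{\rho_*^{N,\kappa}-}$). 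The same device makes $\rho_*^N\to\sigma_*\wedge\tau_*$ exactly from below when $N\to\infty$. Your proposal is salvageable if you adopt this capping instead of the Dynkin patch, but as written it contains a genuine gap.
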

\begin{proof}
Fix $(t,x)\in\overline\cO$ and take $\nu\in\cA_{t,x}^-$ and $\nu^\kappa\in\cA_{t,x}^{\kappa,-}$ as in \eqref{eq:nukap}. Letting $\rho_*^{N}\coloneqq \sigma_*^{N}\wedge\tau_*^N\wedge\sigma_*\wedge\tau_*$ and $\rho_*^{N,\kappa}\coloneqq \sigma_*^{N,\kappa}\wedge\tau_*^{N,\kappa}\wedge\rho_*^{N}$, we have that \eqref{eq:uNkopt} still holds with $\rho_*^{N,\kappa}$ in place of $\sigma_*^{N,\kappa}\wedge\tau_*^{N,\kappa}$. Noticing that $|\partial_xu^{N}_\kappa(s,y)| \le \bar{\alpha}^N$ holds for all $(s,y)\in\overline{\cO}$, we obtain
\begin{align*}
\begin{aligned}
u^{N}_\kappa(t,x)\le\E_x\Big[&\e^{-r\rho_*^{N,\kappa}}\!u^N_\kappa\big(t\!+\!\rho_*^{N,\kappa},X^{\nu^\kappa,\kappa}_{\rho_*^{N,\kappa}-}\big)\!+\!\int_0^{\rho_*^{N,\kappa}}\!\!\!\e^{-rs}h^N(t\!+\!s,X_s^{\nu^\kappa,\kappa})\ud s\!+\!\int_{[0,\rho_*^{N,\kappa})}\!\!\e^{-r s}\bar{\alpha}^N \ud |\nu^\kappa|_s\Big].
\end{aligned}
\end{align*}
Recall that $u^N_\kappa$ and $h^N$ are Lipschitz in the space variable with constant $\bar{\alpha}^N$ and $L_N$, respectively. Notice that $|\nu|_s=|\nu^\kappa|_s$ for $s\in[\![0,\tau_\kappa)\!)$, with $\tau_\kappa$ as in \eqref{eq:nukap}, and $\tau_\kappa\ge \tau^{N,\kappa}_*$ because $u^N_\kappa(t,0)=g^N(t,0)$. Likewise, $\tau_0\ge \tau^N_*$. Then, 
\begin{align*}
\begin{aligned}
u^{N}_\kappa(t,x)\le& (\bar{\alpha}^N+L_N T)\E_x\Big[\sup_{s\in[0,T-t]}\big|X^{\nu^\kappa,\kappa}_{s\wedge\tau_0\wedge\tau_\kappa}-X^{\nu,0}_{s\wedge\tau_0\wedge\tau_\kappa}\big|\Big]\\
&+\E_x\Big[\e^{-r\rho_*^{N,\kappa}}\!u^N_\kappa\big(t\!+\!\rho_*^{N,\kappa},X^\nu_{\rho_*^{N,\kappa}-}\big)\!+\!\int_0^{\rho_*^{N,\kappa}}\!\!\!\e^{-rs}h^N(t\!+\!s,X_s^\nu)\ud s\!+\!\int_{[0,\rho_*^{N,\kappa})}\!\!\e^{-r s}\bar{\alpha}^N \ud |\nu|_s\Big].
\end{aligned}
\end{align*}
Letting $\kappa\downarrow0$ along the sequence from Lemma \ref{lem:stop_conv} and noticing that $\lim_{\kappa\to0}\rho_*^{N,\kappa}=\rho_*^{N}$ from below, we have for a.e. $\omega\in\Omega$
\begin{align*}
\lim_{\kappa\downarrow0} X^{\nu}_{\rho_*^{N,\kappa}-}=X^{\nu}_{\rho_*^N-}\quad\text{and}\quad\lim_{\kappa\downarrow0}\int_{[0,\rho_*^{N,\kappa})}\!\!\e^{-r s}\bar{\alpha}^N \ud \nu_s=\int_{[0,\rho_*^{N})}\!\!\e^{-r s}\bar{\alpha}^N \ud \nu_s.
\end{align*}
Since $u^N$, $g^N$ and $h^N$ are bounded, dominated convergence and \eqref{eq:Xkappato02} yields
\begin{align*}
\begin{aligned}
u^{N}(t,x)&\le\E_x\Big[\e^{-r\rho_*^{N}}\!u^N\big(t\!+\!\rho_*^{N},X^\nu_{\rho_*^{N}-}\big)\!+\!\int_0^{\rho_*^{N}}\!\!\!\e^{-rs}h^N(t\!+\!s,X_s^\nu)\ud s\!+\!\int_{[0,\rho_*^{N})}\!\!\e^{-r s}\bar{\alpha}^N \ud \nu_s\Big].
\end{aligned}
\end{align*}
We use similar arguments to pass to the limit as $N\to\infty$. Notice that $\lim_{N\to\infty}\rho_*^{N}=\sigma_*\wedge\tau_*$ (Lemma \ref{lem:stop_conv2}), $\bar{\alpha}^N\downarrow \bar{\alpha}$, $u^N\to v$ uniformly on compacts (Lemma \ref{thm:game}) and $h^N\uparrow h$. Moreover, $u^N$ and $h^N$ have at most quadratic growth uniformly in $N$ and $\nu$ is fixed. Then, dominated convergence yields
\begin{align}\label{eq:final}
\begin{aligned}
\quad v(t,x)\le\E_x\Big[\e^{-r(\sigma_*\wedge\tau_*)}v\big(t\!+\!\sigma_*\!\wedge\!\tau_*,X^\nu_{\sigma_*\wedge\tau_*-}\big)\!+\!\int_0^{\sigma_*\wedge\tau_*}\!\!\!\!\!\e^{-rs}h(t\!+\!s,X_s^\nu)\ud s\!+\!\int_{[0,\sigma_*\wedge\tau_*)}\!\!\!\!\!\e^{-r s}\bar{\alpha} \ud \nu_s\Big].
\end{aligned}
\end{align}
On $\{\sigma_*<\tau_*\}$, we use that $|\partial_x g|\le \bar{\alpha}$ and thus
\begin{align*}
v\big(t\!+\!\sigma_*\!\wedge\!\tau_*,X^\nu_{\sigma_*\wedge\tau_*-}\big)=g\big(t\!+\!\sigma_*,X^\nu_{\sigma_*-}\big)\le g\big(t\!+\!\sigma_*,X^\nu_{\sigma_*-}\big)+\bar{\alpha}\Delta\nu_{\sigma_*}.
\end{align*}
Similarly, on $\{\sigma_*\ge \tau_*\}$ we have 
\begin{align*}
v\big(t\!+\!\sigma_*\!\wedge\!\tau_*,X^\nu_{\sigma_*\wedge\tau_*-}\big)\le v\big(t\!+\!\tau_*,X^\nu_{\tau_*}\big)+\bar{\alpha}\Delta\nu_{\sigma_*}= g\big(t\!+\!\tau_*,X^\nu_{\tau_*}\big)+\bar{\alpha}\Delta\nu_{\sigma_*}.
\end{align*}
Plugging these two inequalities into the right-hand side of \eqref{eq:final} we obtain 
$v(t,x)\le \cJ_{t,x}(\nu,\sigma_*\wedge\tau_*)$.
By the arbitrariness of $\nu\in\cA^-_{t,x}$, the result holds.
\end{proof}

\appendix
\section{Auxiliary results}
\subsection{Construction of the function \texorpdfstring{$\tilde{g}_m^N$}{gNm}}\label{app:gtilde}

For the ease of the exposition, but with a small abuse of notation, we denote $\overline{\Theta}^{N}_{\kappa}$ by $\overline{\Theta}$. With no loss of generality we assume that $\partial_x g^N_m(T,\overline{\Theta})\neq 0$, as otherwise we do not need to construct $\tilde{g}^N_m$ for the proof of Lemma \ref{lem:lbw}. Similarly, we assume that $g^N_m(T,\overline{\Theta})> 0$ as otherwise $\overline{\Theta}$ would be a minimum of $g^N_m(T,\cdot)$ hence contradicting $\partial_x g^N_m(T,\overline{\Theta})\neq 0$ (recall that $g^N_m\ge 0$). 

We construct $\tilde{g}^N_m$ so that the three conditions in \eqref{eq:tildegmN} hold. Recalling the cut-off function $\xi_m$, we choose $\tilde{g}^N_m(t,x)=f(t)\phi(x)\xi_{m-1}(x)$, for $(t,x)\in[0,T]\times [0,m]$ with
\begin{align}\label{eq:phif}
\phi(x)=(x^2-2\overline{\Theta}x+\overline{\Theta}^2+c)\quad\text{and}\quad f(t)=\frac{g^N_m(t,\zeta_m(t))}{\phi(\zeta_m(t))\xi_{m-1}(\zeta_m(t))},
\end{align}
where $c>0$ is a constant chosen at the end. We emphasise that $\phi$, $f$ and $c$ depend on $N,\kappa,m$. It is immediate to check that $\phi(x)>0$ for all $x\in[0,m]$ with minimum $\phi(\overline \Theta)=c$. 

The first condition of \eqref{eq:tildegmN} holds by construction, i.e., $\tilde g^N_m(t,y)=g^N_m(t,y)$ for every $t\in[0,T]$ and $y\in\{\zeta_m(t),m\}$. The derivatives of $\tilde{g}$ read:
\begin{align}\label{eq:dergtilde}
\begin{aligned}
&\partial_t \tilde{g}^N_m(t,x)=\dot{f}(t)\phi(x)\xi_{m-1}(x),\\
&\partial_x\tilde{g}^N_m(t,x)=f(t)\dot{\phi}(x)\xi_{m-1}(x)+f(t)\phi(x)\dot{\xi}_{m-1}(x),\\
&\partial_{xx}\tilde{g}^N_m(t,x)=f(t)\ddot{\phi}(x)\xi_{m-1}(x)+2f(t)\dot{\phi}(x)\dot{\xi}_{m-1}(x)+f(t)\phi(x)\ddot{\xi}_{m-1}(x).
\end{aligned}
\end{align}
We notice that $\xi_{m-1}\equiv 1$ for $x\in[0,m-1]$, and so $\dot{\xi}_{m-1}(x)=\ddot{\xi}_{m-1}(x)=0$ for $x\le \Theta<m-1$. Therefore, the second condition in \eqref{eq:tildegmN}, i.e., $\partial_x \tilde g^N_m(T,\overline\Theta)=\partial_x \tilde g^N_m(T,m)=0$, holds because $\dot{\xi}_{m-1}(\overline{\Theta})=\dot{\phi}(\overline{\Theta})=0$. 

It remains to prove that $(\partial_t \tilde g^N_m+\cL_\kappa\tilde g^N_m-r\tilde g^N_m+h^N_m)(T,y)\ge 0$ for $y\in\{\overline\Theta,m\}$. The condition is trivially satisfied at $y=m$ because the cut-off function makes all the terms vanish. For the condition at $\overline \Theta$ we resort to explicit calculations.

From \eqref{eq:phif} and using $\xi_{m-1}(\zeta_m(T))=\xi_{m-1}(\overline \Theta)=1$ and $\dot{\xi}_{m-1}(\overline{\Theta})=\dot{\phi}(\overline{\Theta})=0$ it is easy to verify
\begin{align}\label{eq:dotf}
\dot{f}(T)&=\frac{\partial_t g^N_m(T,\overline{\Theta})+\partial_x g^N_m(T,\overline{\Theta})\dot{\zeta}_m(T)}{\phi(\overline{\Theta})}.
\end{align}
Therefore, using again $\xi_{m-1}(\overline \Theta)=1$, $\dot{\xi}_{m-1}(\overline{\Theta})=\ddot{\xi}_{m-1}(\overline{\Theta})=0$ and \eqref{eq:dergtilde} we obtain
\begin{align*}
\widetilde\Gamma(\overline\Theta)&\coloneqq(\partial_t+\cL_\kappa-r)\tilde{g}^N_m(T,\overline{\Theta})+h_m^N(T,\overline{\Theta})\\
&=\dot{f}(T)\phi(\overline{\Theta})+\frac{(\sigma_\kappa(\overline{\Theta}))^2}{2}f(T)\ddot{\phi}(\overline{\Theta})+\mu(\overline{\Theta})f(T)\dot{\phi}(\overline{\Theta})-rf(T)\phi(\overline{\Theta})+h_m^N(T,\overline{\Theta}).
\end{align*}
Now we observe that $\dot \phi(\overline\Theta)=0$ and $f(T)=g^N_m(T,\overline\Theta)/\phi(\overline\Theta)=g^N_m(T,\overline\Theta)/c$. That and \eqref{eq:dotf} yield
\begin{align*}
\widetilde\Gamma(\overline\Theta)&=\partial_t g^N_m(T,\overline{\Theta})+\partial_x g^N_m(T,\overline{\Theta})\dot{\zeta}_m(T)+\frac{(\sigma_\kappa(\overline{\Theta}))^2}{2}\frac{g^N_m(T,\overline{\Theta})}{c}\ddot{\phi}(\overline{\Theta})-rg^N_m(T,\overline{\Theta})+h_m^N(T,\overline{\Theta})\\
&=-\big(\cL_\kappa g^N_m\big)(T,\overline\Theta)+\partial_x g^N_m(T,\overline{\Theta})\dot{\zeta}_m(T)+\frac{(\sigma_\kappa(\overline{\Theta}))^2}{2 c}g^N_m(T,\overline{\Theta})\ddot{\phi}(\overline{\Theta}),
\end{align*}
where in the second equality we used $(\partial_tg^N_m-rg^N_m)(T,\overline{\Theta})+h_m^N(T,\overline{\Theta})=-(\cL_\kappa g^N_m)(T,\overline{\Theta})$ by \eqref{eq:overtheta}. 
Therefore, $\widetilde \Gamma(\overline\Theta)\ge 0$ if
\begin{align*}
\frac{\ddot{\phi}(\overline{\Theta})}{c}\ge \frac{2}{(\sigma_\kappa(\overline{\Theta}))^2g^N_m(T,\overline{\Theta})}\Big(\big(\cL_\kappa g^N_m\big)(T,\overline\Theta)-\partial_x g^N_m(T,\overline{\Theta})\dot{\zeta}_m(T)\Big).
\end{align*}
If the right-hand side above is negative, the inequality holds for any $c>0$ because $\ddot \phi(\overline\Theta)=2$. If instead the right-hand side above is positive, we choose 
\[
c=\frac{(\sigma_\kappa(\overline{\Theta}))^2g^N_m(T,\overline{\Theta})}{\big(\cL_\kappa g^N_m\big)(T,\overline\Theta)-\partial_x g^N_m(T,\overline{\Theta})\dot{\zeta}_m(T)}.
\]
This concludes the proof of the third condition in \eqref{eq:tildegmN}.
\hfill$\square$

\medskip
\noindent{\bf Funding}: Both authors received partial financial support from EU -- Next Generation EU -- PRIN2022 (2022BEMMLZ) CUP: D53D23005780006. T.\ De Angelis also received partial financial support from PRIN-PNRR2022 (P20224TM7Z) CUP: D53D23018780001. 

\bibliographystyle{plain}
\bibliography{Bibliography}

\begin{thebibliography}{10}

\bibitem{baldi2017stochastic}
P.~Baldi.
\newblock {\em Stochastic calculus}.
\newblock Universitext. Springer, 2017.

\bibitem{bayraktar2011regularity}
E.~Bayraktar and V.R. Young.
\newblock Proving regularity of the minimal probability of ruin via a game of
  stopping and control.
\newblock {\em Finance Stoch.}, 15(4):785--818, 2011.

\bibitem{bensoussan2011applications}
A.~Bensoussan and J.-L. Lions.
\newblock {\em Applications of variational inequalities in stochastic control},
  volume~12 of {\em Studies in Mathematics and its Applications}.
\newblock North-Holland Publishing Co., Amsterdam-New York, 1982.

\bibitem{blanchet2008investment}
C.~Blanchet-Scalliet, N.~{El Karoui}, M.~Jeanblanc, and L.~Martellini.
\newblock Optimal investment decisions when time-horizon is uncertain.
\newblock {\em J.\ Math.\ Econom.}, 44(11):1100--1113, 2008.

\bibitem{bovo2023saddlearxiv}
A.~Bovo and T.~De~Angelis.
\newblock On the saddle point of a zero-sum stopper vs.\ singular-controller
  game.
\newblock {\em Stochastic Process.\ Appl.}, 182, 2025.

\bibitem{bovo2022variational}
A.~Bovo, T.~De~Angelis, and E.~Issoglio.
\newblock Variational inequalities on unbounded domains for zero-sum
  singular-controller vs. stopper games.
\newblock {\em Math.\ Oper.\ Res.}, 50(1):277--312, 2025.

\bibitem{bovo2023}
A.~Bovo, T.~De~Angelis, and J.~Palczewski.
\newblock Zero-sum stopper vs. singular-controller games with constrained
  control directions.
\newblock {\em {SIAM} J.\ Control Optim.}, 62(4):2203--2228, 2024.

\bibitem{bovo2023b}
A.~Bovo, T.~De~Angelis, and J.~Palczewski.
\newblock Stopper vs.\ singular-controller games with degenerate diffusions.
\newblock {\em Appl.\ Math.\ Optim.}, 91(Article 3), 2025.

\bibitem{cole1951quasi}
J.D. Cole.
\newblock On a quasi-linear parabolic equation occurring in aerodynamics.
\newblock {\em Quart.\ Appl.\ Math.}, 9(3):225--236, 1951.

\bibitem{davis1994discretionary}
M.H.A. Davis and M.~Zervos.
\newblock {A Problem of Singular Stochastic Control with Discretionary
  Stopping}.
\newblock {\em Ann.\ Appl.\ Probab.}, 4(1):226 -- 240, 1994.

\bibitem{de2017dividend}
T.~De~Angelis and E.~Ekstr{\"o}m.
\newblock The dividend problem with a finite horizon.
\newblock {\em Ann.\ Appl.\ Probab.}, 27(6):3525--3546, 2017.

\bibitem{deangelis2019numerical}
T.~De~Angelis, M.~Germain, and E.~Issoglio.
\newblock A numerical scheme for stochastic differential equations with
  distributional drift.
\newblock {\em Stochastic Process.\ Appl.}, 154:55--90, 2022.

\bibitem{evans1979second}
L.C. Evans.
\newblock A second order elliptic equation with gradient constraint.
\newblock {\em Comm.\ Partial Differential Equations}, 4(5):555--572, 1979.

\bibitem{ferrari2020singular}
G.~Ferrari and T.~Vargiolu.
\newblock On the singular control of exchange rates.
\newblock {\em Ann.\ Oper.\ Res.}, 292(2):795--832, 2020.

\bibitem{fleming2012deterministic}
W.H. Fleming and R.W. Rishel.
\newblock {\em Deterministic and stochastic optimal control}, volume~1 of {\em
  Applications of {M}athematics}.
\newblock Springer New York, NY, 1975.

\bibitem{fleming2006controlled}
W.H. Fleming and H.M. Soner.
\newblock {\em Controlled {M}arkov processes and viscosity solutions},
  volume~25 of {\em Stochastic Modelling and Applied Probability}.
\newblock Springer New York, NY, second edition, 2006.

\bibitem{friedman1973stochastic}
A.~Friedman.
\newblock Stochastic games and variational inequalities.
\newblock {\em Arch.\ Ration.\ Mech.\ Anal.}, 51(5):321--346, 1973.

\bibitem{friedman2008partial}
A.~Friedman.
\newblock {\em Partial differential equations of parabolic type}.
\newblock Dover Publications, Mineola, NY, 2008.

\bibitem{grandits2013optimal}
P.~Grandits.
\newblock Optimal consumption in a {B}rownian model with absorption and finite
  time horizon.
\newblock {\em Appl.\ Math.\ Optim.}, 67:197--241, 2013.

\bibitem{grandits2014existence}
P.~Grandits.
\newblock Existence and asymptotic behavior of an optimal barrier for an
  optimal consumption problem in a {B}rownian model with absorption and finite
  time horizon.
\newblock {\em Appl.\ Math.\ Optim.}, 69:233--271, 2014.

\bibitem{grandits2015optimal}
P.~Grandits.
\newblock An optimal consumption problem in finite time with a constraint on
  the ruin probability.
\newblock {\em Finance Stoch.}, 19(4):791--847, 2015.

\bibitem{hernandez2015zero}
D.~Hernandez-Hernandez, R.S. Simon, and M.~Zervos.
\newblock A zero-sum game between a singular stochastic controller and a
  discretionary stopper.
\newblock {\em Ann.\ Appl.\ Probab.}, 25(1):46--80, 2015.

\bibitem{hobson2024convertible}
D.~Hobson, G.~Liang, and E.~Wang.
\newblock Callable convertible bonds under liquidity constraints and hybrid
  priorities.
\newblock {\em SIAM J.\ Financial Math.}, 15(4):1083--1123, 2024.

\bibitem{hopf1950partial}
E.~Hopf.
\newblock The partial differential equation $u_t + uu_x = \mu_{xx}$.
\newblock {\em Comm.\ Pure Appl.\ Math.}, 3(3):201--230, 1950.

\bibitem{hynd2013analysis}
R.~Hynd.
\newblock Analysis of {H}amilton-{J}acobi-{B}ellman equations arising in
  stochastic singular control.
\newblock {\em {ESAIM} Control Optim.\ Calc.\ Var.}, 19(1):112--128, 2013.

\bibitem{ishii1983regularity}
H.~Ishii and S.~Koike.
\newblock Boundary regularity and uniqueness for an elliptic equations with
  gradient constraint.
\newblock {\em Comm.\ Partial Differential Equations}, 8(4):317--346, 1983.

\bibitem{kelbert2019hjb}
M.~Kelbert and H.A. Moreno-Franco.
\newblock {HJB} equations with gradient constraint associated with controlled
  jump-diffusion processes.
\newblock {\em {SIAM} J.\ Control Optim.}, 57(3):2185--2213, 2019.

\bibitem{kifer2000game}
Y.~Kifer.
\newblock Game options.
\newblock {\em Finance Stoch.}, 4:443--463, 2000.

\bibitem{kifer2013dynkin}
Y.~Kifer.
\newblock Dynkin's games and {I}sraeli options.
\newblock {\em International Scholarly Research Notices}, 2013(1):856458, 2013.

\bibitem{kitapbayev2024mortgage}
Y.~Kitapbayev and S.~Robertson.
\newblock Mortgage contracts and underwater default.
\newblock {\em SIAM J.\ Financial Math.}, 15(2):315--359, 2024.

\bibitem{krylov1980controlled}
N.V. Krylov.
\newblock {\em Controlled diffusion processes}, volume~14 of {\em Stochastic
  Modelling and Applied Probability}.
\newblock Springer Berlin, 2009.
\newblock Translated from the 1977 Russian original by A.B. Aries, Reprint of
  the 1980 edition.

\bibitem{kyprianou2004some}
A.E. Kyprianou.
\newblock Some calculations for israeli options.
\newblock {\em Finance Stoch.}, 8:73--86, 2004.

\bibitem{miller1996optimal}
M.~Miller and L.~Zhang.
\newblock Optimal target zones: How an exchange rate mechanism can improve upon
  discretion.
\newblock {\em J.\ Econom.\ Dynam.\ Control}, 20(9):1641--1660, 1996.

\bibitem{nutz2015nonlinear}
M.~Nutz and J.~Zhang.
\newblock Optimal stopping under adverse nonlinear expectation and related
  games.
\newblock {\em Ann.\ Appl.\ Probab.}, 25(5):2503--2534, 2015.

\bibitem{protter2005stochastic}
P.E. Protter.
\newblock {\em Stochastic integration and differential equations}, volume~21 of
  {\em Stochastic Modelling and Applied Probability}.
\newblock Springer Berlin, second edition, 2005.

\bibitem{riedel2009stopping}
F.~Riedel.
\newblock Optimal stopping with multiple priors.
\newblock {\em Econometrica}, 77(3):857--908, 2009.

\bibitem{si2004perpetual}
M.~S{\^i}rbu, I.~Pikovsky, and S.E. Shreve.
\newblock Perpetual convertible bonds.
\newblock {\em SIAM J.\ Control Optim.}, 43(1):58--85, 2004.

\bibitem{sirbu2006two}
M.~S{\^i}rbu and S.E. Shreve.
\newblock A two-person game for pricing convertible bonds.
\newblock {\em SIAM J.\ Control Optim.}, 45(4):1508--1539, 2006.

\bibitem{wiegner1981character}
M.~Wiegner.
\newblock The ${C}^{1,1}$--character of solutions of second order elliptic
  equations with gradient constraint.
\newblock {\em Comm.\ Partial Differential Equations}, 6(3):361--371, 1981.

\bibitem{yam2014game}
S.C.P. Yam, S.P. Yung, and W.~Zhou.
\newblock Game call options revisited.
\newblock {\em Math.\ Finance}, 24(1):173--206, 2014.

\bibitem{young2004investment}
V.R. Young.
\newblock Optimal investment strategy to minimize the probability of lifetime
  ruin.
\newblock {\em N.\ Am.\ Actuar.\ J.}, 8(4):106--126, 2004.

\end{thebibliography}

\end{document}